\documentclass[12pt]{amsart}
\usepackage[margin=1in]{geometry}
\pdfoutput=1
\usepackage[T1]{fontenc}
\usepackage[utf8]{inputenc}
\usepackage[pagebackref=true,destlabel=true,colorlinks]{hyperref}
\usepackage[dvipsnames, table]{xcolor}
\hypersetup{%
  linkcolor = Purple,
  citecolor = Blue,
  urlcolor = Blue,
}
\usepackage{%
  amsmath,
  amssymb,
  amsthm,
  mathrsfs,
  mathtools,
  manfnt,
  nicematrix,
  tikz-cd,
  booktabs,
  fourier-orns,
  soul
}
\usetikzlibrary{decorations.pathreplacing}
\usepackage[mathscr]{euscript}
\usepackage{ytableau}
\ytableausetup{centertableaux}

\usepackage{enumitem}
\setlist[enumerate]{label=(\alph*)}

\theoremstyle{definition}
\numberwithin{equation}{subsection}

\theoremstyle{plain}
\newtheorem{thm}[equation]{Theorem}
\newtheorem*{thm*}{Theorem}
\newtheorem*{mainthm*}{Main Theorem}
\newtheorem{lem}[equation]{Lemma}
\newtheorem{cor}[equation]{Corollary}
\newtheorem{conj}[equation]{Conjecture}
\newtheorem{prop}[equation]{Proposition}

\theoremstyle{definition}
\newtheorem{defn}[equation]{Definition}
\newtheorem{eg}[equation]{Example}

\theoremstyle{remark}
\newtheorem{rem}[equation]{Remark}
\newtheorem*{note}{Note}

\newcommand{\stacks}[1]{%
  \cite%
  [{\href{https://stacks.math.columbia.edu/tag/#1}{\textsc{#1}}}]%
  {stacks}}

\makeatletter
\@addtoreset{equation}{section}
\renewcommand{\thesubsection}{%
  \ifnum\c@subsection<1 \@arabic\c@section
  \else \thesection.\@arabic\c@subsection
  \fi
}
\makeatother

\colorlet{hlcol}{Yellow!25}
\sethlcolor{hlcol}

\newcommand\sorry{%
  \relax
  \PackageWarning{unresolved sorry: }{missing detail!!}
  \ifmmode
    {\text{\color{BrickRed}\raisebox{2pt}{\Large\warning}}}
  \else
    {{\color{BrickRed}\Large\warning}}
  \fi}

\renewcommand{\emptyset}{\varnothing}

\newcommand{\injects}{\hookrightarrow}

\def\ZZ{\mathbf{Z}}

\def\CC{\mathbf{C}}

\DeclareMathOperator{\Hom}{Hom}

\DeclareMathOperator{\Tor}{Tor}

\DeclareMathOperator{\grade}{grade}
\DeclareMathOperator{\rank}{rank}
  
\DeclareMathOperator{\gr}{gr} 

\DeclareMathOperator{\Spec}{Spec}

\DeclareMathOperator{\Jac}{\mathbf{J}}
\DeclareMathOperator{\codim}{codim} 
\DeclareMathOperator{\Gr}{\mathbf{Gr}} 
\DeclareMathOperator{\Flag}{\mathbf{Flag}} 

\DeclareMathOperator{\Sym}{Sym}

\DeclareMathOperator{\GL}{\mathbf{GL}}

\newcommand{\fA}{\mathfrak{A}}

\newcommand{\sA}{\mathscr{A}}

\newcommand{\bC}{\mathbf{C}}

\newcommand{\sC}{\mathscr{C}}

\newcommand{\sD}{\mathscr{D}}

\newcommand{\rE}{\mathrm{E}}
\newcommand{\sE}{\mathscr{E}}
\newcommand{\bF}{\mathbf{F}}

\newcommand{\bG}{\mathbf{G}}

\newcommand{\rH}{\mathrm{H}}

\newcommand{\sJ}{\mathscr{J}}
\newcommand{\bK}{\mathbf{K}}

\newcommand{\bL}{\mathbf{L}}

\newcommand{\sO}{\mathscr{O}}
\newcommand{\bP}{\mathbf{P}}

\newcommand{\sQ}{\mathscr{Q}}

\newcommand{\rR}{\mathrm{R}}
\newcommand{\sR}{\mathscr{R}}

\newcommand{\fS}{\mathfrak{S}}
\newcommand{\rS}{\mathrm{S}}
\newcommand{\sS}{\mathscr{S}}

\newcommand{\sT}{\mathscr{T}}

\newcommand{\sU}{\mathscr{U}}

\newcommand{\sV}{\mathscr{V}}

\newcommand{\fW}{\mathfrak{W}}

\newcommand{\sX}{\mathscr{X}}

\newcommand{\sY}{\mathscr{Y}}
\newcommand{\bZ}{\mathbf{Z}}

\newcommand{\sZ}{\mathscr{Z}}

\newcommand{\bd}{\mathbf{d}}

\newcommand{\fg}{\mathfrak{g}}

\newcommand{\fh}{\mathfrak{h}}

\newcommand{\bp}{\mathbf{p}}
\newcommand{\fp}{\mathfrak{p}}

\newcommand{\bq}{\mathbf{q}}

\newcommand{\rs}{\mathrm{s}}

\def\Schur{\mathbf{S}}

\def\gl{\mathfrak{gl}}
\def\g{\mathfrak{g}}

\DeclareMathOperator\sdim{sdim}

\DeclareMathOperator\sign{sgn}
\DeclareMathOperator\Fact{Fact}
\DeclareMathOperator\sFact{\mathbf{Fact}}
\DeclareMathOperator\Split{Split}
\DeclareMathOperator\sSplit{\mathbf{Split}}

\newcommand{\sslash}{\mathbin{/\mkern-6mu/}}

\newcommand{\gnot}{\g_{\bar 0}}
\newcommand{\gone}{\g_{\bar 1}}
\newcommand\ord[1]{{#1}_{\mathrm{bos}}}

\newcommand\ftyp[1]{\ensuremath{\mathtt{#1}}}
\renewcommand\ul[1]{\underline{#1}}

\NiceMatrixOptions{%
  margin,last-col,first-row,
  columns-width=0.8em,
  xdots={horizontal-labels,line-style=|-|}
}

\begin{document}
\title[Flag Superschemes and Compositional Varieties]{Cohomology of Flag Superschemes and Syzygies of Compositional Varieties}
\author{Abhik Pal}
\email{apal@ucsd.edu}
\address{%
  Department of Mathematics \\
  University of California San Diego \\
  La Jolla CA 92093 \\
  USA}

\begin{abstract}
  We study the coherent cohomology of partial flag supervarieties and calculate the sheaf cohomology of the structure sheaves of four new infinite families of flag supervarieties.
  We introduce compositional varieties, a generalization of determinantal varieties obtained by imposing rank conditions on compositions of a pair of maps.
  We study geometric properties of compositional varieties and, in cases of interest, compute their Tor-groups. 
  For each of the four families of flag supervarieties, we show that the graded sheaf cohomology is isomorphic to the tensor product of the singular cohomology ring of an ordinary partial flag variety and the graded Tor-groups of a compositional variety.
\end{abstract}
\maketitle


\section{Introduction}

Algebraic supergeometry is a generalization of algebraic geometry that first originated from supersymmetric field theories formulated by mathematical physicists in the 1970s.
Their main motivation was to introduce a geometric theory that encompasses bosonic (even) and fermionic (odd) variables.
In essence, algebraic supergeometry is the study of superschemes, geometric objects that locally resemble spectra of \(\mathbf Z_2\)-graded commutative rings.
Many basic objects from ordinary algebraic geometry have counterparts in algebraic supergeometry; however, these generalized objects are still poorly understood.
For instance, there exists no systematic treatment of cohomology groups of natural vector bundles on super analogues of homogeneous spaces.
The goal of the present paper is to calculate the sheaf cohomology of structure sheaves of four infinite families of flag supervarities.
Our results generalize analogous calculations for the supergrassmannian carried out by Sam--Snowden in \cite{ss2024:cohomology, ss2024:cohomology2} and by Sam in \cite{sam2026:borel}.

Our work is motivated by the open problem of understanding generalizations of the celebrated Borel--Weil--Bott theorem to algebraic supergeometry.
The problem was first posed by Manin in the 1980s, see \cite{pen1990:borelweilbott}, and approaches to a generalization have coalesced around two main perspectives.
The first perspective is from the viewpoint of the representation theory of Lie superalgebras and Penkov \cite{pen1990:borelweilbott}, Penkov--Serganova \cite{ps1997:characters}, Gruson--Serganova \cite{gs2010:cohomology}, and Coulembier \cite{cou2016:bott} have worked on techniques to handle so-called typical weight representations of algebraic supergroups.
As far as we are aware, with the exception of the projective superspace, the representation-theoretic approach also doesn't work well on other natural homogeneous bundles as they result in atypical weights.
The weights corresponding to the structure sheaves, for example, are trivial and therefore maximally atypical; thus, these cases cannot be understood using the representation theoretic approach.

The second perspective on this problem is from invariant theory and commutative algebra and was first explored by Sam--Snowden in \cite{ss2024:cohomology}; they calculated the sheaf cohomology of the structure sheaf of the supergrassmannian in the general linear case \cite{ss2024:cohomology} and in the periplectic case \cite{ss2024:cohomology2}.
In \cite{sam2026:borel} this approach was used to determine the sheaf cohomology of a range of Schur functors applied to the tautological bundles on the supergrassmannian.
The main goal of the current work is to follow this approach and determine the sheaf cohomology of the structure sheaf of partial flag supervarieties.

Let \(V = V_0|V_1\) be a complex vector superspace, that is \(V = V_0 \oplus V_1\) is \(\bZ_2\)-graded and \(V_0\) and \(V_1\) are, respectively, the even and the odd graded parts and \(\dim{V_0} = m\), \(\dim{V_1} = n\).
Let \(0 = p_0 \leq p_1 \leq \dots \leq p_t \leq p_{t + 1} = m\) and \(0 = q_0 \leq q_1 \leq \dots \leq q_t \leq q_{t + 1} = n\) be sequences of integers and consider the \(t\)-step partial flag supervariety
\begin{equation*}
  X = \Flag(p_1|q_1, \dots, p_t|q_t;\; V)
\end{equation*}
consisting of super subspaces \(R_1|S_1 \subseteq \dots \subseteq R_t|S_t \subseteq V_0|V_1\) where \(\dim{R_i} = p_i\) and \(\dim{S_i} = q_i\).
The set \(X\) can be given the structure of a superscheme, that is, it is a locally ringed space \((X, \sO_X)\) where \(\sO_X\) is a sheaf of \(\bZ_2\)-graded superrings compatible with the topology of \(X\).
The underlying topological space of \((X, \sO_X)\) is the product \(\Flag(p_1, \dots, p_1;\; V_0) \times \Flag(q_1, \dots, q_t;\;V_1)\) of two ordinary (that is, not super) partial flag varieties; we call \(\sO_X\) the structure sheaf of \(X\).

\begin{mainthm*}[Theorem~\ref{main-thm}]
  Let \(X\) be a \(t\)-step partial flag supervariety.
  Set \(\delta_i = p_i - q_i\), \(\delta = \delta_{t + 1}\), \(r_k = \sum_{i = 1}^k \min(q_i - q_{i - 1}, p_i - p_{i - 1})\), and \(r = r_{t + 1}\).
  Let \(\delta_{i + 1} \geq \delta_i\) for \(i \geq 3\) and \(Y \subseteq \Hom(V_0, V_1) \times \Hom(V_1, V_0)\) consist of pairs \((f, g)\) subject to the rank conditions specified below:
  \begin{equation*}
    \begin{array}{rlllll}
      \text{Configuration of } \delta_i
      & r_1 & r_2 & r_{i \geq 3} & r & \text{Conditions on } (f, g)\\
      \midrule
      \delta_3 \geq \delta_2 \geq \delta_1 \geq 0
      & q_1 & q_2    & q_i    & n & \varnothing
      \\
      \delta_3 \geq \delta_2 \ge 0 \geq \delta_1
      & p_1 & q_2+\delta_1 & q_i+\delta_1 & n+\delta_1 & \rank{g} \leq n + \delta_1
      \\
      \delta_3 \ge 0 \geq \delta_2 \geq \delta_1
      & p_1 & q_2+\delta_1 & q_i+\delta_1 & n+\delta_1 & \rank{g} \leq n + \delta_1
      \\
      \delta_3 \ge 0 \geq \delta_1 \geq \delta_2
      & p_1 & p_2    & q_i+\delta_2 & n+\delta_2 & \rank{g} \leq n + \delta_2
      \\
      \delta_3 \ge \delta_1 \geq \delta_2 \geq 0
      & q_1 & p_2-\delta_1 & q_i+\delta_2-\delta_1 & n-\delta_1+\delta_2 & \rank{fg} \leq n - \delta_1 + \delta_2
      \\
      \bottomrule
    \end{array}
  \end{equation*}
  In all the above cases, there is an isomorphism of graded algebras
  \begin{equation*}
    \rH^\bullet(X, \sO_X) = \rH^\bullet_{\mathrm{sing}}(\Flag(r_1, \dots, r_t;\; r), \bC) \otimes E^\bullet
  \end{equation*}
  where \(E^\bullet\) consists of the graded \(\Tor\)-groups 
  \begin{equation*}
    E^i = \bigoplus_{j \geq 0} \Tor^{A}_j(\sO_Y, \bC)_{i + j}
  \end{equation*}
  of \(\sO_Y\) over the coordinate ring \(A = \Sym(V_0 \otimes V_1^\ast) \otimes \Sym(V_1 \otimes V_0^\ast)\)  of \(\Hom(V_0, V_1) \times \Hom(V_1, V_0)\).
\end{mainthm*}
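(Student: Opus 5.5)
We will follow the Sam--Snowden strategy from \cite{ss2024:cohomology} and reduce step by step to the supergrassmannian case. The first step is to filter \(\sO_X\) by powers of the nilpotent ideal \(\sJ\) generated by odd functions. The associated graded is \(\gr\sO_X = \bigwedge^\bullet(\mathcal N^\vee)\) on the bosonic reduction \(X_{\mathrm{bos}} = \Flag(p_\bullet;V_0)\times\Flag(q_\bullet;V_1)\). Here \(\mathcal N^\vee\) is the conormal (odd cotangent) bundle. It is an extension of tautological pieces of the form \(\sR_i^\vee\otimes(V_1/S_j)\) and \(\sS_i^\vee\otimes(V_0/R_j)\), one pair for each step of the flag.

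To compute \(\rH^\bullet(X_{\mathrm{bos}},\bigwedge^\bullet\mathcal N^\vee)\), we realise it geometrically as a Kempf--Lascoux--Weyman collapsing. Consider the total space \(Z\) of the subbundle of the trivial bundle \(\Hom(V_0,V_1)\times\Hom(V_1,V_0)\) consisting of pairs \((f,g)\) with \(f(R_i)\subseteq S_i\) and \(g(S_i)\subseteq R_i\). Its conormal bundle \(\xi\) is precisely the bundle whose exterior powers appear in \(\gr\sO_X\). The geometric technique then identifies \(\rH^j(X_{\mathrm{bos}},\bigwedge^{i}\xi)\) with the terms \(\Tor^A_{i-j}(\sO_{Y'},\bC)_i\) of the minimal free resolution of the image \(Y'\) of \(Z\to\Hom\times\Hom\). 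This identification requires the higher direct images of \(\sO_Z\) to vanish, i.e.\ \(Y'\) should have rational singularities.

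The central geometric step is to identify the image \(Y'\) and its generic fibre, which is where the case table enters. We will stratify by the generic ranks of \(f\) and \(g\) relative to the flag. The generic fibre of \(Z\to Y'\) should be an ordinary partial flag variety \(\Flag(r_1,\dots,r_t;r)\): for generic \((f,g)\), the flags compatible with both maps are determined up to a choice of flag inside an \(r\)-dimensional space on which \(fg\) (or \(gf\)) acts. The integers \(r_k\) arise from comparing \(p_i-p_{i-1}\) with \(q_i-q_{i-1}\). The sign patterns of the \(\delta_i\) dictate whether the image is all of \(\Hom\times\Hom\), a determinantal locus \(\rank g\le n+\delta\), or the compositional variety \(\rank fg\le n-\delta_1+\delta_2\). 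For each configuration in the table we will verify this by exhibiting an explicit resolution \(Z\to Y\times_{\text{generic}}\) and checking dimensions. The hypothesis \(\delta_{i+1}\ge\delta_i\) for \(i\ge3\) ensures that the later steps impose no further rank conditions.

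Given this, \(Z\to Y\) factors as a \(\Flag(r_\bullet;r)\)-bundle over a birational model of \(Y\), rational in each case. Pushing forward then yields a tensor factor \(\rH^\bullet_{\mathrm{sing}}(\Flag(r_\bullet;r))\) via Hodge symmetry (\(\rH^q(\Omega^q)\)), mirroring the supergrassmannian computation. The Tor-terms assemble into \(E^\bullet\) after regrading. The remaining step is degeneration of the spectral sequence of the \(\sJ\)-adic filtration. We plan to argue by weights: the \(\GL(V_0)\times\GL(V_1)\)-equivariance together with the grading by homogeneous degree forces the differentials to vanish, as in \cite{ss2024:cohomology}. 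The algebra structure is then recovered from the multiplicativity of the filtration.

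The main obstacle we expect is the compositional case \(\rank fg\le n-\delta_1+\delta_2\). There one must prove that \(Y\) is normal with rational singularities and that the fibres are the claimed flag varieties. We would first attempt a tower of Grassmannian bundles, resolving the image of \(g\) and then \(\ker f\), reducing to iterated determinantal varieties.
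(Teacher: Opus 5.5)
\textbf{There is a genuine gap in your central geometric step.} Your mechanism for producing the factor \(\rH^\bullet_{\mathrm{sing}}(\Flag(r_1,\dots,r_t;r),\bC)\) does not work, for two reasons.

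First, the generic fibre of \(Z\to Y\) is not \(\Flag(r_\bullet;r)\). A compatible flag satisfies \(fg(S_i)\subseteq S_i\), since \(g(S_i)\subseteq R_i\) and \(f(R_i)\subseteq S_i\). For generic \((f,g)\in Y\) the operator \(fg\) has \(r\) distinct nonzero eigenvalues. So the \(S_i\) are forced to be sums of eigenlines, plus forced kernel pieces. That leaves only \(\binom{r}{d_1,\dots,d_t}\) choices, with \(d_i=\min(q_i-q_{i-1},p_i-p_{i-1})\), possibly times a connected variety of choices inside \(\ker f\), such as \(\Flag(\delta_1,\dots,\delta_t;\ker f)\) in the trivial type. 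Already for \(\Gr(p|p;\,m|n)\) with \(m\ge n\) one computes \(\dim Z=2mn=\dim\gone\). Hence \(Z\to\gone\) is generically finite of degree \(\binom{n}{p}\), not a \(\Gr(p,n)\)-bundle.

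Second, even if \(Z\to Y\) were a flag bundle over a birational model, pushing forward \(\sO_Z\) would only see \(\rH^\bullet(F,\sO_F)=\bC\) in degree \(0\). Hodge groups \(\rH^q(F,\Omega^q_F)\) never enter the pushforward of a structure sheaf, so the ``Hodge symmetry'' step has nothing to act on.

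For the same reason your Tor identification is off. Theorem~\ref{t:super-geom-method} identifies \(\rH^q(\ord{X},\bigwedge^{p+q}\sJ/\sJ^2)\) with \(\Tor^A_p(\sO_{\widetilde Y},\bC)_{p+q}\), where \(\widetilde Y=\Spec\Gamma(Z,\sO_Z)\) is the affinization of \(Z\). This is not \(Y\) (nor its normalization), because \(Z\to Y\) is not birational. The needed hypothesis is \(\rH^{>0}(Z,\sO_Z)=0\), not rational singularities of \(Y\).

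The missing idea is the Stein factorization \(\widetilde Y\cong\sFact^{\bd}_Y(\bar\chi)\): the scheme of ordered factorizations, of degrees \(d_i\), of the reduced characteristic polynomial \(\bar\chi\) of \(fg\). To use it you must show:
\begin{itemize}
\item \(\sFact^{\bd}_Y(\bar\chi)\) is finite and free over \(Y\) of rank \(\binom{r}{d_1,\dots,d_t}\);
\item it is integral with rational singularities, which the paper deduces from normality and rational singularities of \(\sSplit_Y(\bar\chi)\) via a discriminant and codimension analysis;
\item \(\varphi_*\sO_Z=\sO_{\widetilde Y}\) with vanishing higher direct images.
\end{itemize}
The flag-variety cohomology then appears as the special fibre \(\sO_{\widetilde Y}\otimes_{\sO_Y}\bC=\Fact^{\bd}_{\bC}(u^r)\), which is Borel's presentation. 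Freeness then gives \(\Tor^A(\sO_{\widetilde Y},\bC)\cong\rH^\bullet_{\mathrm{sing}}(\Flag(r_\bullet;r),\bC)\otimes\Tor^A(\sO_Y,\bC)\), and that is the source of the tensor decomposition.

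Separately, in case \(\sC\) the paper does not obtain \(\Tor^A(\sO_Y,\bC)\) or degeneration from equivariance alone. It first proves that \(A/I\) is reduced and Cohen--Macaulay of codimension \(\Delta_{fg}^2\). It then uses Eagon--Northcott generic perfection to pull back the Lascoux resolution along \(z_{ij}\mapsto\sum_k x_{ik}y_{kj}\), which places all generators in even degree; that parity is what the degeneration argument uses.
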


The general approach to this calculation involves a lot of moving parts and we spend the rest of introduction giving a high-level sketch our setup.

\subsection{General approach}
The primary geometric objects we are interested in are superschemes, that is, locally ringed spaces  \((X, \sO_X)\) where \(\sO_X\) is a sheaf of superrings.
The odd elements of \(\sO_X\) generate the ideal sheaf \(\sJ \subseteq \sO_X\) which determines the underlying ordinary scheme \((\ord{X}, \sO_X/\sJ)\)  of \(X\), we will call this the bosonic reduction \(\ord{X}\) of \(X\).
We first note that the \(\sJ\)-adic filtration on \(\sO_X\) determines a filtration on any super vector bundle \(\sE\) on \(X\) and so there is a spectral sequence
\begin{equation}
  \label{eq:i:main-ss}
  \rE^{p, q}_1 = \rH^{p + q}(\ord{X}, \gr_p\sE) \implies \rH^{p + q}(X, \sE).
\end{equation}
that effectively reduces the sheaf cohomology calculation of \(\sE\) to a calculation in ordinary algebraic geometry.

If \(\bG\) is an algebraic supergroup with parabolic subgroup \(\bP\), a flag supervariety is the quotient \(X = \bG / \bP\) and its bosonic reduction is a product of two ordinary flag varieties.
We take \(\bG = \GL(V)\), the general linear supergroup consisting of \(\bZ_2\)-graded automorphisms of \(V\), and note that its Lie superalgebra \(\g = \gl(m|n)\) has a natural \(\ZZ_2\)-grading where
\begin{equation*}
  \gnot = \Hom(V_0, V_0) \times \Hom(V_1, V_1), \quad
  \gone =  \Hom(V_0, V_1) \times \Hom(V_1, V_0),
\end{equation*}
consist, respectively, of all the even and odd endomorphisms of \(V\), respectively.
We fix the parabolic \(\bP\) and let \(\fp\) denote its Lie superalgebra; in this case, the tangent space at any given \(x = g\bP \in X\) is \(\fg/\fp^x\) where \(\fp^x = g \fp g^{-1}\).

The ideal sheaf \(\sJ\) defines \(\ord{X}\) as a closed subscheme of \(X\), so the quotient \(\sJ / \sJ^2\) is the conormal bundle associated to \(\ord{X}\) and its fibers at \(x\) are given by \((\gone / \fp_{\bar 1}^x)^\ast\).
We thus obtain a short exact sequence
\begin{equation}
  \label{eq:i:main-ses}
  0 \to \sJ / \sJ^2 \to \gone^\ast \otimes \sO_{\ord{X}} \to \eta \to 0
\end{equation}
of ordinary vector bundles on \(\ord{X}\) where the fiber of \(\eta\)  at \(x\) is \((\fp_{\bar 1}^x)^\ast\).
The total space \(Z\) of \(\eta^\ast\) is a subbundle of \(\ord{X} \times \gone\).
We see from \eqref{eq:i:main-ses} that as a module over \(\sO_{\ord{X} \times \gone}\), \(\sO_Z\) is resolved by the Koszul complex \(\bK(\sJ / \sJ^2)_\bullet = \bigwedge^\bullet (\sJ / \sJ^2)\).
Consider the diagram 
\begin{equation}
  \label{eq:i:basic-diag}
  \begin{tikzcd}
    & Z \ar["\pi'"]{dd} \ar["\varphi"]{dl} \ar["\subseteq"]{rr}
    &
    & \ord{X} \times \gone \ar["\pi = \pi_{\gone}"]{dd} \ar["\pi_{\ord{X}}"]{rr}
    &
    & \ord{X}\\
    \widetilde{Y} \ar["\widetilde\varphi"]{dr} & \\
    & Y \ar["\subseteq"]{rr}
    &
    & \gone
  \end{tikzcd}
\end{equation}
where \(Y = \pi'(Z)\) and \(\pi' = \widetilde \varphi \circ \varphi\) is the Stein factorization of \(\pi'\).
In particular, \(\widetilde Y\) is the affinization of \(Z\), \(\sO_{\widetilde Y} = \Gamma(Z, \sO_Z)\), and \(\widetilde \varphi \colon \widetilde Y \to Y\) is a finite morphism.
If \(\rH^i(Z, \sO_Z) = 0\) for all \(i > 0\),  then, by cohomology and base change, one can show
\begin{equation}
  \label{eq:i:tor-ss}
  \rH^{q}(\ord{X}, \bigwedge^{p + q} \sJ/\sJ^2) = \Tor_p(\sO_{\widetilde Y}, \CC)_{p + q}.
\end{equation}
Using finiteness of \(\widetilde\varphi\), the \(\Tor\)-groups of \(\widetilde Y\) can be related to the \(\Tor\)-groups of \(Y\).
Our setup is essentially a slight modification of the Kempf--Weyman geometric technique that relates the cohomology of bundles on a projective variety to syzygies, see \cite[Theorem~5.1.2]{Wey:CohomVect}.

The precise relationship between \(Z\), \(Y\), and \(\widetilde{Y}\) is obtained by considering an analogue of the classical Grothendieck--Springer resolution (cf. \cite[Definition~3.2.4]{CG:RepresentationTheory}) and an ensemble of varieties related to the ring of invariants \(\CC[\gnot]^{\ord{\bG}}\).
If \(\fh \subseteq \fp_{\bar 0}\) is a Cartan subalgebra and \(\fW\) is its Weyl group, then the ensemble is related to the diagram
\begin{equation}
  \label{eq:i:ges-cons}
  \begin{tikzcd}[column sep=large]
    &
    \fp_{\bar 1}^x \ar{r} \ar{d} &
    \fp_{\bar 0}^x \ar{r} \ar{d} &
    \fp_{\bar 0}^x \sslash \ord{\bP} \ar["\cong"]{r} \ar{d} &
    \fh \sslash \fW_\bP \ar{d}
    \\
    Y \ar["\subseteq"]{r} & 
    \gone \ar["{v \mapsto [v, v]}"]{r}&
    \gnot \ar["\substack{\text{GIT}\\\text{quotient}}"]{r} &
    \gnot \sslash \ord{\bG} \ar["\cong"]{r}  &
    \fh \sslash \fW
  \end{tikzcd}
\end{equation}
where the rightmost isomorphisms are obtained by the Chevalley restriction theorem \cite[Theorem~3.1.38]{CG:RepresentationTheory} and \(\fW_\bP \subseteq \fW\) is the Weyl group associated to \(\ord{\bP}\).
Since each element of \(Z\) belongs to some \(\fp_{\bar 1}^x\), there is a map \(Z \to \fh \sslash \fW_{\bP}\).
In fact, as the target is affine, this map factors through \(\widetilde Y\).
We will call varieties arising from \eqref{eq:i:basic-diag} and \eqref{eq:i:ges-cons} the Grothendieck--Springer ensemble (GSE) associated to the flag supervariety \(X\).
We will analyze the GSE to obtain a description of the \(\Tor\)-groups of \(\widetilde Y\) in terms of the \(\Tor\)-groups of \(Y\).

In the case of the supergrassmannian, a key observation in \cite{ss2024:cohomology} was that the space  \(Y\) is a determinantal variety and therefore its syzygies are given by the well-known Lascoux resolution, see \cite{las1978:syzygies} or \cite[Proposition~6.1.2]{Wey:CohomVect}.
We show that for the families of partial flag supervarieties we consider, \(Y \subseteq \Hom(V_0, V_1) \times \Hom(V_1, V_0)\) is a generalization of a determinantal variety called a compositional variety.
Given  a pair of sequences \(r^0_1, r^0_2, \dots\) and \(r^1_1, r^1_2, \dots\) of nonnegative integers, the compositional variety associated to this data is 
\begin{equation*}
  Y
  = \left\{ (f, g) \middle\vert
    \begin{array}{cc}
      \rank(f) \leq r_1^0, & \rank(g) \leq r_1^1, \\
      \rank(gf)^{i - 1} \leq r_{i}^0, & \rank(fg)^{i - 1} \leq r_{i}^1
    \end{array}
  \right\}
\end{equation*}
We show that for all the flag supervarieties we study, the image of \(\pi'\) in \eqref{eq:i:basic-diag} is a compositional variety where the ranks on the compositions are functions of the parameters \(p_i\) and \(q_i\).
We establish geometric properties for these varieties and describe their \(\Tor\)-groups.

\subsection{Some questions and relation to other work}
\begin{enumerate}
\item
  We suspect that for all flag supervarieties the variety \(Y\) in the GSE is always compositional with bounds on the ranks depending on the configuration of the superdimensions; see Conjecture~\ref{conj:Y-always-comp}.
  
\item
  When the compositional variety is not determinantal, our calculation for the \(\Tor\)-groups in Theorem~\ref{simp-2:res} of \(Y\)  relies on using  Eagon--Northcott generic perfection to relate the \(\Tor\)-groups to those of a determinantal variety.
  This method doesn't generalize easily to give \(\Tor\)-groups of other compositional varieties.

\item
  Note that the bundle \(Z \subseteq \ord{X} \times \gone\) is equivariant with respect to the group \(\ord{\bG} = \GL(V_0) \times \GL(V_1)\) and so the space of generators \(\Tor_1^{A}(\sO_{Y}, \bC)_\bullet\) of \(\sO_Y\) as an \(A\)-module is a rational representation of \(\ord{\bG}\).
  The construction of the infinite families in our main theorem relies on first describing \(Y\) for a fixed flag supervariety on \(V = V_0 | V_1\) and then extending it to a longer flag supervariety on \(W = W_0 | W_1\) where \(V \subseteq W\).
  Using the inclusions \(\GL(V_0) \times \GL(V_1) \to \GL(W_0) \times \GL(W_1)\), one obtains a corresponding sequence of representations on \(\Tor_1(\sO_Y, \bC)\).
  Is this sequence representation stable in the sense of \cite{ss2016:grobner, ps2017:representation}?
  Also see Remark~\ref{rem:stability}.

\item
  A natural extension of our work is to consider partial flag supervarieties with respect to the complex periplectic group.
  In \cite{ss2024:cohomology2} it is shown that in the case of the supergrassmannian, the cohomology of the structure sheaf of the periplectic supergrassmannian  is related to the singular cohomology of a Grassmannian for either the symplectic or orthogonal group and syzygies of (skew-)symmetric determinantal varieties.
  It is conceivable that the natural generalization of compositional varieties to the (skew-)symmetric case plays a role in the cohomology of the structure sheaf of partial periplectic flag supervarieties.
  In a similar vein, another natural generalization is to consider other equivariant bundles on flag supervarieties; see \cite{sam2026:borel}.
\end{enumerate}

\null 

\subsection{Outline}
\begin{enumerate}
\item[\S\ref{s:prelim-i}]
  We establish preliminaries on algebraic supervarieties, relationship between syzygies and \(\Tor\)-groups, and the basic ingredients for determining cohomology of flag supervarieties.
  This section mostly establishes our notation for the various basic objects.
  The rest of the preliminaries appear in \S\ref{s:prelim-ii}.

\item[\S\ref{s:flags-and-types}]
  We introduce flag supervarieites and define their basic invariants.
  Based on these invariants we define the \emph{type} of a flag supervariety and describe a classification of flag supervarieties based on type.
  We show that the types exhibit a natural \(\bZ_2 \times \bZ_2\)-symmetry and establish that, as far as the sheaf cohomology calculation is concerned, it is enough to consider the equivalence classes with respect to this action.

\item[\S\ref{s:comp}]
  We introduce compositional varieties and determine their basic geometric properties.
  In the cases relevant to us, we calculate their graded \(\Tor\)-groups.

\item[\S\ref{s:prelim-ii}]
  We state preliminaries required in the analysis of the Grothendieck--Springer ensemble associated to a flag supervariety.
  This section mainly summarizes known results about splitting and factorization rings and sets the relevant notation.

\item[\S\ref{s:ges}]
  We analyze the Grothendieck--Springer ensemble of a flag supervariety.
  This section is the most technically involved as we study properties of all the objects appearing in \eqref{eq:i:basic-diag} and \eqref{eq:i:ges-cons}.
  In \S\ref{ss:gse-comp} we first identify the compositional varieties in the case of \(2\)-step partial flag supervarieites and describe a procedure for lifting results to flags of arbitrary length.
  In \S\ref{ss:gse-split} we analyze the splitting rings associated to the compositional varieties; this analysis is used in \S\ref{ss:gse-fact} to study the spectra of factorization rings, that is, spaces of invariants of splitting rings that appear as the space \(\widetilde Y\) in \eqref{eq:i:basic-diag}.
  The final result is a description of the \(\Tor\)-groups of \(\widetilde Y\) in terms of \(\Tor\)-groups of \(Y\) and the singular cohomology of ordinary partial flag varieties.

\item[\S\ref{s:cohomology}]
  We state Theorem~\ref{main-thm}, the main result of this paper.
  In \S\ref{ss:cohom-ss} we analyze the spectral sequence \eqref{eq:i:main-ss} and show that it degenerates.
  We then combine all the results to prove our main theorem in \S\ref{ss:mainthm}.
  We also derive the super Euler characteristic of the structure sheaf as a corollary.
\end{enumerate}

\subsection{Leitfaden}
The results in various subsections depend on each other in the following way:
\begin{equation*}
  \begin{tikzcd}[row sep=small, column sep=small]
    &
    &
    & \S\ref{ss:geom-tech} \ar{d} 
    & \S\ref{ss:supercohom} \ar[crossing over]{dddd}
    &
    \\
    &
    & \S\ref{ss:simp-2-aux} \ar{r}
    & \S\ref{ss:simp-2-comp} \ar{ddl} \ar{dddr}
    &
    & \\
    \S\ref{ss:types} \ar{dr}
    & \S\ref{ss:norm-forms} \ar{dr}
    & \S\ref{ss:split-fact} \ar{d} 
    & & & \\
    & \S\ref{ss:gse-comp} \ar{r} 
    & \S\ref{ss:gse-split} \ar{r}
    & \S\ref{ss:gse-fact} \ar{dr} 
    & & \\
    & & &
    & \S\ref{ss:cohom-ss} \ar{r}
    & \S\ref{ss:mainthm}
  \end{tikzcd}
\end{equation*}

\subsection*{Acknowledgments}
We thank Steven Sam for introducing us to this problem and for many helpful discussions. The author was partially supported by NSF grant DMS 2302149.

\section{Preliminaries I}
\label{s:prelim-i}

Unless noted otherwise, we work over an algebraically closed field of characteristic zero.
Without loss of generality, we take this field to be \(\CC\).
We denote the quotient \(\ZZ/2\ZZ\) by \(\ZZ_2\).

\subsection{Superschemes and cohomology}
\label{ss:superschemes}

In this section we collect some basic definitions that appear in algebraic supergeometry.
Further details on the constructions presented here are available in \cite{Man:GaugeField,brp2023:AlgSuperGeom}.

Let \(R\) be a \(\ZZ_2\)-graded associative unital \(\CC\)-algebra and let \(J \subseteq R\) be the ideal generated by the odd, that is degree \(1\), elements.
A (commutative) superalgebra is such a ring \(R\) where \(J\) is finitely generated and \(xy = (-1)^{|x||y|}yx\) holds for homogeneous elements \(x, y \in R\) with degrees \(|x|, |y| \in \ZZ_2\).
Ideals, morphism, modules, etc are defined in the usual way with respect to the $\mathbf Z_2$-grading.
Note that the quotient \(\ord{R} = R / J\) is an ordinary algebra which we will call the bosonic reduction of \(R\).

The superspectrum of a commutative superring \(R\)  is the topological space \(\Spec{(\ord{R})}\) equipped with a sheaf of superalgebras derived from localizations of \(R\) in the usual way \cite[\S2.2]{brp2023:AlgSuperGeom}.
A locally ringed superspace is a pair \(X = (X, \sO_X)\) where \(X\) is a topological space, \(\sO_X\) is a sheaf of \(\ZZ_2\)-graded commutative rings such that for all \(x \in X\), the stalks \(\sO_{X, x}\) are local superrings.
A superscheme is a locally ringed superspace that is locally isomorphic to the superspectrum of a superring \cite[Definition~2.3]{brp2023:AlgSuperGeom}.
The odd elements of \(\sO_X\) generate a homogeneous ideal $\sJ = \sJ_X$ and the quotient \(\sO_X/\sJ_X\) is a sheaf of ordinary algebras; we call the ordinary locally ringed space \(\ord{X} = (X, \sO_X/\sJ_X)\)  the bosonic reduction of \(X\).
Note that \(\sO_X \to \sO_X / \sJ_X\) induces a closed immersion \(\ord{X} \injects X\) and underlying topological spaces of \(X\) and \(\ord{X}\) are identical.

There is a natural \(\sJ\)-adic filtration on \(\sO_X\) with the associated graded object given by 
\begin{equation}
  \label{eq:grO}
  \gr{\sO_X}
  = \bigoplus \gr_i \sO_X
  = \bigoplus \sJ^i / \sJ^{i + 1}
\end{equation}
If \(\sE\) is a sheaf of \(\sO_X\)-modules, in particular when \(\sE\) is a vector bundle on \(X\), we have \(\gr_i \sE = \sE \sJ^i / \sE \sJ^{i + 1}\).
There is an isomorphism \(\gr_i\mathscr E = \gr_0\mathscr E \otimes \bigwedge^i(\sJ / \sJ^2)\) over the ordinary sheaf of rings \(\sO_{\ord{X}}\) \cite[Chapter~4 \S3 Proposition~14(1)]{Man:GaugeField}.
A superscheme \(X\) is called smooth if the natural map \(\bigwedge (\sJ / \sJ^2) \to \gr \sO_X\) is an isomorphism.
We will often use the term ``supervariety'' to mean a smooth superscheme.

\subsection{Kempf--Weyman geometric method}
\label{ss:geom-tech}
We present here an overview of the Kempf--Weyman's geometric method for calculating syzygies of ordinary projective varieties.
All objects in this subsection are objects in ordinary algebraic geometry.
Details and proofs are available in \cite[Chapter~5]{Wey:CohomVect}.

Let \(X\) be a projective variety and \(E\) an affine space.
Picking a subbundle \(Z \subseteq X \times E\) defines a subvariety \(Y \subseteq E\) by restricting the projection \(\pi = \pi_E \colon X \times E \to E\) to \(Z\), see \eqref{eq:basic-diag}.
We consider \(X \times E\) as the total space of the trivial bundle \(\sE\) on \(X\) and consider the subvariety \(Z\) as the total space of a subbundle \(\sS \subseteq \sE\).
Let \(\xi = (\sE / \sS)^\ast\) and \(\eta = \sS^\ast\) be the dual bundles.
Suppose \(A = \Sym(E^\ast)\) is the coordinate ring of \(E\) with grading given by \(\deg{E^\ast} = 1\).
For every \(p \in \bZ\) define the graded \(A\)-modules
\begin{equation*}
  \bF_p =  \bigoplus_{j \geq 0} \rH^q(X, \bigwedge^{p + q}\xi) \otimes A(- p - q).
\end{equation*}

\begin{thm}[The Geometric Method]
  \label{t:geom-method}
  \begin{enumerate}
  \item
    There exists minimal differentials $d_i \colon \bF_i \to \bF_{i - 1}$ of degree \(0\) such that $\bF_\bullet$ is a complex of graded free $A$-modules and
    \begin{equation*}
      \rH_{-i}(\bF_\bullet) = \rR^i\pi'_\ast\sO_Z.
    \end{equation*}
    The complex \(\bF_\bullet\) is exact in positive degrees.
  \item
    If $\pi'$ is birational and $\rR^i\pi'_\ast\sO_Z = 0$ for all $i > 0$, then $\bF_\bullet$ is a minimal $A$-free resolution of the normalization of $Y$.
    Moreover, the normalization of $Y$ has rational singularities.
  \end{enumerate}
\end{thm}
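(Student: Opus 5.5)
This is the Kempf--Weyman geometric method, and I would follow the standard proof in \cite[Chapter~5]{Wey:CohomVect}: compute pushforwards along \(\pi\) via a Koszul resolution, then minimize.

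\textbf{Koszul resolution on \(X \times E\).} On \(X \times E\) the subbundle \(Z\) is cut out by the vanishing of the composite \(\pi_X^\ast \xi \to \sO_X \otimes E^\ast \otimes \sO_{X\times E} \to \sO_{X \times E}\); here the first map is dual to \(\sE \to \sE/\sS\). This is a regular section of \((\pi_X^\ast\xi)^\ast\) of the correct codimension, so the Koszul complex
\[
\sK_\bullet = \textstyle\bigwedge^\bullet(\pi_X^\ast \xi)
\]
resolves \(\sO_Z\). Its terms are \(\bigwedge^j \xi \otimes_{\bC} A(-j)\), with \(A\) graded by \(\deg E^\ast = 1\).

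\textbf{Pushforward along \(\pi\).} Since \(E\) is affine and \(\pi\) is projective, \(\rR\pi_\ast\) of the \(j\)-th Koszul term is the complex of free modules \(\rR\Gamma(X, \bigwedge^j\xi) \otimes A(-j)\). I would take a Cartan--Eilenberg (hyper)resolution of \(\sK_\bullet\) by \(\pi\)-acyclic sheaves, for instance Čech complexes, to obtain a double complex of free graded \(A\)-modules whose total complex computes \(\rR\pi_\ast\sO_Z = \rR\pi'_\ast\sO_Z\). Its terms are \(C^q(X, \bigwedge^j \xi) \otimes A(-j)\). Here the Čech complexes are infinite-dimensional, so I would instead use finite-dimensional representatives of \(\rR\Gamma\) over \(\bC\), i.e.\ choose splittings.

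\textbf{Minimization.} This is the main obstacle. Filter the total complex by internal degree. Because all \(A(-j)\) are generated in degree \(j\), the graded pieces split. By the standard lemma that a bounded complex of free graded modules is homotopy equivalent to a minimal one whose terms are \(\Tor\) of the original against \(\bC\), we obtain a complex whose terms are \(\bigoplus_q \rH^q(X, \bigwedge^{p+q}\xi) \otimes A(-p-q)\) in homological position \(p\). This is exactly \(\bF_p\). To see it, tensor with \(\bC = A/A_+\): the differential on Koszul terms becomes zero, so the remaining homology is \(\bigoplus \rH^q(X,\bigwedge^j\xi)\) placed according to total degree \(j - q\). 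The minimal differentials \(d_i\) have degree \(0\), since the maps come from degree-preserving maps. We get \(\rH_{-i}(\bF_\bullet) = \rR^i\pi'_\ast \sO_Z\). Since \(\pi'\) has no negative derived functors, \(\bF_\bullet\) is exact in positive homological degrees, which proves (a).

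\textbf{Part (b).} If \(\rR^i\pi'_\ast\sO_Z = 0\) for \(i>0\), then by (a) the complex \(\bF_\bullet\) is exact away from degree \(0\). Terms with \(p<0\) would then have to form an exact minimal complex of free modules, hence vanish. So \(\bF_\bullet\) is a minimal free resolution of \(\pi'_\ast\sO_Z\). Since \(Z\) is smooth (a vector bundle over the variety \(X\), assumed smooth), \(\pi'_\ast\sO_Z\) is integrally closed. When \(\pi'\) is birational it is finite over \(\sO_Y\) with the same fraction field, hence equals the normalization \(\tilde Y\). Then \(Z \to \tilde Y\) is a resolution of singularities with \(\rR^i\pi'_\ast\sO_Z = 0\) for \(i > 0\). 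By Kempf's criterion, combined with Grauert--Riemenschneider vanishing for \(\omega_Z\), this shows that \(\tilde Y\) has rational singularities.
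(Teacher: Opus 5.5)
Your proposal is correct and takes essentially the paper's approach. The paper just cites \cite[Theorem~5.1.2]{Wey:CohomVect} and \cite[Theorem~5.1.3]{Wey:CohomVect}, and you reproduce Weyman's argument (Koszul resolution of \(\sO_Z\), \v{C}ech total complex, minimization with terms read off from \(-\otimes_A \bC\), then Nakayama, normality of \(\pi'_\ast\sO_Z\) and vanishing of higher direct images for (b)). Two minor remarks: the detour through ``finite-dimensional representatives'' is unnecessary, since the minimization lemma applies directly to the \v{C}ech--Koszul total complex (its generators lie in degrees \([0, \rank\xi]\), so the argument terminates despite infinite ranks). Also, the smoothness of \(X\) you invoke in (b) is a hypothesis implicit in the cited result rather than one stated in the paper.
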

\begin{proof}
  The claims follow from \cite[Theorem~5.1.2]{Wey:CohomVect} and
  \cite[Theorem~5.1.3]{Wey:CohomVect} respectively.
\end{proof}

When \(\pi'\) fails to be birational, we may use its Stein factorization \(\pi' = \widetilde{\varphi} \circ \varphi\) where \(\varphi \colon Z \to \widetilde{Y}\) is proper and has connected fibers, \(\widetilde{\varphi} \colon \widetilde{Y} \to Y\) is finite, and the space \(\widetilde{Y}\) is the affinization of \(Z\) with coordinate ring \(\sO_{\widetilde{Y}} = \Gamma(Z, \sO_Z)\) \stacks{03H0}.
In particular, \(\sO_{\widetilde{Y}}\) is a finite \(A\)-module.
In this case, the Tor groups of \(\sO_{\widetilde{Y}}\) are related to the resolution \(\bF_\bullet\) by \cite[Proposition~2.2]{ss2024:cohomology}.

We will refer to the following diagram as the basic diagram of the geometric method:
\begin{equation}
  \label{eq:basic-diag}
  \begin{tikzcd}
    & Z \ar["\pi'"]{dd} \ar["\varphi"]{dl} \ar["\subseteq"]{rr}
    &
    & X \times E \ar["\pi = \pi_E"]{dd} \ar["\pi_X"]{rr}
    &
    & X\\
    \widetilde{Y} \ar["\widetilde\varphi"]{dr} & \\
    & Y \ar["\subseteq"]{rr}
    &
    & E
  \end{tikzcd}
\end{equation}
In discussions involving this diagram, unless noted otherwise, we will use the notation for spaces and morphisms as shown above.

\subsection{Cohomology of superschemes}
\label{ss:supercohom}

If \(X\) is a supervariety, taking \(\xi = \sJ / \sJ^2\)  allows us to relate  the  cohomology groups \(\rH^i(X, \sO_X)\) to the syzygies of an appropriate ordinary variety appearing in the basic diagram \eqref{eq:basic-diag} of the geometric method.
\cite[Proposition~2.1]{ss2024:cohomology},  \cite[Proposition~2.2]{ss2024:cohomology}, and \cite[Theorem~2.4]{ss2024:cohomology} together imply the following important result:
\begin{thm}
  \label{t:super-geom-method}
  Let \(X\) be a smooth supervariety with \(\ord{X}\) projective.
  Suppose there is an exact sequence
  \begin{equation*}
    0 \to \sJ / \sJ^2 \to \epsilon \to \eta \to 0
  \end{equation*}
  of locally free coherent sheaves on \(\ord{X}\) with \(\epsilon = \sO_{\ord{X}} \otimes E^\ast\) globally free.
  Let \(A = \Sym(E^\ast)\), \(Z = \Spec(\Sym(\eta))\), and  \(\widetilde{Y}\) be the affinization of \(Z\).
  If \(\rH^i(Z, \sO_Z) = 0\) for all \(i > 0\), then there is a natural isomorphism
  \begin{equation*}
    \Tor^A_p(\sO_{\widetilde{Y}}, \CC)_{p + q}
    \cong
    \rH^q(\ord{X}, \bigwedge^{p + q} \sJ / \sJ^2)
  \end{equation*}
  and a spectral sequence
  \begin{equation*}
    \rE_1^{p, q} = \Tor^A_{-q}(\sO_{\widetilde{Y}}, \CC)_p
    \implies
    \rH^{p + q}(X, \sO_X).
  \end{equation*}
\end{thm}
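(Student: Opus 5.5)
The plan is to combine two spectral sequences: the \(\sJ\)-adic spectral sequence for \(\sO_X\), and the Leray spectral sequence for the affine projection \(Z \to \ord{X}\), fed through the geometric method of Theorem~\ref{t:geom-method}.

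\emph{Step 1: the \(\sJ\)-adic spectral sequence.} Since \(X\) is smooth, \(\gr_i \sO_X \cong \bigwedge^i(\sJ/\sJ^2)\). The \(\sJ\)-adic filtration is finite, because \(\sJ\) is locally generated by finitely many odd elements. Hence we obtain a convergent spectral sequence
\begin{equation*}
  \rE_1^{i,k} = \rH^{i+k}\bigl(\ord{X}, \textstyle\bigwedge^i \sJ/\sJ^2\bigr) \implies \rH^{i+k}(X, \sO_X).
\end{equation*}
This is the result of \cite[Proposition~2.1]{ss2024:cohomology}.

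\emph{Step 2: the Tor isomorphism.} Set \(\xi = \sJ/\sJ^2\). Because \(\epsilon\) is globally free, the exact sequence exhibits \(Z = \Spec \Sym(\eta)\) as a subbundle of \(\ord{X}\times E\) with \(\xi = (\epsilon/\ldots)\)-dual in the sense of \S\ref{ss:geom-tech}. By Theorem~\ref{t:geom-method}(a), \(\bF_\bullet\) is a complex of free \(A\)-modules with terms
\begin{equation*}
  \bF_p = \bigoplus_q \rH^q\bigl(\ord{X}, \textstyle\bigwedge^{p+q}\xi\bigr)\otimes A(-p-q)
\end{equation*}
and with homology \(\rH_{-i}(\bF_\bullet) = \rR^i\pi'_\ast \sO_Z\). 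Since \(\pi'\) is affine over \(E\) composed with a projective map and \(E\) is affine, \(\rR^i\pi'_\ast\sO_Z = \rH^i(Z,\sO_Z)\). This vanishes for \(i>0\) by hypothesis, and for \(i=0\) it equals \(\Gamma(Z,\sO_Z)=\sO_{\widetilde Y}\).

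So \(\bF_\bullet\) is a free resolution of \(\sO_{\widetilde Y}\) as an \(A\)-module, and it is minimal because the differentials are minimal. Therefore \(\Tor^A_p(\sO_{\widetilde Y},\CC) = \bF_p\otimes_A \CC\). Reading off the degree-\((p+q)\) piece gives the claimed isomorphism, which is \cite[Proposition~2.2]{ss2024:cohomology}.

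\emph{Step 3: reindexing.} Substitute the Tor isomorphism into the \(\rE_1\)-page of Step 1, writing \(\bigwedge^i\xi\) in cohomological degree \(i+k\) as \(\Tor_{p}(\ldots)_{i}\) with \(p = -k\). Then \(\rE_1^{i,k} = \Tor^A_{-k}(\sO_{\widetilde Y},\CC)_i\), which is the stated spectral sequence after renaming \((i,k)\) to \((p,q)\). Naturality follows from functoriality of each identification.

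\emph{Main obstacle.} The delicate point is the minimality of \(\bF_\bullet\) together with the identification \(\rH_0 = \sO_{\widetilde Y}\) rather than \(\sO_Y\), since \(\pi'\) need not be birational. I would handle this by working with \(\pi'_\ast\sO_Z\) directly as a finite \(A\)-module, as in the Stein factorization discussion, rather than with \(Y\). I would also check that the grading conventions (\(\deg E^\ast = 1\)) match the \(\sJ\)-adic grading, so that the internal degree \(p+q\) corresponds exactly to the exterior power.
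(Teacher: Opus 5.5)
Your proposal is correct and is essentially the paper's route. The paper gives no argument beyond citing \cite[Propositions~2.1, 2.2 and Theorem~2.4]{ss2024:cohomology}, which together amount to your three steps: the $\sJ$-adic spectral sequence, the identification of the geometric-method complex $\bF_\bullet$ with the minimal free resolution of $\Gamma(Z,\sO_Z)=\sO_{\widetilde Y}$, and the reindexing. One point deserves to be made explicit: the vanishing of $\rH^i(Z,\sO_Z)$ for $i>0$ only gives exactness of $\bF_\bullet$ in negative degrees, and you need minimality together with graded Nakayama to conclude $\bF_i=0$ for $i<0$, so that $\bF_\bullet$ is genuinely a resolution.
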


\section{Flag supervarieties and types}
\label{s:flags-and-types}

\subsection{Flag superschemes}
\label{ss:superflags}
A flag superscheme is a quotient \(\bG / \bP\) of a linear algebraic supergroup \(\bG\) by an appropriate parabolic supersubgroup \(\bP\).
We will, however, work with the functor of points as it is better suited to our calculations.
Let \(V = V_0 | V_1\) be a vector superspace with \(\dim(V) = (m \mid n)\) that is, \(V = V_0 \oplus V_1\) and \(\dim{V_0} = m, \dim{V_1} = n\).
The superdimension of \(V\) is the difference \(\sdim{V} = \dim{V_0} - \dim{V_1} = m - n\).
Let \(\bp = (p_1, \dots, p_t)\) and \(\bq = (q_1, \dots, q_t)\) be weakly increasing sequences such that \(p_t \leq p_{t + 1} = m\), \(q_t \leq q_{t + 1} = n\) and for all \(i \leq t\), one of the inequalities \(p_i \leq p_{i + 1}\) and \(q_i \leq q_{i + 1}\) is strict.
The flag superscheme
\begin{equation*}
  X = \Flag(\bp|\bq;\, m | n) = \Flag(p_1|q_1, \dots, p_t|q_t;\, V)
\end{equation*}
is the superscheme representing the functor that attaches to every superalgebra \(T\) a chain of submodules \(R_1|S_1 \subseteq \dots \subseteq R_t|S_t\) of \(T \otimes V\) such that \(R_i|S_i\) is locally a summand of rank \((p_i \mid q_i)\).
Representability  and smoothness of this functor is shown in  \cite[Chapter~4 \S3]{Man:GaugeField}.
The underlying bosonic reduction of the flag supervariety is a product
\begin{equation*}
  \ord{X} = \Flag(p_1, \dots, p_t; V_0) \times \Flag(q_1, \dots, q_t; V_1)
\end{equation*}
of two ordinary partial flag varieties.
We set \(p_0 = 0\) and \(q_0 = 0\) and denote the superdimensions  of the subspaces appearing at points of \(X\) by \(\delta_i = p_i - q_i\) and \(\delta = \delta_{t + 1} = m - n\).

\subsection{Type classes and \(\bZ_2 \times \bZ_2\)-symmetry}
\label{ss:types}
Theorem~\ref{t:super-geom-method} implies that if two flag supervarieties have isomorphic bosonic reductions then they will result in isomorphic spectral sequences and thus have isomorphic cohomology groups.
As such, we classify flag supervarieties based on isomorphisms of their underlying bosonic reductions; as far as the cohomology calculation is concerned, we can reduce to equivalence classes up to these isomorphisms.
We formalize these isomorphisms by introducing a notion of types for flag supervarieties.

\begin{defn}[Type of a flag supervariety]
  The type of a flag supervariety \(X = \Flag(\bp|\bq;\; V)\) is the order relations on the tuple of superdimensions of its flags.
  That is, the type of a flag supervariety is a permutation \(\tau = (\delta_{i_0}, \dots, \delta_{i_{t + 1}})\) of the tuple \((\delta_0 = 0, \dots, \delta_{t + 1} = \delta)\) of superdimensions such that \(\delta_{i_0} \geq \delta_{i_1} \geq \dots \geq \delta_{i_{t + 1}}\) and \(i_j \geq i_{j + 1}\) if \(\delta_{i_j} = \delta_{i_{j + 1}}\).
  If \(X\) has type \(\tau\), we write \(X \colon \tau\).
\end{defn}

\begin{rem}
  We will often denote the type \( (\delta_{i_0}, \dots, \delta_{i_{t + 1}})\) by the string \(i_0 i_1\dots i_{t + 1}\) where we use the symbol \(\ast\) instead of \(i_k\) whenever \(i_k = t + 1\).
  For example, the type \((\delta, \delta_1, \delta_0, \delta_2)\) will be represented by the string \(\ftyp{\ast 1 0 2}\).
\end{rem}

To reduce the number of cases we need to study, we now determine isomorphism classes of types of superflags.
Our main approach is to pick suitable involutions and isomorphisms of \(V\) that result in isomorphisms of flags on \(V\).

Let \(P \colon V \to V\) be the parity change involution that swaps the even and the odd components that is \(P(V)_0 = V_1\) and \(P(V)_1 = V_0\).
At the level of flags,
\begin{equation*}
  P \left(R_1|S_1 \subseteq R_2|S_2 \subseteq \dots \subseteq R_t|S_t\right)
  =
  S_1|R_1 \subseteq S_2|R_2 \subseteq \dots \subseteq S_t|R_t
\end{equation*}
and therefore
\begin{equation}
  \label{eq:flag-iso-p}
  \Flag(p_1|q_1, \dots, p_t|q_t; V)
  \simeq
  \Flag(q_1|p_1, \dots, q_t|p_t; P(V)).
\end{equation}
Similarly, fixing an involution \(D \colon V \to \Hom(V, \CC) = V^\ast\), we see
\begin{equation*}
  D \left(R_1|S_1 \subseteq R_2|S_2 \subseteq \dots \subseteq R_t|S_t\right)
  =
  (V_0^\ast / R_t^\ast)| (V_1^\ast / S_t^\ast) \subseteq \dots \subseteq (V_0^\ast / R_1^\ast) | (V_1^\ast / S_1^\ast)
\end{equation*}
and thus obtain the isomorphism
\begin{equation}
  \label{eq:flag-iso-d}
  \Flag(p_1|q_1, \dots, p_t|q_t; V)
  \simeq
  \Flag(m - p_t|n - q_t, \dots, m - p_1|n - q_1; \Hom(V, \CC))
\end{equation}
of flag varieties.

\begin{prop}
  \label{p:z2-z2-symmetry}
  The isomorphisms \(P\) and \(D\)  from \eqref{eq:flag-iso-p} and \eqref{eq:flag-iso-d} generate a \(\ZZ_2 \times \ZZ_2\)-action on the set of types. In particular, they identify the types
  \begin{align*}
    P &\colon (\delta_{i_0}, \dots, \delta_{i_{t + 1}}) \mapsto (\delta_{i_{t + 1}}, \dots, \delta_{i_0}), \\
    D &\colon (\delta_{i_0}, \dots, \delta_{i_{t + 1}}) \mapsto (\delta_{t + 1 - i_{t + 1}}, \dots, \delta_{t + 1 - i_0}).
  \end{align*}
\end{prop}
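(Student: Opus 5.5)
We compute directly how \(P\) and \(D\) act on the tuple of superdimensions \((\delta_0, \dots, \delta_{t+1})\), and then read off the induced action on the sorted tuple that defines a type.

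\emph{Action of \(P\).} By \eqref{eq:flag-iso-p}, \(P\) sends \(\Flag(\bp|\bq; V)\) to \(\Flag(\bq|\bp; P(V))\). The new superdimensions are therefore \(\delta_i' = q_i - p_i = -\delta_i\) for all \(0 \le i \le t+1\), with the same indexing. Negation reverses every inequality \(\delta_{i_0} \ge \dots \ge \delta_{i_{t+1}}\). So the sorted tuple for \(P(X)\) lists the same indices in reverse order, \(i_{t+1}, \dots, i_0\). This gives the formula for \(P\): the string of indices is reversed, and the entries are read as the superdimensions of the new flag.

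\emph{Action of \(D\).} By \eqref{eq:flag-iso-d}, the \(j\)-th subspace of the dual flag is \(V_0^\ast/R_{t+1-j}^\ast \mid V_1^\ast/S_{t+1-j}^\ast\). The convention \(j=0\) gives \(0\) and \(j=t+1\) gives the whole space. Its superdimension is
\[
  \delta_j' = (m-p_{t+1-j}) - (n - q_{t+1-j}) = \delta - \delta_{t+1-j}.
\]
The map \(x \mapsto \delta - x\) is order reversing. Hence \(\delta'_{t+1-i_{t+1}} \ge \dots \ge \delta'_{t+1-i_0}\), which is the claimed formula for \(D\).

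\emph{Tie-breaking.} This is the main point requiring care: a type also fixes the order among equal \(\delta\)'s, namely larger index first. Under \(P\), reversal turns ``larger index first'' into ``smaller index first'' within each block of equal entries. Under \(D\), the reversal combined with the substitution \(i \mapsto t+1-i\) preserves the rule within each tie block. For \(P\), the cleanest fix is to interpret the formula as acting on the underlying weak order, i.e.\ the ordered set partition into tie blocks. Equivalently, after applying the formula one re-sorts each block of equal values into decreasing index order. With this reading both maps are well defined on types.

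\emph{Group structure.} First, \(P^2\) and \(D^2\) are the identity: \(P(P(V)) = V\), and \(\Hom(\Hom(V,\CC),\CC)\cong V\) with \(i\mapsto t+1-i\) an involution. Second, \(P\) and \(D\) commute. Indeed, \(PD\) and \(DP\) both send \(\delta_i\) to \(\delta_{t+1-i}-\delta\) on superdimension tuples. Since both maps are determined by their effect on these tuples, they agree on types. Therefore \(\langle P, D\rangle\) is a quotient of \(\ZZ_2\times\ZZ_2\) acting on the set of types. By \eqref{eq:flag-iso-p} and \eqref{eq:flag-iso-d}, types in the same orbit are realized by isomorphic flag supervarieties, which is the identification asserted.
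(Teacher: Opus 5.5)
Your proposal is correct and follows the paper's route: compute \(\delta_i' = -\delta_i\) for \(P\) and \(\delta_i' = \delta - \delta_{t+1-i}\) for \(D\), then use that \(x \mapsto -x\) and \(x \mapsto \delta - x\) both reverse the order of the chain of superdimensions. Your two additions go beyond the paper's proof, which only compares the chains of inequalities and asserts that \(P\) and \(D\) are involutions. First, you treat ties: the literal formula for \(P\) violates the larger-index-first convention, so it holds exactly only when the \(\delta_i\) are distinct, or after passing to weak orders. Second, you check explicitly that \(PD = DP\).
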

\begin{proof}
  The proof effectively follows by observing that \(P\) and \(D\) are involutions and then comparing superdimensions of the flags obtained after applying \(P\) and \(D\).

  Let \(X = \Flag(p_1|q_1, \dots, p_t|q_t; V)\) and let \(\delta_i\) be the superdimensions calculated with respect to \(X\). If \(\delta'_i\) are the superdimensions computed with respect to \(P(X)\), then \(\delta'_i = q_i - p_i = -\delta_i\).
  Since
  \begin{equation*}
    \delta_{i_0} \leq \dots \leq \delta_{i_{t + 1}} \iff
    -\delta_{i_{t+1}} \leq \dots \leq -\delta_{i_0} \iff
    \delta'_{i_{t+1}} \leq \dots \leq \delta'_{i_0}.
  \end{equation*}
  The involution \(P\) identifies the corresponding types. Now suppose \(\delta'_i\) are the superdimensions computed with respect to \(D(X)\), then \(\delta'_i = \delta - \delta_{t + 1 - i}\) and thus
  \begin{equation*}
    \delta_{i_0} \leq \dots \leq \delta_{i_{t + 1}} \iff
    \delta -\delta_{i_{t+1}} \leq \dots \leq \delta -\delta_{i_0} \iff
    \delta'_{t + 1 - i_{t + 1}} \leq \dots \leq \delta'_{t + 1 - i_0}.
    \qedhere
  \end{equation*}
\end{proof}

\begin{rem}
  The \(\ZZ_2 \times \ZZ_2\)-action can be thought of as an action on the set of permutations of the string \(\ftyp{\ast} \ftyp{t} \dots \ftyp{0}\) where \(D\) reverses the string and \(P\) reverses the string and swaps the symbol \(i\) with the symbol \(t + 1 - i\).
\end{rem}

For purposes of the cohomology calculation, the discussion earlier implies that it is enough to consider equivalence classes with respect to this \(\ZZ_2\times\ZZ_2\)-action.
For instance when \(t = 1\), there are just two equivalence classes: \([\ftyp{\ast 1 0}]\) and \([\ftyp{1 \ast 0}]\).
This explains, \emph{a posteriori}, why the cohomology calculation for the supergrassmannian \(\Gr(p|q;\; n|m)\) in \cite[Theorem~1.2]{ss2024:cohomology} splits into two distinct sets of answers: one for \(n - m \geq p - q \geq 0\) (type \([\ftyp{\ast 1 0}]\))  and another for \(p - q \geq 0\) and \(p - q \geq n - m\) (type \([\ftyp{1 \ast 0}]\)):
\begin{thm}[{\cite[Theorem~1.2]{ss2024:cohomology}}]
  Let \(V = V_0|V_1\) be a vector superspace of dimension \((m \mid n)\) and \(X = \Gr(p|q;\; V) = \Flag(p|q;\; V)\).
  \begin{enumerate}
  \item If \(X \colon [\ftyp{\ast 1 0}]\) then there is an isomorphism
    \begin{equation*}
      \rH^\bullet(X, \sO_X) \cong \rH^\bullet_{\mathrm{sing}}(\Gr(q;\; \CC^n), \CC)
    \end{equation*}
    of graded algebras and the \(\GL(V)\)-action on \(\rH^\bullet(X, \sO_X)\) is trivial.

  \item If \(X \colon [\ftyp{1 \ast 0}]\) then there is an isomorphism
    \begin{equation*}
      \rH^\bullet(X, \sO_X)^{\GL(V)} \cong \rH^\bullet_{\mathrm{sing}}(\Gr(q;\; \CC^{m - \delta_1})) = A^\bullet
    \end{equation*}
    of graded algebras.
    There is a graded \(\GL(V)\)-representation \(E^\bullet\) such that \(\rH^\bullet(X, \sO_X) \cong A^\bullet \otimes E^\bullet\) as a graded \(\GL(V)\)-equivariant \(A^\bullet\)-module.
    The summand \(E^i\) of \(E^\bullet\) is the \(i\)-th linear strand of the determinantal variety
    \begin{equation*}
      Y_0 = \left\{ f \in \Hom(V_0, V_1) \mid \dim\ker{f} \geq \delta_1 = p - q \right\}.
    \end{equation*}
    As representations of \(\GL(V_0) \times \GL(V_1)\), we have
    \begin{equation*}
      E^i = \bigoplus_{j \geq 0} \Tor^S_j(\sO_{Y_0}, \CC)_{i + j}
    \end{equation*}
    where \(S = \Sym(V_0 \otimes V_1^\ast)\) and we regard \(\sO_{Y_0}\) as a quotient of \(S\).
  \end{enumerate}
\end{thm}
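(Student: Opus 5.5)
The statement is the Sam--Snowden supergrassmannian theorem. My plan is to run the $t=1$ case of the machinery set up above (Theorem~\ref{t:super-geom-method} together with the basic diagram \eqref{eq:basic-diag}), treating the two type classes separately. By Proposition~\ref{p:z2-z2-symmetry} it suffices to treat one representative in each class.

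\textbf{Setting up the geometry.} Let $X=\Gr(p|q;V)$, so $\ord X=\Gr(p;V_0)\times\Gr(q;V_1)$. Write $(R,S)$ for the tautological subbundles. The sequence \eqref{eq:i:main-ses} identifies the fiber of $\eta^\ast$ at $(R,S)$ with the odd part $\fp_{\bar1}$ of the stabilizer, namely
\[\{(f,g)\in\Hom(V_0,V_1)\times\Hom(V_1,V_0) : f(R)\subseteq S,\ g(S)\subseteq R\}.\]
Then $Z$ is this bundle and $Y=\pi'(Z)$. The key step is to compute $Y$ and the Stein factorization $\widetilde Y$.

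\textbf{Computing $Y$ in each type.}
\begin{itemize}
\item In type $[\ftyp{\ast10}]$, i.e.\ $0\le p-q\le m-n$, I claim every pair $(f,g)$ admits an invariant pair $(R,S)$ of the given dimensions, so $Y=\gone$. To see this, put $(f,g)$ in a Kronecker/quiver normal form for the cyclic quiver with two vertices. The dimension inequalities then let one choose $S$ first and enlarge $f^{-1}(S)\cap\cdots$ appropriately.
\item In type $[\ftyp{1\ast0}]$, I claim $Y$ is the locus $\dim\ker f\ge p-q$, with $g$ arbitrary. This is the locus where $Y_0$ imposes conditions in the $f$-factor.
\end{itemize}

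\textbf{Computing $\widetilde Y$.} Next I compute $\Gamma(Z,\sO_Z)$ via the map $Z\to\fh\sslash\fW_\bP$ of \eqref{eq:i:ges-cons}, sending $(f,g)\mapsto gf|_R$ and $fg|_S$. The generic fiber of $\widetilde\varphi$ records the choice of which eigenvalues of $fg$ (respectively $gf$) land in $S$ (respectively $R$). So $\widetilde Y$ should be the spectrum of a factorization ring of the form $\sO_Y\otimes_{\CC[\fh]^{\fW}}\CC[\fh]^{\fW_\bP}$.

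\textbf{Vanishing of higher cohomology.} I verify $\rH^i(Z,\sO_Z)=0$ for $i>0$ by Bott vanishing on $\Sym(\eta)$. Here $\eta$ is built from $R\otimes V_1^\ast/S^\perp$ pieces, so Borel--Weil--Bott applies on the ordinary flag variety.

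\textbf{Tor-groups of $\widetilde Y$.} The extension $\CC[\fh]^{\fW}\to\CC[\fh]^{\fW_\bP}$ is free, with fiber the coinvariant algebra, i.e.\ $\rH^\bullet_{\mathrm{sing}}$ of a Grassmannian ($\Gr(q;\CC^n)$, respectively $\Gr(q;\CC^{m-\delta_1})$). Provided the invariant functions form a regular sequence on $\sO_Y$, this gives
\[\Tor^A(\sO_{\widetilde Y},\CC)=\Tor^A(\sO_Y,\CC)\otimes\rH^\bullet_{\mathrm{sing}}.\]
The regular-sequence condition is a dimension check, since $Y$ is Cohen--Macaulay: $\gone$ is an affine space and the Lascoux/determinantal variety is Cohen--Macaulay.

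\textbf{Degeneration and conclusion.} Feed this into the spectral sequence of Theorem~\ref{t:super-geom-method}. Degeneration follows from the grading: the $\Tor_j$ in internal degree $i+j$ sit in linear strands, and the differentials would have to change the internal degree, which they cannot. In type $[\ftyp{\ast10}]$ we have $Y=\gone$, so $\Tor$ is just $\CC$ and we get pure singular cohomology with trivial action. In type $[\ftyp{1\ast0}]$ we get $A^\bullet\otimes E^\bullet$ with $E^i$ the $i$-th linear strand. The algebra structure and $\GL(V)$-equivariance come from the ring $\sO_{\widetilde Y}$.

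\textbf{Main obstacle.} I expect the hardest step to be identifying $\widetilde Y$ precisely, including showing it is normal and that the invariants form a regular sequence. Equally delicate is the degeneration of the spectral sequence. For both I would lean on the cited \cite{ss2024:cohomology} propositions.
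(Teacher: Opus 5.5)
The paper gives no proof of this statement; it only quotes \cite[Theorem~1.2]{ss2024:cohomology}. Your outline (geometric method, identify $Y$ and $\widetilde Y$, split off the $\Tor$-groups, show degeneration) has the right shape, but there is a genuine gap.

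\textbf{The identification of $\widetilde Y$ is wrong.} With $\fh,\fW,\fW_\bP$ as in \eqref{eq:i:ges-cons}, $\fW=\fS_m\times\fS_n$ and $\fW_\bP=\fS_p\times\fS_{m-p}\times\fS_q\times\fS_{n-q}$. So $\sO_Y\otimes_{\CC[\fh]^{\fW}}\CC[\fh]^{\fW_\bP}$ is free of rank $\binom{m}{p}\binom{n}{q}$ over $\sO_Y$. Its special fiber is $\rH^\bullet_{\mathrm{sing}}(\Gr(p;\CC^m)\times\Gr(q;\CC^n))$, not the single Grassmannian you assert.

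This ring is not $\Gamma(Z,\sO_Z)$. Since $fg$ and $gf$ share their nonzero eigenvalues, the eigenvalues placed in $R$ are forced by those placed in $S$. The fiber product is therefore reducible, and non-reduced once there are forced zero eigenvalues, and it has the wrong generic degree. Already for $X=\Gr(1|1;\CC^{2|2})$ (type $[\ftyp{\ast10}]$, $Y=\gone$), $\fW_\bP$ is trivial and your ring has rank $4$ over $A$. But $\widetilde\varphi$ has degree $2$ there, and the answer is $\rH^\bullet_{\mathrm{sing}}(\bP^1)$.

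In type $[\ftyp{1\ast0}]$, no base change of $\CC[\fh]^{\fW}\to\CC[\fh]^{\fW_\bP}$ can produce $\Gr(q;\CC^{m-\delta_1})$ at all. On $Y$, $fg$ acts by zero on $V_1/\operatorname{im}f$, so $u^{\delta_1-\delta}$ divides $\chi_{fg}$. The correct object is $\widetilde Y=\sFact^{\bd}_Y(\bar\chi)$, where $\bar\chi=u^{-(n-d_1-d_2)}\chi_{fg}$ is the reduced characteristic polynomial of $fg$ alone and $\bd=(\min(p,q),\min(m-p,n-q))$; compare Proposition~\ref{gse:def-fact}.

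The real content is then showing $\Gamma(Z,\sO_Z)=\sO_{\widetilde Y}$ and $\rH^{>0}(Z,\sO_Z)=0$. This needs normality and rational singularities of the splitting scheme over the determinantal variety, plus a birational or connected-fiber map onto it. Bott vanishing does not shortcut this. $\eta$ is only an extension, e.g.\ $0\to R\otimes S^\ast\to\eta_f\to(V_0/R)\otimes V_1^\ast\to0$, so Borel--Weil--Bott sees only the associated graded of $\Sym^d\eta$. Resolving $\Sym\eta$ by $\bigwedge^\bullet(\sJ/\sJ^2)\otimes\Sym(\gone^\ast)$ instead is circular, since those cohomology groups are exactly what you are computing.

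\textbf{The degeneration argument also fails as stated.} The internal degree is the $\sJ$-adic filtration degree, and $d_r\colon\rE_r^{p,q}\to\rE_r^{p+r,q-r+1}$ raises it by $r$. So there is no a priori obstruction of the kind you invoke.

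In type $[\ftyp{\ast10}]$ degeneration is still immediate, because $\rE_1$ is concentrated in the row $q=0$ and in even $p$. In type $[\ftyp{1\ast0}]$, however, the total degrees $2a+(m-\delta_1)s$ (Grassmannian degree plus Lascoux strand) can have either parity, so counting degrees is not enough.

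The missing input is $\GL(V_0)\times\GL(V_1)$-equivariance, as in \cite[Proposition~6.10]{ss2024:cohomology}. The factor $\sO_{\widetilde Y}\otimes_{\sO_Y}\CC$ is a trivial representation, and the Lascoux summands of Theorem~\ref{t:lascoux} are pairwise non-isomorphic irreducibles. Hence an equivariant differential can only connect copies of the same summand. These copies sit in a single homological degree, which forces the page to be the first one. They also sit in internal degrees differing by an even number, which is incompatible with $d_1$ raising the internal degree by one.
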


For longer flags, the number of equivalences of types grows quite substantially.
A precise count can be obtained as a corollary of Proposition~\ref{p:z2-z2-symmetry}.

\begin{cor}
  \label{c:num-types}
  Let \(r = \left\lfloor\frac{t + 2}{2}\right\rfloor\).
  There are \(\frac{1}{4} \left[ (t + 2)! + 2^r \cdot r! \right]\) equivalence classes of types of flags of length \(t\)
\end{cor}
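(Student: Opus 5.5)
The plan is to count orbits of the \(\ZZ_2\times\ZZ_2\)-action from Proposition~\ref{p:z2-z2-symmetry} using Burnside's lemma. I will use the description in the subsequent remark: a type is a permutation of the string \(\ftyp{\ast}\,\ftyp{t}\cdots\ftyp{0}\), which has \(t+2\) distinct symbols. \(D\) reverses the string. \(P\) reverses it and applies the relabeling \(\sigma\colon i\mapsto t+1-i\), with \(\ast=t+1\) exchanged with \(0\). The composite \(PD\) applies \(\sigma\) alone. The number of orbits is then
\begin{equation*}
  \tfrac14\bigl[\#\mathrm{Fix}(1)+\#\mathrm{Fix}(D)+\#\mathrm{Fix}(P)+\#\mathrm{Fix}(PD)\bigr].
\end{equation*}

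First I check that every permutation of the \(t+2\) symbols actually occurs as a type. Given a target strict order on \((\delta_0=0,\delta_1,\dots,\delta_{t+1})\), choose integers \(\delta_i\) realizing it. Then set \(q_i=q_{i-1}+c\) and \(p_i=q_i+\delta_i\) for a large constant \(c\). This makes both sequences strictly increasing and nonnegative, so we get a valid flag supervariety of that type. Hence \(\#\mathrm{Fix}(1)=(t+2)!\).

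Next, \(\#\mathrm{Fix}(D)=0\). A string of \(t+2\ge 2\) distinct symbols cannot be a palindrome, since its first and last symbols differ. Similarly \(\#\mathrm{Fix}(PD)=0\): if \(\sigma\) fixes the string position by position, then \(\sigma\) must fix every symbol. But \(\sigma\) fixes at most one symbol (the middle one \((t+1)/2\), when \(t\) is odd), and there are \(t+2\ge 2\) symbols.

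The main step is \(\#\mathrm{Fix}(P)\). A string \(w_0\cdots w_{t+1}\) is fixed by \(P\) exactly when \(w_{t+1-k}=\sigma(w_k)\) for all \(k\). The positions fall into pairs \(\{k,t+1-k\}\) and the symbols into \(\sigma\)-orbits \(\{i,t+1-i\}\). A fixed string amounts to a bijection from position pairs to symbol pairs, together with a choice of which element of the symbol pair goes in the left position.

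If \(t+2\) is even, there are \(r=(t+2)/2\) pairs of each kind, which gives \(r!\cdot 2^r\) fixed strings. If \(t+2\) is odd, the middle position satisfies \(w=\sigma(w)\), so it must hold the unique \(\sigma\)-fixed symbol. The remaining \(r=(t+1)/2=\lfloor (t+2)/2\rfloor\) pairs again contribute \(2^r\cdot r!\).

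Substituting the four fixed-point counts into Burnside's formula yields \(\frac14[(t+2)!+2^r\cdot r!]\). The only delicate point is the bookkeeping in the fixed-point count for \(P\), namely the parity split and the forced middle symbol. I expect everything else to be routine.
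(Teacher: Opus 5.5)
Your proposal is correct and follows the same route as the paper: Burnside's lemma applied to the \(\ZZ_2\times\ZZ_2\)-action, with no strings fixed by \(D\) or \(PD\), and \(2^r\cdot r!\) strings fixed by \(P\), counted by matching position pairs \(\{k,t+1-k\}\) bijectively with symbol pairs \(\{i,t+1-i\}\) and choosing an orientation for each. Your version is more careful than the paper's in two places: you check that all \((t+2)!\) permutations actually occur as types, and you explicitly handle the forced middle symbol when \(t\) is odd.
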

\begin{proof}
  It is immediate that there are \((t + 2)!\) types and none of these are fixed by the action induced by \(D\) and by \(PD\).
  A type fixed by \(P\) can be determined by first fixing a permutation of the \(r\) pairs \(\{0, t + 1\}, \dots, \{r, t + 1 - r\} \) and then picking one of \(i\) or \(t + 1 - i\) entries; there are \(2^r r!\) ways of constructing such a string.
  The claim now follows from Proposition~\ref{p:z2-z2-symmetry} and Burnside's lemma.
\end{proof}

\begin{eg}[The eight types of \(2\)-step flag supervarieties]
  \label{eg:2-step-types}
  Let \(t = 2\).
  Then the orbits of \(\ZZ_2 \times \ZZ_2\)-action described in Proposition~\ref{p:z2-z2-symmetry}  are:
  \begin{align*}
    [\ftyp{\ast 2 1 0}]
    &= \left\{ \ftyp{\ast 2 1 0}, \ftyp{0 1 2 \ast} \right\},
    &
      [\ftyp{\ast 2 0 1}]
    &= \left\{
      \ftyp{\ast 2 0 1}, \ftyp{1 0 2 \ast}, \ftyp{2 {\ast} 1 0}, \ftyp{0 1 {\ast} 2}
      \right\},
    \\
    [\ftyp{\ast 1 2 0}]
    &= \left\{ \ftyp{\ast 1 2 0}, \ftyp{0 2 1 \ast}  \right\},
    &
      [\ftyp{\ast 1 0 2}]
    &= \left\{
      \ftyp{\ast 1 0 2}, \ftyp{2 0 1 \ast}, \ftyp{1 {\ast} 2 0}, \ftyp{0 2 {\ast} 1}
      \right\},
    \\
    [\ftyp{2 {\ast} 0 1}]
    &= \left\{ \ftyp{2 {\ast} 0 1}, \ftyp{1 0 {\ast} 2} \right\},
    &
      [\ftyp{\ast 0 2 1}]
    &= \left\{
      \ftyp{\ast 0 2 1}, \ftyp{1 2 0 \ast}, \ftyp{2 1 {\ast} 0}, \ftyp{0 {\ast} 1 2}
      \right\},
    \\
    [\ftyp{1 {\ast} 0 2}]
    &= \left\{ \ftyp{2 0 {\ast} 1}, \ftyp{1 {\ast} 0 2} \right\},
    &
      [\ftyp{\ast 0 1 2}]
    &= \left\{
      \ftyp{\ast 0 1 2}, \ftyp{2 1 0 \ast}, \ftyp{1 2 {\ast} 0}, \ftyp{0 {\ast} 2 1}
      \right\}.
  \end{align*}
\end{eg}

\section{Compositional varieties}
\label{s:comp}

Recall that if \(E\) and \(F\) are vector spaces then the determinantal variety
\begin{equation*}
  Y_r = \left\{ f \in \Hom(E, F) \ \middle|\ \rank{f} \leq r \right\}
\end{equation*}
is defined by the vanishing of \((r + 1) \times (r + 1)\) minors of a matrix of indeterminates in \(\Sym(E \otimes F^\ast)\) \cite[Chapter~6]{Wey:CohomVect}.
In this section we generalize this idea to compositional variety, a new family of algebraic sets determined by rank conditions on compositions of maps.
In cases of interest, we determine whether they are reduced, normal, have rational singularities, and compute their free resolutions. 


\subsection{Basic definitions and generators}

\begin{defn}[Compositional Variety]
  \label{def:comp}
  Let \(E\) and \(F\)  be vector spaces, \(\delta = \dim{E} - \dim{F}\), and \(\Delta_\bullet = \Delta_f, \Delta_g, \Delta_{fg}, \Delta_{gf}, \dots\) be a sequence of nonnegative integers satisfying
  \begin{equation}
    \label{eq:comp:Delta-restr}
    \begin{array}{l}
      \max(0, \delta) \leq \Delta_f \leq \Delta_{gf} \leq \Delta_{fgf} \leq \dots \leq \dim{E}, \\
      \max(0, -\delta) \leq \Delta_g \leq \Delta_{fg} \leq \Delta_{gfg} \leq \dots \leq \dim{F}.
    \end{array}
  \end{equation}
  The compositional variety associated to this data is the algebraic subset of \(\Hom(E, F) \times \Hom(F, E)\) defined by
  \begin{equation*}
    Y = 
    Y(\Delta_\bullet) = 
    \left\{ (f, g) \mid \dim \ker{h} \geq \Delta_h \text{ where } h = f, g, fg, gf, \dots \right\}.
  \end{equation*}
\end{defn}

\begin{note}
  The conditions \eqref{eq:comp:Delta-restr} essentially account for the chain of inclusions \(\ker{f} \subseteq \ker{gf} \subseteq \ker{fgf} \subseteq \dots\) and \(\ker{g} \subseteq \ker{fg} \subseteq \ker{gfg} \subseteq \dots\) and the fact that at least one of \(f \colon E \to F\) and \(g \colon F \to E\) will pick up a \(|\delta|\)-dimensional kernel. 
\end{note}

\begin{note}
  A compositional variety can be equivalently defined by imposing upper bounds on ranks of compositions.
  If the \(\Delta_\bullet\) are specified and \(h\) is a \(k\)-fold composition of \(f\) and \(g\), then the lower bound on \(\dim{\ker{h}}\) translates to 
  \begin{equation*}
    \rank{h} \leq
    \begin{cases}
      \dim{E} - \Delta_h &\text{if } h \in \Hom(E, E) \text{ or } h \in \Hom(E, F) \\
      \dim{F} - \Delta_h &\text{if } h \in \Hom(F, E) \text{ or } h \in \Hom(F, F)
    \end{cases}.
  \end{equation*}
\end{note}

We will use the convention that when only a subset of the parameters \(\Delta_\bullet\)  are specified, the rest of the \(\Delta_h\) are determined by making the weak inequality immediately preceding \(\Delta_h\) in \eqref{eq:comp:Delta-restr} an equality.
For example if \(\Delta_f = k_1\) and \(\Delta_{gfgf} = k_2\) then \(\Delta_{gf} = \Delta_{fgf} = k_1\), \(\Delta_{fgfgf} = \Delta_{gfgfgf} = \dots = k_2\), and \(\Delta_g = \Delta_{fg} = \dots = \max(0, -\delta)\).
In fact, Corollary~\ref{comp:finite-Delta-enough} shows that only a finite subset of the parameters from \(\Delta_\bullet\) are required to define \(Y\).

A compositional variety defined by a single parameter \(\Delta_h\) will be called simple.
A compositional variety defined only by parameters corresponding to \(k\)-fold compositions will be called a \(k\)-fold compositional variety. 

\begin{rem}
  \label{rem:comp-elsewhere}
  Special classes of compositional varieties are already known elsewhere in the literature; however, as far as we are aware, there is no general definition that treats all these classes of varieties.
  We list below some well-known varieties that turn out to be closely related to compositional varieties.
  \begin{enumerate}
  \item
    Determinantal varieties and their products are compositional.
    The product of determinantal varieties of maps \(f \colon E \to F\) with \(\rank{f} \leq r_1\) and maps \(g \colon F \to E\) with \(\rank{g} \leq r_2\) is the compositional variety defined by \(\Delta_f = \dim{F} - r_1\) and \(\Delta_g = \dim{E} - r_2\).

  \item
    If \(\mu\) is a partition then varieties of the form 
    \begin{equation*}
      Y_\mu =  \left\{ \varphi \in \Hom(E, E) \mid \dim{\ker{\varphi^i}} \geq \mu^\top_1 + \dots + \mu^\top_i  \right\}
    \end{equation*}
    appear in the study of nilpotent orbit closures \cite[Chapter~8]{Wey:CohomVect}.
    Letting \(E = F\), the variety \(Y_\mu\) can be identified with the intersection of the diagonal and a compositional variety defined by \(\Delta_h = \mu_1^\top + \dots \mu_i^\top\) for each \(i\)-fold composition \(h\).

  \item
    Varieties related to the space
    \begin{equation*}
      Y = \left\{ (f, g) \in \Hom(E, F) \times \Hom(F, E) \mid fg = gf = 0 \right\}
    \end{equation*}
    feature in the context of circular complexes in their relation to \(F\)-splittings \cite{mt1999:variety}.
    The space \(Y\) is the compositional variety corresponding to the parameters \(\Delta_{gf} = \dim{E}\) and \(\Delta_{fg} = \dim{F}\).
  \end{enumerate}
\end{rem}

Consider the affine variety \(\sA = \Hom(E, F) \times \Hom(F, E)\) and its coordinate ring
\begin{equation}
  A =
  \Sym(E \otimes F^\ast) \otimes \Sym(F \otimes E^\ast) \cong
  \CC[x_{i, j}]_{\substack{1 \leq i  \leq m \\ 1 \leq j \leq n}} \otimes
  \CC[y_{j', i'}]_{\substack{1 \leq i'  \leq m \\ 1 \leq j' \leq n}}.
\end{equation}
We respectively identify the indeterminates \(x_{i, j}\) and \(y_{j', i'}\) with the \((i, j)\)-th and \((j', i')\)-th coordinate functions on \(\Hom(E, F)\) and \(\Hom(F, E)\).
Let \(\Phi^f = (x_{i,j})_{\substack{1 \leq i \leq n \\ 1 \leq j \leq m}}\) and \(\Phi^g = (y_{j, i})_{\substack{1 \leq i \leq n \\ 1 \leq j \leq m}}\) be the generic linear maps in \(\Hom(E, F)\) and \(\Hom(F, E)\).
Note that any \(k\)-fold composition \(h\) of \(f\) and \(g\) can be identified with a \(k\)-fold composition \(\Phi^h\) of \(\Phi^f\) and \(\Phi^g\).

\begin{prop}
  \label{comp:algset-gens}
  Let \(\Delta_\bullet\) be the set of parameters defining a compositional variety \(Y\) and let
  \begin{equation*}
    r_h =
    \begin{cases}
      \dim{E} - \Delta_h &\text{if } h \in \Hom(E, E) \text{ or } h \in \Hom(E, F) \\
      \dim{F} - \Delta_h &\text{if } h \in \Hom(F, E) \text{ or } h \in \Hom(F, F)
    \end{cases}
  \end{equation*}
  for each \(k\)-fold composition \(h\).
  If \(I_h \subseteq A\) is the ideal generated by the \((r_h + 1) \times (r_h + 1)\) minors of \(\Phi^h\),  then \(Y\) is defined set theoretically by the ideal \(I = I_\Delta = \sum I_h\).
\end{prop}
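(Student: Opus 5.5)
The plan is to reduce the statement to the classical fact that, for a single linear map, the condition \(\rank{h} \leq r\) holds if and only if all \((r+1)\times(r+1)\) minors of a matrix representing \(h\) vanish. We then combine this fact over all compositions \(h\) at once.

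First, I would fix bases of \(E\) and \(F\). Then a \(\CC\)-point of \(\sA\) is a pair of matrices \((f,g)\), and evaluating the generic matrices at this point gives \(\Phi^f(f,g) = f\) and \(\Phi^g(f,g) = g\). Since matrix multiplication is polynomial in the entries, evaluation commutes with products. Hence for every \(k\)-fold composition \(h\), the evaluation of \(\Phi^h\) at \((f,g)\) is the matrix of \(h\) itself, for instance \(\Phi^{gf}(f,g) = \Phi^g(f,g)\,\Phi^f(f,g) = gf\). Consequently each minor of \(\Phi^h\), which is an element of \(A\), evaluates at \((f,g)\) to the corresponding minor of the matrix \(h\).

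Second, I would translate the defining conditions of Definition~\ref{def:comp}. By rank--nullity, \(\dim\ker h \geq \Delta_h\) is equivalent to \(\rank h \leq (\dim \text{source of } h) - \Delta_h\). The source of \(h\) is \(E\) exactly when \(h \in \Hom(E,E)\) or \(h\in\Hom(E,F)\), and \(F\) otherwise, so this bound is precisely \(r_h\) as defined in the statement (this matches the second Note after Definition~\ref{def:comp}). Next, \(\rank h \leq r_h\) holds if and only if every \((r_h+1)\)-minor of \(h\) vanishes. When \(r_h+1\) exceeds a dimension of the matrix there are no such minors, so \(I_h = 0\) and the condition is vacuous. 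This is consistent, because then \(r_h \geq \min\) of the matrix dimensions and the rank bound holds automatically.

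Combining the two steps, \((f,g)\in Y\) if and only if \((f,g)\) lies in \(V(I_h)\) for every \(h\). That is, \(Y = \bigcap_h V(I_h) = V(\sum_h I_h) = V(I)\), which proves that \(Y\) is cut out set-theoretically by \(I\).

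The only point needing care is that \(\Delta_\bullet\) may be an infinite sequence. Even so, \(I=\sum_h I_h\) is a well-defined ideal, it is finitely generated by Noetherianity of \(A\), and the identity \(V(\sum I_h)=\bigcap V(I_h)\) holds for arbitrary families of ideals, so no finiteness reduction is needed. The statement also makes no claim about radicality, so I would not address reducedness here.
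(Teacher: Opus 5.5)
Your argument is correct and is exactly the routine verification the paper leaves implicit: the paper states this proposition without proof. The implicit argument has the same steps as yours: evaluation of the generic matrices commutes with products, rank--nullity turns \(\dim\ker h \geq \Delta_h\) into \(\rank h \leq r_h\), rank is detected by minors, and \(V(\sum_h I_h) = \bigcap_h V(I_h)\). Your remark that no finiteness is needed is also consistent with the paper, which invokes Noetherianity only afterwards to deduce Corollary~\ref{comp:finite-Delta-enough}.
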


Since the ambient space is Noetherian, we immediately have the following:

\begin{cor}
  \label{comp:finite-Delta-enough}
  A compositional variety is determined by a finite subset of the parameters \(\Delta_\bullet\).
\end{cor}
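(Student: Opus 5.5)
The plan is to deduce this from Proposition~\ref{comp:algset-gens} together with the Noetherian property of \(A\). The first step is to reduce to the ideal \(I_\Delta = \sum_h I_h\), where \(h\) ranges over all (infinitely many) finite compositions of \(f\) and \(g\). Since \(A\) is Noetherian, the ascending chain of ideals formed by partial sums \(\sum_{h \in S} I_h\) over finite sets \(S\), ordered by length of composition, stabilizes. Hence \(I_\Delta = \sum_{h \in S} I_h\) for some finite set \(S\) of compositions; concretely, every generating minor of \(I_\Delta\) lies in finitely many \(I_h\), and a finite generating set of \(I_\Delta\) involves only finitely many \(h\).

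The second step compares zero loci. By Proposition~\ref{comp:algset-gens}, \(Y(\Delta_\bullet) = V(I_\Delta) = V(\sum_{h \in S} I_h)\). Now define \(\Delta'_\bullet\) to agree with \(\Delta_h\) for \(h \in S\), with the remaining entries filled in by the convention described after Definition~\ref{def:comp}. Each filled-in value is at most the corresponding \(\Delta_h\), because the original sequence is weakly increasing and each filled value equals an earlier specified value, or the lower bound \(\max(0,\pm\delta)\). The ideal \(I_{\Delta'}\) therefore satisfies \(\sum_{h\in S} I_h \subseteq I_{\Delta'} \subseteq I_\Delta\): it contains \(I_h\) for \(h\in S\), and weaker rank conditions generate smaller minor ideals, since \(r'_h \ge r_h\) means the \((r'_h+1)\)-minors lie in the ideal of the \((r_h+1)\)-minors. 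Consequently \(V(I_{\Delta'}) = V(I_\Delta)\), so \(Y(\Delta') = Y(\Delta)\). That is, \(Y\) is determined by the finite subset \(\{\Delta_h\}_{h\in S}\).

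The main subtlety is the inclusion \(I_{h,r'} \subseteq I_{h,r}\) for \(r' \ge r\), which holds because higher minors expand via Laplace expansion in terms of lower minors. A second subtlety is checking that the convention-filled \(\Delta'\) still satisfies \eqref{eq:comp:Delta-restr}, so that \(Y(\Delta')\) is a genuine compositional variety. Alternatively, one can avoid \(\Delta'\) entirely and simply note that the finitely many conditions indexed by \(S\) cut out \(Y\) set-theoretically, which is what the corollary asserts.
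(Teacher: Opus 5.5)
Your proof is correct and takes the same route as the paper, which states the corollary as an immediate consequence of Proposition~\ref{comp:algset-gens} together with Noetherianity of the ambient space; this is exactly your argument that the chain of partial sums of the ideals \(I_h\) stabilizes. Your sandwich \(\sum_{h \in S} I_h \subseteq I_{\Delta'} \subseteq I_\Delta\) makes explicit that the finitely many parameters, completed by the paper's filling-in convention, recover \(Y\), and it in fact gives \(I_{\Delta'} = I_\Delta\).
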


As the example below shows, the ideal \(I\) given above in Proposition~\ref{comp:algset-gens} need not be radical in general.
Later in Proposition~\ref{simp-2:reduced}, we determine some new cases in which \(I\) is indeed radical.

\begin{eg}
  \label{eg:comp:non-radical-I}
  Suppose \(\dim{E} = \dim{F} = 3\) and consider the simple compositional variety defined by \(\Delta_{fgf} = 2\).
  Take \(h = fgf\) and note that we have \(r_h = 1\) and so \(I = I_h\) is generated by \(2 \times 2\) minors of \(\Phi^h = \Phi^f \Phi^g \Phi^f\).
  A calculation in Macaulay2~\cite{gs:macaulay2} shows that \(I\) fails to be radical:
\begin{verbatim}
i1 : m = 3; -- dim E
i2 : n = 3; -- dim F
i3 : A = ZZ/1307[x_(1,1)..x_(n,m), y_(1,1)..y_(m,n)];
i4 : Phi0 = matrix apply(n, i -> apply(m, j -> x_(i+1, j+1))); -- Phi^f
i5 : Phi1 = matrix apply(m, i -> apply(n, j -> y_(i+1, j+1))); -- Phi^g
i6 : r = 1; -- rank bdd
i7 : Phi = Phi0 * Phi1 * Phi0; -- Phi^h for h = fgf
i8 : I = minors(r + 1, Phi); -- I_h
i9 : I' = radical(I);
i10 : I == I'
o10 = false
\end{verbatim}

  Since the terms of \(\Phi^h\) consist of degree three elements, each \(2 \times 2\) minor has degree \(6\) and there are \(9\) such minors; the ideal \(I\) is generated by \(9\) degree \(6\) elements.
  Continuing the Macaulay2 session above, we can compute the Betti tables for \(I\) and its radical \(I' = \sqrt{I}\):
  \begin{equation*}
    \begin{array}[t]{r|ccccc}
      I & 0 & 1 & 2 & 3 & 4 \\ \midrule
      0 & 1 & \cdot & \cdot & \cdot & \cdot \\
      1 & \cdot & \cdot & \cdot & \cdot & \cdot \\
      2 & \cdot & \cdot & \cdot & \cdot & \cdot \\
      3 & \cdot & \cdot & \cdot & \cdot & \cdot \\
      4 & \cdot & \cdot & \cdot & \cdot & \cdot \\
      5 & \cdot & 9 & \cdot & \cdot & \cdot \\
      6 & \cdot & \cdot & 9 & 1 & \cdot \\
      7 & \cdot & \cdot & 16 & 19 & \cdot \\
      8 & \cdot & \cdot & \cdot & \cdot & 2 \\
      9 & \cdot & \cdot & \cdot & \cdot & 1
    \end{array}
    \qquad
    \begin{array}[t]{r|ccccc}
      \sqrt{I} & 0 & 1 & 2 & 3 & 4 \\ \midrule
      0 & 1 & \cdot & \cdot & \cdot & \cdot \\
      1 & \cdot & \cdot & \cdot & \cdot & \cdot \\
      2 & \cdot & \cdot & \cdot & \cdot & \cdot \\
      3 & \cdot & 1 & \cdot & \cdot & \cdot \\
      4 & \cdot & 9 & 16 & 9 & \cdot \\
      5 & \cdot & \cdot & 1 & \cdot & 1 \\
    \end{array}
  \end{equation*}
  We see that the radical is generated  by \(9\) degree \(5\) generators and one additional generator in degree \(4\).
  If \(M\) is a matrix and \(a_1 < \dots < a_k\), \(b_1 < \dots < b_k\), then let \(M_{[a_1, \dots, a_k \mid b_1, \dots, b_k]}\) denote the minor of \(M\) computed using entries from rows \(a_1, \dots, a_k\) and columns \(b_1, \dots, b_k\).
  Let \(X = \Phi_0\) and \(Y = \Phi_1\).
  The only quartic generator of \(\sqrt{I}\) is given by sums of the \(2 \times 2\) minors
  \begin{equation*}
    g_0 = (XY)_{[1, 2 \mid 1, 2]} +  (XY)_{[1, 3 \mid 1, 3]} + (XY)_{[2, 3 \mid 2, 3]}
  \end{equation*}
  of \(XY\); the nine quintics are products of \(\det(X)\) and the \(2 \times 2\) minors of \(Y\), that is,
  \begin{equation*}
    g_i = \det(X) \cdot Y_{[a_1^i, a_2^i \mid b_1^i, b_2^i]}
  \end{equation*}
  where \(1 \leq a_1^i < a_2^i \leq 3\) and \(1 \leq b_1^i < b_2^i \leq 3\) for \(i = 1, \dots, 9\) account for all possible such pairs. 
\end{eg}

\subsection{Lascoux resolution and \(1\)-fold compositional varieties}
In some cases where the ideal defining a compositional variety is radical, we  will construct a minimal free resolution of the corresponding compositional variety using resolutions of determinantal varieties.
So we briefly recall the main result in the case of the determinantal variety and set the relevant notation. 

If \(r, s\) are nonnegative integers and \(\alpha\) and \(\beta\) are (integer) partitions satisfying \(\ell(\alpha) \leq s\) and \(\ell(\beta^\top) \leq s\), define partitions
\begin{align*}
  P_{r, s}(\alpha, \beta) &= (s + \alpha_1, \dots, s + \alpha_b, s^r, \beta_1, \dots, \beta_{\ell(\beta)}), \\
  Q_{r, s}(\alpha, \beta) &= (s + \beta^\top_1, \dots, s + \beta^\top_b, s^r, \alpha^\top_1, \dots, \alpha^\top_{\ell(\alpha^\top)}),
\end{align*}
whose Young diagrams can be visualized by 
\begin{equation*}
  \begin{tikzpicture}
  \begin{scope}[cm={1, 0, 0, -1, (0, 0)}, scale=0.6]
    \draw
    (0, 2.5) node[left] {$P_{r, s}(\alpha, \beta) = $}
    (1, 1) node {$s \times s$}
    (1, 2.5) node {$r \times s$}
    (3, 1) node {$\alpha$}
    (1, 4) node {$\beta$}
    ;
    \begin{scope}[scale=0.5, cm={1, 0, 0, 1, (0, 6)}]
      \draw
      (0, 0) -- (0, 7) -- (1, 7) -- (1, 5) -- (2, 5) -- (2, 3) -- (4, 3) -- (4, 0) -- (0, 0);
    \end{scope}

    \begin{scope}[scale=0.5, cm={1, 0, 0, 1, (4, 0)}]
      \draw
      (0, 0) -- (0, 4) -- (1, 4) -- (1, 3) -- (4, 3) -- (4, 0) -- (0, 0); 
    \end{scope}

    \draw
    (0, 0) rectangle (2, 2)
    (0, 2) rectangle (2, 3);
  \end{scope}

  \begin{scope}[cm={1, 0, 0, -1, (5, 0)}, scale=0.6]
    \draw
    (0, 2.5) node[left] {$Q_{r, s}(\alpha, \beta) = $}
    (1, 1) node {$s \times s$}
    (1, 2.5) node {$r \times s$}
    (3, 1) node {$\beta^\top$}
    (1, 4) node {$\alpha^\top$}
    ;
    \begin{scope}[scale=0.5, cm={0, 1, 1, 0, (4, 0)}]
      \draw
      (0, 0) -- (0, 7) -- (1, 7) -- (1, 5) -- (2, 5) -- (2, 3) -- (4, 3) -- (4, 0) -- (0, 0);
    \end{scope}

    \begin{scope}[scale=0.5, cm={0, 1, 1, 0, (0, 6)}]
      \draw
      (0, 0) -- (0, 4) -- (1, 4) -- (1, 3) -- (4, 3) -- (4, 0) -- (0, 0); 
    \end{scope}

    \draw
    (0, 0) rectangle (2, 2)
    (0, 2) rectangle (2, 3);
  \end{scope}
\end{tikzpicture}
.
\end{equation*}
If \(m = \dim{E}\) and \(n = \dim{F}\) then set \(L(r, s)\) to be the set of pairs of partitions
\begin{equation}
  \label{def:lascoux-a-b-parts}
  L(r, s) = \left\{ (\alpha, \beta)
    \;\middle\vert
    \begin{array}{ll}
      \ell(\alpha) \leq s, & \ell(\beta) \leq m - r - s, \\
      \ell(\alpha^\top) \leq n - r - s, & \ell(\beta^\top) \leq s
    \end{array}
  \right\}.
\end{equation}

\begin{thm}[Lascoux]
  \label{t:lascoux}
  Let \(Y_r = \left\{ f \in \Hom(E, F) \mid \rank{f} \leq r\right\}\) be a determinantal variety.
  If \(S = \Sym(E \otimes F^\ast)\) then a graded minimal \(S\)-free resolution of \(\CC[Y_r]\) is given by 
  \begin{equation*}
    \mathbf L_i
    = \bigoplus_{s \geq 0} \bigoplus_{\substack{(\alpha, \beta) \in L(r, s) \\ i = s^2 + |\alpha| + |\beta|}}
    \Schur_{P_{r, s}(\alpha, \beta)}(E) \otimes \Schur_{Q_{r, s}(\alpha, \beta)}(F^\ast) \otimes_{\CC} S.
  \end{equation*}
  If \(r\) divides \(q\) and \(q = rs\) for some \(s \geq 0\) then the graded \(\Tor\)-groups are given by 
  \begin{equation*}
    \Tor_p^S(\sO_{Y_r}, \CC)_{p + q}
    =
    \bigoplus_{\substack{(\alpha, \beta) \in L(r, s) \\ p = s^2 + |\alpha| + |\beta|}}
    \Schur_{P_{r, s}(\alpha, \beta)}(E) \otimes \Schur_{Q_{r, s}(\alpha, \beta)}(F^\ast).
  \end{equation*}
  If \(r\) does not divide \(q\), then \(\Tor^S_p(\sO_{Y_r}, \CC)_{p + q} = 0\).
\end{thm}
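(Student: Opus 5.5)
The plan is to apply the geometric method of Theorem~\ref{t:geom-method} to the usual desingularization of \(Y_r\).

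\emph{Setup.} Let \(X = \Gr(r, F)\) be the Grassmannian of \(r\)-dimensional subspaces \(R \subseteq F\), with tautological sequence \(0 \to \sR \to F \otimes \sO_X \to \sQ \to 0\), so \(\rank \sQ = n - r\). Inside the trivial bundle with fibre \(\Hom(E, F)\) take the subbundle
\begin{equation*}
  Z = \{ (R, f) \mid \operatorname{im} f \subseteq R \},
\end{equation*}
which is the total space of \(E^\ast \otimes \sR\).
\begin{itemize}
\item The projection \(\pi' \colon Z \to \Hom(E, F)\) has image \(Y_r\).
\item \(\pi'\) is an isomorphism over the open set of maps of rank exactly \(r\), hence birational.
\item In the notation of \S\ref{ss:geom-tech}, \(\eta = E \otimes \sR^\ast\) and \(\xi = E \otimes \sQ^\ast\).
\end{itemize}

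\emph{Step 1: vanishing of higher direct images.} Since \(Z\) is a vector bundle over the projective \(X\), we have \(\rR^i\pi'_\ast\sO_Z = \rH^i(X, \Sym(E \otimes \sR^\ast))\). The Cauchy formula decomposes this as \(\bigoplus_\lambda \Schur_\lambda E \otimes \Schur_\lambda \sR^\ast\). Each \(\Schur_\lambda \sR^\ast\) corresponds to a dominant weight for the parabolic, so Bott's theorem (in fact Borel--Weil) kills all higher cohomology.

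\emph{Step 2: the resolution and minimality.} By Theorem~\ref{t:geom-method}(b), \(\bF_\bullet\) is a minimal resolution of the normalization of \(Y_r\). Determinantal varieties are normal; this is classical, e.g.\ via the straightening law or since the ideal of minors is prime with \(Y_r\) Cohen--Macaulay and regular in codimension one. So \(\bF_\bullet\) resolves \(\CC[Y_r]\) itself. Because the differentials are minimal, \(\Tor_p^S(\sO_{Y_r}, \CC)_{p+q} = \rH^q(X, \bigwedge^{p+q}(E \otimes \sQ^\ast))\).

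\emph{Step 3: Bott computation.} By Cauchy,
\begin{equation*}
  \bigwedge^k(E \otimes \sQ^\ast) = \bigoplus_{|\lambda| = k} \Schur_\lambda E \otimes \Schur_{\lambda^\top}\sQ^\ast,
\end{equation*}
subject to \(\ell(\lambda) \le m\) and \(\lambda_1 \le n - r\). The core of the proof is to compute \(\rH^\bullet(X, \Schur_{\lambda^\top}\sQ^\ast)\) via Bott's algorithm. The relevant \(\GL(F)\)-weight concatenates \(\lambda^\top\) (on the \(\sQ^\ast\) part) with \(0^r\) (on the \(\sR^\ast\) part), up to the dualization conventions. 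Applying the dotted Weyl action to move the zeros past the parts of \(\lambda^\top\), I expect to show the following.
\begin{itemize}
\item Either the weight is singular, giving no cohomology.
\item Or \(\lambda\) has Durfee-type decomposition \(\lambda = P_{r,s}(\alpha, \beta)\): an \(s \times s\) square, then an \(r \times s\) block, then \(\alpha\) to the right and \(\beta\) below.
\item In the second case, exactly \(rs\) transpositions are needed, and the resulting dominant weight is \(Q_{r,s}(\alpha, \beta)\).
\end{itemize}
This gives cohomology \(\Schur_{Q_{r,s}(\alpha,\beta)} F^\ast\) concentrated in degree \(q = rs\). The length constraints defining \(L(r,s)\) should fall out of the bounds above:
\begin{itemize}
\item \(\ell(\lambda) = s + r + \ell(\beta) \le m\) gives \(\ell(\beta) \le m - r - s\).
\item \(\lambda_1 = s + \alpha_1 \le n - r\) gives \(\ell(\alpha^\top) \le n - r - s\).
\item The conditions \(\ell(\alpha) \le s\) and \(\ell(\beta^\top) \le s\) are built into the shape of \(P_{r,s}(\alpha, \beta)\).
\end{itemize}

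\emph{Step 4: bookkeeping.} With \(p + q = |\lambda| = s^2 + rs + |\alpha| + |\beta|\) and \(q = rs\), we get \(p = s^2 + |\alpha| + |\beta|\). Vanishing of \(\Tor_p\) in degree \(p+q\) when \(r \nmid q\) then follows. I expect Step 3 to be the main obstacle: carefully tracking the Bott straightening of the concatenated weight, and verifying that the number of inversions is exactly \(rs\) and the output partition is exactly \(Q_{r,s}\).
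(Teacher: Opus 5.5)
Your proposal is correct and follows essentially the same route as the proof the paper relies on. The paper gives no argument of its own but cites Lascoux and \cite[Proposition~6.1.3]{Wey:CohomVect}, where the resolution is obtained exactly via the geometric method on the desingularization over $\Gr(r, F)$, the Cauchy decomposition of $\bigwedge^\bullet(E \otimes \sQ^\ast)$, and Bott's theorem; there the straightening produces $\lambda = P_{r,s}(\alpha,\beta)$, $rs$ inversions, and output $Q_{r,s}(\alpha,\beta)$, just as you predict in Step~3, and the length constraints of $L(r,s)$ come out as you say.

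The one ingredient you import from outside, normality of $Y_r$, can also be read off internally. The same Bott computation shows $\bF_0 = A$, so the normalization is a cyclic quotient of $A$ and hence equals $Y_r$.
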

\begin{proof}
  The result originally appears in \cite{las1978:syzygies}.
  Our notation is closest to the proof presented as part of \cite[Proposition~6.1.3]{Wey:CohomVect}.
\end{proof}

\begin{prop}
  \label{simp-1:res}
  Let \(A = \Sym(E \otimes F^\ast) \otimes \Sym(E^\ast \otimes F)\).
  \begin{enumerate}
  \item The minimal graded \(A\)-free resolution \(\bF_\bullet\) of the simple compositional variety defined by \(\Delta_f\) is given by \(\bF_\bullet = \bL_\bullet^0 \otimes \Sym(E^\ast \otimes F)\) where \(\bL_\bullet^0\) is the Lascoux resolution of the determinantal variety
    \begin{equation*}
      Y_0 = \left\{ f \in \Hom(E, F) \mid \rank{f} \leq \dim{E} - \Delta_f \right\}
    \end{equation*}
    as a \(\Sym(E \otimes F^\ast)\)-module.

  \item The minimal graded \(A\)-free resolution \(\bG_\bullet\) of the simple compositional variety defined by \(\Delta_g\) is given by \(\bG_\bullet = \Sym(E \otimes F^\ast) \otimes \bL_\bullet^1\) where \(\bL_\bullet^1\) is the Lascoux resolution of the determinantal variety
    \begin{equation*}
      Y_1 = \left\{ g \in \Hom(F, E) \mid \rank{g} \leq \dim{F} - \Delta_g \right\}
    \end{equation*}
    as a \(\Sym(E^\ast \otimes F)\)-module.
  \end{enumerate}
  In both cases the graded \(\Tor\)-groups agree with those of the respective determinantal varieties.
\end{prop}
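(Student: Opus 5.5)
The approach is flat base change. It suffices to treat part (a); part (b) then follows by interchanging the roles of \(E\) and \(F\), equivalently of \(f\) and \(g\).

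\emph{Step 1: identify the ideal.} The simple compositional variety defined by \(\Delta_f\) imposes only the condition \(\dim\ker f \geq \Delta_f\). By the convention following Definition~\ref{def:comp}, every other parameter \(\Delta_h\) takes its minimal allowed value, and by the chain of kernels in the Note those conditions are automatic. Write \(S^0 = \Sym(E \otimes F^\ast)\) and \(S^1 = \Sym(E^\ast \otimes F)\). By Proposition~\ref{comp:algset-gens}, \(Y\) is cut out set-theoretically by \(I_f A\), where \(I_f \subseteq S^0\) is generated by the \((\dim E - \Delta_f + 1)\)-minors of \(\Phi^f\). Since \(I_f\) is the ideal of the determinantal variety \(Y_0\), it is prime, and it is the ideal used in Theorem~\ref{t:lascoux}. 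Therefore \(A/I_f A = (S^0/I_f) \otimes_{\CC} S^1\), and this ring is a domain because it is a polynomial ring over a domain. So \(I_f A\) is radical and \(\sO_Y = \CC[Y_0] \otimes_\CC S^1\), with \(Y = Y_0 \times \Hom(F,E)\).

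\emph{Step 2: tensor up the resolution.} The extension \(S^0 \to A = S^0 \otimes_\CC S^1\) is flat, indeed free. Tensoring the Lascoux resolution \(\bL^0_\bullet \to \CC[Y_0]\) with \(S^1\) over \(\CC\) is therefore exact and gives a graded free \(A\)-resolution \(\bF_\bullet = \bL^0_\bullet \otimes_\CC S^1\) of \(\sO_Y\). For minimality, note that the differentials of \(\bL^0_\bullet\) have entries in the maximal homogeneous ideal of \(S^0\). Their images in \(A\) have entries in the maximal homogeneous ideal of \(A\), so \(\bF_\bullet\) is minimal.

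\emph{Step 3: compare the Tor groups.} We have
\[
\Tor^A_p(\sO_Y,\CC) = \rH_p(\bF_\bullet \otimes_A \CC) = \rH_p(\bL^0_\bullet \otimes_{S^0} \CC),
\]
and the grading is preserved because \(S^1\) is generated in degree \(1\) and is killed by the residue field. This identification holds equivariantly for \(\GL(E)\times\GL(F)\), so the graded Tor groups agree with those of \(Y_0\) given in Theorem~\ref{t:lascoux}.

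The only step requiring any care is Step 1: one must check that no extra conditions from the composite parameters \(\Delta_{gf},\Delta_{fg},\dots\) survive. Their default values give vacuous rank bounds, since, for example, \(\rank(gf) \leq \rank f\). This leaves no real obstacle.
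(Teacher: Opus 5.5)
Your proposal is correct and follows essentially the same route as the paper: identify \(Y \cong Y_0 \times \Hom(F,E)\), tensor the Lascoux resolution of \(\CC[Y_0]\) with \(\Sym(E^\ast \otimes F)\), and observe that this flat extension preserves exactness, minimality, the grading and the \(\Tor\)-groups. You also supply details the paper leaves implicit, namely that the defaulted composite conditions are vacuous and that \(I_f A\) is prime (because determinantal ideals are prime), so \(I_f A\) is the full ideal of \(Y\).
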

\begin{proof}
  Both claims are a direct consequence of Theorem~\ref{t:lascoux}.
  Note that the simple compositional variety \(Y\) defined by \(\Delta_f\) is isomorphic to the product \(Y_0 \times \Hom(F, E)\) and so a minimal free resolution of \(\CC[Y]\) as an \(A\)-module is obtained by tensoring the minimal free resolution of \(\CC[Y_0]\) as a \(\Sym(E \otimes F^\ast)\)-module with \(\Sym(E^\ast \otimes F)\).
  This operation preserves the \(\Tor\)-groups and the grading.
  The second claim follows similarly.
\end{proof}

\subsection{Auxiliary spaces for \(2\)-fold compositional varieties}
\label{ss:simp-2-aux}
We now introduce the setup for the geometric method relevant for calculating geometric invariants of some compositional varieties.
We also prove some of the main results that will be used in calculation of syzygies for \(2\)-fold compositional varieties.

We assume without loss of generality that \(\dim{E} \geq \dim{F}\) and, as in Definition~\ref{def:comp}, set \(\delta = \dim{E} - \dim{F}\).
Recall the affine variety \(\sA = \Hom(E, F) \times \Hom(F, E)\) and its coordinate ring \(A = \Sym(E \otimes F^\ast) \otimes \Sym(E^\ast \otimes F)\).
Let \(\sX = \Gr(\delta, E)\) and let \(0 \to \sR \to \sO_{\sX} \otimes E \to \sQ \to 0\) be its tautological sequence where \(\sR\) and \(\sQ\) are the bundles given by
\begin{align*}
  \sR &= \left\{ (v, R) \mid R \in \Gr(\delta, E), v \in R \right\} \subseteq E \times \Gr(\delta, E), \\
  \sQ &= \left\{ (v, R) \mid R \in \Gr(\delta, E), v \in E/R \right\} = (E \times \Gr(\delta, E)) / \sR.
\end{align*}

Consider the trivial vector bundle on \(\sX\) whose fibers are isomorphic to \(\sA\), the total space of this bundle is simply \(\sX \times \sA\).
Fix \(0 \leq r \leq \dim{F}\) and define the subvariety 
\begin{equation*}
  \sZ =
  \sZ_r =
  \left\{
    (K, f, g) \in \sX \times \sA
    \;\middle|
    \begin{array}{l}
      f(K) = 0 \\
      \rank(g \colon F \to E / K) \leq r
    \end{array}
  \right\}.
\end{equation*}
Now consider the natural diagram
\begin{equation}
  \label{diag:gm:comp-2}
  \begin{tikzcd}
    \sZ_r \ar["\subseteq"]{r} \ar["\Pi' = \Pi\big\vert_{\sZ}",swap]{d} & \sX \times \sA \ar["\Pi_{\sX}"]{r} \ar["\Pi = \Pi_{\sA}"]{d} & \sX \\
    \sY_r \ar{r} & \sA &
  \end{tikzcd}
\end{equation}
involving the projections \(\Pi = \Pi_\sA\) and \(\Pi_\sX\), the restriction \(\Pi'\) of \(\Pi\) to \(\sZ\), and \(\sY = \Pi'(\sZ)\).

\begin{prop}
  \label{aux:comp-2:upstairs}
  Let \(m = \dim{E}\) and \(n = \dim{F}\).
  The minimal free resolution of \(\sO_{\sZ}\) as an \(\sO_{\sX \times \sA}\)-module is given by a complex \(\bF_\bullet\) where 
  \begin{equation*}
    \bF_t =
    \bigoplus_{\substack{|\lambda| \leq t,\; 0 \leq s\\(\alpha, \beta) \in L(r, s)\\ t - |\lambda| = s^2 + |\alpha| + |\beta|}}
    \Schur_\lambda(\sR) \otimes
    \Schur_{Q_{r, s}(\alpha, \beta)}(\sQ^\ast) \otimes
    \Schur_{\lambda^\top}(F^\ast) \otimes
    \Schur_{P_{r, s}(\alpha, \beta)}(F) \otimes_\CC A
  \end{equation*}

  Furthermore \(\dim{\sZ_r} = 2mn - (n - r)^2\).
\end{prop}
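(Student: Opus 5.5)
The plan is to observe that \(\sZ_r\) is locally (over \(\sX\)) a product of two independent conditions: the condition \(f(K) = 0\) involves only \(f\), and the rank condition on \(g \colon F \to E/K\) involves only \(g\). I will resolve each factor separately over \(\sX \times \sA\) and then take the tensor product of the two resolutions.

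For the first factor, \(f(K)=0\) means that \(f\) lies in the subbundle \(\Hom(E/K,F) \subseteq \Hom(E,F)\). Equivalently, the composite of the tautological map \(\sR \otimes F^\ast \to \sO \otimes E \otimes F^\ast\) with the coordinate functions vanishes. So this locus is the zero scheme of a regular section of the bundle \(\sR^\ast \otimes F\), which has rank \(\delta n\). Its coordinate sheaf is therefore resolved by the Koszul complex \(\bigwedge^\bullet(\sR \otimes F^\ast) \otimes A\). By the Cauchy formula,
\[
\bigwedge^k(\sR \otimes F^\ast) = \bigoplus_{|\lambda| = k} \Schur_\lambda(\sR) \otimes \Schur_{\lambda^\top}(F^\ast).
\]
This produces the \(\lambda\)-part of \(\bF_t\), with homological degree \(|\lambda|\).

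For the second factor, the condition involves only \(g\). The relevant object is the relative determinantal variety of maps \(\overline g \colon F \to \sQ\) of rank at most \(r\), sitting inside the total space of \(\Hom(F,\sQ) = F^\ast \otimes \sQ\). The actual variable is \(g \in \Hom(F,E)\), which surjects onto \(\Hom(F,\sQ)\) with fiberwise affine-space kernel \(\Hom(F,\sR)\), so the condition is pulled back along a smooth (flat) projection. I will apply Theorem~\ref{t:lascoux} in relative form over \(\sX\). Because the Lascoux complex is \(\GL\)-equivariant and built functorially from Schur functors, it globalizes to bundles. Here the target is \(\sQ\) of rank \(n\) and the source is \(F\) of rank \(n\). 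The dualized roles give terms \(\Schur_{P_{r,s}(\alpha,\beta)}(F) \otimes \Schur_{Q_{r,s}(\alpha,\beta)}(\sQ^\ast)\) in homological degree \(s^2 + |\alpha| + |\beta|\), with \((\alpha,\beta) \in L(r,s)\) for \(m = n = \operatorname{rank}\) (here \(\dim E/K = n\)). The main bookkeeping point is to check that the index conventions and the set \(L(r,s)\) match after swapping which space plays \(E\) and which plays \(F^\ast\). I expect this to be routine but fiddly.

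Then I will argue that the tensor product of the two complexes over \(\sO_{\sX\times\sA}\) resolves \(\sO_{\sZ}\). The two ideals live in disjoint sets of variables: locally over \(\sX\), choose a splitting \(E = K \oplus E/K\), so the \(f\)-coordinates and the \(g\)-coordinates are separate. Hence \(\Tor_{>0}\) of the two quotients vanishes, by flatness of one factor over the other's coordinate ring, and the Künneth-type statement gives the tensor product resolution. Collecting total homological degree \(t = |\lambda| + s^2 + |\alpha| + |\beta|\) yields the stated \(\bF_t\). Minimality holds because both complexes are minimal: the Koszul differentials are linear, and the Lascoux differentials have positive degree, so the tensor product differential has entries in the maximal ideal fiberwise. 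Because the second ideal is Cohen–Macaulay and the first is a complete intersection, there is no hidden cancellation.

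Finally, the dimension count:
\[
\dim \sZ_r = \dim \sX + \bigl(mn - \delta n\bigr) + \bigl(\dim\Hom(F,\sR) + \dim\{\text{rank} \le r \text{ maps } F \to \sQ\}\bigr).
\]
Here \(\dim\sX = \delta(m-\delta) = \delta n\). The \(f\)-part contributes \(n(m-\delta) = n^2\). The \(g\)-part contributes \(n\delta + (n^2 - (n-r)^2)\). The sum is
\[
\delta n + n^2 + n\delta + n^2 - (n-r)^2 = 2n(n+\delta) - (n-r)^2 = 2mn - (n-r)^2.
\]
The main obstacle I anticipate is the relative Lascoux step: justifying that the resolution sheafifies over the Grassmannian with the \(g\)-variable living in \(\Hom(F,E)\) rather than \(\Hom(F,\sQ)\). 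I will handle this by the flat pullback along \(\Hom(F,E) \to \Hom(F,\sQ)\).
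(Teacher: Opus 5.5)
Your proposal is correct and follows essentially the same route as the paper. You write \(\sZ\) as the fiber product over \(\sX\) of the locus \(f(K)=0\), resolved by the Koszul complex \(\bigwedge^\bullet(\sR\otimes F^\ast)\), and the relative rank locus for \(g\colon F\to E/K\), resolved by the Lascoux complex pulled back along \(\Hom(F,E)\to\Hom(F,\sQ)\); you then tensor the two resolutions and add the dimensions. Your explicit arguments for Tor-independence and for taking \(L(r,s)\) with both dimensions equal to \(n\) are harmless refinements of points the paper only asserts, since any extra Schur functors vanish anyway.
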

\begin{proof}
  Let \(\sZ_0\) and \(\sZ_1\) be varieties defined by
  \begin{align*}
    \sZ_0 &= \left\{ (K, f) \mid K \subseteq \ker{f} \right\} \subseteq \sX \times \Hom(E, F), \\
    \sZ_1 &= \left\{ (K, f) \mid \rank(g \colon F \to E / K) \leq r \right\} \subseteq \sX \times \Hom(F, E).
  \end{align*}
  Note that we have \(\sZ = \sZ_0 \times_\sX \sZ_1\).

  Since \(\sZ_0\) consists of pairs \((K, f)\) satisfying \(K \subseteq \ker{f}\), we identify \(\sO_{\sZ_0}\) with the subbundle \(\Hom(\sQ, F)\) of \(\sX \times \sA\).
  Therefore, we obtain 
  \begin{equation*}
    \dim(\sZ_0)
    = \rank{(\sQ^\ast \otimes F)} + \dim{\sX} 
    = (m - \delta) n + (m - \delta) \delta
    = (m - \delta)(n + \delta) = mn.
  \end{equation*}
  As an \(\sO_{\sX \times \sA}\)-module, \(\sO_{\sZ_0}\) is defined by the vanishing of the sections of the subbundle \(\sR \otimes F^\ast\) and thus its resolution as an \(\sO_{\sX \times \sA}\)-module is given by the Koszul complex \(\bK_\bullet = \bigwedge^\bullet (\sR \otimes F^\ast)\).
  

  For the variety \(\sZ_1\), first consider \(\sO_{\sZ_1}\) as an \(\sO_{\sX \times \Hom(F, E)}\)-module.
  The fiber of \(\sZ_1\) above a fixed \(K \in \sX\) is given by \(Z_1 = \left\{ g \in \Hom(F, E) \mid \rank(g \colon F \to E / K) \leq r \right\}\) whose defining ideal \(I_{r + 1} \subseteq \Sym(F \otimes (E / K)^\ast)\) is generated by the \((r + 1) \times (r + 1)\) minors of the \(n \times (m - \delta) = n \times n\) matrix of indeterminates in \(\Sym(F \otimes (E / K)^\ast)\).
  By Theorem~\ref{t:lascoux}, the quotient ring is resolved by the Lascoux resolution and tensoring first by \(\Sym(F \otimes K^\ast)\) and then by \(\Sym(E \otimes F^\ast)\), we see that an \(\sO_{\sX \times \sA}\)-module resolution of \(\sO_{\sZ_1}\) is given by a resolution \(\bL_\bullet\) whose terms are
  \begin{equation*}
    \bL_i = \bigoplus_{s \geq 0} \bigoplus_{\substack{(\alpha, \beta) \in L(r, s) \\ i = s^2 + |\alpha| + |\beta|}}  \Schur_{P_{r, s}}(F) \otimes \Schur_{Q_{r, s}(\alpha, \beta)}(\sQ^\ast).
  \end{equation*}
  Since  \(\dim{Z_1} = \dim{\left(\Sym(F \otimes (E / K)) / I_{r + 1}\right)} + \dim{\Hom(F, K)}\), 
  \begin{align*}
    \dim{\sZ_1}
    &=  \dim(\sX) + \dim(Z_1) =  n\delta + (2nr - r^2 + n \delta)
      = 2mn - 2n^2 + 2nr - r^2 \\
    &= mn + n\delta - n^2 + 2nr - r^2
      = mn + n\delta - (n - r)^2.
  \end{align*}

  We can thus conclude that \(\sO_{\sZ}\) is resolved by \(\bF_\bullet = \mathbf{K}_\bullet \otimes \mathbf{L}_\bullet\) where \(\bF_t = \bigoplus_{a + b = t} \bK_a \otimes \bL_b\) and
  \begin{equation*}
    \bK_a = \bigwedge^a (\sR \otimes F^\ast) = \bigoplus_{|\lambda| = a} \Schur_\lambda(\sR) \otimes \Schur_{\lambda^\top}(F^\ast).
  \end{equation*}
  Simplifying and reindexing using \(a = |\lambda| \leq t\) and \(b = t - |\lambda|\) gives us the claimed form for \(\bF_t\).
  Finally, note that \(\sZ = \sZ_0 \times_{\sX} \sZ_1\) and the dimension calculations for \(\sZ_0\) and \(\sZ_1\) imply 
  \begin{align*}
    \dim{\sZ}
    &=  \dim{\sZ_0} + \dim{\sZ_1} - \dim{\sX} \\
    &=  mn + mn + n\delta - (n - r)^2 - n\delta
      = 2mn - (n - r)^2. \qedhere
  \end{align*}
\end{proof}

Recall that Theorem~\ref{t:geom-method} implies that the sequence of graded free \(A\)-modules
\begin{equation*}
  \bG_i = \bigoplus_{j \geq 0} \rH^j(\sX, \bF_{i + j}) \otimes A(-i - j)
\end{equation*}
is a complex and \(\rH_{-i}(\mathbf{G}_\bullet) = \rR^i \Pi'_\ast \sO_{\sZ}\).
Furthermore when \(\Pi'\) is birational and \(\rR^i\Pi'_\ast \sO_{\sZ} = 0\) for all \(i \geq 0\) then \(\sY\) is normal, has rational singularities, and \(\bG_\bullet\) is a minimal free resolution of \(\sY\).
Assuming birationality of \(\Pi'\) and using the diagram~\eqref{diag:gm:comp-2} we will now show that \(\sY\) is normal, Cohen--Macaulay, and has rational singularities.
In our case, the terms of \(\bG_\bullet\) are obtained by computing cohomology groups over a Grassmannian \(\sX\), so the general strategy is to use Borel--Weil--Bott to show that the required cohomology groups vanish.
The main result here is Proposition~\ref{aux:comp-2:rat-sing} but we first prove some combinatorial properties associated to the partitions appearing in \(\bF_\bullet\). 

For partitions \(\lambda\) and \(\mu\), set
\begin{equation}
  C(\lambda, \mu) =
  \left\{
    (i, j) \mid
    (\lambda_i - i) + (\mu_j - j) \geq 0
  \right\}.
\end{equation}
Let \(u\) and \(v\) be tuples whose terms are given by
\begin{equation*}
  u_i = \Bigl| \{ j \mid (i, j) \in C(\lambda, \mu) \} \Bigr|,
  \quad
  v_j = \Bigl| \{ i \mid (i, j) \in C(\lambda, \mu) \} \Bigr|.
\end{equation*}
Our goal is to show that \(|C(\lambda, \mu)|\) calculates cohomology degree for summands of \(\bF_\bullet\) associated to weights \(\lambda\) and \(\mu\) and so, in a sequence of lemmas, we determine the necessary bounds.

\begin{lem}
  \label{lem:c-props}
  The following holds for the set \(C(\lambda, \mu)\) and the tuples \(u\) and \(v\):
  \begin{enumerate}
  \item \((i, j) \in C(\lambda, \mu)\) and \(i' \leq i\) and
    \(j' \leq j\) then \((i', j') \in C(\lambda, \mu)\),
  \item \(u_i \geq u_{i + 1}\) and \(v_j \geq v_{j + 1}\).
  \end{enumerate}
  In particular, \(C(\lambda,\mu)\) can be identified with the Young diagram of a partition \(\theta\).
\end{lem}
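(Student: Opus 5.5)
The plan rests on one observation: the sequences \(\lambda_i - i\) and \(\mu_j - j\) are strictly decreasing in their indices. Since \(\lambda\) is a partition, \(\lambda_{i+1} \leq \lambda_i\), hence
\begin{equation*}
  \lambda_{i+1} - (i+1) \leq \lambda_i - i - 1 < \lambda_i - i,
\end{equation*}
and similarly for \(\mu\). We use the convention that \(\lambda_i = 0\) for \(i > \ell(\lambda)\), and likewise for \(\mu\). Both sequences are then defined for all positive indices and tend to \(-\infty\).

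For part (a), take \((i, j) \in C(\lambda, \mu)\) with \(i' \leq i\) and \(j' \leq j\). Monotonicity gives \(\lambda_{i'} - i' \geq \lambda_i - i\) and \(\mu_{j'} - j' \geq \mu_j - j\). Adding these,
\begin{equation*}
  (\lambda_{i'} - i') + (\mu_{j'} - j') \geq (\lambda_i - i) + (\mu_j - j) \geq 0,
\end{equation*}
so \((i', j') \in C(\lambda, \mu)\).

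For part (b), apply (a) with \(j' = j\) and \(i' = i\). This shows that the row sets \(\{j \mid (i+1, j) \in C\}\) are nested, with the set for \(i+1\) contained in the set for \(i\). Taking cardinalities gives \(u_{i+1} \leq u_i\). The inequality \(v_{j+1} \leq v_j\) follows symmetrically from the column sets.

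For finiteness, fix \(i\). Then \(\mu_j - j \to -\infty\), so only finitely many \(j\) satisfy the defining inequality, and \(u_i < \infty\). Also \(u_1\) is finite, and \(u_i = 0\) once \(\lambda_i - i + \mu_1 - 1 < 0\), which holds for all large \(i\). Hence \(C\) is a finite set.

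By (a), each row of \(C\) is an initial segment \(\{1, \dots, u_i\}\), and by (b) the row lengths weakly decrease. So \(C(\lambda, \mu)\) is exactly the Young diagram of the partition \(\theta = (u_1, u_2, \dots)\), and \(\theta^\top = v\). There is no real obstacle here. The only care needed is to handle indices beyond the lengths of \(\lambda\) and \(\mu\) via the zero-padding convention, which keeps the strict monotonicity argument valid everywhere.
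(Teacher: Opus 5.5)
Your proof is correct and follows the paper's argument: (a) comes from adding the two monotonicity inequalities, and (b) from the nestedness of rows and columns supplied by (a), which the paper phrases as a contradiction with the maximality of \(u_a\) rather than as an inclusion of row sets. The zero-padding convention and the finiteness check are harmless additions that the paper leaves implicit. Be aware, though, that the later count in Lemma~\ref{lem:c-len-formula} needs the indices confined to the box \([\delta]\times[n]\) rather than zero-padded; your argument works verbatim in that setting as well.
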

\begin{proof}
  Since \(\lambda\) and \(\mu\) are partitions
  \(\lambda_{i'} \geq \lambda_{i}\) and \(\mu_{j'} \geq \mu_{j}\), so
  \begin{align*}
    \lambda_{i'} - i' + \mu_{j'} - j' \geq \lambda_i - i + \mu_j - j \geq 0
  \end{align*}
  and \((i', j') \in C(\lambda, \mu)\). Observe that each \(u_i\) is maximal such
  that \((i, u_i) \in C(\lambda, \mu)\). If \(u_a < u_b\) for some
  \(a < b\) then
  \((b, u_b) \in C(\lambda, \mu) \implies (a, u_b) \in C(\lambda, \mu)\) contradicts maximality
  of \(u_a\). A similar argument shows \(v_j \geq v_{j + 1}\).
\end{proof}

\begin{eg}
  Lemma~\ref{lem:c-props} shows that \(C(\lambda, \mu)\) is the Young diagram of a partition \(\theta\) whose rows and columns are given by the tuples \(u\) and \(v\) respectively. For example, if \(\lambda = (4, 3, 3, 3, 3, 2, 2)\) and \(\mu = (6, 6, 4, 4, 3, 3, 2, 2)\) then \(\theta = (6, 4, 4, 3, 2, 2, 1)\).
  We can visualize the relationship between the partitions in the diagram below:
  \begin{equation*}
    \ytableausetup{boxsize=0.8em}
\begin{tikzpicture}

  \draw (0, 0) node[rotate=-90, anchor=north west]{%
    \ydiagram{7, 7, 5, 1}
    *[*(Red!80) \bullet]{7 + 0, 1, 1}
    *[*(Red!20) \circ]{1 + 2, 2 + 1}
    *[*(Gray!20) \cdot]{3, 3, 3, 1}
  };
  \draw (-2, -2) node {$\lambda$};

  \draw (0, 0) node[rotate=90, anchor=north west]{%
    \ydiagram{6, 6, 4, 4, 3, 3, 2, 2}
    *[*(Cyan!80) \ast]{1 + 4, 2 + 3, 3 + 1}
    *[*(Cyan!20) \times]{1, 2, 3, 3}
    *[*(Gray!20) \cdot]{6, 6, 4, 4}
  };
  \draw (2, 2) node {$\mu$};

  \draw (0, 0)  node[anchor=north west] {%
    \ydiagram{6, 4, 4, 3, 2, 2, 1}
    *[*(Red!20) \circ]{2 + 2, 3 +1}
    *[*(Cyan!20) \times]{2, 3, 4}
    *[*(Red!80) \bullet]{4 + 2}
    *[*(Cyan!80) \ast]{0, 0, 0, 3, 2, 2, 1}
  };
  \draw (2, -2) node {$\theta$};
\end{tikzpicture}

  \end{equation*}

  Recall that the rank of a partition is the length of the largest square contained in its Young diagram \cite[\S1.8]{Sta:EC1}.
  The ranks \(a = \rank{\lambda}\) and \(b = \rank{\mu}\) determine a partition of the boxes of \(\theta\) into three disjoint sets: a rectangle \(a \times b\) and partitions \(\theta^\lambda = (\theta_1 - b, \dots, \theta_a - b)\) and \(\theta^\mu = (\theta^\top_1 - a, \dots, \theta^\top_b - a)\).
  These can be identified with some of the boxes in the first \(a\) rows of \(\lambda\) and first \(b\) rows of \(\mu\) according to the coloring scheme shown above.
  For instance the boxes in \(\theta^\mu_j\) map to the boxes \((j, j + 1), \dots, (j, j + \theta^\mu_j)\) of \(\mu\).
  In Lemma~\ref{lem:c-bdd} we show that such a decomposition is always possible and use it to determine a bound on \(|C(\lambda, \mu)|\).
\end{eg}

\begin{lem}
  \label{lem:c-bdd}
  If \(\lambda\) and \(\mu\) are partitions then
  \begin{equation*}
    |C(\lambda, \mu)| \leq \sum_{i = 1}^{\rank{\lambda}} \lambda_i + \sum_{j = 1}^{\rank{\mu}}\mu_j.
  \end{equation*}
  If \(\rank{\lambda} + \rank{\mu} > 0\) then the inequality above is strict.
\end{lem}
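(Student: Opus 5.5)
The plan is to use the description of \(C(\lambda, \mu)\) as a Young diagram \(\theta\) from Lemma~\ref{lem:c-props}. Cut \(\theta\) into the \(a \times b\) rectangle in its top-left corner, the boxes in rows \(i \leq a\) to the right of column \(b\), and the boxes in columns \(j \leq b\) below row \(a\). Here \(a = \rank{\lambda}\) and \(b = \rank{\mu}\), as in the example preceding the statement. Each piece is bounded separately, and the bounds are then compared to the right-hand side.

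\emph{Step 1: no box lies outside the first \(a\) rows and first \(b\) columns.} Since \(\rank{\lambda} = a\), we have \(\lambda_{a+1} \leq a\). So for \(i > a\) we get \(\lambda_i \leq a \leq i - 1\), that is, \(\lambda_i - i \leq -1\). Likewise \(\mu_j - j \leq -1\) for \(j > b\). Hence if \(i > a\) and \(j > b\), then \((\lambda_i - i) + (\mu_j - j) \leq -2 < 0\), so \((i,j) \notin C(\lambda, \mu)\). Therefore \(C(\lambda,\mu)\) is contained in the union of the first \(a\) rows and the first \(b\) columns.

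\emph{Step 2: bound the two arms.} Fix \(i \leq a\); then \(\lambda_i \geq i\). A box \((i, j) \in C\) with \(j > b\) requires \(\mu_j - j \geq i - \lambda_i\). The sequence \(\mu_j - j\) is strictly decreasing in \(j\), so its values for \(j > b\) are distinct integers. By Step 1 they are all \(\leq -1\). Thus the admissible \(j\) correspond injectively to integers in the interval \([i - \lambda_i, -1]\), of which there are \(\lambda_i - i\). Symmetrically, column \(j \leq b\) contains at most \(\mu_j - j\) boxes below row \(a\). Adding the rectangle gives
\begin{equation*}
  |C(\lambda,\mu)| \leq ab + \sum_{i=1}^{a} (\lambda_i - i) + \sum_{j=1}^{b} (\mu_j - j)
  = \sum_{i=1}^{a}\lambda_i + \sum_{j=1}^{b}\mu_j + ab - \tfrac{a(a+1)}{2} - \tfrac{b(b+1)}{2}.
\end{equation*}

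\emph{Step 3: the elementary inequality.} It remains to show \(2ab \leq a^2 + a + b^2 + b\). This is equivalent to \((a-b)^2 + a + b \geq 0\), which clearly holds. Equality forces \(a = b\) and \(a + b = 0\), so the inequality is strict whenever \(a + b > 0\), which gives the strict version of the lemma.

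The only delicate point is Step 2: one must use strict monotonicity of \(\mu_j - j\), not just monotonicity of \(\mu\), to turn the count of admissible \(j\) into a count of integers in an interval. One must also check that \(\lambda_i - i \geq 0\) for \(i \leq a\), so that the interval count is meaningful; this holds because \(\lambda_a \geq a\).
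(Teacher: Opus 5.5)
Your proof is correct and follows the paper's argument. You use the same split of $C(\lambda,\mu)$ into the $a\times b$ rectangle and two arms, the same bounds $\lambda_i - i$ per row and $\mu_j - j$ per column, and the same final inequality $(a-b)^2+(a+b)\ge 0$. The only difference is how the arm bound is justified: you use injectivity of $j\mapsto \mu_j - j$ into $[i-\lambda_i,-1]$, while the paper argues $u_i \le \lambda_i - i + \mu_{u_i} \le \lambda_i - i + b$, and your version also cleanly covers the boundary case $u_i = b$, which the paper's phrasing glosses over.
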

\begin{proof}
  Set \(a = \rank{\lambda}\), \(b = \rank{\mu}\), and \(\theta\) be the partition whose Young diagram is given by \(C(\lambda, \mu)\).
  When \(i > a \) and \(j > b\), \(\lambda_i - i < 0\) and \(\mu_j - j < 0\) so \((i, j) \notin C(\lambda, \mu)\); on the other hand, if \(i \leq a\) and \(j \leq b\) we have \(\lambda_i \geq i\) and \(\mu_j \geq j\) so \((i, j) \in C(\lambda, \mu)\).
  Therefore \(\theta\) contains the rectangle \(a \times b\) Since \(b \leq u_i\) for \(i \leq a\) and \(a \leq v_j\) for \(j \leq b\), the rest of the boxes of \(\theta\) determine partitions
  \begin{align*}
    \theta^\lambda &=  (u_1 - b, \dots, u_a - b)  = (\theta_1 - b, \dots, \theta_a - b), \\
    \theta^\mu &= (v_1 - a, \dots, v_b - a) = (\theta^\top_1 - a, \dots, \theta^\top_b - a)
  \end{align*}
  satisfying \(|\theta| = ab + |\theta^\lambda| + |\theta^\mu|\).

  If \(u_i \geq b\) then \(\mu_{u_i} \leq b\) and \(u_i \leq \lambda_i - i + \mu_{u_i} \leq  \lambda_i - i + b\).
  Therefore
  \begin{equation*}
    |\theta^\lambda|
    = \sum_{i \leq a} (u_i - b)
    \leq \sum_{i \leq a} (\lambda_i - i)
    = \sum_{i \leq a} \lambda_i - \frac{a (a + 1)}{2}.
  \end{equation*}
  Similarly if \(j \leq b\) then \((a, j) \in C(\lambda, \mu)\) and \(v_j \leq \lambda_{v_j} + \mu_j - j \leq a + \mu_j - j\).
  In this case,
  \begin{equation*}
    |\theta^\mu|
    = \sum_{j \leq b} (v_j - a)
    \leq \sum_{j \leq b} (\mu_j - j)
    \leq \sum_{j \leq b} \mu_j - \frac{b (b + 1)}{2}.
  \end{equation*}


  Thus we can now conclude
  \begin{align*}
    |C(\lambda, \mu)| &= |\theta| = ab + |\theta^\lambda| + |\theta^\mu| \\
                      &\leq ab +
                        \left(\sum_{i = 1}^{\rank \lambda} \lambda_i - \frac{a(a + 1)}{2}\right) +
                        \left(\sum_{j = 1}^{\rank \mu} \mu_j - \frac{b(b + 1)}{2}\right) \\
                      &= \sum_{i = 1}^{\rank \lambda} \lambda_i +  \sum_{j = 1}^{\rank \mu} \mu_j
                        - \frac{1}{2} [(a - b)^2 + (a + b)] 
                        \leq \sum_{i = 1}^{\rank \lambda} \lambda_i +  \sum_{j = 1}^{\rank \mu} \mu_j.
  \end{align*}
  If \(a + b > 0\) then \( \frac12 [(a - b)^2 + (a + b)] \geq 1\) so in this case final inequality above is strict.
\end{proof}

\begin{lem}
  \label{lem:c-len-formula}
  Let \(\mu = (\mu_1, \dots, \mu_n)\) and \(\lambda = (\lambda_1, \dots, \lambda_\delta)\) be partitions, \(m = n + \delta\), and
  \begin{equation*}
    \gamma = (-\mu_n, \dots, -\mu_1, \lambda_1, \dots, \lambda_\delta),
    \qquad
    \rho = (m - 1, \dots, 1, 0).
  \end{equation*}
  If \(\gamma + \rho\) has no repeated entries and \(\sigma \in \mathfrak{S}_m\) is a permutation such that entries of \(\sigma(\gamma + \rho)\) appear in decreasing order, then \(\ell(\sigma) = |C(\lambda, \mu)| = \sum_{i \in [\delta]} u_i = \sum_{j \in [n]} v_j\).
\end{lem}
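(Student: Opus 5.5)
The length of a sorting permutation equals the number of inversions of the sequence $\gamma+\rho$, i.e. the number of pairs of positions $(k,l)$ with $k<l$ and $(\gamma+\rho)_k<(\gamma+\rho)_l$. Since the entries are distinct, this is exactly $\ell(\sigma)$. So the plan is to count these inversions directly and match them with $C(\lambda,\mu)$.

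Write the first $n$ positions as $k=n+1-j$, carrying the entry $-\mu_j$, and the last $\delta$ positions as $k=n+i$, carrying $\lambda_i$. Then
\begin{equation*}
  (\gamma+\rho)_{n+1-j} = -\mu_j + (m-n-1+j) = \delta - 1 + j - \mu_j,
  \qquad
  (\gamma+\rho)_{n+i} = \lambda_i + \delta - i .
\end{equation*}
Inversions come in three kinds.
\begin{enumerate}
\item Both positions lie in the first block. For $j'>j$ (position $n+1-j'$ is earlier), the entries satisfy $j'-\mu_{j'} > j-\mu_j$ because $\mu$ is weakly decreasing. So the earlier entry is strictly larger and there is no inversion.
\item Both positions lie in the second block. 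The entries $\lambda_i-i$ are strictly decreasing in $i$, so again there is no inversion.
\item One position in each block. The first-block position always precedes the second-block one, so the pair $(j,i)$ is an inversion exactly when
\begin{equation*}
  \delta-1+j-\mu_j < \lambda_i+\delta-i ,
\end{equation*}
that is, $\lambda_i - i + \mu_j - j > -1$. Since everything is an integer, this is $(\lambda_i-i)+(\mu_j-j)\ge 0$, i.e. $(i,j)\in C(\lambda,\mu)$.
\end{enumerate}
Equality cannot occur in case (c), because $\gamma+\rho$ has no repeated entries; this is the only place that hypothesis is used.

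Hence $\ell(\sigma)=|C(\lambda,\mu)|$, with $i$ ranging over $[\delta]$ and $j$ over $[n]$ (entries of $C$ outside this range are not counted, as $\lambda$ and $\mu$ have only that many parts). The remaining equalities are bookkeeping: summing the row counts $u_i$ over $i\in[\delta]$, or the column counts $v_j$ over $j\in[n]$, both give $|C(\lambda,\mu)|$.

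The only real obstacle is the index bookkeeping: getting $\rho$ aligned with the reversed $\mu$-block and checking the strict-versus-weak inequality conversion. I would double-check the no-inversion claim within each block, using that strictness comes from the shift by $\rho$.
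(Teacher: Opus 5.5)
Your proposal is correct and follows essentially the paper's proof. Both count inversions of $\gamma+\rho$, note that each block is strictly decreasing, and reindex the $\mu$-block by $j \mapsto n+1-j$, so that the cross-block inversions are exactly the pairs with $(\lambda_i-i)+(\mu_j-j)\ge 0$. You state the within-block direction correctly, whereas the paper's text has that inequality reversed.

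One small remark: the distinctness hypothesis is what makes $\sigma$ unique and $\ell(\sigma)$ equal to the inversion count. It is not needed in case (c), where a tie would not count as an inversion and would also correspond to a pair outside $C(\lambda,\mu)$.
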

\begin{proof}
  The entries of \(\gamma + \rho\) are given by
  \begin{equation*}
    (\gamma + \rho)_i =
    \begin{cases}
      -\mu_{n + 1 - i} + m - i, &i \in [n] \\
      \lambda_{i - n} + m - i & (i - n) \in [\delta]
    \end{cases}.
  \end{equation*}
  Since the first \(n\) and the last \(\delta\) entries of \(\gamma\) are weakly
  decreasing and all entries of \(\rho\) are strictly decreasing, we have
  \((\gamma + \rho)_i < (\gamma + \rho)_j\) whenever \(i < j \leq n\) or
  \(n < i < j \leq m\). The total number of inversions required to sort
  \(\gamma + \rho\) is given by the cardinality of the set
  \begin{equation*}
    \mathrm{inv}(\sigma) = \left\{
      (i, j) \in [n] \times [\delta] \mid
      (\gamma + \rho)_i < (\gamma + \rho)_{j + n}
    \right\}.
  \end{equation*}
  Since \(i \mapsto (n + 1 - i)\) is an involution on \([n]\), the set above
  has the same cardinality as
  \begin{equation*}
    C' = \left\{
      (i, j) \in [n] \times [\delta] \mid
      (\gamma + \rho)_{n + 1 - i} < (\gamma + \rho)_{j + n}
    \right\}.
  \end{equation*}
  However
  \begin{align*}
    (\gamma + \rho)_{n + 1 - i}
    &=  - \mu_i + m - (n + 1 - i)  =  (- \mu_i + i) + \delta - 1, \\
    (\gamma + \rho)_{n + j}
    &= \lambda_j + m - (n + j)  =  \lambda_j+ \delta - j,
  \end{align*}
  and \((\gamma + \rho)_{n + 1 -i} < (\gamma + \rho)_{j + n}\) if and only if
  \begin{align*}
    \lambda_j + \delta - j &> (-\mu_i + i) + \delta - 1 \\
    \iff (\lambda - j) +  (\mu_i - i) &> (-1) \\
    \iff (\lambda - j) +  (\mu_i - i) &\geq 0.
  \end{align*}
  Thus we have shown \(C(\lambda, \mu) = C'\) and \(|\ell(\sigma)| = |\mathrm{inv}(\sigma)| = |C(\lambda, \mu)|\).
  Since each nonzero \(u_i\) counts the total number of \(j\) such that \((i, j) \in C(\lambda, \mu)\), \(|C(\lambda, \mu)| = \sum_{i \in [\delta]}u_i\).
  Similarly, \(|C(\lambda, \mu)| = \sum_{j \in [n]}v_j\).
\end{proof}

The combinatorial calculations above can now be used to prove our final result:

\begin{prop}
  \label{aux:comp-2:rat-sing}
  If \(\Pi'\) is birational then the variety \(\sY\) is normal and has rational singularities.
  In particular, \(\sY\) is Cohen--Macaulay.
\end{prop}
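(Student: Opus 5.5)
The plan is to apply the geometric method, Theorem~\ref{t:geom-method}(b), to the diagram~\eqref{diag:gm:comp-2}, with the resolution $\bF_\bullet$ of Proposition~\ref{aux:comp-2:upstairs} as input. Since $\Pi'$ is assumed birational, it suffices to show $\rR^i\Pi'_\ast\sO_{\sZ} = 0$ for $i>0$. By Theorem~\ref{t:geom-method}(a), $\rH_{-i}(\bG_\bullet) = \rR^i\Pi'_\ast\sO_\sZ$, so this follows if $\bG_{-i} = 0$ for $i>0$. Concretely, I will show $\rH^j(\sX, \bF_t) = 0$ whenever $j \ge t$ and $t>0$. This gives the vanishing in negative homological degrees and also shows $\bG_0 = \rH^0(\sX, \bF_0)\otimes A = A$.

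First I analyse each summand of $\bF_t$ using Borel--Weil--Bott on $\sX = \Gr(\delta, E)$. Only the factor $\Schur_\lambda(\sR)\otimes \Schur_{\mu}(\sQ^\ast)$ with $\mu = Q_{r,s}(\alpha,\beta)$ matters, since the remaining factors are trivial bundles. Here $\sR$ has rank $\delta$ and $\sQ^\ast$ has rank $m-\delta = n$. Up to the usual sign and ordering conventions, the relevant weight is exactly $\gamma = (-\mu_n,\dots,-\mu_1,\lambda_1,\dots,\lambda_\delta)$ of Lemma~\ref{lem:c-len-formula}. So the summand has cohomology in at most one degree: none if $\gamma+\rho$ has repeated entries, and otherwise only in degree $\ell(\sigma) = |C(\lambda,\mu)|$. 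The first thing to check carefully is this identification of the weight and its conventions (whether one takes $\sR$ or $\sR^\ast$, and the reversal of $\mu$). I expect this bookkeeping to be the main obstacle, because Lemma~\ref{lem:c-len-formula} was set up precisely to match it.

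Second, I bound $|C(\lambda,\mu)|$ using Lemma~\ref{lem:c-bdd}. Trivially $\sum_{i\le \rank\lambda}\lambda_i \le |\lambda|$. For $\mu = Q_{r,s}(\alpha,\beta)$ I claim $\rank\mu = s$. Indeed $\mu_i \ge s$ for $i\le s$, while every later row is either $s$ (from the $s^r$ block) or some $\alpha^\top_k \le \ell(\alpha) \le s$, and in either case it is smaller than its index. Hence $\sum_{j\le s}\mu_j = s^2+|\beta^\top| = s^2+|\beta|$. Lemma~\ref{lem:c-bdd} then gives
\[
  |C(\lambda,\mu)| \le |\lambda| + s^2 + |\beta| \le |\lambda| + s^2+|\alpha|+|\beta| = t,
\]
with strict inequality when $\rank\lambda+\rank\mu>0$. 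If instead $\rank\lambda = \rank\mu = 0$, then $\lambda=\emptyset$ and $s=0$, so $|C|=0$. In every case the only possible nonzero cohomology degree $j$ satisfies $j<t$, except when $t=0$ and $j=0$. This proves the vanishing claimed above.

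Finally, Theorem~\ref{t:geom-method}(b) shows that $\bG_\bullet$ is a minimal free resolution of the normalization of $\sY$, and that this normalization has rational singularities. Because $\bG_0 = A$, the resolved module is cyclic, a quotient $A/J$ receiving a surjection from $A$. This quotient is the coordinate ring of $\sY$, which therefore coincides with its normalization, so $\sY$ is normal. Rational singularities imply Cohen--Macaulay, which gives the last assertion.
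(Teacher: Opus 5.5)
Your proposal is correct and is essentially the paper's proof. Both arguments show \(\rH^j(\sX,\bF_t)=0\) for \(j\ge t>0\) by Borel--Weil--Bott with \(\gamma=(-\mu_n,\dots,-\mu_1,\lambda_1,\dots,\lambda_\delta)\), identify the only possible degree as \(|C(\lambda,\mu)|\) via Lemma~\ref{lem:c-len-formula}, and bound it strictly below \(t\) using Lemma~\ref{lem:c-bdd} together with \(\rank\mu=s\). The differences are cosmetic. You work with \(\mu=Q_{r,s}(\alpha,\beta)\) directly, so the first \(s\) rows contribute \(s^2+|\beta|\) rather than the paper's \(s^2+|\alpha|\) after rewriting \(\mu\) as \(P_{r,s}(\alpha,\beta)\). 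You also spell out the step the paper leaves implicit: \(\bG_0=A\) forces \(\CC[\sY]\) to coincide with its normalization.
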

\begin{proof}
  It suffices to show that \(\rR^i\Pi'_\ast \sO_{\sZ} = 0\) for all \(i > 0\) that is, the modules \(\rH^i(\Gr(\delta, V_0), \bF_t)\) vanish for \(i \geq t\).
  When \(t > 0\) the summands of \(\bF_t\) are of the form \(\sE_{\lambda, \mu} = \Schur_\mu \sQ^\ast \otimes \Schur_\lambda \sR\) where \(\mu = Q_{r, s}(\beta^\top, \alpha^\top) = P_{r, s}(\alpha, \beta)\) for some \((\beta^\top, \alpha^\top) \in L(r, s)\) satisfying \(t - |\lambda| = s^2 + |\alpha| + |\beta|\).
  Now \(\rH^i(\sX, \sE_{\lambda, \mu})\) can be computed using Borel--Weil--Bott so we set \(\gamma = (-\mu_n, \dots, -\mu_1, \lambda_1, \dots, \lambda_\delta)\).
  Borel--Weil--Bott implies that \(\rH^i(\Gr(\delta, V_0), \sE_{\lambda, \mu}) \neq 0\) only when there is a nontrivial permutation \(\sigma \in \mathfrak{S}_m\) for which \(\sigma \bullet \gamma \neq \gamma\) and \(\sigma \bullet \gamma\) is dominant.
  In the latter case, we have \(i = \ell(\sigma)\).
  Let \(\sigma\) be any such permutation.
  If \(\sigma \bullet \gamma\) is dominant then \(\sigma(\gamma + \rho)\) is strictly decreasing and contains no repeats.
  By Lemma~\ref{lem:c-len-formula} the length is given by \(\ell(\sigma) = |C(\lambda, \mu)|\).
  Finally as \(t > 0\), \(|\lambda| + |\mu| > 0\) and using Lemma~\ref{lem:c-bdd} obtain the required bound
  \begin{equation*}
    \ell(\sigma)
    < \sum_{i = 1}^{\rank{\lambda}} \lambda_i +  \sum_{j = 1}^{\rank{\mu}}\mu_j
    \leq |\lambda| + s^2 + |\alpha| + |\beta| = t.
  \end{equation*}
  To obtain the penultimate inequality, we note that \(\mu = P_{r, s}(\alpha, \beta)\) implies \(\rank{\mu} = s\) and \(\sum_{j = 1}^s \mu_i = \sum_{j = 1}^s s + \alpha_j = s^2 + |\alpha|\).
\end{proof}

\subsection{Simple \(2\)-fold compositional varieties}
\label{ss:simp-2-comp}
In this subsection we use properties established about the auxiliary spaces to determine properties of the simple \(2\)-fold compositional variety
\begin{equation*}
  Y = \left\{ (f, g) \in \Hom(E, F) \times \Hom(F, E) \mid \dim{\ker{fg}} \geq \Delta_{fg} \right\}.
\end{equation*}
Without loss of generality suppose \(m = \dim{E} \geq n = \dim{F}\).
Unless stated otherwise, in this subsection we take \(r = n - \Delta_{fg}\).

\begin{thm}
  \label{simp-2:norm+rat-sing}
  Let \(Y\) be the simple \(2\)-fold compositional variety
  \begin{equation*}
    Y = \left\{ (f, g) \in \Hom(E, F) \times \Hom(F, E) \mid \dim{\ker{fg}} \geq \Delta_{fg} \right\}
  \end{equation*}
  and let \(r = n - \Delta_{fg}\).
  Then \(\sZ_r\) is a desingularization of \(Y\) and the variety \(Y\) is normal, Cohen--Macaulay, and has rational singularities.
\end{thm}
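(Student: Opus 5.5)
The plan is to verify the hypotheses of Proposition~\ref{aux:comp-2:rat-sing} for the diagram \eqref{diag:gm:comp-2} with \(r = n - \Delta_{fg}\): show that \(\sY_r = \Pi'(\sZ_r)\) equals \(Y\) set-theoretically, and that \(\Pi' \colon \sZ_r \to Y\) is birational. Given these two facts, Proposition~\ref{aux:comp-2:rat-sing} yields normality, rational singularities, and hence the Cohen--Macaulay property. The vanishing \(\rR^i\Pi'_\ast\sO_{\sZ} = 0\) for \(i > 0\) has already been established there through the Borel--Weil--Bott bound of Lemma~\ref{lem:c-bdd}.

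\emph{Step 1: the image is \(Y\).} Since \(m \geq n\), every \(f \colon E \to F\) satisfies \(\dim\ker f \geq \delta\).
\begin{itemize}
\item[] For \((f,g) \in Y\), choose a \(\delta\)-dimensional \(K \subseteq \ker f\). Then \(f\) factors as \(\bar f \colon E/K \to F\), and \(fg = \bar f \circ \bar g\), where \(\bar g \colon F \to E/K\) is \(g\) followed by the projection and \(\dim E/K = n\).
\item[] Conversely, for any \((K,f,g) \in \sZ_r\) we have \(\rank(fg) = \rank(\bar f\bar g) \leq \rank \bar g \leq r\). This gives \(\Pi'(\sZ_r) \subseteq Y\).
\item[] For the reverse inclusion I would first treat the locus where \(f\) is surjective. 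There \(K = \ker f\) is forced and \(\bar f\) is an isomorphism, so \(\rank \bar g = \rank(fg) \leq r\) and the point lifts.
\item[] For general \((f,g) \in Y\), I would argue that such points are dense in \(Y\). Since \(\Pi'\) is proper (\(\sX\) is projective), its image is closed, and density then gives \(Y \subseteq \Pi'(\sZ_r)\).
\end{itemize}
The density claim is where care is needed, since \(Y\) is not a priori irreducible. I would try first a direct lifting argument instead: given \((f,g)\) with \(\rank fg \leq r\), choose \(K \subseteq \ker f\) of dimension \(\delta\) so as to absorb as much of \(\operatorname{im} g \cap \ker f\) as possible. If \(\dim(\operatorname{im} g \cap \ker f) \leq \delta\), take \(K \supseteq \operatorname{im} g\cap\ker f\). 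Then \(\rank \bar g = \rank g - \dim(\operatorname{im} g \cap K) = \rank g - \dim(\operatorname{im} g\cap\ker f) = \rank(fg) \leq r\). If instead \(\dim(\operatorname{im} g \cap \ker f) > \delta\), take \(K \subseteq \operatorname{im} g \cap \ker f\). Then \(\rank \bar g = \rank g - \delta\), and I need to check that this is at most \(r\). I expect this second case to be the main obstacle. If the direct inequality fails there, I would fall back to the closure argument, proving irreducibility of \(Y\) by degenerating \(f\) to a surjection while keeping \(\rank(fg) \leq r\).

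\emph{Step 2: irreducibility and birationality.} \(\sZ_r \to \sX\) is a fibre bundle whose fibre is \(\Hom(E/K,F) \times \Hom(F,K) \times \{\bar g : \rank \bar g \leq r\}\), an irreducible determinantal variety times affine spaces. Hence \(\sZ_r\) is irreducible of dimension \(2mn-(n-r)^2\) by Proposition~\ref{aux:comp-2:upstairs}, and therefore so is \(Y\). The locus \(U \subseteq \sZ_r\) where \(\bar f\) is an isomorphism is open and nonempty, and it maps isomorphically onto the open set of \(Y\) where \(f\) is surjective: the inverse is \((f,g) \mapsto (\ker f, f, g)\), which is a morphism into the Grassmannian on the locus where \(\rank f = n\). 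So \(\Pi'\) is birational.

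\emph{Step 3: conclusion and the meaning of ``desingularization''.} With \(\Pi'\) proper, birational and \(\rR^{>0}\Pi'_\ast\sO_{\sZ_r} = 0\), Proposition~\ref{aux:comp-2:rat-sing} and Theorem~\ref{t:geom-method}(b) give the stated properties of \(Y\). If \(\sZ_r\) is required to be literally smooth, this needs a further remark, because \(\sZ_r\) is smooth only when \(r = n\). In that case I would compose with the standard Grassmannian resolution of the determinantal fibres, \(\{(L,\bar g) : \operatorname{im}\bar g \subseteq L,\ \dim L = r\}\), relatively over \(\sX\). Since determinantal varieties have rational singularities, the higher direct images along this composite still vanish.
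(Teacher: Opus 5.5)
\textbf{There is a genuine gap in Step~1.} Your overall route is the paper's: birationality of $\Pi'$ through the locus where $f$ is surjective, with inverse $(f,g)\mapsto(\ker f,f,g)$, followed by Proposition~\ref{aux:comp-2:rat-sing}. However, the case you flag as the main obstacle is a real gap, and your fallback cannot close it.

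In your second case, the choice $K\subseteq\operatorname{im} g\cap\ker f$ is already optimal. So a lift of $(f,g)$ exists if and only if $\rank g-\delta\le r$. Since $\rank g$ can equal $n$, this holds for every point of $Y$ exactly when $\Delta_{fg}\le\delta$.

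If $\Delta_{fg}>\delta$, take $g$ injective and $f$ vanishing on a $\Delta_{fg}$-dimensional subspace of $\operatorname{im} g$. Then $(f,g)\in Y$, but every $\delta$-dimensional $K$ gives $\rank\bar g\ge n-\delta>r$, so $(f,g)\notin\Pi'(\sZ_r)$. The surjective-$f$ locus is a nonempty open subset of $Y$ lying in the closed set $\Pi'(\sZ_r)$. Hence $Y$ is reducible in this case. The irreducibility you hoped to prove by degenerating $f$ is false, and so is the statement itself.

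Two concrete counterexamples:
\begin{itemize}
\item For $m=n=1$ and $\Delta_{fg}=1$, one gets $Y=\{xy=0\}\subseteq\CC^2$, two crossing lines, which is not normal. Here $\sZ_0=\{g=0\}$ covers only one of the lines.
\item For $m=3$, $n=2$, $\Delta_{fg}=2$, the variety $Y=\{fg=0\}$ has two $8$-dimensional components. These are the closure of the locus where $f$ is surjective (there $\rank g\le 1$) and the closure of the locus where $g$ is injective (there $\rank f\le 1$). The space $\sZ_0$ reaches only the first.
\end{itemize}

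\textbf{What is missing is the hypothesis $\Delta_{fg}\le\delta$.} With it, your second case closes in one line, since $\rank g\le n\le r+\delta$. Then $Y=\Pi'(\sZ_r)$, and your Steps~2 and~3 go through as written.

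This hypothesis does hold where the theorem is actually used. For type $[\ftyp{\ast 1 2 0}]$ one has $\Delta_{fg}=\delta_1-\delta_2\le\delta_1\le\delta$.

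The paper's own proof only shows that $\Pi'$ is birational onto its image, and then writes $Y=\sY_r$ without argument. So you were right that this identification is the crux. It needs the extra hypothesis, not an irreducibility argument.

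A minor point: $\sZ_r$ is also smooth when $r=0$, not only when $r=n$. In the remaining cases, your relative Grassmannian resolution correctly handles the literal meaning of ``desingularization''.
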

\begin{proof}
  We show that the auxiliary space \(\sZ_r\) given in \eqref{diag:gm:comp-2} is a desingularization of \(Y\) and we have \(Y = \sY_r\).
  Consider the open subset
  \begin{equation*}
    \sU = \left\{ (K, f, g) \in \sZ_r
      \;\middle|
      \begin{array}{l}
        \dim{\ker{f}} = \delta \\
        \rank(g \colon V_1 \to V_0 / K) = r
      \end{array}
    \right\}
  \end{equation*}
  of \(\sZ = \sZ_r\).
  By the definition of \(\sZ\) we see that if \((K, f, g) \in \sU\) then \(K = \ker{f}\).
  Moreover \(\rank(f \colon V_0/K \to V_1) = m - \delta = n\) so \(\dim{\ker{fg}} = \dim{\ker{g}} = \Delta_{fg}\).
  Therefore the image \(\Pi'(\sU)\) is given by an open set
  \begin{equation*}
    \Pi'(\sU) = \left\{ (f, g) \in Y \middle|
      \begin{array}{l}
        \dim{\ker{f}} = \delta \\
        \dim{\ker{fg}} = \Delta_{fg}
      \end{array}
    \right\}
  \end{equation*}
  and the algebraic map \((f, g) \mapsto (\ker{f}, f, g)\) is an inverse of
  the map \(\Pi'|_{\sU}\).
  The set \(\sU\) is seen to be nonempty as it contains any \(f\) and \(g\) that have a nontrivial \((n - \Delta_{fg}) \times (n - \Delta_{fg})\) matrix on the upper left and zeroes everywhere else.
  Therefore \(\Pi'|_{\sU}\) is an isomorphism and \(\Pi'\) is a birational isomorphism.
  Proposition~\ref{aux:comp-2:rat-sing} now implies that \(Y = \sY_r\) is normal, Cohen--Macaulay, and has rational singularities.
\end{proof}

\begin{cor}
  \label{simp-2:codim}
  If \(Y\) is simple \(2\)-fold compositional defined by \(\Delta_{fg}\) then we have
  \begin{equation*}
    \codim_{\sA}Y = \Delta_{fg}^2.
  \end{equation*}
\end{cor}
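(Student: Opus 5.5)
Since Theorem~\ref{simp-2:norm+rat-sing} shows that \(\Pi' \colon \sZ_r \to Y\) is birational onto \(Y = \sY_r\), the plan is to read off \(\dim Y\) from \(\dim \sZ_r\). By Proposition~\ref{aux:comp-2:upstairs} we have \(\dim \sZ_r = 2mn - (n - r)^2\). Birationality gives \(\dim Y = \dim \sZ_r\). Concretely, \(\Pi'|_{\sU}\) is an isomorphism onto a nonempty open subset of \(Y\), and \(\sZ_r\) is irreducible, being a fiber bundle over the Grassmannian \(\sX\) whose fibers are the product of a vector space and an irreducible determinantal variety. Hence \(\sU\) is dense in \(\sZ_r\), its image is dense in \(Y\), and the dimensions agree.

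Since \(\dim \sA = 2mn\), we then get
\[
  \codim_{\sA} Y = 2mn - \bigl(2mn - (n - r)^2\bigr) = (n - r)^2 .
\]
With the convention \(r = n - \Delta_{fg}\), this equals \(\Delta_{fg}^2\).

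The only point needing care is that \(Y\) is irreducible, equivalently that it coincides with \(\sY_r\) rather than merely containing it. This is asserted in Theorem~\ref{simp-2:norm+rat-sing}. If I had to check it directly, I would take \((f,g) \in Y\) and pick any \(\delta\)-dimensional \(K \subseteq \ker f\), which exists because \(\dim \ker f \geq m - n = \delta\). Then \(f\) factors through \(E/K\), which has dimension \(n\). The condition \(\dim \ker(fg) \geq \Delta_{fg}\) must then be converted into \(\rank(g \colon F \to E/K) \leq r\).

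This conversion is delicate when \(\bar f \colon E/K \to F\) is not injective. In that case I would argue by a closure and degeneration argument: the locus where \(\bar f\) is an isomorphism is dense in \(Y\), and on that locus the two conditions are equivalent. Alternatively I would rely on the theorem as stated. Granting this step, the computation above completes the proof.
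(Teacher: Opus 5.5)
Your route is exactly the paper's. It also cites Theorem~\ref{simp-2:norm+rat-sing} for birationality of \(\sZ_r \to Y\) and reads off \(\dim Y = 2mn - (n-r)^2\) from Proposition~\ref{aux:comp-2:upstairs}.

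The step you flagged, \(Y = \sY_r\), is a genuine gap, and that theorem does not close it. Its proof only shows that \(\Pi'\) restricts to an isomorphism from \(\sU\) onto an open subset \(\Pi'(\sU)\) of \(Y\). It never shows that this open subset is dense in \(Y\). Your proposed repair, density in \(Y\) of the locus \(\rank f = n\), is just a restatement of the missing claim, and it is false in the stated generality.

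Take \(m = n = 2\) and \(\Delta_{fg} = 2\), so \(Y = \{fg = 0\}\). The pairs with \(\rank f = 1\) and \(g(F) \subseteq \ker f\) form a \(3 + 2 = 5\)-dimensional subset, so \(\codim_{\sA} Y \leq 3 < \Delta_{fg}^2\). More generally, take \(\alpha, \beta \geq 0\) with \(\alpha + \beta = \Delta_{fg} - \delta\). The pairs in \(Y\) with \(\rank f = n - \alpha\) and \(\rank g = n - \beta\) form a locally closed set of codimension \(\Delta_{fg}^2 - \alpha\beta\). So the corollary fails whenever \(\Delta_{fg} \geq \delta + 2\).

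For \(\Delta_{fg} = \delta + 1\) the number is right, but \(Y\) is reducible. For example, with \(m = 3\), \(n = 2\), \(\Delta_{fg} = 2\), the pairs with \(\rank f = 1\) and \(g(F) = \ker f\) give a second \(8\)-dimensional component. So the birationality argument breaks there as well.

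The statement is true under the extra hypothesis \(\Delta_{fg} \leq \delta\). This suffices for the paper's only application, type \([\ftyp{\ast 1 2 0}]\), where \(\Delta_{fg} = \delta_1 - \delta_2 \leq \delta\). Under this hypothesis the missing step is elementary linear algebra, not a degeneration argument. Note that \(K\) cannot be an arbitrary \(\delta\)-dimensional subspace of \(\ker f\), as your sketch suggests; it must be chosen carefully.

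Given \((f, g) \in Y\), let \(b = \rank g\) and \(W = g(F) \cap \ker f\). Then \(\dim W = b - \rank(fg) \geq b - r\). Since \(b - r \leq n - r = \Delta_{fg} \leq \delta \leq \dim \ker f\), you may choose a \(\delta\)-dimensional \(K \subseteq \ker f\) containing a \(\max(b - r, 0)\)-dimensional subspace of \(W\). Then \(\rank(g \colon F \to E/K) = b - \dim(g(F) \cap K) \leq r\), so \((K, f, g) \in \sZ_r\).

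Hence \(Y = \sY_r\) is irreducible and \(\Pi'(\sU)\) is dense in it. Your dimension count then goes through verbatim, and this also supplies the argument missing from the proof of Theorem~\ref{simp-2:norm+rat-sing}.
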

\begin{proof}
  If \(r = n - \Delta_{fg}\) then Theorem~\ref{simp-2:norm+rat-sing} implies that \(\sZ_r\) is a desingularization of \(Y\).
  Using the dimension calculation from Proposition~\ref{aux:comp-2:upstairs},
  \begin{equation*}
    \dim{Y} = \dim{\sZ} = 2mn - (n - r)^2 = 2mn - \Delta_{fg}^2.
  \end{equation*}
  Since \(\dim{\sA} = 2mn\), \(\codim_{\sA}{Y} = \Delta_{fg}^2\).
\end{proof}

We recall that the ideal generated by the obvious set theoretic generators given in Proposition~\ref{comp:algset-gens} need not be radical.
In particular, as Example~\ref{eg:comp:non-radical-I} shows, the ideal \(I_\Delta\) of the simple \(3\)-fold compositional variety fails to be radical.
We use the calculation above to show that the situation is better for simple \(2\)-fold compositional varieties. 

\begin{prop}
  \label{simp-2:reduced}
  Let \(\Phi^f = (x_{i,j})_{\substack{1 \leq i \leq n \\ 1 \leq j \leq m}}\) and \(\Phi^g = (y_{j, i})_{\substack{1 \leq i \leq n \\ 1 \leq j \leq m}}\) respectively be matrices of indeterminates in \(A\).
  If \(I = I_{r +1}\) is the ideal generated by the \((r+1) \times (r+1)\) minors of \(\Phi^f\Phi^g\) then \(A/I\) is reduced and Cohen--Macaulay.
\end{prop}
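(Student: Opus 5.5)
The plan is to prove that \(A/I\) is Cohen--Macaulay using generic perfection (Eagon--Northcott), and then to obtain reducedness from Serre's criterion together with the normality of \(Y\) established in Theorem~\ref{simp-2:norm+rat-sing}.

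\emph{Step 1 (grade of \(I\)).} By Proposition~\ref{comp:algset-gens} the radical of \(I\) is the ideal of \(Y\), so Corollary~\ref{simp-2:codim} gives \(\grade I = \codim_{\sA} Y = \Delta_{fg}^2 = (n - r)^2\), since \(A\) is a polynomial ring and hence Cohen--Macaulay. This is exactly the generic codimension of the ideal of \((r+1)\times(r+1)\) minors of an \(n \times n\) matrix.

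\emph{Step 2 (Cohen--Macaulayness).} Consider the ring homomorphism \(\psi \colon \ZZ[z_{ij}]_{1 \le i,j \le n} \to A\) sending the generic square matrix \((z_{ij})\) to \(\Phi^f \Phi^g\), which is an \(n \times n\) matrix. By Eagon--Hochster (or Eagon--Northcott), the determinantal ideal \(I_{r+1}(z)\) is generically perfect of grade \((n-r)^2\). The extension of \(I_{r+1}(z)\) to \(A\) is \(I\), and Step 1 shows that \(\grade I\) equals this generic grade. Generic perfection therefore gives that \(I\) is perfect, so \(A/I\) is Cohen--Macaulay. As a bonus, the Lascoux complex specializes along \(\psi\) to a minimal free resolution of \(A/I\).

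\emph{Step 3 (reducedness).} Since \(A/I\) is Cohen--Macaulay, it satisfies \((S_1)\). It remains to check \((R_0)\), namely that \(A/I\) is generically reduced along each minimal prime. Because \(Y\) is irreducible (as the image of the irreducible \(\sZ_r\)), \(I\) has the unique minimal prime \(\sqrt I = I(Y)\), and it suffices to find a single point of \(Y\) at which \(I\) defines \(Y\) with reduced structure.

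I would choose the point from the open set \(\Pi'(\sU)\) used in the proof of Theorem~\ref{simp-2:norm+rat-sing}, requiring in addition that \(f\) is surjective. Near such a point one can change coordinates so that \(f\) is in normal form with an invertible \(n\times n\) block \(f_1\). Then \(fg = f_1 g_1 + f_2 g_2\), and the substitution \(g_1 \mapsto f_1^{-1}(h - f_2 g_2)\) is an étale-local change of variables that turns the entries of \(fg\) into independent coordinates \(h\). Locally, \(I\) then becomes the ideal of \((r+1)\)-minors of a generic matrix \(h\) times a smooth factor, and this ideal is prime by the classical result. Hence \(A/I\) is reduced at the generic point, and Serre's criterion \((R_0)+(S_1)\) gives that \(A/I\) is reduced. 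In fact this argument shows that \(I\) is prime, with \(A/I = \CC[Y]\).

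I expect the main obstacle to be Step 3, specifically justifying the local change of variables carefully: one must check that the map \((f,g) \mapsto (f, fg)\) is smooth over the locus where \(f\) is surjective, and that \(Y\) meets this locus densely, which follows because \(\Pi'(\sU)\) is dense in \(Y\). Step 2 is routine once the grade computation of Step 1 is in place.
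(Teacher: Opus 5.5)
Your proposal is correct. Steps 1--2 and the Serre-criterion framing of Step 3 match the paper's proof exactly: grade from Corollary~\ref{simp-2:codim}, Cohen--Macaulayness via Eagon--Northcott (the paper cites \cite[Theorem~18.18]{Eis:CommAlg}, which is the determinantal case of the generic perfection you invoke), and \((S_1)\) for free.

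The routes diverge only in how \((R_0)\) is checked. The paper applies the Jacobian criterion at the explicit point \(f = \left[\begin{smallmatrix} 1_r & 0 \\ 0 & 0 \end{smallmatrix}\right]\), \(g = \left[\begin{smallmatrix} 1_n \\ 0 \end{smallmatrix}\right]\). It then needs Lemma~\ref{lem:simp-2:jac-nonzero}, a Cauchy--Binet expansion of the minors of \(\Phi^f\Phi^g\), to see that exactly the \((n-r)^2\) derivatives \(\partial[1,\dots,r,i \mid 1,\dots,r,j]/\partial x_{i,j}\) survive there.

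You instead note that on \(D(\det f_1)\) the substitution \(g_1 = f_1^{-1}(h - f_2 g_2)\) is an honest Zariski-local isomorphism, not merely \'etale, carrying \(I\) to \(I_{r+1}(h)\). Hence \((A/I)[1/\det f_1]\) is a localized polynomial ring over the generic determinantal ring \(\CC[h]/I_{r+1}(h)\). This trades the Jacobian bookkeeping for the classical primality (indeed normality) of generic determinantal ideals. It also yields more: \(A/I\) is normal on the whole locus where \(f\) is surjective, and \(I\) is prime. The same trick with roles swapped, \(f_1 = (h - f_2 g_2)g_1^{-1}\) where a block of \(g\) is invertible, also explains why the paper's point (with \(g\) injective but \(f\) not surjective) is regular. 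The paper's route, in exchange, is fully explicit and uses nothing beyond the Jacobian criterion.

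One point in your favor: you state explicitly that \(Y\) is irreducible, which is what lets \((R_0)\) be checked at a single point. The paper's ``it suffices to find a regular point'' uses this tacitly.

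Both arguments rest on \(Y = \Pi'(\sZ_r)\) from Theorem~\ref{simp-2:norm+rat-sing}, and that equality needs \(\Delta_{fg} \le \delta\). Indeed, for \(f = 0\) and \(g\) injective one needs a \(\delta\)-dimensional \(K\) meeting the image of \(g\) in dimension at least \(\Delta_{fg}\). This holds in the type \([\ftyp{\ast 1 2 0}]\) setting, where \(\Delta_{fg} = \delta_1 - \delta_2 \le \delta\). It fails under the subsection's bare hypothesis \(m \ge n\): for \(m = n = 2\), \(r = 0\), the locus \(\{fg = 0\}\) is not even equidimensional.
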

\begin{proof}
  Set theoretically, the zero set of \(I\) coincides with \(Y\) so \(\codim_A{I} = \codim_{\sA}{Y} = \Delta_{fg}^2\) from Corollary~\ref{simp-2:codim}.
  So the quotient ring \(A / I\) is determinantal and therefore Cohen--Macaulay by \cite[Theorem~18.18]{Eis:CommAlg}.
  Serre's criteria for normality states that \(A/I\) is reduced if and only if it satisfies properties \((\rR_0)\) and \((\rS_1)\), see \stacks{031R}.
  A Cohen--Macaulay ring satisfies \((\rS_k)\) for all \(k\) so we only need to establish \((\rR_0)\).
  From the Jacobian criteria given in \cite[Theorem~16.19 and cf. Theorem~18.15]{Eis:CommAlg} it suffices to find a regular point in \(Y\).
  If \(m_i\) is one of the generators of \(I\) and \(t_j\) is an indeterminate in \(A\) taken with respect some monomial order, we need to ensure that
  \begin{equation*}
    \rank{\Jac(I)_{(f, g)}} = \codim_AI = \Delta_{fg}^2
  \end{equation*}
  for some \((f, g) \in Y\) where
  \begin{equation*}
    \Jac_{(f,g)}
    = \Jac(I)_{(f, g)}
    = \left( \frac{\partial m_i}{\partial t_j} \right)_{(f, g)}
  \end{equation*}
  is the Jacobian localized at \((f, g)\).
  The matrices \(f, g\) from Lemma~\ref{lem:simp-2:jac-nonzero} satisfy \(\rank{fg} = r\) so we have \((f, g) \in Y\).
  Moreover all nonzero entries of \(\Jac_{(f, g)}\) are given by terms of the form \((\partial[1, \dots, r, i \mid 1, \dots, r, j]/\partial x_{i, j} )_{(f, g)}\) where \(r + 1 \leq i, j \leq n\).
  Since there are \((n - r)^2 = \Delta_{fg}^2\) such pairs of indices, \(\rank{\Jac_{(f, g)}} = \Delta_{fg}^2\) as required.
\end{proof}

If \(\varphi\) is a matrix and \(\ul a = (a_1,\dots,a_\ell)\) and \(\ul b = (b_1,\dots,b_{\ell'})\) are tuples of strictly decreasing indices then define the submatrix \(\varphi_{\ul a,\ul b} = (\varphi_{a_i, b_j})\) consisting of rows and columns indexed by \(\ul a\) and \(\ul b\) respectively.
If \(\ul a\) and \(\ul b\) have the same length then let  \([\varphi]_{\ul a, \ul b} = \det{\varphi_{\ul a, \ul b}}\) denote the minor.
When there is no ambiguity and when \(\varphi\) is clear from context, this minor will be simply denoted by \([\ul a \mid \ul b ]\).
Finally let \(\ul{a, b}\) be the tuple of pairs \(( (a_1, b_1), \dots, (a_\ell, b_\ell))\). We say \((i,j) \in \ul{a, b}\) if and only if \(i = a_k\) and \(j = b_k\)  for some \(1 \leq k \leq \ell\); in this case we set
\begin{equation*}
  \ul{a - i, b - j} =
  \left((a_1, b_1), \dots, (a_{k - 1}, b_{k - 1}), (a_{k + 1}, b_{k + 1}), \dots, (a_\ell, b_\ell)\right).
\end{equation*}

\begin{lem}
  \label{lem:simp-2:jac-nonzero}
  Let \(\Phi^f\) and \(\Phi^g\) be the usual matrices of indeterminates in \(A\).
  Let \(\Phi = \Phi^f \Phi^g\) and fix tuples \(\ul a = (a_1,\dots,a_{r + 1})\) and \(\ul b = (b_1,\dots,b_{r + 1})\).
  \begin{enumerate}
  \item
    \label{lem:jac:monomial}
    The monomial
    \begin{equation*}
      x_{\ul{i, j}} \cdot y_{\ul{i', j'}} =
      x_{i_1, j_1} \dots x_{i_{r + 1}, j_{r + 1}} y_{i'_1, j'_1} \dots y_{i'_{r + 1}, j'_{r +1}}
    \end{equation*}
    appears as a term in \([\Phi]_{\ul a, \ul b}\) if and only if \(\{\ul a\} = \{\ul i\}\), \(\{\ul b\} = \{\ul j'\}\), and \(\{\ul j\} = \{\ul i'\}\) as sets.

  \item
    \label{lem:jac:partial}
    Suppose \(t = x_{i, j}\) or \(y_{i, j}\) then
    \begin{equation*}
      \frac{\partial {x_{\ul{a, c}} \cdot y_{\ul{c', b}}}}{\partial t}  =
      \begin{cases}
        x_{\ul{a - i, c - j} } \cdot y_{\ul{c', b}} & t = x_{i, j} \text{ and } (i, j) \in \ul{a, c}\\
        x_{\ul{a, c} } \cdot y_{\ul{c' - i, b - j}} & t = y_{i, j} \text{ and } (i, j) \in \ul{c', b}\\
        0 &\text{otherwise}
      \end{cases}.
    \end{equation*}
  \item
    \label{lem:jac:fg}
    If \(f\) and \(g\) are block matrices
    \begin{equation*}
      f =
      \begin{bNiceArray}{ccc|cc|cc}
        \Hdotsfor{3}^{r} & \Hdotsfor{2}^{n - r} & \Hdotsfor{2}^{m - n} \\
        1 &   &  & \Block{3-2}{0} &  & \Block{3-2}{0} & & \Vdotsfor{3}^{r} \\
        & \Ddots[line-style=standard]  &  &                &  & & & \\
        &  & 1 &         &  &  & & \\ \midrule
        \Block{2-3}{0} &  &  & \Block{2-2}{0} &  & \Block{2-2}{0} & & \Vdotsfor{2}^{n - r} \\
        & & & & & & & \\
      \end{bNiceArray}
      \qquad,\qquad
      g =
      \begin{bNiceArray}{ccc}
        \Hdotsfor{3}^{n} \\
        1 & & & \Vdotsfor{3}^{n} \\
        & \Ddots[line-style=standard] & & \\
        & & 1 & \\ \midrule
        \Block{1-3}{0}& & & \Vdotsfor{1}^{m - n}\\
      \end{bNiceArray}
      \quad
    \end{equation*}
    then we have
    \begin{equation*}
      \left( \frac{\partial [\ul a \mid \ul b]}{\partial x_{i, j}} \right)_{(f, g)} = 1
    \end{equation*}
    if and only if \(\ul a = (1, 2, \dots, r, i)\), \(\ul b = (1, 2, \dots, r, j)\) for \(i, j \in \{r + 1, \dots, n \}\).
    All other partial derivatives vanish.
  \end{enumerate}
\end{lem}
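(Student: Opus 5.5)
The plan is to base all three parts on the Cauchy--Binet expansion of minors of \(\Phi = \Phi^f\Phi^g\). Write \(\Phi_{a,b} = \sum_{c=1}^m x_{a,c}\,y_{c,b}\). For index tuples \(\ul a, \ul b\) of length \(r+1\), Cauchy--Binet gives
\begin{equation*}
  [\Phi]_{\ul a,\ul b} = \sum_{C} [\Phi^f]_{\ul a, C}\,[\Phi^g]_{C,\ul b},
\end{equation*}
where the sum runs over \((r+1)\)-element subsets \(C \subseteq [m]\). Each summand is a product of two polynomials in disjoint sets of variables. Each factor is a sum of squarefree monomials of the form \(\pm\prod_k x_{a_k, c_{\sigma(k)}}\) (respectively \(\pm\prod_k y_{c_k, b_{\tau(k)}}\)), and these monomials are pairwise distinct. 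Monomials coming from different sets \(C\) are distinct, because \(C\) is recovered from the monomial as the set of column indices of its \(x\)-variables. So no cancellation occurs between terms.

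For part (a), it then follows that \(x_{\ul{i,j}}\,y_{\ul{i',j'}}\) occurs exactly when three conditions hold: the \(x\)-rows form \(\{\ul a\}\); the \(x\)-columns form a set \(C\) equal to the set of \(y\)-rows; and the \(y\)-columns form \(\{\ul b\}\). This is precisely the stated set-theoretic criterion, with distinctness of indices built into the minors. Part (b) is then immediate. All these monomials are squarefree, so differentiating with respect to \(t\) either deletes the unique occurrence of \(t\) or gives \(0\).

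For part (c), I will evaluate \(\partial[\ul a\mid\ul b]/\partial x_{i,j}\) at the given \((f,g)\). Since only the \(x\)-factor depends on \(x_{i,j}\), the derivative equals
\begin{equation*}
  \sum_C \bigl(\partial[\Phi^f]_{\ul a,C}/\partial x_{i,j}\bigr)(f)\cdot[g]_{C,\ul b}.
\end{equation*}
Here \(g\) is the identity on the top \(n\times n\) block and \(0\) below it. Hence \([g]_{C,\ul b}\) is nonzero only when \(C = \{\ul b\} \subseteq [n]\), and in that case it equals \(1\). This leaves \(\pm[f]_{\ul a - i,\ \ul b - j}\), the cofactor, which requires \(i\in\ul a\) and \(j\in\ul b\). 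The matrix \(f\) has rank \(r\) and is supported on the identity block \([r]\times[r]\). Therefore an \(r\times r\) minor of \(f\) is nonzero only when its row set and column set are both \([r]\), and then it equals \(1\). This forces \(\ul a = (1,\dots,r,i)\) and \(\ul b = (1,\dots,r,j)\). Distinctness of the entries forces \(i,j \ge r+1\). Moreover \(j\le n\) because \(\ul b\subseteq[n]\), and \(i\le n\) because \(\Phi\) has \(n\) rows. In every other case the derivative vanishes.

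The only delicate step is the sign: I will order the indices so that \(i\) and \(j\) occupy the same (last) position in their tuples, which makes the cofactor sign \(+1\). This is presumably why the paper's convention on the ordering of the tuples matters. The no-cancellation argument in part (a) is the conceptual core. I expect the main bookkeeping obstacle to be stating precisely which monomials survive, with the correct signs.
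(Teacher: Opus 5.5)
Your argument for (a), (b), and the $x$-derivatives in (c) is correct and uses the same Cauchy--Binet expansion as the paper. In (c) you organize the evaluation differently. You evaluate at the level of the factored minors: $g$ collapses the sum to $C=\{\ul b\}$, and the result is an $r\times r$ cofactor of the rank-$r$ matrix $f$. The paper instead works monomial by monomial. It first kills $x_{i,j}$ with $i\le r$ or $j\le r$, by noting that a leftover $x$-variable lands on a zero entry of $f$. It then identifies the unique surviving monomial when $i,j>r$. Your packaging is cleaner, and your no-cancellation remark in (a) makes explicit something the paper leaves implicit. The sign also needs no choice on your part. Under the paper's convention both tuples are ordered the same way, and $i,j>r$ are larger than every other index, so they occupy the same extremal position. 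The cofactor sign is therefore automatically $+1$.

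There is one omission. The clause ``All other partial derivatives vanish'' also covers derivatives with respect to the $y_{k,l}$. The paper proves this case first, and it is used in Proposition~\ref{simp-2:reduced} to assert that every nonzero Jacobian entry is an $x$-derivative. Your proposal never addresses these derivatives. In your framework the fix is one line. You have $\partial[\ul a\mid\ul b]/\partial y_{k,l}=\sum_C[\Phi^f]_{\ul a,C}\,\partial[\Phi^g]_{C,\ul b}/\partial y_{k,l}$. At $(f,g)$ every $(r+1)\times(r+1)$ minor $[f]_{\ul a,C}$ vanishes because $\rank f=r$, so all $y$-derivatives vanish.
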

\begin{proof}
  We recall that \(\Phi^f\) and \(\Phi^g\) are \(n \times m\) and \(m \times n\) matrices. We also denote the entries of the \(n \times n\) matrix \(\Phi\) by \((z_{i, j})\).
  Since \(z_{i, j} = \sum_{k \in [m]} x_{i,k}y_{k,j}\) as matrices we have \(\Phi_{\ul a, \ul b} = \Phi^f_{\ul a, [m]} \Phi^g_{[m], \ul a}\) so the claim now directly follows using the Cauchy--Binet formula for the determinant of a product of matrices:
  \begin{align*}
    \det \Phi_{\ul a, \ul b}
    &=
      \sum_{\ul c \subseteq [m]}
      [\Phi^f]_{\ul a, \ul c} [\Phi^g]_{\ul c, \ul b} \\
    &=
      \sum_{\ul c \subseteq [m]}
      \sum_{\substack{\sigma \in \mathfrak{S}_{r +{1}} \\ \tau \in \mathfrak{S}_{r +{1}}}}
    \sign{(\sigma \tau)}
    x_{a_1, c_{\sigma(1)}} \dots x_{a_{r + 1}, c_{\sigma(r + 1)}}
    y_{c_{\tau(1)}, b_1} \dots y_{c_{\tau(r + 1)}, b_{r + 1}} \\
    &=
      \sum_{\ul c \subseteq [m]}
      \sum_{\substack{\sigma \in \mathfrak{S}_{r +{1}} \\ \tau \in \mathfrak{S}_{r +{1}}}}
    \sign{(\sigma \tau)} x_{\ul{a, \sigma(c)}} \cdot y_{\ul{\tau(c), b}}
    =
    \sum_{\substack{\ul c \subseteq [m] \\ \sigma \in \mathfrak{S}_{r +{1}}}}
    \sign{(\sigma)} x_{\ul{a, \sigma(c)}} \cdot y_{\ul{c, b}}
  \end{align*}
  This proves \ref{lem:jac:monomial} and we immediately get \ref{lem:jac:partial} from the form of the monomial and properties of partial derivatives.
  For \ref{lem:jac:fg}, note that  if \([\ul a \mid \ul b]\) is an arbitrary minor then each monomial appearing in \((\partial[\ul a \mid \ul b]/\partial y_{i, j})\) will have \(r + 1\) indeterminates \(x_{i', j'}\).
  Since \(f\) has exactly \(r\) nonzero entries, each of the monomials will have at least one term \(x_{i',j'}\) that is set to zero and thus \(\left(\partial[\ul a | \ul b] / \partial y_{i,j}\right)_{(f,g)} = 0\) for all \(y_{i,j}\).
  Now consider the indeterminate \(x_{i, j}\) when \(1 \leq i \leq r\) or \(1 \leq j \leq r\).
  Every nonzero monomial of \((\partial[\ul a \mid \ul b] / \partial x_{i,j})\) must contain at least one term \(x_{i', j'}\) satisfying \(i' > r\) or \(j' > r\) and for this pair of indices we have \(f_{i', j'} = 0\).
  So in this case \(\left(\partial[\ul a|\ul b] / \partial x_{i,j}\right)_{(f, g)}\) vanishes.
  Finally let \(i, j \in \{r + 1, \dots, n\}\) and consider the monomial \(m = x_{\ul{a, c}} \cdot y_{\ul{c', b}}\) in \(\partial[\ul a \mid \ul b]/\partial x_{i, j}\).
  Since \(\partial m / \partial x_{i, j}\) is nonzero if and only if \((i, j) \in \ul{a, c} \), 
  its localization \((\partial m/\partial x_{i,j})_{(f, g)}\) is nonzero if and only if \(f_{\ul{a - i, c - j}} \cdot g_{\ul{c', b}} \neq 0\).
  This is only possible when \(\ul{a - i} = \ul{c - j} = (1, \dots, r)\) and, as \(\ul b\) is decreasing and \(\{\ul c\} = \{\ul{c'}\}\), \(\ul{c'} = \ul b\) we must have \(\ul c = \ul{c'} = \ul b\).
  So whenever \((\partial [\ul a | \ul b] / \partial x_{i,j})_{(f,g)}\) is nonzero, 
  \begin{equation*}
    \left( \frac{\partial [\ul a|\ul b]}{\partial x_{i, j}} \right)_{(f, g)} =
    \sign(\mathrm{id}) \cdot{f}_{\ul{a - i, c - i}} \cdot g_{\ul{c, b}} =
    f_{1, 1} \dots f_{r,r} \cdot g_{(1, 1)} \dots g_{(r, r)} g_{(j,j)} =
    1.
  \end{equation*}
  We conclude that \(\ul a = (1, \dots, r, i)\) and \(\ul b = \ul c = (1, \dots, r, j)\) as required.
\end{proof}

A free resolution of \(Y\) can now be determined using the so-called Eagon--Northcott generic perfection.
In particular, we determine the grade of \(\CC[Y]\) and then compute the free resolution as a slight modification of the resolution of a determinantal variety.

\begin{thm}
  \label{simp-2:res}
  Let \(B = \CC[z_{i,j}]_{1 \leq i, j \leq n}\), \(I'_{r + 1}\)be the ideal generated by the \((r + 1) \times (r + 1)\) minors of the \(n \times n\) matrix \((z_{i,j})\) of indeterminates, and \(M = B / I'_{r + 1}\).
  If \(\bF_\bullet\) is a \(B\)-free resolution of \(M\) then \(\CC[Y] = M \otimes B\) is resolved by \(\bF_\bullet \otimes B\).
\end{thm}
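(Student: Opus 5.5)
The plan is to apply Eagon--Northcott generic perfection to the ring map \(\psi \colon B \to A\) given by \(z_{i,j} \mapsto (\Phi^f\Phi^g)_{i,j} = \sum_k x_{i,k}y_{k,j}\). The statement should be read as \(\CC[Y] = M \otimes_B A\), resolved by \(\bF_\bullet \otimes_B A\). Under \(\psi\), the extended ideal \(I'_{r+1}A\) is exactly the ideal \(I = I_{r+1}\) of \((r+1)\times(r+1)\) minors of \(\Phi^f\Phi^g\). By Proposition~\ref{simp-2:reduced}, \(A/I\) is reduced, and set-theoretically its zero locus is \(Y\), so \(A/I = \CC[Y]\). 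Hence \(M \otimes_B A = B/I'_{r+1} \otimes_B A = A/I = \CC[Y]\). It remains to show that \(\bF_\bullet \otimes_B A\) is acyclic.

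First, I will recall that \(M\) is a perfect \(B\)-module: by Eagon--Northcott/Hochster--Eagon (or \cite[Theorem~18.18]{Eis:CommAlg}), determinantal ideals of a generic matrix are perfect. Here \(\grade I'_{r+1} = \operatorname{pd}_B M = (n-r)^2 = \Delta_{fg}^2\), so \(M\) is a generically perfect module over \(\ZZ[z_{i,j}]\) as well. Second, I will compute the grade of the extended ideal: \(A\) is a polynomial ring, hence Cohen--Macaulay, so \(\grade_A I = \codim_A I = \codim_{\sA} Y = \Delta_{fg}^2\) by Corollary~\ref{simp-2:codim}. Third, I will invoke the generic perfection theorem (Eagon--Northcott, or Bruns--Vetter, Theorem~3.5). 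If \(M\) is generically perfect of grade \(g\) and \(\grade(I'_{r+1}A) \geq g\) (equivalently \(MA \ne 0\) and the grade is preserved), then \(M \otimes_B A\) is perfect of grade \(g\) and \(\bF_\bullet \otimes_B A\) is an \(A\)-free resolution of it. Minimality is preserved because \(\psi\) maps the homogeneous maximal ideal into the homogeneous maximal ideal. The grading needs adjusting, since \(\psi\) doubles degrees, with \(\deg z_{i,j}=2\).

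The main obstacle is the grade equality in the second step, and in particular making sure that the codimension of the zero set of \(I\) equals the codimension of \(Y\) as computed through the desingularization \(\sZ_r\). This rests on Theorem~\ref{simp-2:norm+rat-sing} identifying \(Y\) with \(\sY_r\), and on \(\sZ_r\to Y\) being birational, so that \(\dim Y = 2mn - \Delta_{fg}^2\). A minor point is that generic perfection is usually stated over \(\ZZ\) or for the universal matrix over a base. I would handle this by base-changing from \(\ZZ[z]\) to \(\CC[z]\) and then along \(\psi\), noting that perfection over \(\ZZ[z]\) of \(\ZZ[z]/I'_{r+1}\) is classical.
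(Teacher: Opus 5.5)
Your proposal is the paper's proof: base change along \(z_{i,j} \mapsto \sum_k x_{i,k}y_{k,j}\), perfection of the generic determinantal ring, \(A/I = \CC[Y]\) from Proposition~\ref{simp-2:reduced}, \(\grade_A I = \Delta_{fg}^2\) from Corollary~\ref{simp-2:codim}, and Bruns--Vetter's generic perfection theorem, and you are right to read \(M \otimes B\) and \(\bF_\bullet \otimes B\) as \(M \otimes_B A\) and \(\bF_\bullet \otimes_B A\). The one caveat is at the step you flagged, and it is inherited from the paper rather than introduced by you: the paper obtains \(\codim_{\sA} Y = \Delta_{fg}^2\) from \(Y = \sY_r\), which holds only when \(\Delta_{fg} \leq \delta\) (automatic in case \(\sC\), where \(\Delta_{fg} = \delta_1 - \delta_2 \leq \delta\)); without such a hypothesis the statement can fail, e.g.\ for \(m = n = 2\), \(r = 0\) the locus \(\{fg = 0\}\) contains the \(5\)-dimensional family with \(\rank f = 1\) and \(g(F) \subseteq \ker f\), so \(\grade_A I = 3 < 4\) and the Koszul complex on the entries of \(\Phi^f\Phi^g\) is not acyclic.
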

\begin{proof}
  This is a direct application of \cite[Theorem~3.5]{BV:DeterminantalRings} and Proposition~\ref{simp-2:reduced}.
  Since \(M\) is a determinantal ring it is perfect of grade \(g = \Delta_{fg}^2\).
  From Proposition~\ref{simp-2:reduced}, \(\CC[Y] = M \otimes B\).
  Since \(\CC[Y]\) is Cohen--Macaulay,
  \begin{equation*}
    \grade{\CC[Y]} = \codim_\sA{\CC[A]} = \Delta_{fg}^2.
  \end{equation*}
  The map \(z_{i, j} \mapsto \sum_k x_{i, k} y_{k, j}\), induced by matrix multiplication makes \(A\) into a \(B\)-algebra.
  As \(\CC[Y] \neq 0\), \cite[Theorem~3.5]{BV:DeterminantalRings} now implies \(\CC[Y]\) is perfect of grade \(g\) and is resolved by \(\bF_\bullet \otimes B\).
\end{proof}

\section{Preliminaries II}
\label{s:prelim-ii}

\subsection{Normal forms of a pair of maps}
\label{ss:norm-forms}
If \(V_0\) and \(V_1\) are finite dimensional complex vector spaces and \(f \colon V_0 \to V_1\) and \(g \colon V_1 \to V_0\) are linear maps, then \(V_0 \oplus V_1\) can be thought of as a \(\ZZ_2\)-graded module over \(\CC[x]\) where \(\deg{x} = 1\) and \(x\) acts on \(V_0\) by \(f\) and on \(V_1\) by \(g\).
Let \(\fA\) denote the category whose objects are tuples \((V_0, V_1, f, g)\) and morphisms determined by this \(\CC[x]\)-module structure.
The indecomposable objects in \(\fA\) determine the forms in which matrices of  \(f\) and \(g\) can be put in.

\begin{prop}[]
  \label{fg-normal-form}
  An indecomposable object in \(\fA\) is isomorphic to one of the following:
  \begin{enumerate}
  \item the object \(J_n(\lambda)\) with \(V_0 = V_1 = \CC^n\), \(f\) the identity matrix, and \(g\) a single Jordan block with eigenvalue \(\lambda\);
  \item the object \(J_n(\infty)\) with \(V_0 = V_1 = \CC^n\), \(f\) a single nilpotent Jordan block, and \(g\) the identity;
  \item \(K_n = \CC[x] / (x^{2n + 1})\)  with basis \(v_0, \dots, v_{2n}\) where \(v_i\) has parity \(i\) and \(xv_i = v_{i + 1}\) for \(i < 2n\) and \(xv_{2n} = 0\);
  \item  the object \(K_n[1]\) obtained by swapping  the even and odd parts of \(K_n\).
  \end{enumerate}
\end{prop}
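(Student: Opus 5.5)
We regard an object \((V_0, V_1, f, g)\) of \(\fA\) as a finitely generated \(\ZZ_2\)-graded torsion module \(M = V_0 \oplus V_1\) over \(\CC[x]\) with \(\deg x = 1\). The plan is to split into two cases according to the behaviour of the degree-\(0\) endomorphism \(x^2\), which acts on \(V_0\) by \(gf\) and on \(V_1\) by \(fg\).

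\textbf{Step 1: generalized eigenspaces.} Since \(x^2\) is even and commutes with \(x\), the generalized eigenspace decomposition \(M = \bigoplus_\mu M_\mu\) for \(x^2\) consists of graded \(\CC[x]\)-submodules. An indecomposable object therefore has a single eigenvalue \(\mu\) for \(x^2\).

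\textbf{Step 2: the case \(\mu \neq 0\).} Here \(gf\) is invertible on \(V_0\) and \(fg\) is invertible on \(V_1\). Hence \(f\) and \(g\) are isomorphisms and \(\dim V_0 = \dim V_1\). Using \(f\) to identify \(V_1\) with \(V_0\) (a change of basis on \(V_1\)), we may take \(f = \mathrm{id}\). Then \(g\) becomes an endomorphism of \(V_0\) whose conjugacy class classifies the object, since isomorphisms must be simultaneous changes of basis preserving \(f = \mathrm{id}\), and these are exactly conjugations. Indecomposability forces \(g\) to be a single Jordan block \(J_n(\lambda)\) with \(\lambda \neq 0\). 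The \(\lambda = 0\) members of family (a) and all of family (b) fall into the next case.

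\textbf{Step 3: the case \(\mu = 0\) (main obstacle).} Now \(x\) acts nilpotently, so \(M\) is a graded module over \(\CC[x]/(x^N)\). I would prove the graded analogue of the structure theorem: every finitely generated graded torsion module over the graded PID \(\CC[x]\) is a direct sum of cyclic graded modules \(\CC[x]/(x^k)\), each generated in parity \(0\) or \(1\). To do this, choose a homogeneous element \(v\) of maximal nilpotency order \(k\) (possible because the order of \(v\) is the maximum of the orders of its homogeneous components). Then show that the cyclic submodule \(\CC[x]v\) is a graded direct summand. The usual splitting argument goes through if one uses homogeneous lifts; alternatively, \(\CC[x]v\) is injective in the category of graded \(\CC[x]/(x^k)\)-modules, since that ring is graded self-injective. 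This splitting is the step requiring care.

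\textbf{Step 4: matching the cyclic pieces to the list.} A cyclic graded module \(\CC[x]/(x^k)\) generated in parity \(\epsilon\) has a basis \(v_0, \dots, v_{k-1}\) with \(v_i\) of parity \(i + \epsilon\). The four possibilities according to \(k \bmod 2\) and \(\epsilon\) are as follows.
\begin{itemize}
\item[\(\circ\)] \(k = 2n+1\), \(\epsilon = 0\): this is \(K_n\).
\item[\(\circ\)] \(k = 2n+1\), \(\epsilon = 1\): this is \(K_n[1]\).
\item[\(\circ\)] \(k = 2n\), \(\epsilon = 0\): then \(\dim V_0 = \dim V_1 = n\), and in the bases \((v_0, v_2, \dots)\) and \((v_1, v_3, \dots)\) the map \(f\) is the identity and \(g\) is a nilpotent Jordan block. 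This is \(J_n(0)\).
\item[\(\circ\)] \(k = 2n\), \(\epsilon = 1\): here \(g\) is the identity and \(f\) is a nilpotent Jordan block. This is \(J_n(\infty)\).
\end{itemize}

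\textbf{Step 5: indecomposability of the listed objects.} Each listed object is indecomposable. In the nilpotent case this holds because the module is cyclic over the local ring \(\CC[x]/(x^k)\), so its graded endomorphism ring is local. In the \(\mu \neq 0\) case it holds because a single Jordan block is indecomposable.
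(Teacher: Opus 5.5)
Your proposal is correct, but it does not follow the paper's route: the paper proves nothing here and simply cites Sam--Snowden (their Proposition~4.3 and the start of their \S4.2). You instead give a self-contained proof of the standard classification. Objects of \(\fA\) are representations of the two-vertex cyclic quiver. You split off the generalized eigenspaces of the even operator \(x^2\). When \(x^2\) is invertible, \(f\) is an isomorphism and Jordan form for \(gf\) finishes. When \(x^2\) is nilpotent, a graded structure theorem splits the module into homogeneous cyclic strings.

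What your route buys is transparency. It shows that \(J_n(0)\) and \(J_n(\infty)\) are exactly the even-length nilpotent strings, generated in parity \(0\) and \(1\) respectively, while \(K_n\) and \(K_n[1]\) are the odd-length ones. This is precisely the bookkeeping used later in Proposition~\ref{simp-2:split:normal}, where a repeated zero root of \(\bar\chi\) may come from any of the four nilpotent types. The citation buys only brevity.

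Two small tightenings:
\begin{enumerate}
\item In Step 3, state the structure theorem only for \(x\)-nilpotent graded modules. As literally written for all graded torsion modules it is false: \(J_1(\lambda)\) with \(\lambda \neq 0\) is cyclic, isomorphic to \(\CC[x]/(x^2-\lambda)\). Your maximal-order argument only uses \(x^k M = 0\) anyway.
\item The splitting is cleanest as follows. Self-injectivity of \(\CC[x]/(x^k)\) gives an ungraded \(x\)-linear retraction \(p\) of \(M\) onto \(\CC[x]v\). Its parity-preserving component \(p_0\) still commutes with \(x\); compare the parity-reversing parts of \(px = xp\). Since \(v\) is homogeneous, \(\CC[x]v\) is graded, so \(p_0\) restricts to the identity on it. Hence \(\ker p_0\) is a graded complement.
\end{enumerate}
Step 5 is not needed, since the statement only asserts that every indecomposable appears on the list.
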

\begin{proof}
  See \cite[Proposition~4.3]{ss2024:cohomology} and discussion at the beginning of \cite[\S4.2]{ss2024:cohomology}.
\end{proof}

\subsection{Splitting and factorization rings}
\label{ss:split-fact}
Here we compile basic properties of splitting and factorization rings relevant for the cohomology calculation.
Details are available in \cite[\S 3]{ss2024:cohomology} and \cite[\S 2.3]{sam2026:borel}.

Let \(A\) be a commutative ring and \(f(u) = u^N + \sum_{i = 0}^{N - 1} a_{N - i}u^i \in A[u]\) be a degree \(N\) monic polynomial.
The splitting ring \(\Split_A(f)\) is a quotient of \(A[\xi_1, \dots, \xi_N]\) obtained by identifying coefficients of the polynomials \(f(u) = (u - \xi_1) \dots (u - \xi_N)\).
If \(A\) is graded by \(\deg{a_i} = i\deg{a_1}\), then the splitting ring inherits a grading with \(\deg{\xi_i} = \deg{a_1}\) for all \(i\).
The symmetric group \(\fS_N\) acts on \(\Split_A(f)\) by permuting the \(\xi_i\) and fixing elements of \(A\).
\begin{prop}
  \label{split:facts}
  Let \(f \in A[u]\) be a degree \(N\) monic polynomial, \(\Delta\) be its discriminant, and \(B = \Split_A(f)\).
  Set
  \begin{equation*}
    V(\Delta, \partial\Delta) = \left\{ \fp \in \Spec{A} \mid \Delta \in \fp^2 A_\fp \right\}.
  \end{equation*}

  \begin{enumerate}
  \item \label{split:rank}
    \(B\) is an \(A\)-module of rank \(N!\).

  \item \label{split:syntomic+splitting}
    The map \(A \to B\) is syntomic (that is, flat, of finite presentations, and all fibers are local complete intersections).
    Furthermore, this map admits an \(A\)-linear splitting.

  \item \label{split:serre-cm}
    If \(A\) satisfies Serre's condition \((\rS_k)\) then so does \(B\).
    If \(A\) is Cohen--Macaulay, then so is \(B\).

  \item \label{split:etale}
    If \(\Delta\)is a unit of \(A\), then \(A \to B\) is étale.

  \item \label{split:red}
    If \(A\) is reduced and \(\Delta\) is a nonzerodivisor then \(B\) is reduced.

  \item  \label{split:norm-criteria}
    If \(A\) is normal, \(\Delta \in A\) is a nonzero divisor, and \(V(\Delta, \partial \Delta)\) has codimension at least \(2\) in \(\Spec(A)\), then the splitting ring \(\Split_A(f)\) is normal.
  \end{enumerate}
\end{prop}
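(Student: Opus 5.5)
The statement collects standard facts about splitting rings, and the plan is to prove them in order, relying on \cite[\S3]{ss2024:cohomology} and \cite[\S2.3]{sam2026:borel} where possible. The tool throughout is the tower description of \(B=\Split_A(f)\). Put \(A_0=A\) and \(f_0=f\). Let \(A_1=A_0[\xi_1]/(f_0(\xi_1))\) and write \(f_0(u)=(u-\xi_1)f_1(u)\) with \(f_1\in A_1[u]\) monic of degree \(N-1\). Iterating gives \(B=A_N\).

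\emph{Parts \ref{split:rank}, \ref{split:syntomic+splitting}, \ref{split:serre-cm}.} Each step \(A_{k}\to A_{k+1}=A_k[u]/(f_k)\) divides by a monic polynomial of degree \(N-k\). Hence \(A_{k+1}\) is free over \(A_k\) with basis \(1,u,\dots,u^{N-k-1}\), and so \(B\) is free of rank \(N!\) over \(A\), which gives \ref{split:rank}. The composite basis contains \(1\), so \(A\cdot 1\) is a direct summand; this is the \(A\)-linear splitting. Flatness and finite presentation are immediate. For the fibers, note that \(B\) is a quotient of \(A[\xi_1,\dots,\xi_N]\) by the \(N\) coefficient relations and is finite over \(A\). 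So each fiber \(B\otimes k(\fp)\) is a zero-dimensional quotient of a polynomial ring in \(N\) variables by \(N\) equations, hence a complete intersection. This proves \ref{split:syntomic+splitting}. For \ref{split:serre-cm}, use that \(A_\fp\to B_\fq\) is flat and local with Artinian Cohen--Macaulay fiber. Then \(\operatorname{depth}B_\fq=\operatorname{depth}A_\fp\) and \(\dim B_\fq=\dim A_\fp\), so \((\rS_k)\) and the Cohen--Macaulay property pass from \(A\) to \(B\).

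\emph{Parts \ref{split:etale} and \ref{split:red}.} Up to sign, \(\Delta\) equals \(\prod_{i<j}(\xi_i-\xi_j)^2\) in \(B\). At each step \(f_k'(\xi_{k+1})\) divides a factor of this product. So if \(\Delta\) is a unit, every \(A_k\to A_{k+1}\) is standard étale, and therefore so is the composite, giving \ref{split:etale}. For \ref{split:red}, let \(A\) be reduced and \(\Delta\) a nonzerodivisor. Then \(A\hookrightarrow A[\Delta^{-1}]\), and flatness gives \(B\hookrightarrow B[\Delta^{-1}]\). By \ref{split:etale}, \(B[\Delta^{-1}]\) is étale over the reduced ring \(A[\Delta^{-1}]\), hence reduced, and so \(B\) is reduced.

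\emph{Part \ref{split:norm-criteria}.} Apply Serre's criterion: \((\rS_2)\) for \(B\) follows from \ref{split:serre-cm}, so it remains to prove \((\rR_1)\). Let \(\fq\subset B\) have height at most one and lie over \(\fp\); by going-down for flat maps, \(\fp\) also has height at most one. If \(\Delta\notin\fp\), then \(B_\fq\) is étale over the regular ring \(A_\fp\) and is therefore regular. If \(\Delta\in\fp\), then \(\fp\) has height one, since \(\Delta\) is a nonzerodivisor in a normal ring. The codimension hypothesis forces \(\fp\notin V(\Delta,\partial\Delta)\), so \(A_\fp\) is a DVR in which \(\Delta\) is a uniformizer.

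This last case is the main obstacle. After passing to the strict henselization (or completion) of \(A_\fp\), \(f\) modulo \(\fp\) has exactly one double root and otherwise simple roots, since \(v(\Delta)=1\). Hensel's lemma then factors \(f=g\cdot h\), where \(g\) is quadratic with discriminant of valuation \(1\) and \(h\) is separable over the residue field. The splitting ring of \(f\) is then étale-locally a product of copies of \(\Split(g)\otimes\Split(h)\). \(\Split(h)\) is étale, and \(\Split(g)\cong A_\fp[\xi]/(\xi^2-\pi)\) after completing the square, which is a regular (ramified) DVR because \(\pi\) is a uniformizer. Regularity descends along the faithfully flat completion, so \(B_\fq\) is regular. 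I would try to make this local factorization argument precise first, possibly by quoting the corresponding lemma in \cite{ss2024:cohomology}.
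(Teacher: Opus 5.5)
The paper gives no argument of its own here: it cites \cite[Proposition~3.1]{ss2024:cohomology} for (a)--(e) and \cite[Proposition~3.10]{ss2024:cohomology} for (f). Your proposal reconstructs the proofs instead, and the reconstruction is correct.
\begin{itemize}
\item The tower $A=A_0\to A_1\to\cdots\to A_N=B$ of quotients by monic polynomials gives freeness of rank $N!$, the $A$-linear splitting and syntomicity.
\item The flat local formulas for depth and dimension, applied with Artinian fibers, transfer $(\rS_k)$ and the Cohen--Macaulay property.
\item Reducedness follows by inverting $\Delta$.
\item Normality reduces, via Serre's criterion, to regularity of the splitting ring over a DVR in which $\Delta$ is a uniformizer. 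You handle this by Hensel-factoring off a quadratic.
\end{itemize}
Compared with the bare citation, your route is self-contained. It also makes visible where the implicit hypotheses enter: $A$ Noetherian for Serre's criterion and the depth/dimension formulas, and $2$ invertible for completing the square. The citation buys brevity.

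Three steps deserve a line more.

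\emph{Part (d).} You show $f_k'(\xi_{k+1})$ is a unit in $B$, but standard \'etaleness needs it to be a unit in $A_{k+1}$. This follows by applying the $A_{k+1}$-linear retraction $B\to A_{k+1}$ to an equation $xy=1$. Alternatively, use the identity $\operatorname{disc}(f_k)=\pm f_k'(\xi_{k+1})^2\operatorname{disc}(f_{k+1})$ in $A_{k+1}$ and induct.

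\emph{Part (f), the double-root claim.} The assertion that $v(\Delta)=1$ forces exactly one double root, with all other roots simple, needs justification. Hensel-factor $f=\prod_j g_j$ over the strict henselization, with $\bar g_j=(u-a_j)^{m_j}$ and the $a_j$ distinct. The resultants $\operatorname{Res}(g_j,g_k)$ are units, so $v(\Delta)=\sum_j v(\operatorname{disc} g_j)$. After translating $a_j$ to $0$, all non-leading coefficients of $g_j$ lie in the maximal ideal. Since $\operatorname{disc}(g_j)$ is weighted homogeneous of weight $m_j(m_j-1)$ in coefficients of weight at most $m_j$, this gives $v(\operatorname{disc} g_j)\ge m_j-1$, so $v(\Delta)=1$ forces a single $m_j=2$.

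\emph{Part (f), the local identification.} Identifying the local rings of $\Split_R(f)$ with those of $\Split_R(g)\otimes_R\Split_R(h)$ uses uniqueness of Hensel factorization over the henselian local rings of the finite $R$-algebra $\Split_R(f)$. With that in place, descent of regularity along the flat local map $B_{\mathfrak{q}}\to \Split_R(f)_{\mathfrak{q}'}$ finishes the argument as you indicate.
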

\begin{proof}
  See \cite[Proposition~3.10]{ss2024:cohomology} for \ref{split:norm-criteria} and \cite[Proposition~3.1]{ss2024:cohomology} for the rest.
\end{proof}

Factorization rings are obtained by considering rings of invariants of \(\Split_A(f)\).
Let \(f  \in A[u]\) be as before and \(\mathbf{d} = (d_1, \dots, d_r)\) be a tuple of nonnegative integers satisfying \(\sum d_i = N = \deg{f}\).
If \(g_1, \dots, g_r\) are monic polynomials of degrees \(\deg{g_i} = d_i\), then the \(\bd\)-factorization ring \(\Fact_A^{\bd}(f)\) is obtained by considering the polynomial ring \(A[b_{ij} \mid 1 \leq i \leq r, 1 \leq j \leq d_i]\) and taking the quotient that identifies the coefficients  of the polynomials \(f(u) = g_1(u) \dots g_r(u)\).
The factorization ring \(\Fact_A^\bd(f)\) inherits a grading \(\deg{b_{i, j}} = j \deg{a_1}\) for all \(i, j\).

\begin{prop}
  \label{fact:facts}
  Let \(f \in A[u]\) be a degree \(N\) monic polynomial and suppose \(f(u) = g_1(u) \dots g_r(u)\) in the factorization ring \(C = \Fact^{\mathbf{d}}_A(f)\).

  \begin{enumerate}
  \item There is a natural isomorphism
    \begin{equation*}
      \Split_A(f) = \Split_C(g_1) \otimes_C \dots \otimes_C \Split_C(g_r)
    \end{equation*}
    of \(A\)-modules.
  \item \label{fact:quot-of-split}
    \(\Fact^{\mathbf{d}}_A(f)\) is a free \(A\)-module of rank \({N \choose d_1, \dots, d_r} = \frac{N!}{d_1! \dots d_r!}\).
  \item \label{fact:inv-ring}
    If \(N!\) is invertible then \(\Fact_A^{\bd}(f) = \Split_A(f)^G\) is the ring of \(G\)-invariants with respect to the group \(G \cong \mathfrak{S}_{d_1} \times \dots \times \mathfrak{S}_{d_r}\).
    Here \(\mathfrak{S}_{d_1}\) permutes the first \(d_1\) roots, \(\mathfrak{S}_{d_2}\) the next \(d_2\) roots, etc.
  \item \label{fact:red}
    If \(\Split_A(f)\) is reduced (respectively, integral, normal) then so is \(\Fact_A^{\bd}(f)\).
  \end{enumerate}
\end{prop}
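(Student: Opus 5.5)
The plan is to deduce all four statements from the structure of the splitting ring over the factorization ring, using Proposition~\ref{split:facts} together with the universal property of the factorization ring. Throughout, write \(C = \Fact^{\bd}_A(f)\), \(B = \Split_A(f)\), and let \(g_1, \dots, g_r \in C[u]\) be the universal factors.

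For (a), I would use universal properties. To give an \(A\)-algebra map out of \(\Split_C(g_1) \otimes_C \dots \otimes_C \Split_C(g_r)\), one must specify:
\begin{itemize}
\item[(i)] a factorization \(f = g_1 \cdots g_r\) with \(\deg g_i = d_i\), which is the data of a map out of \(C\);
\item[(ii)] a splitting of each \(g_i\) into linear factors.
\end{itemize}
Concatenating these splittings splits \(f\) into \(N\) linear factors, which is the data of a map out of \(B\). Conversely, a splitting \(f = \prod_k (u - \xi_k)\) determines \(g_1 = \prod_{k \le d_1}(u - \xi_k)\), then \(g_2\) from the next \(d_2\) roots, and so on. These two constructions are mutually inverse on functors of points, so Yoneda gives an isomorphism of \(A\)-algebras, hence of \(A\)-modules.

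For (b), I would reduce to the universal case \(A_0 = \bZ[a_1, \dots, a_N]\); the general case follows by base change, since \(\Fact\) is compatible with base change and freeness is preserved. In the universal case \(C_0 = \bZ[b_{ij}]\) is itself a polynomial ring in \(N\) variables, and \(A_0 \to C_0\) is the inclusion of the elementary symmetric polynomials of \(b\)-variables grouped according to \(\bd\). By (a) and Proposition~\ref{split:facts}\ref{split:rank}, \(B_0\) is free over \(A_0\) of rank \(N!\) and free over \(C_0\) of rank \(\prod d_i!\). A graded Hilbert series comparison then shows that \(C_0\) is a graded \(A_0\)-module whose Hilbert series equals the \(q\)-multinomial coefficient times that of \(A_0\). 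Since \(C_0\) is a finite graded Cohen--Macaulay module over the polynomial ring \(A_0\), it is free by Auslander--Buchsbaum, of rank \(\binom{N}{d_1, \dots, d_r}\). I expect this freeness step to be the main obstacle. An alternative route is an explicit basis argument: the monomials \(\prod b_{ij}^{e_{ij}}\) under a staircase bound, obtained by a Gröbner/division argument iterated one factor at a time. Concretely, \(\Fact^{(d_1, N - d_1)}\) is free of rank \(\binom{N}{d_1}\) by Euclidean division by a monic polynomial, and one then induces on \(r\).

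For (c), the group \(G\) acts on \(B = \Split_C(g_1) \otimes \dots \otimes \Split_C(g_r)\) factorwise, and \(C \to B\) lands in \(B^G\). To get equality I would show \(\Split_C(g_i)^{\fS_{d_i}} = C\) when \(d_i!\) is invertible. The inclusion \(C \to \Split_C(g_i)\) is split and equivariant, and the Reynolds operator \(e = \frac{1}{|G|}\sum_{\sigma \in G} \sigma\) is exact. Since all modules involved are free, both invariants and tensor products are controlled, and it suffices to check the statement in the universal case. There it is the classical fundamental theorem on symmetric polynomials over \(\bZ[1/N!]\): the elementary symmetric functions of \(\xi_1, \dots, \xi_{d}\) generate the invariants.

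For (d), assume \(N!\) is invertible, which is the setting where (c) applies. Then \(C = B^G\) is a direct summand of \(B\) via the Reynolds operator. A subring of a reduced ring is reduced, and a subring of an integral domain is a domain. For normality, I would use that the invariant ring of a finite group acting on a normal domain is normal: an element of \(\operatorname{Frac}(B^G)\) integral over \(B^G\) lies in \(B\), and it is \(G\)-fixed, so it lies in \(B^G\).
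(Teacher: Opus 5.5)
The paper gives no argument of its own here; it cites \cite[Proposition~3.11]{ss2024:cohomology} and \cite[Proposition~2.3]{sam2026:borel}. Your proposal is therefore a self-contained reconstruction, and it is essentially correct. The Yoneda argument for (a) is right, and it even gives an isomorphism of $A$-algebras. Your treatment of (c) is also sound: you reduce to $\Split_C(g_i)^{\fS_{d_i}} = C$ by base change from the universal case, and use that Reynolds idempotents commute with $\otimes_C$.

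A few points need tightening.
\begin{enumerate}
\item In (b), $A_0 = \bZ[a_1,\dots,a_N]$ is not a polynomial ring over a field, so the graded Auslander--Buchsbaum criterion does not apply verbatim. You can take the universal ring over $\mathbf{Q}$ instead, which suffices for the paper since it works over $\CC$.
\item More cleanly, you can use the two facts you state but never use. $B$ is flat over $A$, and by (a) it is free of positive rank, hence faithfully flat, over $C$. Faithfully flat descent then shows $C$ is flat over $A$. Being finite and of finite presentation, $C$ is projective of rank $N!/\prod_i d_i!$. In the graded universal case, a finitely generated graded projective module over a positively graded ring with degree-zero part $\bZ$ is free, and freeness then follows by base change.
\item Your fallback ``Euclidean division'' route does not work as stated. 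Division by a monic polynomial only handles $\Fact^{(1,N-1)}_A(f) = A[u]/(f)$. For $d \geq 2$, freeness of $\Fact^{(d,N-d)}_A(f)$ of rank $\binom{N}{d}$ is a genuine Grassmannian-type statement that needs a standard-monomial or Gr\"obner argument.
\item Part (d) does not require $N!$ to be invertible. Since $C \to B$ is injective and faithfully flat, reducedness and integrality pass to the subring $C$. For normality, if $a/s \in \operatorname{Frac}(C)$ is integral over $C$, then $a/s \in B$ because $B$ is normal, so $a \in sB \cap C = sC$. In the paper's characteristic-zero setting your restriction is harmless, but this argument proves (d) as literally stated.
\end{enumerate}
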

\begin{proof}
  See \cite[Proposition~2.3]{sam2026:borel} and \cite[Proposition~3.11]{ss2024:cohomology}
\end{proof}

If \(X\) is a smooth variety and \(A\) its chow ring, then the factorization rings \(\Fact^\bd_A(f)\) can be realized as Chow rings of partial flag bundles over \(X\).
In particular, when \(X\) is a point, the Chow rings agree with singular cohomology groups of certain partial flag varieties.

\begin{prop}
  \label{iso:fact=singular-hom}
  Let \(A\) be a graded ring concentrated in degree \(0\), \(\mathbf{d} = (d_1, \dots, d_r)\) and \(d = \sum d_i\).
  There is a natural isomorphism of graded rings
  \begin{equation*}
    \Fact^{\mathbf{d}}_A(u^d)
    = A \otimes \rH^\bullet_{\mathrm{sing}}(\Flag(d_1, d_1 + d_2, \dots, d_1 + \dots + d_{r - 1};\;\CC^d), \ZZ)
  \end{equation*}
  where the generators \(b_{i, j}\) of \(\Fact^{\mathbf{d}}_A(u^n)\) have degrees given by \(\deg{b_{i, j}} = 2j\).
\end{prop}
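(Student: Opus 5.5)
Since \(A\) is concentrated in degree \(0\), the factorization ring \(\Fact^{\bd}_A(u^d)\) is obtained by base change from \(\Fact^{\bd}_{\CC}(u^d)\) (or \(\Fact^{\bd}_{\ZZ}(u^d)\)). The coefficients of \(u^d\) are \(0\) and \(1\), so no coefficient of \(A\) enters the defining relations. Hence \(\Fact^{\bd}_A(u^d) = A \otimes \Fact^{\bd}_{\ZZ}(u^d)\) as graded rings, and the plan reduces to the case \(A = \ZZ\). Concretely, \(\Fact^{\bd}_{\ZZ}(u^d)\) is the polynomial ring \(\ZZ[b_{i,j}]\) modulo the ideal generated by the homogeneous components of
\begin{equation*}
  \prod_{i=1}^r \Bigl(u^{d_i} + b_{i,1}u^{d_i-1} + \dots + b_{i,d_i}\Bigr) - u^d .
\end{equation*}
In other words, we impose the single relation \(\prod_i g_i(u) = u^d\) coefficientwise.

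The second step is to invoke the classical presentation of the cohomology of a partial flag variety. Let \(\Flag(d_1, d_1+d_2, \dots;\, \CC^d)\) carry the tautological filtration \(0 = \sV_0 \subseteq \sV_1 \subseteq \dots \subseteq \sV_r = \CC^d\), with successive quotients \(\sQ_i = \sV_i/\sV_{i-1}\) of rank \(d_i\). Then \(\rH^\bullet_{\mathrm{sing}}(\Flag, \ZZ)\) is generated by the Chern classes \(c_j(\sQ_i)\), which lie in degree \(2j\). The only relation is the Whitney formula \(\prod_i c(\sQ_i) = c(\CC^d) = 1\). This is the standard Borel-type presentation, proved for instance by iterating the projective/Grassmann bundle formula: \(\Flag\) is a tower of Grassmannian bundles, and \(\rH^\bullet\) of a Grassmann bundle \(\Gr(k, \sE)\) over \(B\) is \(\rH^\bullet(B)[c(\sS), c(\sQ)]/(c(\sS)c(\sQ) = c(\sE))\). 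The map \(b_{i,j} \mapsto c_j(\sQ_i)\) then matches the relations once we homogenize. The relation \(\prod_i (1 + c_1 + \dots + c_{d_i}) = 1\) is precisely \(\prod_i g_i(u) = u^d\) after the substitution \(b_{i,j} = c_j(\sQ_i)\), with \(u\) recording the complementary degree. This gives a well-defined surjective graded ring map \(\Fact^{\bd}_{\ZZ}(u^d) \to \rH^\bullet_{\mathrm{sing}}(\Flag, \ZZ)\), with \(\deg b_{i,j} = 2j\).

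The main obstacle is injectivity, which amounts to showing that the Whitney relations generate all relations. I would handle it with a rank count. By Proposition~\ref{fact:facts}\ref{fact:quot-of-split}, \(\Fact^{\bd}_{\ZZ}(u^d)\) is a free \(\ZZ\)-module of rank \(\binom{d}{d_1, \dots, d_r}\). The cohomology of the flag variety is also free of this rank, with a Schubert cell basis counted by the cosets \(\fS_d/(\fS_{d_1}\times\dots\times\fS_{d_r})\). A surjection between free \(\ZZ\)-modules of the same finite rank is an isomorphism, so this completes the case \(A = \ZZ\), and tensoring with \(A\) gives the claim. Naturality in \(A\) is clear from the construction. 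Alternatively, one can cite the known statement \cite[\S2.3]{sam2026:borel} that \(\Fact^{\bd}\) computes Chow rings of partial flag bundles, and then use that Chow ring and singular cohomology agree for flag varieties.
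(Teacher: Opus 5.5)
Your argument is correct, but it is not the route the paper takes. The paper gives no argument of its own: it cites \cite[Proposition~2.4]{sam2026:borel}, and points to \cite[Theorem~1]{gro1958:quelques} and \cite[Thm~5.1]{gss2022:computations} for the Chow-ring version. That is essentially the ``alternatively'' at the end of your proposal.

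Your self-contained proof has four steps:
\begin{enumerate}
\item You reduce to \(A = \ZZ\), since the defining relations of \(\Fact^{\bd}_A(u^d)\) have integer coefficients.
\item You send \(b_{i,j} \mapsto c_j(\sQ_i)\). This is well defined because homogenizing the Whitney identity \(\prod_i c(\sQ_i) = 1\) gives exactly \(\prod_i g_i(u) = u^d\), coefficient by coefficient.
\item You get surjectivity because the cohomology of the Grassmann-bundle tower is generated by Chern classes of the tautological bundles \(\sQ_i = \sV_i/\sV_{i-1}\).
\item You get injectivity by a rank count. By Proposition~\ref{fact:facts}\ref{fact:quot-of-split}, which holds over any commutative ring and hence over \(\ZZ\), \(\Fact^{\bd}_{\ZZ}(u^d)\) is free of rank \(\binom{d}{d_1, \dots, d_r}\). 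The Schubert cell decomposition gives the same rank for \(\rH^\bullet_{\mathrm{sing}}(\Flag, \ZZ)\).
\end{enumerate}

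What your route buys is an explicit identification of the generators \(b_{i,j}\) with Chern classes of the successive quotients. It also needs only that these classes generate, not the full Borel presentation.

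One presentational point: you assert early on that Whitney is ``the only relation,'' and later treat injectivity as the main obstacle. That assertion is exactly what the rank count establishes, so present it as a consequence rather than an input.

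The citation route is shorter and comes packaged with the relative statement for partial flag bundles over a smooth base (Chow rings). That is the form the paper alludes to just before the proposition.
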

\begin{proof}
  See \cite[Proposition~2.4]{sam2026:borel}.
  For the variant of the statement for Chow rings see \cite[Theorem~1]{gro1958:quelques} or \cite[Thm~5.1]{gss2022:computations}.
\end{proof}

We will be interested in the schemes associated to splitting and factorization rings.
If \(Y\) is an algebraic variety and \(\chi \in \sO_Y[u]\) we set
\begin{equation*}
  \sSplit_{Y}(\chi)  = \Spec(\Split_{\sO_Y}(\chi)),
  \qquad
  \sFact^{\mathbf d}_Y(\chi)  = \Spec(\Fact^{\mathbf{d}}_{\sO_Y}(\chi)).
\end{equation*}
These spaces will be called, respectively, the splitting and the \(\bd\)-factorization schemes of \(\chi\) over \(Y\).
If clear from context, we will omit \(Y\) and \(\mathbf d\) in the notation above.

\section{Grothendieck--Springer ensembles}
\label{s:ges}

In this section we calculate the Grothendieck--Springer ensemble (GSE) associated to five infinite families of flag supervarieties.
We will work with the basic diagram~\eqref{eq:basic-diag} and determine the image \(Y\) of the projection \(\ord{X} \times \gone \to \gone\)  when restricted to the subbundle \(Z\).
We will then introduce the relevant splitting and factorization schemes in the basic diagram and prove the technical properties required for our cohomology calculation.

\subsection{Compositional varieties of superflags}
\label{ss:gse-comp}
In this section we determine the compositional variety \(Y\) associated to six of the eight type classes of 2-step flags supervarieties given in Example~\ref{eg:2-step-types}.
We then describe a general procedure for extending these results to flags of arbitrary length and determine the five main cases of interest.
For the remaining two types of \(2\)-step flags, see Remark~\ref{r:missing-flags}.

\begin{prop}
  \label{type:trivial}
  If \(X \colon [\ftyp{\ast t \dots 1 0}] = [\delta \geq \delta_t \geq \dots \geq \delta_1 \geq 0]\) then
  \begin{equation*}
    Y = \Hom(V_0, V_1) \times \Hom(V_1, V_0).
  \end{equation*}
\end{prop}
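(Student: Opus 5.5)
We have to show that every pair \((f, g) \in \Hom(V_0, V_1) \times \Hom(V_1, V_0)\) is a fiber point of \(Z\). Recall \(Y = \pi'(Z)\), where \(Z\) is the total space of \(\eta^\ast\). The fiber of \(Z\) over a point \(x \in \ord{X}\) is \(\fp_{\bar 1}^x\): the odd endomorphisms preserving the flag \(R_1|S_1 \subseteq \dots \subseteq R_t|S_t\). So an odd pair \((f, g)\) lies in \(Y\) exactly when there is a flag with
\begin{equation*}
  f(R_i) \subseteq S_i \quad\text{and}\quad g(S_i) \subseteq R_i \qquad \text{for all } i,
\end{equation*}
where \(\dim R_i = p_i\) and \(\dim S_i = q_i\). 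The inclusion \(Y \subseteq \Hom(V_0,V_1)\times\Hom(V_1,V_0)\) is trivial, so the whole task is to construct such a flag for an arbitrary pair.

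The type condition \(\delta \geq \delta_t \geq \dots \geq \delta_1 \geq 0\) means that \(p_i - q_i\) is nonnegative and weakly increasing. Equivalently, the increments satisfy \(p_i - p_{i-1} \geq q_i - q_{i-1}\) for every \(i\) (including \(i = t+1\)).

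\textbf{Density reduction.} Since \(Z \to \gone\) is proper (because \(\ord{X}\) is projective), \(Y\) is closed. It therefore suffices to produce the flag for \((f, g)\) in a dense open subset. I would take the pairs with \(f\) injective when \(m \geq n\)\,---\,no, rather \(f\) of maximal rank and \(g\) of maximal rank, with \(gf\) generic. The cleanest approach, though, is to argue directly by induction building the flag from the top down.

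\textbf{Inductive construction.} Set \(R_{t+1} = V_0\) and \(S_{t+1} = V_1\), and suppose \(R_{i+1}|S_{i+1}\) is already \((f,g)\)-stable. We must find \(S_i \subseteq S_{i+1}\) of dimension \(q_i\) and \(R_i \subseteq R_{i+1}\) of dimension \(p_i\) with \(g(S_i) \subseteq R_i\) and \(f(R_i) \subseteq S_i\). The plan is:
\begin{enumerate}
\item Choose \(R_i\) first, inside \(R_{i+1} \cap f^{-1}(S_i)\).
\item Choose \(S_i\) in a way that forces a large kernel.
\end{enumerate}
Concretely, decompose \((R_{i+1}, S_{i+1}, f, g)\) into the indecomposables of Proposition~\ref{fg-normal-form}. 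The pieces \(J_n(\lambda)\) and \(J_n(\infty)\) have superdimension \(0\), \(K_n\) has superdimension \(+1\), and \(K_n[1]\) has superdimension \(-1\). A stable subobject of superdimension \(d\) with prescribed dimensions can be assembled from stable subobjects of the indecomposables, using:
\begin{itemize}
\item for \(J_n\), the truncations \(\CC[x]/(x^{2k})\)-type submodules, which contribute \((k,k)\);
\item for \(K_n\), the submodules \(x^j K_n\), which contribute \((k+1,k)\) or \((k,k)\).
\end{itemize}
These building blocks let us reach any \((p_i, q_i)\) with \(0 \leq p_i - q_i \leq \sdim(R_{i+1}|S_{i+1})\), provided the number of \(K\)-type summands is at least \(p_i - q_i\). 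Since \(\delta_{i+1} - \#K[1] + \#K \ldots\) equals \(\#K - \#K[1]\), this count is at least \(\delta_{i+1} \geq \delta_i\), as required. The case \(t=1\) is exactly the type \([\ftyp{\ast 1 0}]\) statement of \cite{ss2024:cohomology}, which serves as a check.

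\textbf{Main obstacle.} The difficult step is this combinatorial filling argument: verifying that tail submodules of indecomposables realize every admissible pair \((p_i,q_i)\). This is where the monotonicity \(\delta_i \leq \delta_{i+1}\) is used. To handle it, I would first treat a generic \((f,g)\) (for example \(f\) surjective with \(gf\) semisimple), where the decomposition is explicit, and then conclude by closedness of \(Y\).
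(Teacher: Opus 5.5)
Your reduction is sound: \(\pi'\) is proper because \(\ord{X}\) is projective, so \(Y\) is closed, and it suffices to treat a dense set of pairs. The paper uses this closedness implicitly. However, the proposal never actually produces the flag, and that construction is the entire content of the statement. The step you call the main obstacle is left unproved, and as you state it, it is false.

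Your building blocks only give sub-objects of dimensions \((k,k)\) and \((k+1,k)\), so the odd parts of \(K_n[1]\)-summands are never used. Concretely, take \(m=2\), \(n=1\), \(X=\Flag(1|1;\;V)\), so that \(0=\delta_1\le\delta=1\) and the type is trivial, and take \(f=g=0\). Then \(V\cong K_0\oplus K_0\oplus K_0[1]\) and \(\#K=2\ge p_1-q_1=0\), yet no combination of your blocks has \(q_1=1\).

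To repair the direct route you must also use the sub-objects \(x^jK_n[1]\) of dimension \((k,k+1)\), each compensated by an extra \((k+1,k)\) from a \(K\)-summand. Reaching \(q_i\) close to \(\dim S_{i+1}\) forces you to use all \(\#K[1]\) of them. So the real condition is \(\#K-\#K[1]\ge p_i-q_i\), i.e.\ \(\delta_{i+1}\ge\delta_i\), not \(\#K\ge p_i-q_i\). With that bookkeeping the top-down induction does close up. It even gives a flag for every pair without using properness, at the cost of invoking the full normal-form classification.

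Your fallback, a generic pair plus closedness, is exactly the paper's route. There no combinatorics is needed, but you still have to write the construction:
\begin{enumerate}
\item On the nonempty open set where \(fg\) has \(n\) distinct nonzero eigenvalues, \(f\) is surjective, \(g\) is injective, \(\dim\ker f=\delta\), and \(g(S)\cap\ker f=0\) for every \(S\subseteq V_1\).
\item Let \(E_1,\dots,E_n\) be the eigenlines of \(fg\), and choose \(K_1\subseteq\dots\subseteq K_t\subseteq\ker f\) with \(\dim K_i=\delta_i\). This is exactly where \(0\le\delta_1\le\dots\le\delta_t\le\delta\) is used.
\item Put \(S_i=E_1\oplus\dots\oplus E_{q_i}\) and \(R_i=g(S_i)\oplus K_i\). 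Then \(\dim R_i=q_i+\delta_i=p_i\), \(f(R_i)=fg(S_i)=S_i\), and \(g(S_i)\subseteq R_i\), with nesting automatic.
\end{enumerate}
In your language, such a pair decomposes as \(\bigoplus_j J_1(e_j)\oplus K_0^{\oplus\delta}\) with no \(K[1]\)-summands. That is why your listed blocks do suffice there.
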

\begin{proof}
  Any nonempty open set in an irreducible algebraic variety is dense.
  Since \(\Hom(V_0, V_1) \times \Hom(V_1, V_0)\) is irreducible, it suffices to show that \(\pi\) surjects onto a nonempty open subset of \(\Hom(V_0, V_1) \times \Hom(V_1, V_0)\).
  Let \(U\) be the open set
  \begin{equation*}
    U = \left\{
      (f, g) \;\middle\vert\;
      fg \text{ has } n \text{ distinct nonzero eigenvalues}
    \right\}.
  \end{equation*}
  The set \(U\) is nonempty as it contains the block matrices
  \begin{equation*}
    f =
    \begin{bNiceArray}{ccc|cc}%
      \Hdotsfor{3}^{n} & \Hdotsfor{2}^{\delta} \\
      e_1 & & & \Block{3-2}{0} & & \Vdotsfor{3}^{n} \\
      &  \Ddots[line-style=standard] & & & & \\
      & & e_n & & & \\
    \end{bNiceArray}
    \qquad,\qquad
    g =
    \begin{bNiceArray}{ccc}
      \Hdotsfor{3}^{n} \\
      \Block{3-3}{1_{n}} & & & \Vdotsfor{3}^{n} \\
      & & & \\
      & & & \\ \hline
      \Block{2-3}{0} & & & \Vdotsfor{2}^{\delta} \\
      & & &  \\
    \end{bNiceArray}
    \quad
  \end{equation*}
  for  some choice of nonzero and distinct \(e_1, \dots, e_n\).
  It is enough to show that the preimage of \((f, g) \in U\) in \(Z\) is nonempty.
  If \((f, g) \in U\) then \(g\) has full rank and so \(\dim \ker f = \delta\).
  As \(\delta_1 \leq \dots \leq \delta_t \leq \delta\), we may pick a nontrivial flag \(K_\bullet \in \Flag(\delta_1, \dots, \delta_t; \ker{f})\).
  Let \(E_1, \dots, E_n\) be the one dimensional eigenspaces corresponding to the \(n\) distinct nonzero eigenvalues of \(fg\).
  Setting \(S_i = E_1 \oplus \dots \oplus E_{q_i}\) and \(R_i = g(S_i) \oplus K_i\) we see that \((R_\bullet, S_\bullet, f, g) \in \pi^{-1}(f, g)\).
\end{proof}

\subsubsection{Simple \(1\)-fold compositional varieties}
\begin{prop}
  \label{type:d201}
  If \(X \colon [\ftyp{\ast 2 0 1}] = [\delta \geq \delta_2 \geq 0 > \delta_1]\) then \(Y\) is the determinantal variety
  \begin{equation*}
    Y = \{ (f, g) \mid \dim{\ker{g}} \geq -\delta_1 \}.
  \end{equation*}
\end{prop}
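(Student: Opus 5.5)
The plan is to prove the two inclusions separately: first that every point of \(Z\) maps into the determinantal variety \(D = \{(f,g) \mid \dim\ker g \geq -\delta_1\}\), and then that \(\pi'(Z)\) contains a dense open subset of \(D\). Since \(\pi'\) is the restriction of the projection \(\ord{X} \times \gone \to \gone\) with \(\ord{X}\) projective, \(Y = \pi'(Z)\) is closed. The variety \(D\) is irreducible: it is \(\Hom(V_0,V_1)\) times a determinantal variety of maps \(g\) with \(\rank g \leq n+\delta_1\). So these two inclusions give \(Y = D\), exactly as in the proof of Proposition~\ref{type:trivial}.

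Recall that a point of \(Z\) over a flag \(R_1|S_1 \subseteq R_2|S_2\) is a pair \((f,g)\) lying in \(\fp_{\bar 1}^x\). Concretely, this means \(f(R_i) \subseteq S_i\) and \(g(S_i) \subseteq R_i\) for \(i = 1,2\). For the inclusion \(Y \subseteq D\), restrict \(g\) to \(S_1\). It maps the \(q_1\)-dimensional space \(S_1\) into the \(p_1\)-dimensional space \(R_1\), and \(p_1 < q_1\) because \(\delta_1 < 0\). Hence \(\dim\ker g \geq \dim\ker(g|_{S_1}) \geq q_1 - p_1 = -\delta_1\).

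For the reverse inclusion, take the open set \(U \subseteq D\) of pairs satisfying three conditions: \(\rank g = n+\delta_1\), \(f\) is surjective (possible since \(\delta \geq 0\)), and \(fg\) has exactly \(n+\delta_1\) distinct nonzero eigenvalues. To see that \(U\) is nonempty, write down block matrices analogous to those in Proposition~\ref{type:trivial}, with the last \(-\delta_1\) columns of \(g\) set to zero. For \((f,g) \in U\), we have \(\ker fg = \ker g\), of dimension \(-\delta_1\). Let \(E_1, \dots, E_{n+\delta_1}\) be the eigenlines of \(fg\) for the nonzero eigenvalues. Note that \(g\) is injective on \(E_1 \oplus \dots \oplus E_{n+\delta_1}\) and \(fg(E_i) = E_i\). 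Now define the flag as follows:
\begin{itemize}
  \item \(S_1 = \ker g \oplus E_1 \oplus \dots \oplus E_{p_1}\);
  \item \(R_1 = g(E_1 \oplus \dots \oplus E_{p_1})\);
  \item \(S_2 = \ker g \oplus E_1 \oplus \dots \oplus E_{q_2+\delta_1}\);
  \item \(R_2\) is any \(p_2\)-dimensional subspace with \(g(E_1 \oplus \dots \oplus E_{q_2+\delta_1}) \subseteq R_2 \subseteq f^{-1}(S_2)\).
\end{itemize}
The dimensions work out: \(\dim S_1 = -\delta_1 + p_1 = q_1\), and \(\dim S_2 = -\delta_1 + (q_2 + \delta_1) = q_2\). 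The number of eigenlines used for \(S_2\) is \(q_2 + \delta_1 \leq n + \delta_1\). For \(R_2\), we have \(\dim f^{-1}(S_2) = \delta + q_2\) because \(f\) is surjective, and the required chain of dimensions
\begin{equation*}
  q_2 + \delta_1 \leq p_2 \leq q_2 + \delta
\end{equation*}
follows from \(\delta \geq \delta_2 \geq 0 > \delta_1\).

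It then remains to check the compatibility conditions \(f(R_i) \subseteq S_i\), \(g(S_i) \subseteq R_i\), \(S_1 \subseteq S_2\) and \(R_1 \subseteq R_2\). These follow from \(fg(E_i) = E_i\), \(g(\ker g) = 0\), and the choice \(R_2 \subseteq f^{-1}(S_2)\). The step needing the most care is this dimension bookkeeping for \(R_2\): one must verify that the interval of allowed dimensions \([\,q_2+\delta_1,\ q_2+\delta\,]\) really contains \(p_2\). That is precisely where the type hypothesis \([\ftyp{\ast 2 0 1}]\) is used.
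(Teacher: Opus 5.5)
Your proposal is correct and follows essentially the same route as the paper. Both use the same open set (\(f\) surjective, \(\dim\ker g = -\delta_1\), \(n+\delta_1\) distinct nonzero eigenvalues of \(fg\)), the same flags \(S_1, S_2\) built from \(\ker g\) and eigenlines, and \(R_1 = g(S_1)\). The paper's specific choice \(R_2 = K_0 \oplus K_1 \oplus g(S_2)\), with \(K_1 \subseteq \ker f\), is just one instance of your \(p_2\)-dimensional subspace between \(g(S_2)\) and \(f^{-1}(S_2)\). You also spell out the inclusion \(Y \subseteq D\) and the closedness/density argument, which the paper leaves implicit in this proof.
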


\begin{proof}
  Consider the open set
  \begin{equation*}
    U =
    \left\{
      (f, g)
      \;
      \middle|
      \begin{array}{l}
        \dim{\ker{f}} = \delta, \dim{\ker{g}} = -\delta_1 \\
        fg \text{ has } n + \delta_1 \text{ distinct eigenvalues}
      \end{array}
    \right\}.
  \end{equation*}
  The open set \(U\) is nonempty as it contains block matrices
  \begin{equation*}
    f =
    \begin{bNiceArray}{ccc|cc|cc}
      \Hdotsfor{3}^{n + \delta_1} & \Hdotsfor{2}^{-\delta_1} & \Hdotsfor{2}^{\delta} \\
      e_1 &   &  & \Block{3-2}{\ast} &  & \Block{3-2}{\ast} & & \Vdotsfor{3}^{n + \delta_1} \\
      & \Ddots[line-style=standard]  &  &                &  & & & \\
      &  & e_{n + \delta_1} &         &  &  & & \\ \midrule
      \Block{2-3}{0} &  &  & \Block{2-2}{1} &  & \Block{2-2}{\ast} & & \Vdotsfor{2}^{-\delta_1} \\
      & & & & & & & \\
    \end{bNiceArray}
    \qquad,\qquad
    g =
    \begin{bNiceArray}{ccc|cc}
      \Hdotsfor{3}^{n + \delta_1} & \Hdotsfor{2}^{-\delta_1} \\
      \Block{3-3}{1_{n + \delta_1}} &  & & \Block{3-2}{0} & & \Vdotsfor{3}^{n + \delta_1} \\
      & & & & & \\
      & & & & & \\ \midrule
      \Block{4-3}{0} & & & \Block{4-2}{0} & & \Vdotsfor{4}^{\delta - \delta_1} \\
      & & & & & \\
      & & & & & \\
      & & & & & \\
    \end{bNiceArray}
    \quad
  \end{equation*}
  for some nonzero and distinct \(e_1, \dots, e_{n + \delta_1}\).
  Let \(E_i\) denote the eigenspace of \(e_i\), \(K = \ker{f}\), and \(L = \ker{g}\).
  Our assumptions imply \(\dim{E_i} = 1\) for all \(i\), \(\dim{K} = \delta\), and  \(\dim{L} = -\delta_1\).
  Define \(S_\bullet = S_1 \subseteq S_2 \subseteq V_1\) by 
  \begin{align*}
    S_1 &= L \oplus E_1 \oplus \dots \oplus E_{p_1}, & \dim{S_1} &=  (-\delta_1) + p_1 = q_1, \\
    S_2 &= L \oplus E_1 \oplus \dots \oplus E_{q_2 + \delta_1}, & \dim{S_2} &= (-\delta_1) + (q_2 + \delta_1) = q_2.
  \end{align*}
  As \(E_i\) is an eigenspace of \(fg\) corresponding to a nonzero eigenvalue, \(K \cap g(E_i) = 0\) for all \(i\).
  In particular, \(K \cap g(S_i) = 0\), and so
  \begin{align*}
    \dim{V_0 \big/ (K \oplus g(S_2))}
    &=  m - (\delta + q_2 + \delta_1) = (n - q_2) - \delta_1,\\
    \dim{V_1 \big/ S_2}
    &= n - q_2.
  \end{align*}
  Since \(fg(S_2) \subseteq S_2\), \(f\) induces a map \(f' \colon V_0 \big/ (K \oplus g(S_2)) \to V_1 \big/ S_2\) whose kernel \(K'\) satisfies \(\dim{K'} \ge -\delta_1 > 0\).
  We can therefore pick \(K'_0 \in \Gr(-\delta_1, V_0 \big/ (K \oplus g(S_2)))\) such that  \(K'_0 = K_0 \oplus K \oplus g(S_2) \subseteq V_0\)  and \(\dim{K_0} = -\delta_1\).
  Finally, since \(\delta_2 \leq \delta\), pick \(K_1 \in \Gr(\delta_2, K)\) and define the flag \(R_\bullet = R_1 \subseteq R_2 \subseteq V_0\) by
  \begin{align*}
    R_1 &= g(S_1),  & \dim{R_1} & = q_1 + \delta_1 = p_1, \\
    R_2 &= K_0 \oplus K_1 \oplus g(S_2), & \dim{R_2} &= -\delta_1 + \delta_2 + (q_2 + \delta_1) = q_2 + \delta_2 = p_2.
  \end{align*}
  By construction we have \(f(R_\bullet) \subseteq S_\bullet\) and \(g(S_\bullet) \subseteq R_\bullet\) and so \((R_\bullet, S_\bullet, f, g) \in Z\) is in the preimage of \((f, g)\).
\end{proof}

\begin{prop}
  \label{type:d021}
  If \(X \colon [\ftyp{\ast 0 2 1}] = [\delta \geq 0 \geq \delta_2 \geq \delta_1]\) then \(Y\) is the determinantal variety
  \begin{equation*}
    Y = \{ (f, g) \mid \dim{\ker{g}} \geq -\delta_1 \}.
  \end{equation*}
\end{prop}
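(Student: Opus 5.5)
I would follow the template of the proof of Proposition~\ref{type:d201}, with two parts. First, I would check the containment \(Y \subseteq \{(f,g) \mid \dim\ker g \geq -\delta_1\}\). A point of \(Z\) is a tuple \((R_\bullet, S_\bullet, f, g)\) with \(f(R_i) \subseteq S_i\) and \(g(S_i) \subseteq R_i\). In particular \(g\) maps \(S_1\), of dimension \(q_1\), into \(R_1\), of dimension \(p_1 = q_1 + \delta_1 < q_1\). Hence \(\dim\ker g \geq \dim\ker(g|_{S_1}) \geq -\delta_1\). The right-hand side is closed and irreducible, being a determinantal variety times an affine space. So it suffices to exhibit a nonempty open subset \(U\) of it that is contained in \(\pi'(Z)\).

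For \(U\), I would take pairs with \(\dim\ker f = \delta\), \(\dim\ker g = -\delta_1\), and \(fg\) having \(n+\delta_1\) distinct nonzero eigenvalues. The block matrices used in Proposition~\ref{type:d201} show that \(U\) is nonempty. Write \(K=\ker f\), \(L=\ker g\), and let \(E_1,\dots,E_{n+\delta_1}\) be the one-dimensional eigenspaces of \(fg\) for the nonzero eigenvalues. As before, \(K\cap g(E_i)=0\), and \(g\) is injective on \(E_1\oplus\dots\oplus E_{n+\delta_1}\).

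The new feature is the ordering \(0 \geq \delta_2 \geq \delta_1\). Now \(\delta_2 \le 0\), so \(g\) must also have a kernel of dimension at least \(-\delta_2\) inside \(S_2\), and this is compatible with \(L\). I would set
\begin{equation*}
  S_1 = L \oplus E_1\oplus\dots\oplus E_{p_1}, \qquad
  S_2 = L \oplus E_1 \oplus \dots \oplus E_{q_2+\delta_1}.
\end{equation*}
These have dimensions \(q_1\) and \(q_2\). The index \(q_2+\delta_1\) is nonnegative since \(q_2+\delta_1 \ge q_2+\delta_2 = p_2 \ge 0\), and it is at least \(p_1\) because \(q_2 - q_1 \ge \delta_2-\delta_1\ge 0\). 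I would then take \(R_1 = g(S_1)\), of dimension \(p_1\).

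The step I expect to be the main obstacle is choosing \(R_2\). It must have dimension \(p_2 = q_2+\delta_2\), contain \(g(S_2)\) (of dimension \(q_2+\delta_1\)), and satisfy \(f(R_2)\subseteq S_2\). The required excess is \(\delta_2-\delta_1 \ge 0\). Unlike Proposition~\ref{type:d201}, we no longer need to pad with a piece of \(K\) of dimension \(\delta_2\). Instead I would choose a \((\delta_2-\delta_1)\)-dimensional subspace \(K_0\) of the preimage \(f^{-1}(S_2)\) that meets \(g(S_2)\) trivially, and set \(R_2 = K_0\oplus g(S_2)\).

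To see that \(K_0\) exists, pass to the induced map \(f'\colon V_0/(K\oplus g(S_2)) \to V_1/S_2\). It has kernel of dimension at least \(-\delta_1\). Moreover \(K\) itself has dimension \(\delta \ge 0\) and lies in \(f^{-1}(S_2)\). So \(f^{-1}(S_2)/g(S_2)\) has dimension at least \(\delta - \delta_1 \ge \delta_2 - \delta_1\), and a suitable \(K_0\) can be chosen.

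Finally, I would verify that \(R_1 \subseteq R_2\), \(S_1\subseteq S_2\), \(f(R_\bullet)\subseteq S_\bullet\) and \(g(S_\bullet)\subseteq R_\bullet\). For \(f(R_1)\subseteq S_1\), note that \(fg(E_i)=E_i\). These checks show that \((R_\bullet,S_\bullet,f,g)\in Z\) lies over \((f,g)\), which completes the proof.
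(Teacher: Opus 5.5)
Your proof is correct and is essentially the paper's: the same open set \(U\), the same flag \(S_\bullet\), \(R_1 = g(S_1)\), and \(R_2 = K_0 \oplus g(S_2)\), with \(K_0\) lifted from the kernel of the induced map \(V_0/g(S_2) \to V_1/S_2\), which has dimension at least \(\delta - \delta_1 \geq \delta_2 - \delta_1\). You also supply the easy containment \(\pi'(Z) \subseteq \{\dim\ker g \geq -\delta_1\}\), which the paper leaves implicit.

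One small slip is in your index bounds. Since \(\delta_1 \leq \delta_2\), you actually have \(q_2+\delta_1 \leq p_2\), and \(q_2-q_1 \geq \delta_2-\delta_1\) fails in general (e.g.\ \(q_1=q_2\), \(p_2>p_1\)). The correct justification is \(0 \leq p_1 \leq q_2+\delta_1 \leq n+\delta_1\), which follows from \(p_1 \geq 0\) and \(q_1 \leq q_2 \leq n\).
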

\begin{proof}
  Most of this case is identical to the proof of type \([\ftyp{\ast 2 0 1}]\) given in Proposition~\ref{type:d201}.
  Let \(U\), \(K = \ker{f}\),  \(L = \ker{g}\), the spaces \(E_i\)  for \(i = 1, \dots, n + \delta_1\), and \(S_\bullet\) be the same as the ones in proof of Proposition~\ref{type:d201}.
  For reference we have
  \begin{equation*}
    S_1 = L \oplus E_1 \oplus \dots \oplus E_{p_1},
    \qquad
    S_2 = L \oplus E_{1} \oplus \dots \oplus E_{q_2 + \delta_1}.
  \end{equation*}
  As \(E_i\) is an eigenspace of \(fg\), \(fg(S_2)) \subseteq S_2\) and we have an induced map \(V_0 \big/ g(S_2) \to V_1 / S_2\) whose kernel we denote by \(K'\).
  Since
  \begin{align*}
    \dim{V_0 \big/ g(S_2)} &= m - (q_2 + \delta_1) = m - q_2 - \delta_1 \\
    \dim{V_1 \big/ S_2} &= n - q_2
  \end{align*}
  and so \(\dim{V_0 \big/ g(S_2)} \geq \dim{V_1 \big/ S_2}\) as \(\delta = m - n \geq 0 \geq \delta_1\).
  Therefore,
  \begin{equation*}
    \dim{K'}
    \geq \dim{V_0 \big/ g(S_2)} - \dim{V_1 \big/S_2}
    = (m - n) - \delta_1 = \delta - \delta_1.
  \end{equation*}
  Since \(\delta \geq \delta_2 \geq \delta_1 \implies \delta - \delta_1 \geq \delta_2 - \delta_1 \geq 0 \) and we can pick \(K_0 \oplus g(S_2) \subseteq K'\) satisfying \(\dim{K_0} = \delta_2 - \delta_1\).
  We can now define
  \begin{align*}
    R_1 &=  g(S_1), & \dim{R_1} &= q_1 - q_0 = p_1, \\
    R_2 &= K_0 \oplus g(S_2), & \dim{R_1} &= (\delta_2 - \delta_1) + (q_2 - q_0) = \delta_2 - \delta_1 + q_2 + \delta_1 = p_2.
  \end{align*}
  As before, we get \((R_\bullet, S_\bullet, f, g) \in Z\) in the preimage of \(\pi\).
\end{proof}

\begin{prop}
  \label{type:d012}
  If \(X \colon [\ftyp{\ast 0 1 2}] = [\delta \geq 0 \geq \delta_1 \geq \delta_2]\) then \(Y\) is the determinantal variety
  \begin{equation*}
    Y = \left\{ (f, g) \mid \dim{\ker{g}} \geq (-\delta_2) \right\}.
  \end{equation*}
\end{prop}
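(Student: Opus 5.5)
The plan is to prove the two inclusions separately, following the template of Propositions~\ref{type:d201} and~\ref{type:d021}. In this type no auxiliary kernel \(K_0\) is needed, because both flags can be built directly from eigenspaces of \(fg\) together with pieces of \(\ker g\).

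\emph{Step 1: \(Y \subseteq \{\dim\ker g \geq -\delta_2\}\).} A point of \(Z\) is a tuple \((R_\bullet, S_\bullet, f, g)\) with \((f,g) \in \fp^x_{\bar 1}\). This means \(f(R_i) \subseteq S_i\) and \(g(S_i) \subseteq R_i\). Restricting \(g\) gives a map \(S_2 \to R_2\), whose kernel has dimension at least \(q_2 - p_2 = -\delta_2\). Hence \(\dim\ker g \geq -\delta_2\) on \(\pi'(Z)\). The set defined by this condition is closed, and \(\pi'\) is proper because \(\ord{X}\) is projective. So \(Y = \pi'(Z)\) is a closed set contained in \(\{\dim\ker g \ge -\delta_2\}\).

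\emph{Step 2: the reverse inclusion.} The right-hand side is a determinantal variety times \(\Hom(V_0,V_1)\), so it is irreducible. Since \(Y\) is closed, it suffices to show that \(Y\) contains a nonempty open subset \(U\) of it. Take
\begin{equation*}
  U = \left\{ (f,g) \;\middle|\; \dim\ker f = \delta,\ \dim\ker g = -\delta_2,\ fg \text{ has } n+\delta_2 \text{ distinct nonzero eigenvalues} \right\}.
\end{equation*}
Nonemptiness follows from the same block matrices as in Proposition~\ref{type:d201}, with \(\delta_1\) replaced by \(\delta_2\). Note that \(n + \delta_2 \geq q_2 + \delta_2 = p_2 \geq 0\).

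For \((f,g) \in U\), let \(L = \ker g\) and let \(E_1, \dots, E_{n+\delta_2}\) be the one-dimensional eigenspaces of \(fg\) with nonzero eigenvalues. Since \(0 \geq \delta_1 \geq \delta_2\), we may choose \(L_1 \subseteq L\) with \(\dim L_1 = -\delta_1\). Set
\begin{equation*}
  S_1 = L_1 \oplus E_1 \oplus \dots \oplus E_{p_1}, \qquad S_2 = L \oplus E_1 \oplus \dots \oplus E_{p_2},
\end{equation*}
so that \(\dim S_1 = q_1\), \(\dim S_2 = q_2\), and \(S_1 \subseteq S_2\) because \(p_1 \leq p_2\). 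Then set \(R_i = g(S_i)\). The map \(g\) is injective on \(\bigoplus E_j\), since \(fg\) is invertible there, so \(\dim R_i = p_i\) and \(R_1 \subseteq R_2\).

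It remains to check the incidence conditions. By construction \(g(S_i) = R_i\). Also \(f(R_i) = fg(S_i) \subseteq S_i\), because \(fg\) kills \(L\) and preserves each \(E_j\). Hence \((R_\bullet, S_\bullet, f, g) \in Z\) lies over \((f,g)\), which shows \(U \subseteq Y\).

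The main point to check carefully is the dimension bookkeeping: that \(L_1\) exists and that \(p_1 \le p_2 \le n+\delta_2\). Both follow from the type inequalities \(0 \geq \delta_1 \geq \delta_2\). Everything else is routine.
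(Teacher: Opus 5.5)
Your proof is correct and follows the paper's argument essentially verbatim. It uses the same dense open set \(U\) (you correctly require the eigenvalues of \(fg\) to be nonzero, which is what makes \(g\) injective on \(E_1 \oplus \dots \oplus E_{n+\delta_2}\)), the same flags \(S_1 = L_1 \oplus E_1 \oplus \dots \oplus E_{p_1}\) and \(S_2 = \ker g \oplus E_1 \oplus \dots \oplus E_{p_2}\), and the same choice \(R_i = g(S_i)\). Your Step~1 (the containment \(Y \subseteq \{\dim\ker g \geq -\delta_2\}\), obtained by restricting \(g\) to \(S_2 \to R_2\)) and the closedness-plus-irreducibility argument are left implicit in the paper's proof, so spelling them out is a welcome addition rather than a deviation.
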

\begin{proof}
  Consider the open set
  \begin{equation*}
    U = \left\{
      (f, g) \;\middle\vert
      \begin{array}{l}
        \dim{\ker{f}} = \delta, \dim{\ker{g}} = -\delta_2 \\
        fg \text{ has } n - q_0 \text{ distinct eigenvalues}
      \end{array}
    \right\}.
  \end{equation*}
  Since \(n + \delta_2 = n - q_2 + p_2 \geq 0\), the set \(U\) is well defined and it is  nonempty because it contains block matrices
  \begin{equation*}
    f =
    \begin{bNiceArray}{ccc|cc|c}
      \Hdotsfor{3}^{n + \delta_2} & \Hdotsfor{2}^{-\delta_2} & \Hdotsfor{1}^{\delta} \\
      e_1 &  & & \Block{3-2}{0} & & \Block{3-1}{0} & \Vdotsfor{3}^{n + \delta_2}\\
      & \Ddots[line-style=standard] & & & & \\
      & & e_{n + \delta_2} & & &  \\ \hline
      \Block{2-3}{0}& & & \Block{2-2}{1} & & \Block{2-1}{0} & \Vdotsfor{2}^{-\delta_2}\\
      & & & & &
    \end{bNiceArray}
    \qquad,\qquad
    g =
    \begin{bNiceArray}{ccc|cc}
      \Hdotsfor{3}^{n + \delta_2} & \Hdotsfor{2}^{-\delta_2} \\
      1 & & & \Block{3-2}{0} & & \Vdotsfor{3}^{n + \delta_2} \\
      & \Ddots[line-style=standard] & & & \\
      & & 1 & & \\ \hline
      \Block{2-3}{0} & & & \Block{2-2}{0} & &  \Vdotsfor{2}^{-\delta_2} \\
      & & & & \\ \hline
      \Block{1-3}{0} & & & \Block{1-2}{0} & & \Vdotsfor{1}^{\delta}
    \end{bNiceArray}
  \end{equation*}
  for some nonzero and distinct \(e_1, \dots, e_{n + \delta_2}\).

  Let \((f, g) \in U\) and set \(K = \ker{f}\), \(L = \ker{g}\), and \(E_i\)  the eigenspace of \(e_i\).
  Since \(0 \geq \delta_1 \geq \delta_2 \implies (-\delta_2) \geq (-\delta_1) \geq 0\), pick \(L_0 \in \Gr(-\delta_1, L)\).
  We now define the flag \(S_\bullet\) by
  \begin{align*}
    S_1 &=  L_0 \oplus E_1 \oplus \dots \oplus E_{p_1}, & \dim{S_1} &= (-\delta_1) + p_1 = q_1, \\
    S_2 &= L \oplus E_1 \oplus \dots \oplus E_{p_2}, & \dim{S_2} &= q_0 + p_2 = q_2.
  \end{align*}
  Our assumptions imply \(n + \delta_2 = (n - q_2) + p_2 \geq p_2 \geq p_1 \geq 0\) so \(S_\bullet\) is well defined.
  Finally define \(R_\bullet\) by \(R_i = g(S_i)\), \(\dim{R_i} = p_i\).
  By construction \(f(R_\bullet) \subseteq S_\bullet\), \(g(S_\bullet) \subseteq R_\bullet\) and so \((R_\bullet, S_\bullet, f, g)\) is in the preimage of \((f, g)\).
\end{proof}

\subsubsection{Simple \(2\)-fold compositional varieties}
\begin{prop}
  \label{type:d120}
  Suppose \(X \colon [\ftyp{\ast 1 2 0}]\).
  If \(\delta \geq \delta_1 \geq \delta_2 \geq 0\) then \(Y\) is the simple compositional variety
  \begin{equation*}
    Y = \left\{ (f, g) \mid \dim \ker fg \geq \delta_1 - \delta_2 \right\}.
  \end{equation*}
\end{prop}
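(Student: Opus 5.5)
We prove the two inclusions separately: first that the image \(Y = \pi'(Z)\) satisfies the kernel bound \(\dim\ker fg \geq \delta_1 - \delta_2\), and then that it contains a dense open subset of the simple \(2\)-fold compositional variety \(Y' = \{(f,g) \mid \dim\ker fg \geq \delta_1 - \delta_2\}\). Throughout, a point of \(Z\) is a tuple \((R_\bullet, S_\bullet, f, g)\) with \(f(R_i) \subseteq S_i\) and \(g(S_i) \subseteq R_i\), and \(Y\) is closed because \(\pi'\) is proper.

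\emph{The inclusion \(Y' \subseteq Y\).} By Theorem~\ref{simp-2:norm+rat-sing}, \(Y'\) is the image of the irreducible space \(\sZ_r\) with \(r = n - (\delta_1 - \delta_2)\), so \(Y'\) is irreducible. Since \(Y\) is closed, it suffices, as in Propositions~\ref{type:trivial}--\ref{type:d012}, to find a nonempty open \(U \subseteq Y'\) contained in \(Y\). Let \(U\) consist of the pairs \((f,g)\) satisfying three conditions:
\begin{itemize}
\item[(i)] \(\dim\ker f = \delta\), i.e.\ \(f\) is surjective;
\item[(ii)] \(L = \ker g\) has dimension exactly \(\delta_1 - \delta_2\);
\item[(iii)] \(fg\) has \(n - (\delta_1 - \delta_2)\) distinct nonzero eigenvalues, with one-dimensional eigenspaces \(E_i\).
\end{itemize}
Block matrices of the shape used in Proposition~\ref{type:d012} show that \(U\) is nonempty. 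For \((f,g) \in U\), write \(K = \ker f\), which has dimension \(\delta \geq \delta_1\), and fix \(K_1 \in \Gr(\delta_1, K)\). Set
\begin{equation*}
  S_1 = E_1 \oplus \dots \oplus E_{q_1}, \quad
  S_2 = L \oplus E_1 \oplus \dots \oplus E_{q_2 - \delta_1 + \delta_2}, \quad
  R_1 = g(S_1) \oplus K_1, \quad
  R_2 = g(S_2) \oplus K_1 .
\end{equation*}
These are well-defined flags for the following reasons.
\begin{itemize}
\item \(S_1 \subseteq S_2\), because \(q_2 - \delta_1 + \delta_2 \geq q_1\) is equivalent to \(p_2 \geq p_1\).
\item The sums defining \(R_1\) and \(R_2\) are direct, because \(g(E_i)\) meets \(K\) trivially (the eigenvalues are nonzero).
\item The dimensions come out right: \(\dim R_1 = q_1 + \delta_1 = p_1\), \(\dim R_2 = q_2 - \delta_1 + \delta_2 + \delta_1 = p_2\), and \(\dim S_2 = q_2\).
\end{itemize}
Finally, \(f(R_i) = fg(S_i) \subseteq S_i\) because the \(S_i\) are \(fg\)-stable, and \(g(S_i) \subseteq R_i\) by construction. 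Hence \((f,g) \in Y\).

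\emph{The inclusion \(Y \subseteq Y'\).} The containments \(g(S_1) \subseteq R_1\), \(g(S_2) \subseteq R_2\), \(f(R_i) \subseteq S_i\) make \(fg\) block upper triangular with respect to \(S_1 \subseteq S_2 \subseteq V_1\). The middle block \(S_2/S_1 \to S_2/S_1\) factors through \(R_2/R_1\), which has dimension \(p_2 - p_1 = (q_2 - q_1) - (\delta_1 - \delta_2)\). So that block has a kernel of dimension at least \(\delta_1 - \delta_2\), and \(u^{\delta_1 - \delta_2}\) divides the characteristic polynomial of \(fg\).

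To upgrade this algebraic multiplicity to a bound on \(\dim\ker fg\), I would use that \(Z\) is a vector bundle over an irreducible base, hence irreducible. Since \(Y'\) is closed, it then suffices to check the kernel bound on a dense open subset of \(Z\). On such a subset I would show three things:
\begin{itemize}
\item the diagonal blocks of \(fg\) on \(S_1\) and on \(V_1/S_2\) are invertible with distinct eigenvalues;
\item the middle block has rank exactly \(p_2 - p_1\) and is semisimple at \(0\);
\item the off-diagonal blocks can be cleared on the zero generalized eigenspace, so that the generalized \(0\)-eigenspace of \(fg\) is an honest kernel of dimension \(\delta_1 - \delta_2\).
\end{itemize}
Exhibiting an explicit point of \(Z\) where all of this holds, for instance the flags built above, reduces the genericity claim to openness of these conditions.

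I expect this last step to be the main obstacle: controlling the geometric, rather than algebraic, multiplicity of \(0\) at a generic point of \(Z\). If the genericity argument becomes awkward, the fallback is to find a direct linear-algebra bound on \(\rank(fg)\) using the rank factorization of each block.
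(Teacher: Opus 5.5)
\paragraph{The gap: your set \(U\) is not open in \(Y'\).} The problem is in the inclusion \(Y' \subseteq Y\). Condition (ii), \(\dim\ker g = \delta_1-\delta_2\), puts \(U\) inside the determinantal locus \(\{\dim\ker g \geq \delta_1-\delta_2\}\). That locus has codimension \((\delta_1-\delta_2)(\delta+\delta_1-\delta_2)\) in \(\Hom(V_0,V_1)\times\Hom(V_1,V_0)\). By Corollary~\ref{simp-2:codim}, \(Y'\) has codimension \((\delta_1-\delta_2)^2\). Whenever \(\delta_1>\delta_2\) we have \(\delta\geq\delta_1>0\), so the locus is a proper closed, hence nowhere dense, subset of the irreducible \(Y'\). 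The closure of \(U\) is therefore not \(Y'\).

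In fact, at a generic point of \(Y'\) the map \(g\) is injective. Its component into \(\ker f\) is unconstrained, and \(\dim\ker f=\delta\geq\delta_1-\delta_2\). So \(g(\ker fg)\) is a nonzero subspace of \(\ker f\).

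The smallest case shows this concretely. Take \(m=2\), \(n=1\), \(p_\bullet=(1,1)\), \(q_\bullet=(0,1)\). Your \(U\) is \(\{(f,0) : f\neq 0\}\), which has dimension \(2\). But \(Y'=\{fg=0\}\) is a \(3\)-dimensional quadric cone. The point \(f=(1\ 0)\), \(g=(0\ 1)^{\top}\) lies in \(Y\) but your argument never reaches it. As written, you have only shown that a determinantal subvariety of \(Y'\) lies in \(Y\).

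\paragraph{The repair.} This is what the paper does: take \(L=\ker fg\) and impose \(\dim\ker fg=\delta_1-\delta_2\) instead of a condition on \(\ker g\). Add surjectivity of \(f\) and the requirement that \(fg\) have \(n-(\delta_1-\delta_2)\) distinct nonzero eigenvalues. These conditions are genuinely open in \(Y'\), and the set they define is nonempty, hence dense.

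The new feature is that \(g(L)\) may now be nonzero. However \(g(L)\subseteq\ker f\) and \(\dim g(L)\leq\delta_1-\delta_2\leq\delta_1\leq\delta\). So you can choose \(K_1\) with \(g(L)\subseteq K_1\subseteq\ker f\) and \(\dim K_1=\delta_1\), and set \(R_2=g(S_2)+K_1\). Because the eigenvalues on the \(E_i\) are nonzero, \(g(E_1\oplus\cdots\oplus E_k)\cap\ker f=0\). Hence \(g(S_2)\cap K_1=g(L)\) and \(\dim R_2=(q_2-\delta_1+\delta_2)+\delta_1=p_2\). The rest of your verification goes through unchanged.

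\paragraph{The other inclusion \(Y\subseteq Y'\).} The paper's proof does not spell this direction out, so it is good that you address it. Your genericity argument over the irreducible \(Z\) can be completed: invertibility of the outer diagonal blocks forces the generalized \(0\)-eigenspace of \(fg\) to be conjugate to that of the middle block. But it is unnecessary, because the direct bound you list as a fallback works immediately. Using \(fg(S_2)\subseteq f(R_2)\) and \(f(R_1)\subseteq S_1\),
\begin{equation*}
  \rank(fg) \leq \dim fg(S_2) + \dim(V_1/S_2) \leq \dim f(R_1) + \dim(R_2/R_1) + (n-q_2) \leq q_1 + (p_2-p_1) + (n-q_2) = n-(\delta_1-\delta_2).
\end{equation*}
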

\begin{proof}
  Let \(U \subseteq Y\) be open set consisting of morphisms
  \begin{equation*}
    U = \left\{ (f, g)
      \;\middle|
      \begin{array}{l}
        \dim \ker fg = (\delta_1 - \delta_2), \dim \ker f = \delta, \\
        fg \text{ has } n - (\delta_1 - \delta_2) \text{ distinct nonzero eigenvalues}
      \end{array}
    \right\}
  \end{equation*}
  Let \(E_1, \dots, E_{n - (\delta_1 - \delta_2)}\) be the one dimensional eigenspaces of
  \(fg\) and let \(L = \ker{fg}\).
  We note that \(L \cap E_i = 0\) for all \(i\) so define \(S_\bullet\) by 
  \begin{align*}
    S_1 &= E_1 \oplus \dots \oplus E_{q_1} & \dim{S_1} &= q_1  \\
    S_2 &= E_1 \oplus \dots \oplus E_{q_1 + (p_2 - p_1)} \oplus L & \dim{S_2} &= q_1 + (p_2 - p_1) + (\delta_1 - \delta_2) = q_2 
  \end{align*}

  As \(\delta_1 \leq \delta\) and \(g(L) \subseteq \ker{f}\) with \(\dim{g(L) \leq \dim{L} = \delta_1 - \delta_2}\), we may pick \(K \subseteq \ker{f}\) with \(\dim{K} = \delta_1\) such that \(g(L) \subseteq K \subseteq \ker{f}\) as \(\delta_1 \leq \delta = \dim{\ker{f}}\).

  By construction we have \(g(S_2) \cap K = g(L)\)  so let \(R_\bullet\) be the flag defined by
  \begin{align*}
    R_1 &= g(S_1) \oplus K & \dim{R_1} &= q_1 + \delta_1  = p_1 \\
    R_2 &= g(S_2) + (K / g(S_2)) & \dim{R_2} &= (q_1 + (p_2 - p_1) + \dim{g(L)}) + \delta_1 - \dim{g(L)} = p_2
  \end{align*}

  If \(e_1, \dots, e_{n - (\delta_1 - \delta_2)}\) are distinct and nonzero, then \(U\) contains the matrices
  \begin{equation*}
    f = 
    \begin{bNiceArray}{ccc|cc|cc}
      \Hdotsfor{3}^{n - (\delta_1 - \delta_2)} & \Hdotsfor{2}^{\delta} & \Hdotsfor{2}^{\delta_1 - \delta_2} \\
      e_1 & & & \Block{3-2}{0} & & \Block{3-2}{0} & & \Vdotsfor{3}^{n - (\delta_1 - \delta_2)} \\
      & \Ddots[line-style=standard] & & & & & & \\
      & & e_{n - (\delta_1 - \delta_2)} & & & & & \\ \hline
      \Block{2-3}{0} & & & \Block{2-2}{0} & & \Block{2-2}{1} & & \Vdotsfor{2}^{\delta_1 - \delta_2} \\
      & & & & & & & \\
    \end{bNiceArray}
    \qquad\quad,\quad
    g = 
    \begin{bNiceArray}{ccc|cc}
      \Hdotsfor{3}^{n - (\delta_1 - \delta_2)} & \Hdotsfor{2}^{\delta_1 - \delta_2} \\
      1 & & & \Block{3-2}{0} &  & \Vdotsfor{3}^{n - (\delta_1 - \delta_2)} \\
      & \Ddots[line-style=standard] & & &  \\
      & & 1 & &  \\ \hline
      \Block{5-3}{0} & & & \ddots & & \Vdotsfor{2}^{\delta_1 - \delta_2} \\
      & & & & 1 & \\ \cline{4-5}
      & & & \Block{3-2}{0} & & \Vdotsfor{3}^{\delta} \\
      & & & & & \\
      & & & & & \\ 
    \end{bNiceArray}
  \end{equation*}
  and is therefore nonempty.
  Note that an arbitrary pair of maps in \(Y\) is in a neighborhood of a pair of maps in \(U\) in the Euclidean topology: after moving to an appropriate normal form and picking bases, one only needs to ensure that \(fg\) has the appropriate number of distinct nonzero eigenvalues. 

\end{proof}

\subsubsection{Five families of flags}
Here we describe a general method of extending the results from flags of fixed length to flags of arbitrary length.
Roughly speaking, we show that any finite type can be ``extended'' by the trivial type of Proposition~\ref{type:trivial} without affecting the final compositional variety.

\begin{prop}
  \label{p:type-extension}
  Let \(X' = \Flag(p_1|q_1, \dots, p_{t'}|q_{t'};\; V')\) be a flag supervariety, \(V' = V'_0 \oplus V'_1\), and \(\gone' = \Hom(V'_0, V'_1) \times \Hom(V'_1, V'_0)\).
  Let \(Z'\) be the subbundle 
  \begin{equation*}
    Z'
    = \left\{ (R_\bullet, S_\bullet, f, g) \;\vert\; f(R_\bullet) \subseteq S_\bullet, g(S_\bullet) \subseteq R_\bullet \right\}
    \subseteq \ord{X'} \times \gone', 
  \end{equation*}
  and \(\pi' \colon \ord{X'} \times \gone' \to \gone'\) the projection.
  Consider 
  \begin{equation*}
    X = \Flag(p_1|q_1, \dots, p_{t'}|q_{t'}, p_{t' + 1}|q_{t' + 1}, \dots, p_{t}|q_{t};\;V)
  \end{equation*}
  where \(V = V_0 \oplus V_1\) is a vector superspace satisfying \(p_{t' + 1} = \dim{V'_0}\), \(q_{t' + 1} = \dim{V'_1}\), and 
  \begin{equation*}
    \sdim{V'} = p_{t' + 1} - q_{t' + 1} \leq \dots \leq p_t - q_t \leq \sdim{V}
  \end{equation*}
  Let \(\gone\), \(Z\), \(\pi\),  respectively, be the analogues of \(\gone'\), \(Z'\), and \(\pi'\).
  If \(\pi'(Z') \subseteq \gone'\) is a compositional variety determined by parameters \(\Delta_\bullet\) then \(\pi(Z) \subseteq \gone\) is also compositional and determined by the same set of parameters. 
\end{prop}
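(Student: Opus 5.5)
The plan is to compare the two incidence correspondences directly, using that a point of \(Z\) restricts to a point of \(Z'\) on the \((t'+1)\)-st term of its flag. Throughout, \(Y' = \pi'(Z')\) is the compositional variety on \(V'\) defined by \(\Delta_\bullet\), \(Y(\Delta_\bullet) \subseteq \gone\) is the compositional variety on \(V\) with the same parameters, and \(W = R_{t'+1}|S_{t'+1}\) has dimension \((p_{t'+1} \mid q_{t'+1})\), so \(W \simeq V'\). Since \(\ord{X}\) is projective, \(\pi\) is proper and \(\pi(Z)\) is closed; the goal is \(\pi(Z) = Y(\Delta_\bullet)\).

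The first step is the inclusion \(\pi(Z) \subseteq Y(\Delta_\bullet)\). Given \((R_\bullet, S_\bullet, f, g) \in Z\), the subspace \(W\) is \(f\)- and \(g\)-stable. Choosing an identification \(W \simeq V'\), the truncated flag \(R_1|S_1 \subseteq \dots \subseteq R_{t'}|S_{t'}\) together with \((f|_W, g|_W)\) is a point of \(Z'\). Hence \((f|_W, g|_W) \in Y'\), so \(\dim\ker(h|_W) \geq \Delta_h\) for every composition \(h\). Since \(\ker(h|_W) \subseteq \ker h\), we get \(\dim\ker h \geq \Delta_h\), that is, \((f,g) \in Y(\Delta_\bullet)\).

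For the reverse inclusion I would argue as in Propositions~\ref{type:trivial}--\ref{type:d120}. I would exhibit a dense open \(U \subseteq Y(\Delta_\bullet)\), show \(U \subseteq \pi(Z)\), and conclude by closedness. Take \(U\) to be the pairs for which every \(\dim \ker h\) equals \(\Delta_h\), and for which \(fg\) (equivalently \(gf\)) has the maximal possible number of distinct nonzero eigenvalues, each with a one-dimensional eigenspace. By Proposition~\ref{fg-normal-form}, such a pair decomposes as \(N \oplus \bigoplus_i J_1(\lambda_i)\), where \(N\) is the nilpotent part (the \(K_n\), \(K_n[1]\), nilpotent \(J_n(0)\) and \(J_n(\infty)\) summands) and carries every kernel \(\ker h\). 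Set \(W = N \oplus \bigoplus_{i \leq k} J_1(\lambda_i)\), with \(k\) and, if necessary, an odd or even line inside a kernel chosen so that \(\dim W = (p_{t'+1} \mid q_{t'+1})\). Then \(W\) is stable and contains all kernels, so \((f|_W, g|_W)\) satisfies the same kernel conditions, hence lies in \(Y'\), and lifts to a flag in \(W\) by hypothesis. On the quotient \(V/W\), the induced pair has distinct nonzero eigenvalues and kernel of dimension \(\sdim V - \sdim V' \geq 0\) on the even side. The superdimensions \(\delta_i - \sdim V'\) for \(i > t'+1\) are increasing from \(0\), so we are in the trivial type, and the explicit construction in the proof of Proposition~\ref{type:trivial} produces the remaining flag \(R_{t'+2}|S_{t'+2} \subseteq \dots \subseteq R_t|S_t\) compatibly with \((f,g)\). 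Pulling this flag back to \(V\) and concatenating with the lifted flag in \(W\) gives a point of \(Z\) over \((f,g)\).

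The main obstacle is the last step. One must show that \(U\) is nonempty and dense in \(Y(\Delta_\bullet)\): that the generic normal form really has all kernels concentrated in a nilpotent summand whose dimension fits inside \(W\), and that \(Y(\Delta_\bullet)\) is irreducible, or at least that \(U\) meets every component. I would first try to deduce the dimension count from \(Y' \neq \emptyset\), since the nilpotent part of a generic point of \(Y'\) already fits inside \(V'\). For density I would use the Euclidean-perturbation remark at the end of the proof of Proposition~\ref{type:d120}: perturb the semisimple invertible part of any \((f,g) \in Y(\Delta_\bullet)\) to have distinct eigenvalues without changing any kernel dimensions.
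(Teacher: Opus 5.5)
Your inclusion \(\pi(Z)\subseteq Y(\Delta_\bullet)\) is correct, and it is a step the paper leaves implicit. The reverse inclusion, however, breaks at the choice of \(W\). In the decomposition of Proposition~\ref{fg-normal-form}, every invertible summand \(J_k(\lambda)\) with \(\lambda\neq 0\) has superdimension \(0\). Hence the nilpotent part satisfies \(\sdim N=\sdim V\), and \(W=N\oplus\bigoplus_{i\le k}J_1(\lambda_i)\) has \(\sdim W=\sdim V\), not \(\sdim V'=p_{t'+1}-q_{t'+1}\). Adding ``a line inside a kernel'' changes nothing, because every \(\ker h\) already lies in \(N\).

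The same count makes your two claims about \(W\) incompatible. If \(W\supseteq N\), then \(V/W\) is a sum of \(J_1(\lambda)\)'s on which \(f\) and \(g\) are invertible. So \(V/W\) cannot carry the even kernel of dimension \(\sdim V-\sdim V'\) that the trivial-type tail needs.

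Here is a concrete instance. Take \(Y'=\gone'\) and \(\sdim V>\sdim V'\ge 0\). The generic point of \(Y(\Delta_\bullet)=\gone\) has \(N\cong K_0^{\oplus\sdim V}\), spanned by \(\ker f\). Your \(W\) would need \(p_{t'+1}\ge\sdim V\) and would still have the wrong superdimension. The correct \(W\) keeps only \(\sdim V'\) of these \(K_0\)'s. So comparing with the generic point of \(Y'\) is the wrong comparison: the nilpotent part of points of \(Y(\Delta_\bullet)\subseteq\gone\) grows with \(\sdim V\).

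What you actually need is a stable sub-object containing only \emph{part} of \(N\). It must keep enough of each \(\ker h\) that \(\dim\ker(h|_W)\ge\Delta_h\), with the surplus pushed into \(V/W\). Producing it requires knowing which indecomposables occur in \(N\) at the points you treat. That is the real content of the statement, and your plan does not supply it.

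The density step is a second gap. Density of \(U\) in \(Y(\Delta_\bullet)\) amounts to knowing that \(Y(\Delta_\bullet)\) is irreducible with the expected generic kernel dimensions. Nothing in the paper gives this for a general compositional variety. Moreover, since \(\pi(Z)\) is irreducible, it is essentially a consequence of the proposition, so you cannot use it to prove it. Perturbing the invertible part to separate eigenvalues does not reach points whose nilpotent part is more degenerate than expected, which is exactly where extra components would live. Your \(U\) can even be empty, because the conditions interact: for \(\dim V_0=\dim V_1\), \(\Delta_f=2\) and \(\Delta_{fg}=0\) force \(\dim\ker fg\ge 2\) everywhere.

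The paper avoids both issues by arguing pointwise. For an arbitrary \((f,g)\in Y\):
\begin{enumerate}
\item It takes a trivial-type tail flag \(R_{t'+1}|S_{t'+1}\subseteq\dots\subseteq R_t|S_t\), which exists for every pair by Proposition~\ref{type:trivial}.
\item It uses \(W=R_{t'+1}|S_{t'+1}\) as the stable subspace, arranging that the restricted kernels still meet the bounds \(\Delta_\bullet\).
\item It lifts through \(Z'\).
\end{enumerate}
No irreducibility of \(Y(\Delta_\bullet)\) is needed. Your device of getting the trivial type on \(V/W\) is a good one. It also covers \(\sdim V'<0\), which the paper's direct appeal to Proposition~\ref{type:trivial} tacitly excludes. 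But it only helps once a correct \(W\) has been found.
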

\begin{proof}
  Let \(Y \subseteq \gone\) be the compositional variety defined by \(\Delta_\bullet\).
  It is enough to show that \(\pi \colon Z \to Y\) is surjective.
  Pick \((f, g) \in Y\).
  Let \(X'' = \Flag(p_{t' + 1}|q_{t' + 1}, \dots, p_{t}|q_{t};\;V)\).
  Since \(p_{t' + 1} - q_{t' + 1} \leq \dots \leq p_t - q_t \leq \sdim(V)\), \(X''\) is of trivial type and by Proposition~\ref{type:trivial}, there exists a pair \((R_{t' + 1} \subseteq \dots \subseteq R_{t} \subseteq V_0, S_{t' + 1} \subseteq \dots \subseteq S_t \subseteq V_1)\) of flags satisfying \(\dim{R_i} = p_i\), \(\dim{S_i} = q_i\), \(f(R_i) \subseteq S_i\), and \(g(S_i) \subseteq R_i\) for all \(t' + 1 \leq i \leq t\).
  
  Let \(f'\) and \(g'\) denote, respectively, the restrictions of \(f\) and \(g\) to \(R_{t' + 1}\) and \(S_{t' + 1}\).
  Since \(p_{t' + 1} = \dim{R_{t' + 1}} = \dim{V_0'}\) and \(q_{t' + 1} = \dim{S_{t' + 1}} = \dim{V_1'}\), \((f', g') \in \gone'\).
  Since \(\Delta_{\cdots f} \leq \dim{V_0'} = p_{t' + 1} \leq \dim{V_0}\) and \(\Delta_{\cdots g} \leq \dim{V_1'} = q_{t' + 1} \leq \dim{V_1}\), following construction of the flag in proof of Proposition~\ref{type:trivial}, we may assume that the restriction of the appropriate kernels to \(R_{t' + 1}\) and \(S_{t' + 1}\) satisfy the required lower bounds.
  Kernels of various compositions of \(f'\) and \(g'\) satisfy the conditions specified by \(\Delta_\bullet\) and thus, using the identifications \(V' = R_{t' + 1} \oplus S_{t' + 1}\) and \(\gone' = \Hom(R_{t' + 1}, S_{t' + 1}) \times \Hom(S_{t' + 1}, R_{t' + 1})\), we have \((f', g') \in \pi'(Z')\).
  Since \(\pi' \colon Z' \to \pi'(Z)\) is surjective, pick \((R_1 \subseteq \dots \subseteq R_{t' + 1}, S_1 \subseteq \dots \subseteq S_{t' + 1}, f', g')\) in the preimage of \((f', g')\).
  We conclude that \((R_1 \subseteq \dots \subseteq R_t \subseteq V_0, S_1 \subseteq \dots \subseteq S_t \subseteq V_1, f, g)\) is in the preimage \(\pi^{-1}(f, g)\) and \(\pi \colon Z \to Y\)  is surjective.
\end{proof}

\begin{cor}
  \label{five-families}
  Let \(X \colon \tau\)  be a partial flag supervariety of length \(t\).
  Let \(Y\) be the restriction of the projection \(\pi \colon \ord{X} \times \gone \to \gone\) to the subbundle \(Z\).
  If \(\tau\) belongs to one of the five families listed below, then \(Y\) is compositional and its defining parameters depend on \(\tau\) in the following way:
  \begin{align}
    \tag{\(\sT\)} \label{fam:T}
    \tau =  [\ftyp{\ast t \dots 3 2 1 0}] &\implies Y = \gone,
    \\
    \tag{\(\sD_1\)} \label{fam:D1}
    \tau = [\ftyp{\ast t \dots 3 2 0 1}] &\implies Y = \{ (f, g) \mid \dim{\ker{g}} \geq - \delta_1 \},
    \\
    \tag{\(\sD_2\)} \label{fam:D2}
    \tau = [\ftyp{\ast t \dots 3 0 2 1}] &\implies Y = \{ (f, g) \mid \dim{\ker{g}} \geq - \delta_1 \},
    \\
    \tag{\(\sD_3\)} \label{fam:D3}
    \tau = [\ftyp{\ast t \dots 3 0 1 2}] &\implies Y = \{ (f, g) \mid \dim{\ker{g}} \geq - \delta_2 \},
    \\
    \tag{\(\sC\)} \label{fam:C}
    \tau = [\ftyp{\ast t \dots 3 1 2 0}] &\implies Y = \{ (f, g) \mid \dim{\ker{fg}} \geq \delta_1 - \delta_2 \}.
  \end{align}
\end{cor}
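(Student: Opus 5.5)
The plan is to derive the corollary from Proposition~\ref{p:type-extension}, using the $2$-step computations as base cases. Fix $\tau$ in one of the five families. The first three entries of the type string (reading $\ftyp{2}, \ftyp{1}, \ftyp{0}$ in the order dictated by $\tau$) only involve $\delta_0, \delta_1, \delta_2$. The remaining entries $\ftyp{\ast t \dots 3}$ assert $\delta_3 \le \dots \le \delta_t \le \delta$. Moreover, in every family $\delta_3$ dominates $\delta_0, \delta_1, \delta_2$. I would therefore take
\begin{equation*}
  X' = \Flag(p_1|q_1, p_2|q_2;\; V'),
\end{equation*}
where $V' = V'_0|V'_1$ has $\dim V'_0 = p_3$ and $\dim V'_1 = q_3$. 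Then $\sdim V' = \delta_3$, and $X'$ is a $2$-step flag supervariety whose type is obtained from $\tau$ by deleting $\ftyp{t}, \dots, \ftyp{3}$ and letting $\delta_3$ play the role of $\ftyp{\ast}$. This gives the types $[\ftyp{\ast 2 1 0}]$, $[\ftyp{\ast 2 0 1}]$, $[\ftyp{\ast 0 2 1}]$, $[\ftyp{\ast 0 1 2}]$, and $[\ftyp{\ast 1 2 0}]$ respectively.

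Next I would invoke the base cases for $X'$: Proposition~\ref{type:trivial}, \ref{type:d201}, \ref{type:d021}, \ref{type:d012}, and \ref{type:d120}, respectively. These give $\pi'(Z') \subseteq \gone'$ as the compositional variety with parameters $\Delta_\bullet$ equal to the empty set, $\Delta_g = -\delta_1$, $\Delta_g = -\delta_1$, $\Delta_g = -\delta_2$, and $\Delta_{fg} = \delta_1 - \delta_2$, respectively. One bookkeeping point: each proposition is stated with $\delta$ as the ambient superdimension, so it must be applied with $\sdim V' = \delta_3$. Its hypotheses, for example $\delta_3 \ge \delta_2 \ge 0 > \delta_1$, are exactly the restrictions of $\tau$.

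Finally, Proposition~\ref{p:type-extension} applies with $t' = 2$, since its hypothesis $\delta_3 \le \dots \le \delta_t \le \sdim V$ is precisely the tail of $\tau$. It yields that $\pi(Z) = Y$ is compositional with the same parameters. For $t = 2$ no extension is needed, and one identifies directly with the $2$-step result.

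The main obstacle I anticipate is checking that the parameters remain admissible in the larger space, i.e.\ that \eqref{eq:comp:Delta-restr} holds for $\dim V_0, \dim V_1$. For the $\Delta_g$ cases this means $\max(0, -\delta) \le -\delta_i \le n$. This follows from $\delta \ge 0$ in the relevant orderings and from $-\delta_i \le q_i \le n$. For $\Delta_{fg}$ one needs $\delta_1 - \delta_2 \le n$. This holds because $\delta_1 - \delta_2 \le q_2 - q_1$, using $p_1 \le p_2$. I would also double-check that the hypotheses used inside Proposition~\ref{p:type-extension} hold here, namely that the kernel lower bounds can be realized on $R_3, S_3$. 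This is automatic when the base variety is nonempty.
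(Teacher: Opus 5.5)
Your proposal is correct and is essentially the paper's proof: the paper likewise handles the four non-trivial families by applying Proposition~\ref{p:type-extension} with $t' = 2$ (the $2$-step flag on a superspace of dimension $p_3|q_3$) to the base cases in Propositions~\ref{type:d201}, \ref{type:d021}, \ref{type:d012}, and \ref{type:d120}. The only difference is that for family $\sT$ the paper cites Proposition~\ref{type:trivial} directly, because that proposition is already stated for flags of arbitrary length, so no extension step is needed there.
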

\begin{proof}
  First statement is Proposition~\ref{type:trivial} the rest follow by using Proposition~\ref{p:type-extension} with Proposition~\ref{type:d201}, Proposition~\ref{type:d021}, Proposition~\ref{type:d012}, and Proposition~\ref{type:d120} respectively.
\end{proof}

\begin{rem}
  \label{rem:stability}
  We suspect that the assumption that \(Y\)  is compositional can be removed from Proposition~\ref{p:type-extension} using techniques from representation stability.
  If \(\gone' = \Hom(V_0', V_1') \times \Hom(V_1', V_0')\) then, as \(Z'\) is equivariant with respect to the group \(\GL(V_0') \times \GL(V_1')\), the space of generators of \(\pi(Z')\) given by \(M' = \Tor_1^{A'}(\sO_{\pi(Z')}, \bC)_\bullet\) is naturally a rational \(\GL(V_0') \times \GL(V_1')\)-representation.
  By varying \(X\) one obtains inclusions \(\GL(V_0') \times \GL(V_1') \hookrightarrow \GL(V_0) \times \GL(V_1)\) and a sequence of representations \(M' \to M\); it will be interesting to know if this sequence is representation stable in the sense of \cite{ss2016:grobner, ps2017:representation}.
\end{rem}

\begin{rem}[The ``missing'' 2-step flags]
  \label{r:missing-flags}
  In Corollary~\ref{five-families} above, we only list the types for which we are able to completely determine cohomology.
  Note that all the families arise from five of the eight types of two step flags given in Example~\ref{eg:2-step-types}.
  Of the three remaining types, \([\ftyp{\ast 1 0 2}]\) results in a compound compositional variety as we show in Proposition~\ref{type:d102} below.
  In the types \([\ftyp{2 \ast 0 1}]\) and \([\ftyp{1 \ast 0 2}]\), we haven't identified all the defining parameters.
  However we suspect that the resulting compositional varieties will be given by \(\Delta_f  = \delta_2\), \(\Delta_g = -\delta_1\) in type \([\ftyp{2 \ast 0 1}]\) and \(\Delta_{f} = \delta_1\), \(\Delta_g = - \delta_2\) in type \([\ftyp{1 \ast 0 2}]\).
  Additionally, these cases might require a further restriction on \(\Delta_{fg}\) or \(\Delta_{gf}\).
  In all these missing cases, the resulting compositional variety is not simple and currently we don't have a good description of their \(\Tor\)-groups.
  Since the latter appear in the cohomology spectral sequence in Theorem~\ref{t:super-geom-method}, lacking a good description of the syzygies, we are unable to show the spectral sequence degenerates.
\end{rem}

\begin{prop}
  \label{type:d102}
  If \(X \colon [\ftyp{\ast 1 0 2}] = [\delta \geq \delta_1 \geq 0 > \delta_2]\) then \(Y\) is the compositional variety
  \begin{equation*}
    Y =  \left\{ (f, g) \mid \dim{\ker{g}} \geq (-\delta_2), \dim{\ker{fg}} \geq \delta_1 - \delta_2 \right\}.
  \end{equation*}
\end{prop}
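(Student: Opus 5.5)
The plan is to prove two inclusions: \(\pi'(Z) \subseteq Y\) by rank bookkeeping, and \(Y \subseteq \pi'(Z)\) via an explicit preimage over a dense open subset of \(Y\). Throughout we are in type \([\ftyp{\ast 1 0 2}]\), i.e. \(\delta \geq \delta_1 \geq 0 > \delta_2\), and \(Z\) consists of tuples \((R_\bullet, S_\bullet, f, g)\) with \(f(R_i) \subseteq S_i\) and \(g(S_i) \subseteq R_i\).

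\emph{Step 1: \(\pi'(Z) \subseteq Y\).} Fix a point of \(Z\). For the first condition, \(g\) restricts to a map \(S_2 \to R_2\) with \(\dim S_2 = q_2 > p_2 = \dim R_2\) because \(\delta_2 < 0\). Hence \(\dim \ker g \geq q_2 - p_2 = -\delta_2\). For the second condition, \(f\) restricts to \(R_1 \to S_1\) with \(p_1 \geq q_1\), so \(\dim \ker(f|_{R_1}) \geq \delta_1\). This gives
\begin{equation*}
  \dim f(R_2) \leq p_2 - \delta_1 = q_2 - (\delta_1 - \delta_2).
\end{equation*}
Since \(g(S_2) \subseteq R_2\), the restriction \(fg|_{S_2}\) has rank at most \(q_2 - (\delta_1 - \delta_2)\). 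Adding the complement of \(S_2\) in \(V_1\) gives \(\rank(fg) \leq n - (\delta_1 - \delta_2)\), i.e. \(\dim \ker fg \geq \delta_1 - \delta_2\). Because \(\pi'\) is proper, \(\pi'(Z)\) is closed, and by Proposition~\ref{comp:algset-gens} so is the compositional variety. So this step gives the inclusion.

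\emph{Step 2: \(Y \subseteq \pi'(Z)\).} As in Propositions~\ref{type:d120} and~\ref{type:d012}, I will take the open subset \(U \subseteq Y\) of pairs satisfying all of the following:
\begin{itemize}
  \item[(a)] \(\dim\ker f = \delta\);
  \item[(b)] \(\dim \ker g = -\delta_2\);
  \item[(c)] \(\dim \ker fg = \delta_1 - \delta_2\);
  \item[(d)] \(fg\) has \(n - (\delta_1 - \delta_2)\) distinct nonzero eigenvalues with eigenspaces \(E_i\).
\end{itemize}
Set \(L = \ker g\) and \(M = \ker fg \supseteq L\). Since \(\dim g(M) = \delta_1\) and \(g(M) \subseteq \ker f\), the dimensions line up with \(\delta_1 \leq \delta\). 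For the odd flag I will try
\begin{align*}
  S_1 &= E_1 \oplus \dots \oplus E_{q_1}, \\
  S_2 &= M \oplus E_1 \oplus \dots \oplus E_{q_2 - \delta_1 + \delta_2}.
\end{align*}
For the even flag I will try \(R_1 = g(S_1) \oplus g(M)\). Its dimension is \(q_1 + \delta_1 = p_1\), provided \(g(M) \subseteq R_1\) is acceptable, and this works because \(f\) kills \(g(M)\). Then I will try \(R_2 = g(S_2)\), which has dimension \(q_2 + \delta_2 = p_2\). It contains \(R_1\) as long as \(q_1 \leq q_2 - \delta_1 + \delta_2\), i.e. \(p_1 \leq p_2\). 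The containments \(f(R_i) \subseteq S_i\) and \(g(S_i) \subseteq R_i\) then follow from \(fg\)-stability of the \(E_i\) and of \(M\). Nonemptiness of \(U\) will be shown by block matrices analogous to those in Proposition~\ref{type:d120}, with an extra \((-\delta_2)\)-dimensional zero block in \(g\).

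\emph{Main obstacle: density of \(U\) in \(Y\).} Step 2 only covers \(U\), so I must show \(U\) is dense, ideally by proving \(Y\) irreducible. I would attempt this by exhibiting \(Y\) as the image of an irreducible incidence variety. One candidate fibres over \(\Gr(-\delta_2, V_1) \times \Gr(\delta, V_0)\), pairing \(L \subseteq \ker g\) with \(K \subseteq \ker f\), and imposes a rank condition on \(g\colon F/L \to E/K\), in the spirit of \(\sZ_r\) from \S\ref{ss:simp-2-aux}. It is a vector-bundle-over-determinantal fibration, hence irreducible. As a fallback, I would follow the normal-form argument at the end of Proposition~\ref{type:d120}: using Proposition~\ref{fg-normal-form}, perturb any \((f,g) \in Y\) within \(Y\) so that the nilpotent and zero-eigenvalue parts reach the generic block structure while the eigenvalues of \(fg\) separate.
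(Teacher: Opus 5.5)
Your proposal is correct and follows the paper's proof: the paper uses the same open set \(U\) and the same flags \(S_1 = E_1\oplus\cdots\oplus E_{q_1}\), \(S_2 = \ker fg\oplus E_1\oplus\cdots\oplus E_{p_2-\delta_1}\), \(R_1 = g(S_1)\oplus K_1\), \(R_2 = g(S_2)\). Here its \(K_1 = g(L_1)\cap K_0\) is exactly your \(g(M)\), since \(g(\ker fg)\subseteq\ker f\).

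Your Step~1 (the inclusion \(\pi'(Z)\subseteq Y\)) and your irreducibility argument for the density of \(U\) supply points the paper leaves implicit, since it simply calls \(U\) dense. Your incidence variety does work with the rank bound \(n-\delta_1+\delta_2\) on \(g\colon V_1/L\to V_0/K\). It surjects onto \(Y\) because you can always choose \(K\subseteq\ker f\) of dimension \(\delta\) meeting \(\mathrm{im}\,g\) in dimension at least \(\rank g-(n-\delta_1+\delta_2)\), and this quantity is at most \(\delta_1\le\delta\).
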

\begin{proof}
  Consider the dense open set
  \begin{equation*}
    U = \left\{ (f, g) \;\middle\vert
      \begin{array}{l}
        \dim{\ker{f}} = \delta, \dim{\ker{g}} = -\delta_2, \\
        \dim{\ker{fg}} = \delta_1 - \delta_2, \\
        fg \text{ has } n - (\delta_1 - \delta_2) \text{ distinct eigenvalues}
      \end{array}
    \right\}.
  \end{equation*}
  Since \(n - (\delta_1 - \delta_2) = (n - q_2) + q_1 - (p_2 - p_1) \geq 0 \), \(U\) is well defined.
  Let \(f, g \in Y\), \(e_1, \dots, e_{n - (\delta_1 - \delta_2)}\) be the distinct eigenvalues of \(fg\), \(E_i\) the eigenspace of \(e_i\), \(K_0 = \ker{f}\), \(L_0 = \ker{g}\), \(L_1 = \ker{fg}\).
  Let \(S_\bullet\) be the flag
  \begin{align*}
    S_1 &= E_1 \oplus \dots \oplus E_{q_1} & \dim{S_1} &= q_1, \\
    S_2 &= E_1 \oplus \dots \oplus E_{p_2 - \delta_1} \oplus L_1 & \dim{S_1} &= (p_2 - \delta_1) + (\delta_1 - \delta_2) = q_2.
  \end{align*}
  Since \(\dim{L_1} = \delta_1 - \delta_2 \geq \dim{L_0} = - \delta_2\), we must have \(\dim{g(L_1) \cap K_0} = \delta_1\).
  Letting \(g(L_1) \cap K_0 = K_1\), define flag \(R_\bullet\) by
  \begin{align*}
    R_1 &= g(S_1) \oplus K_1 & \dim{R_1} &= q_1 + \delta_1 = p_1, \\
    R_2 &= g(S_2) & \dim{R_2} &=  (p_2 - \delta_1) + \delta_1 = p_2.
  \end{align*}
  Since \(K_1 \subseteq f(L_1)\), by construction we get \(f(R_\bullet) \subseteq S_\bullet\) and \(g(S_\bullet) \subseteq R_\bullet\).
  We conclude that \((R_\bullet, S_\bullet, f, g)\) is in the preimage of \((f, g)\).
\end{proof}

Based on calculations in the \(2\)-step case and the extension result, we suspect that \(Y\) is compositional for all partial flag supervarieties.
Suppose \(X = \Flag(p_1|q_1, \dots, p_t|q_t;\;V)\).
Pick \((R_\bullet, S_\bullet, f, g) \in Z\) and let \(f_i\) and \(g_i\) denote respectively the restrictions of \(f\) and \(g\) to \(R_i\) and \(S_i\).
Since \(f(R_i) \subseteq S_i\) and \(g(S_i) \subseteq R_i\), we have \((f_i, g_i) \in \Hom(R_i, S_i) \times \Hom(S_i, R_i)\).
Thus \(\dim{\ker{f_i}} \geq \delta_i\) and \(\dim{\ker{g_i}} \geq -\delta_i\).
Since \(\ker{f_i} \subseteq \ker{f}\) and \(\ker{g_i} \subseteq \ker{g}\), \(f\) and \(g\) must satisfy
\begin{equation*}
  \dim{\ker{f}} \geq \max\{0, \delta_1, \dots, \delta_t \},
  \qquad
  \dim{\ker{g}} \geq \max\{0, -\delta_1, \dots, -\delta_t \}.
\end{equation*}
A similar analysis goes through for arbitrary compositions of \(f\) and \(g\).
Since \(f_i g_i\) induces a map \(S_i / S_{i - 1} \to S_i / S_{i - 1}\) that factors through \(R_i / R_{i - 1}\); thus we have
\begin{equation*}
  \rank{f_i g_i} \leq \min(p_i - p_{i - 1}, q_i - q_{i - 1})
\end{equation*}
and by writing \(V_0 = \bigoplus R_i / R_{i - 1}\) and \(V_1 = \bigoplus S_i / S_{i - 1}\),  we obtain the bound
\begin{equation*}
  \dim{\ker{fg}} \geq  n - \sum \min(p_i - p_{i - 1}, q_i - q_{i -1}).
\end{equation*}
Continuing in this manner, it is possible that these rank conditions completely determine \(Y\).

\begin{conj}
  \label{conj:Y-always-comp}
  If \(X = \Flag(p_1|q_1, \dots, p_t|q_t;\;V)\) is a flag supervariety of type \(\tau\) then \(Y\) is a compositional variety whose defining parameters \(\Delta_\bullet\) are given by \(\Delta_h = \sum c^h_i p_i + d^h_i q_i\) where \(c^h_i, d^h_i \in \bZ\) only depend \(\tau\) and are independent of the \(p_i\) and \(q_i\).
\end{conj}

\subsection{Splitting rings and their spectra}
\label{ss:gse-split}
Here we study properties of splitting rings over simple \(1\)-fold and simple \(2\)-fold compositional varieties.
The simple \(1\)-fold case is effectively the case of the determinantal varieties and appears in the cohomology calculation of the structure sheaf of the supergrassmannian in \cite{ss2024:cohomology}.
For reference, we restate the result below in our notation. 

\begin{thm}
  \label{simp-1:split-integ-ratsing}
  Let \(\dim{V_0} \geq \dim{V_1}\) and let \(Y \subseteq \gone\) be the simple \(1\)-fold compositional (that is, determinantal) variety defined by \(\Delta_g\). 
  Let \(f \colon V_0 \otimes  \sO_Y \to V_1 \otimes \sO_Y\) and \(g \colon V_0 \otimes \sO_Y \to V_1 \otimes \sO_Y\) be the universal linear maps, \(\chi(u) \in \sO_Y[u]\) the characteristic polynomial of \(fg\), and \(\bar \chi(u) = u^{- \Delta_{g}} \chi(u)\)
  The splitting scheme \(\sSplit_{Y}(\bar\chi)\) is integral and has rational singularities.
\end{thm}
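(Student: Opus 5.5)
This is a restatement of the Sam--Snowden supergrassmannian result, so the plan follows their argument. Set \(k = -\Delta_g\), so that \(\rank g \le n - \Delta_g\) on \(Y\). Then \(fg\) has rank at most \(n-\Delta_g\), and \(u^{\Delta_g}\) divides \(\chi(u)\) in \(\sO_Y[u]\). Indeed, the coefficients of \(u^j\) in \(\chi\) for \(j<\Delta_g\) are sums of principal minors of \(fg\) of size \(>n-\Delta_g\). By Cauchy--Binet these lie in the ideal of \((n-\Delta_g+1)\)-minors of \(g\), which is the radical ideal of \(Y\). Hence \(\bar\chi\) is a monic polynomial of degree \(N=n-\Delta_g\) over \(\sO_Y\). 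Since \(\sO_Y\) is a determinantal ring, it is a normal Cohen--Macaulay domain with rational singularities. By Proposition~\ref{split:facts}\ref{split:serre-cm}, \(\Split_{\sO_Y}(\bar\chi)\) is Cohen--Macaulay and flat over \(\sO_Y\).

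For integrality, I would apply Proposition~\ref{split:facts}\ref{split:norm-criteria}. This needs two inputs: the discriminant \(\Delta\) of \(\bar\chi\) is a nonzerodivisor, and \(V(\Delta,\partial\Delta)\) has codimension \(\ge 2\) in \(Y\).

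For the first input, exhibit one point of \(Y\) where \(fg\) has \(N\) distinct nonzero eigenvalues. The block matrices from the proof of Proposition~\ref{type:d201} work. Since \(Y\) is integral, \(\Delta\neq 0\) is then a nonzerodivisor.

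For the second input, I would stratify \(Y\) using the normal forms of Proposition~\ref{fg-normal-form}. The discriminant fails to vanish to order \(\ge 2\) (generically along a divisor) exactly on the locus where two nonzero eigenvalues collide in a \(J_2(\lambda)\)-type block, or where one extra eigenvalue goes to \(0\). The latter means \(\rank fg\) drops by one without a repeated root structure. The task is to show that the locus where \(\Delta\) lies in \(\fp^2\) consists of points with at least two independent degenerations. A codimension count in the \(\GL(V_0)\times\GL(V_1)\)-equivariant parametrization (orbit families indexed by Jordan data) should give codimension \(\ge 2\). I expect this dimension count to be the main obstacle. Alternatively, I could quote it directly from the analysis in \cite{ss2024:cohomology}, \S4, which is exactly this variety.

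Once normality is established, I would argue connectedness, hence integrality. The ring \(\Split\) is normal and graded with degree-zero part \(\CC\), so its spectrum is connected, and a normal connected ring is a domain.

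For rational singularities, I would use the étale locus from Proposition~\ref{split:facts}\ref{split:etale}: off \(V(\Delta)\) the map is étale over a variety with rational singularities. To handle the whole space, I would realize \(\sSplit_Y(\bar\chi)\) as the affinization of a Grothendieck--Springer-type resolution \(\widetilde Z\). Here \(\widetilde Z\) parametrizes complete \(fg\)-stable flags in a complement of \(\ker g\), together with Kempf--Weyman data as in Theorem~\ref{t:geom-method}. I would then check that \(\rR^i\) of its pushforward vanishes via Borel--Weil--Bott, as in Proposition~\ref{aux:comp-2:rat-sing}, and that the map to the normal \(\sSplit\) is birational.

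Alternatively, and more simply: \(\Split\) over \(Y\) is finite and flat, and by Proposition~\ref{split:facts}\ref{split:syntomic+splitting} it admits an \(A\)-linear splitting. That splitting gives \(\sO_Y\) as a direct summand, but this gives the wrong direction. So I would rely on the resolution argument, mirroring \cite[\S4]{ss2024:cohomology}.
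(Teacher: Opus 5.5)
\textbf{Verdict: the proposal has a genuine gap, in the rational-singularities half.} The paper proves the whole statement in one line by invoking \cite[Theorem~5.4]{ss2024:cohomology} for \(Y\cong\Hom(V_0,V_1)\times\{g\mid\rank g\le n-\Delta_g\}\).

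\textbf{Integrality half.} Your setup and this half are fine in outline. \(\bar\chi\) is well defined by Cauchy--Binet, and the matrices of Proposition~\ref{type:d201} give \(\Delta\neq 0\). Normality together with the \(\mathbf{N}\)-grading with degree-zero part \(\CC\) does force \(\Split_{\sO_Y}(\bar\chi)\) to be a domain; that connectedness step is worth making explicit. However, the one substantive input to normality, \(\codim V(\Delta,\partial\Delta)\ge 2\), is only announced. Your description of \(V(\Delta)\) also confuses it with \(D_1\): a single root of \(\bar\chi\) going to \(0\) does not make \(\Delta\) vanish at all. What you need is that \(V(\Delta)\) is irreducible, with generic point a collision of two nonzero roots in a non-scalar Jordan block, where \(\Delta\) vanishes to order one. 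This is \cite[Propositions~5.9--5.11]{ss2024:cohomology}, in their \S5 rather than \S4.

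\textbf{The gap: rational singularities.} Borel--Weil--Bott cannot be used as in Proposition~\ref{aux:comp-2:rat-sing}. There, every term of the Koszul--Lascoux complex was a Schur functor of tautological bundles on a Grassmannian, hence an irreducible homogeneous bundle. On a Grothendieck--Springer-type space \(\widetilde Z\) built from \(fg\)-stable complete flags, the bundles \(\bigwedge^\bullet\xi\) are not direct sums of irreducible homogeneous bundles, because the unipotent radical of the parabolic acts nontrivially on the fibres. So BWB only computes graded pieces of a filtration, and you give no mechanism forcing \(\rH^i(\widetilde Z,\sO_{\widetilde Z})=0\) for \(i>0\). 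Elsewhere in the paper this vanishing is an output of rational singularities, not an input (cf.\ Theorem~\ref{t:gse-fact-rings}\ref{gse-fact:higher-direct}).

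\textbf{The argument that works.} It is used in \cite[\S5.6]{ss2024:cohomology} and mirrored in Proposition~\ref{simp-2:split:ratsing}, and it needs no cohomology computation.
\begin{enumerate}
\item \(\sSplit_Y(\bar\chi)\) is normal and Cohen--Macaulay. By Kempf's criterion \cite[Theorem~5.10]{KM:BirationalGeometry}, it suffices to show \(\Phi_\ast\omega_{\widetilde Z}=\omega_{\sSplit_Y(\bar\chi)}\) for a resolution \(\Phi\colon\widetilde Z\to\sSplit_Y(\bar\chi)\).
\item This equality holds over \(U_5\), where \(\Phi\) is an isomorphism.
\item It also holds over the \'etale locus \(U_2\), where rational singularities are inherited from \(Y\).
\item The preimage of the complement \(D_3\cup D_4\) has codimension at least \(2\) in the smooth \(\widetilde Z\), so sections of \(\omega_{\widetilde Z}\) extend across it \cite[Proposition~4.1]{ss2024:cohomology}.
\end{enumerate}
You already had the Cohen--Macaulay and \'etale-locus ingredients; the missing piece is this codimension-two extension step, which is what closes the argument.
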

\begin{proof}
  By \cite[Theorem~5.4]{ss2024:cohomology} spectra of splitting rings over determinantal varieties are integral and have rational singularities.
\end{proof}

In the rest of this subsection, we prove the analogous result for the simple \(2\)-fold compositional variety.
From Corollary~\ref{five-families}, the only case of interest for us are flag supervarieties \(X\) of type \([\ftyp{\ast t \dots 3 1 2 0}]\).
So we let \(Y\) be the simple \(2\)-fold compositional variety determined by \(\Delta_{fg} = \delta_1 - \delta_2\).
We first prove the results for the \(t = 2\) case and fix \(X \colon [\ftyp{\ast 1 2 0}]\).
Let \(\sZ\) be the total space of a subbundle of a trivial bundle over a product 
\begin{equation*}
  \Flag(\delta_1 + 1, \dots, \delta_1 + q_1, \delta_1 + q_1, \dots n + \delta_2;\; V_0)
  \times
  \Flag(1, \dots, q_1, q_1 + (\delta_1 - \delta_2), \dots, n;\; V_1)
\end{equation*}
of partial flag varieties with fibers isomorphic to \(\gone\).
The \(T\)-points of \(\sZ\) are tuples \((R_\bullet, S_\bullet, f, g)\) where 
\begin{align*}
  R_\bullet &= R_0^{\delta_1 + 1} \subseteq \dots \subseteq R_{q_1 - 1}^{\delta_1 + q_1} \subseteq R^{\delta_1 + q_1}_{q_1} \subseteq \dots \subseteq R_{n - (\delta_1 - \delta_2)}^{n + \delta_2} \subseteq (V_0)_T, \\
  S_\bullet &= S_0^1 \subseteq \dots \subseteq S_{q_1 - 1}^{q_1} \subseteq S_{q_1}^{q_1 + (\delta_1 - \delta_2)} \subseteq \dots \subseteq S_{n - (\delta_1 - \delta_2)}^n \subseteq (V_1)_T
\end{align*}
are flags of \(T\)-submodules with ranks prescribed by the superscripts and  \(f \colon (V_0)_T \to (V_1)_T\), \(g \colon (V_1)_T \to (V_0)_T\) are \(T\)-module morphisms satisfying \(f(R_\bullet) \subseteq S_\bullet\) and \(g(S_\bullet) \subseteq R_\bullet\).

This results in a natural surjection \(\sZ \to Z\) given by 
\begin{equation*}
  (R_\bullet, S_\bullet, f, g) \mapsto
  (R_{q_1 - 1}^{p_1} \subseteq R_{p_2 - p_1 + q_1}^{p_2} \subseteq V_0,
  S_{q_1 - 1}^{q_1} \subseteq S_{p_2 - p_1 + q_1}^{q_2} \subseteq V_1, f, g).
\end{equation*}
Let \(f \colon V_0 \otimes \sO_Y \to V_1 \otimes \sO_Y\) and \(g \colon V_1 \otimes \sO_Y \to V_0 \otimes \sO_Y\) be the universal linear maps, \(\chi\)  the characteristic polynomial of \(fg\), and \(\bar\chi = u^{-\Delta_{fg}} \chi\) the reduced characteristic polynomial.
Let \(\Phi \colon \sZ \to \sSplit_{Y}(\bar\chi)\) be the morphism sending \((R_\bullet, S_\bullet, f, g)\) to \((f, g, \prod_{i = 0, i \neq q_1}^{n  - \Delta_{fg}}(u - \lambda_i))\) where \(\lambda_i\) is the eigenvalue of \(fg\) on \(S_i / S_{i - 1}\).
We work with the extension 
\begin{equation}
  \label{diag:split:simp-2}
  \begin{tikzcd}
    \sZ
    \ar["\Phi"]{d}
    \ar[two heads]{r}
    &
    Z
    \ar["\pi' = \pi\big\vert_Z", two heads]{d}
    \ar[hook]{r}
    &
    \ord{X} \times \gone
    \ar["\pi", two heads]{d}
    \\
    \sSplit_{Y}(\bar\chi)
    \ar["\widetilde \Phi"]{r}
    &
    Y
    \ar[hook]{r}
    &
    \gone
  \end{tikzcd}
\end{equation}
of the basic diagram of the geometric method.

\begin{prop}
  \label{simp-2:split:maps-well-def}
  The morphisms \(\Phi\) and \(\widetilde \Phi\) in \eqref{diag:split:simp-2} are well defined and surjective.
\end{prop}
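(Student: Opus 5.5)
We check well-definedness and surjectivity separately for the three maps in \eqref{diag:split:simp-2}: the top arrow \(\sZ \to Z\), the map \(\Phi\), and \(\widetilde\Phi\).

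\emph{Well-definedness.} For the top arrow we only have to check dimensions. The subspace \(R^{p_1}_{q_1 - 1}\) has rank \(\delta_1 + q_1 = p_1\) and \(S^{q_1}_{q_1 - 1}\) has rank \(q_1\). The index \(p_2 - p_1 + q_1\) lies in the range \(q_1 \leq i \leq n - (\delta_1 - \delta_2)\); there \(R_i\) has rank \(\delta_1 + i = p_2\) and \(S_i\) has rank \(i + \delta_1 - \delta_2 = q_2\). The incidences \(f(R_\bullet) \subseteq S_\bullet\) and \(g(S_\bullet) \subseteq R_\bullet\) restrict to the chosen members.

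For \(\Phi\), fix a point \((R_\bullet, S_\bullet, f, g)\) of \(\sZ\). Since \(fg\) preserves \(S_\bullet\), it induces an endomorphism of each subquotient \(S_i / S_{i-1}\). These subquotients have rank one, except the jump at \(i = q_1\), which has rank \(\delta_1 - \delta_2 + 1\). The rank of that jump subquotient factors through \(R_{q_1}/R_{q_1 - 1}\), which has rank one. Hence \(fg\) on it has at most one nonzero eigenvalue \(\lambda_{q_1}\), and the remaining \(\Delta_{fg}\) eigenvalues are \(0\). Multiplying over the filtration gives
\begin{equation*}
  \chi(u) = u^{\Delta_{fg}} \prod_{i} (u - \lambda_i),
\end{equation*}
so \(\bar\chi = \prod_{i}(u - \lambda_i)\) over the \(n - \Delta_{fg}\) slots. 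This factorization is exactly a \(T\)-point of \(\sSplit_Y(\bar\chi)\). We also need \((f,g) \in Y\), which holds because \(\pi'\) lands in \(Y\) by Proposition~\ref{type:d120}. The step needing the most care is doing this over an arbitrary superalgebra \(T\) rather than pointwise. Locally on \(T\) we choose splittings of the flag to make \(fg\) block upper triangular, and then the characteristic polynomial is the product of the characteristic polynomials of the blocks. This shows \(\Phi\) is a morphism of schemes. The map \(\widetilde\Phi\) is the structure map of the splitting scheme, so it is well defined.

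\emph{Surjectivity.} \(\widetilde\Phi\) is surjective because \(\Split\) is faithfully flat over \(\sO_Y\) by Proposition~\ref{split:facts}\ref{split:syntomic+splitting}. For \(\Phi\) it suffices to work with closed points. Given \((f,g) \in Y\) and an ordering \(\mu_1, \dots, \mu_{n - \Delta_{fg}}\) of the roots of \(\bar\chi(f,g)\), we must build flags realizing these eigenvalues in the prescribed order, with the nilpotent block at slot \(q_1\). We would put \((f,g)\) in the normal form of Proposition~\ref{fg-normal-form}. Then \(V_1\) decomposes into generalized eigenspaces of \(fg\), and the zero generalized eigenspace has dimension at least \(\Delta_{fg}\) because \((f,g) \in Y\). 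We build \(S_\bullet\) by successively choosing \(fg\)-stable subspaces: a line in the \(\mu_i\)-eigenspace of the current quotient at each ordinary slot, and an \(fg\)-stable subspace of the current quotient of dimension \(\delta_1 - \delta_2 + 1\) at the jump. We then take \(R_i = g(S_i) + K_i\), where the \(K_i\) are chosen inside \(\ker f\) (of dimension at least \(\delta\)) to fix the ranks, as in the proofs of Propositions~\ref{type:d120} and~\ref{type:trivial}.

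\emph{Main obstacle.} The hard part is the jump slot, and it is what we would work out first. There we need \(S_{q_1}/S_{q_1 - 1}\) of dimension \(\Delta_{fg} + 1\) on which \(fg\) has eigenvalues \(\mu_{q_1}\) (possibly itself \(0\)) and \(0\) with multiplicity \(\Delta_{fg}\). At the same time \(g\) must map it into an \(R\)-step of rank one modulo \(R_{q_1 - 1}\), and this must be compatible with the later \(f\)-incidences. We would first handle the generic locus, where \(fg\) has distinct nonzero eigenvalues and \(\ker fg\) has dimension exactly \(\Delta_{fg}\), with the explicit construction of Proposition~\ref{type:d120}. For arbitrary points we would use the normal form to treat each indecomposable summand \(J_n(\lambda)\), \(J_n(\infty)\), \(K_n\), \(K_n[1]\) and allocate the kernel of \(fg\) from them. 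Failing that, we would fall back on properness: \(\sZ\) is proper over \(\gone\), so \(\Phi(\sZ)\) is closed; it contains a dense open subset of \(\sSplit_Y(\bar\chi)\) by the generic case; and irreducibility of the splitting scheme (to be established alongside) then forces \(\Phi(\sZ) = \sSplit_Y(\bar\chi)\).
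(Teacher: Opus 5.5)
Your overall route matches the paper's, but the step at the exceptional slot \(q_1\) is wrong as written. The matching parts are these: \(\widetilde\Phi\) is the structure map of the splitting scheme, and your faithful-flatness argument for its surjectivity is a fine substitute for the paper's remark that \(\bar\chi\) splits over \(\bC\) at every closed point. \(\Phi\) is well defined by multiplicativity of characteristic polynomials along \(S_\bullet\). Surjectivity of \(\Phi\) comes from the generic construction.

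The slot-\(q_1\) error contradicts the ranks you computed in your first paragraph. There \(R_{q_1-1}\) and \(R_{q_1}\) both have rank \(\delta_1+q_1=p_1\), so \(R_{q_1}=R_{q_1-1}\). Likewise \(S_{q_1}/S_{q_1-1}\) has rank \((q_1+\Delta_{fg})-q_1=\Delta_{fg}\), not \(\Delta_{fg}+1\). So at this slot there is no rank-one \(R\)-subquotient and no eigenvalue \(\lambda_{q_1}\); in effect you merged slots \(q_1-1\) and \(q_1\). This is why the paper's \(\Phi\) takes the product over \(0\le i\le n-\Delta_{fg}\) with \(i\neq q_1\): there are \(n-\Delta_{fg}+1\) slots, and one carries no root.

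The correct argument is one line: \(fg(S_{q_1})\subseteq f(R_{q_1})=f(R_{q_1-1})\subseteq S_{q_1-1}\). Hence \(fg\) induces zero on \(S_{q_1}/S_{q_1-1}\), whose characteristic polynomial is \(u^{\Delta_{fg}}\), and the product over \(i\neq q_1\) is exactly \(\bar\chi\). This holds over any test ring \(T\), which is an ordinary commutative ring since \(\sZ\) is not a superscheme. It also gives \(\dim\ker fg\ge\Delta_{fg}\) at once, and it supplies the justification the paper leaves implicit when it only says the product is ``a factor of \(\chi\)'' of the right degree.

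The same miscount distorts your ``main obstacle'': no nonzero eigenvalue has to be placed at the jump. On the generic locus, take \(S_{q_1}=S_{q_1-1}\oplus\ker fg\). Choose \(K\subseteq\ker f\) of dimension \(\delta_1\) containing \(g(\ker fg)\); this is possible because \(fg(\ker fg)=0\) forces \(g(\ker fg)\subseteq\ker f\), and \(\dim g(\ker fg)\le\Delta_{fg}\le\delta_1\). Then set \(R_i=K+g(S_i)\), which is the construction in the paper and in Proposition~\ref{type:d120}.

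Your properness fallback is the right way to finish, and it is exactly what the paper's ``it is enough to work over the open set'' silently relies on. \(\Phi\) is proper because \(\sZ\to\gone\) is proper and \(\sSplit_Y(\bar\chi)\to\gone\) is finite, hence separated. You also do not need irreducibility of \(\sSplit_Y(\bar\chi)\), which the paper proves only later. \(\widetilde\Phi\) is flat of finite presentation, hence open. So the preimage of the dense open locus of \(Y\) where \(\dim\ker fg=\Delta_{fg}\) and \(fg\) has \(n-\Delta_{fg}\) distinct nonzero eigenvalues is dense in \(\sSplit_Y(\bar\chi)\). The closed image of \(\Phi\) contains this preimage, hence is everything.
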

\begin{proof}
  Since \(\widetilde\Phi\) is induced from the ring map used in the construction of the splitting ring, it is automatically well defined whenever \(\bar\chi\) is.
  Since \(\dim{\ker{fg}} \geq \Delta_{fg}\), \(u^{\Delta_{fg}}\) divides the characteristic polynomial \(\chi\) of \(fg\) and so \(\bar\chi = u^{-\Delta_{fg}} \chi\)  is well defined.
  Since any such \(\bar\chi\) splits into linear terms, \(\widetilde\Phi\) is surjective.

  If \(i \neq q_1\) then \(\dim{S_i / S_{i - 1}} = 1\) so the characteristic polynomial of \(fg\) restricted to \(S_i / S_{i - 1}\) is linear and of the form \((u - \lambda_i)\).
  The product \(\prod_{i = 0, i \neq q_1}^{n - (\delta_1 - \delta_2)}(u - \lambda_i)\) is a factor of the characteristic polynomial \(\chi\) of \(fg\) on \(V_1\) of degree \(n - (\delta_1 - \delta_2)\). 
  Since \(\sZ \to Z\) and \(\pi'\) are both surjective, \((f, g) \in Y\).
  Thus \((f, g, \prod_{i = 0, i \neq q_1}^{n - (\delta_1 - \delta_2)} (u - \lambda_i)) \in \sSplit_{Y}(\bar\chi)\) and \(\Phi\) is well defined.
  To show \(\Phi\) surjective, it is enough to work over the open set in \(\sSplit_Y(\bar\chi)\) where \(\dim{\ker{fg}} = \delta_1 - \delta_2\) and \(\dim{\ker{f}} = \delta\) (cf. Proposition~\ref{type:d120}).
  Let \(E_i\) be the one dimensional eigenspace of \(\lambda_i\), \(K \subseteq \ker{f}\) a \(\delta_1\)-dimensional subspace, and \(L = \ker{fg}\).
  If \(1 \leq i < q_1\) then let \(S_{i - 1} = E_1 \oplus \dots \oplus E_i\) and if \(i \geq q_1\) let \(S_i = L \oplus E_1 \oplus \dots \oplus E_i\).
  If \(R_i = K \oplus g(S_i)\) then we see that \((f, g, R_\bullet, S_\bullet) \in \sZ\) and conclude that \(\Phi\) is surjective.
\end{proof}

\begin{prop}
  \label{simp-2:split:reduced}
  The scheme \(\sSplit_Y(\bar\chi)\) is reduced.
\end{prop}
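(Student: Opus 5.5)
The statement to prove is that \(\sSplit_Y(\bar\chi)\) is reduced. We will deduce it from Proposition~\ref{split:facts}\ref{split:red}: if the base ring is reduced and the discriminant of the monic polynomial is a nonzerodivisor, then the splitting ring is reduced. Here the base ring is \(\sO_Y\) and the polynomial is \(\bar\chi = u^{-\Delta_{fg}}\chi\). So the proof has two steps: show \(\sO_Y\) is reduced, and show \(\operatorname{disc}(\bar\chi)\) is a nonzerodivisor in \(\sO_Y\).

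\textbf{Reducedness and integrality of \(\sO_Y\).} Here \(Y\) is the simple \(2\)-fold compositional variety with \(\Delta_{fg} = \delta_1 - \delta_2\). Proposition~\ref{simp-2:reduced} shows that the ideal generated by the \((r+1)\times(r+1)\) minors of \(\Phi^f\Phi^g\), with \(r = n - \Delta_{fg}\), is radical. Hence \(\sO_Y = A/I\) is reduced (and Cohen--Macaulay). One caveat: Section~\ref{ss:simp-2-comp} assumed \(\dim E \ge \dim F\), and here \(E = V_0\), \(F = V_1\), with \(\delta \ge \delta_1 \ge 0\), so this holds. Theorem~\ref{simp-2:norm+rat-sing} moreover gives that \(Y\) is normal and is the image of the irreducible variety \(\sZ_r\), which is a vector-bundle-type fibration over a Grassmannian with irreducible determinantal fibers. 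So \(Y\) is irreducible, and therefore \(\sO_Y\) is an integral domain.

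\textbf{The discriminant is a nonzerodivisor.} In a domain, every nonzero element is a nonzerodivisor. So it suffices to exhibit one point of \(Y\) at which \(\operatorname{disc}(\bar\chi)\) is nonzero. Take the block matrices \((f,g)\) used in the proof of Proposition~\ref{type:d120}.
\begin{itemize}
\item For them, \(fg\) has \(n - \Delta_{fg}\) distinct nonzero eigenvalues \(e_1,\dots,e_{n-\Delta_{fg}}\).
\item The remaining eigenvalue is \(0\), with multiplicity exactly \(\Delta_{fg}\), since \(\dim\ker fg = \Delta_{fg}\) and the nonzero part is diagonalizable.
\item Therefore \(\bar\chi(u) = \prod_i (u - e_i)\) at this point, which has distinct roots.
\end{itemize}
Its discriminant \(\prod_{i<j}(e_i-e_j)^2\) is nonzero, so \(\operatorname{disc}(\bar\chi)\) is a nonzero element of \(\sO_Y\). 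Applying Proposition~\ref{split:facts}\ref{split:red} then gives that \(\Split_{\sO_Y}(\bar\chi)\) is reduced.

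\textbf{Main obstacle.} The delicate point is making sure \(\sO_Y\) is reduced as a ring, not merely as a variety. This means the scheme-theoretic identification \(\sO_Y = A/I_{r+1}\) must be the one actually used to define \(\bar\chi\) over \(\sO_Y\). It also requires irreducibility, so that a single nonvanishing point suffices. The irreducibility of \(Y\) follows from \(\sZ_r\) being irreducible (Proposition~\ref{aux:comp-2:upstairs}) and surjecting onto \(Y\). If one prefers to avoid irreducibility, there is an alternative: check that \(\operatorname{disc}(\bar\chi)\) does not vanish identically on any component. This follows because the locus where \(fg\) has repeated nonzero eigenvalues or a larger kernel has positive codimension in each component, by the density argument at the end of the proof of Proposition~\ref{type:d120}.
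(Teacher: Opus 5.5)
Your proposal is correct and follows the paper's proof. It uses the same four ingredients: reducedness of \(\sO_Y\) from Proposition~\ref{simp-2:reduced}, irreducibility of \(Y\), nonvanishing of the discriminant at the block-matrix point from Proposition~\ref{type:d120}, and then Proposition~\ref{split:facts}\ref{split:red}. The only difference is cosmetic: you get irreducibility from the auxiliary space \(\sZ_r\) surjecting onto \(Y\), whereas the paper uses the surjection from the vector bundle \(Z\) over \(\ord{X}\), and both work.
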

\begin{proof}
  Let \(\Delta \in \sO_Y\) be the discriminant of \(\bar\chi\).
  The maps \(f \colon V_0 \to V_1\) and \(g \colon V_1 \to V_0\) given in proof of Proposition~\ref{type:d120} determine a \(\CC\)-point of \(Y\) where the characteristic polynomial of \(fg\) is given by \(u^{\delta_1 - \delta_2} \prod_{i = 1}^{n - \delta_1 + \delta_2}(u - e_i)\)  for distinct and nonzero \(e_1, \dots, e_{n - \delta_1 + \delta_2}\).
  Therefore \(\Delta \neq 0\) at \((f, g)\).
  Since \(Y\) is the surjective image of a vector bundle \(Z\) over a projective variety \(\ord{X}\), it is irreducible.
  Form Proposition~\ref{simp-2:reduced} we see that \(Y\) is reduced.
  The claim now follows from Proposition~\ref{split:facts}\ref{split:red} by noting that \(\Delta \neq 0\) and \(Y\) is integral.
\end{proof}

\begin{prop}
  \label{simp-2:normal-form}
  If \((f, g)\) is a \(\bC\)-point of \(Y\) then we can choose bases of \(V_0\) and \(V_1\) such that the matrices \(f\) and \(g\) have the form
  \begin{equation*}
    f =
    \begin{bmatrix}
      F & 0 & \ast \\
      0 & 0 & \ast
    \end{bmatrix},
    \quad
    g =
    \begin{bmatrix}
      G & \ast \\
      0 & \ast \\
      0 & 0
    \end{bmatrix}.
  \end{equation*}
  Where \(F\)  and \(G\) are upper triangular \((n - \Delta_{fg}) \times (n - \Delta_{fg})\) matrices and the matrices denoted by \(\ast\) are arbitrary and have \(\delta\)-columns in \(f\) and \(\Delta_{fg}\)-columns in \(g\).
\end{prop}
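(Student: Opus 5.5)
The plan is to build, for a \(\bC\)-point \((f,g)\in Y\), a graded \(x\)-stable subobject \((R,S)\subseteq(V_0,V_1)\) with \(\dim R=\dim S=n-\Delta_{fg}\), together with a subspace \(K\subseteq\ker f\) with \(\dim K=\Delta_{fg}\) and \(R\cap K=0\). Reading off the block shape, the normal form says exactly the following.
\begin{itemize}
\item The first \(n-\Delta_{fg}\) basis vectors of \(V_0\) span \(R\), and the first \(n-\Delta_{fg}\) basis vectors of \(V_1\) span \(S\).
\item The next \(\Delta_{fg}\) basis vectors of \(V_0\) span \(K\).
\item The relations \(f(R)\subseteq S\), \(g(S)\subseteq R\), \(f(K)=0\) and \(g(V_1)\subseteq R\oplus K\) hold.
\end{itemize}
Upper triangularity of \(F,G\) then amounts to choosing complete flags \(R_1\subset\dots\subset R_{n-\Delta_{fg}}=R\) and \(S_1\subset\dots\subset S_{n-\Delta_{fg}}=S\) with \(f(R_i)\subseteq S_i\) and \(g(S_i)\subseteq R_i\), and taking bases adapted to them. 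Any complement of \(R\oplus K\) in \(V_0\) supplies the last \(\delta\) basis vectors, and any complement of \(S\) in \(V_1\) the last \(\Delta_{fg}\); the starred blocks impose nothing.

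To produce these data I would use Proposition~\ref{fg-normal-form} and decompose \((V_0,V_1,f,g)\) into indecomposables \(J_k(\lambda)\), \(J_k(\infty)\), \(K_k\), \(K_k[1]\). Equivalently, I would first take the Fitting decomposition \(V_1=S^{\mathrm{inv}}\oplus N\) for \(fg\) (invertible on \(S^{\mathrm{inv}}\), nilpotent on \(N\)), and similarly for \(gf\) on \(V_0\). On the invertible part, which is the sum of the \(J_k(\lambda)\) with \(\lambda\neq0\), \(f\) and \(g\) are mutually inverse up to an automorphism. There I put \(S^{\mathrm{inv}}\subseteq S\) and \(g(S^{\mathrm{inv}})\subseteq R\), and obtain the flags from a triangularizing basis of \(fg\) by setting \(R_i=g(S_i)\); this makes \(F\) and \(G\) simultaneously triangular. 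The remaining nilpotent part carries \(\dim N-\dim\ker(fg)\le\dim N-\Delta_{fg}\) further dimensions to be absorbed into \((R,S)\). It also carries the kernel directions: \(L=\ker fg\) has dimension at least \(\Delta_{fg}\), and \(g(L)\subseteq\ker f\). I would pick \(K\) inside \(\ker f\), containing enough of \(g(L)\) that \(g(V_1)\subseteq R\oplus K\), using the count \(\dim\ker f\ge\delta+\dim\ker g\) (valid since \(m\ge n\)).

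I would treat each nilpotent indecomposable separately. For \(J_k(0)\), \(J_k(\infty)\), \(K_k\) and \(K_k[1]\), the string basis \(v_0,v_1,\dots\) with \(xv_i=v_{i+1}\) already gives flags stable under \(x\) whose steps alternate parity. I would assign the tail of each string, the part lying in \(\operatorname{im}(fg)\) or \(\operatorname{im}(gf)\), to \((R,S)\), and the head, including any \(\ker f\) vector in \(V_0\), to \(K\) or to the complements. The key numerical step is to check that, after summing over indecomposables, \(\dim R\) and \(\dim S\) can both be made exactly \(n-\Delta_{fg}\). This is where the hypothesis \(\dim\ker fg\ge\Delta_{fg}\) enters, as an upper bound on how much of \(V_1\) must be forced into \(S\). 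If the minimal choice is too small, I would enlarge \((R,S)\) by further string segments.

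The main obstacle I expect is the simultaneous triangularity on the nilpotent part. A complete flag of the pair \((R,S)\) stable under \(x\) must raise the dimensions of \(R_i\) and \(S_i\) by one at each step. A single nilpotent string, however, naturally grows one parity at a time, so two strings of opposite parity, for example a \(K_k\) and a \(K_{k'}[1]\), or a \(J\) block paired with leftover kernel vectors, have to be interleaved. I would try to pair each even-parity step with an odd-parity step, building \(R_i\) and \(S_i\) from pairs \((gv,v)\) or \((w,fw)\) and placing unpaired vectors into \(K\) or the complements. If a mismatch persists, I would fall back on the density argument used after Proposition~\ref{type:d120}: establish the form on the dense open locus where \(fg\) has \(n-\Delta_{fg}\) distinct nonzero eigenvalues, where it is immediate, and then pass to all of \(Y\). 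For that I would use properness of the flag-bundle projection \(\sZ\to Y\) from \eqref{diag:split:simp-2}, whose fibres are exactly the triangularizing flag data. Since \(\Phi\) and \(Z\to Y\) are surjective (Proposition~\ref{simp-2:split:maps-well-def}), every point of \(Y\) has such flag data in its fibre, and reading off the corresponding bases would give the stated form.
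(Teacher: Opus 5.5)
\textbf{This is a genuine gap, and it cannot be closed: the statement fails at some points of $Y$.} So neither your string-by-string construction nor your closure argument can be completed. Take $(V_0,V_1,f,g)=K_2$ from Proposition~\ref{fg-normal-form}, i.e.\ $V_0=\langle v_0,v_2,v_4\rangle$, $V_1=\langle v_1,v_3\rangle$, with $fv_0=v_1$, $fv_2=v_3$, $fv_4=0$, $gv_1=v_2$, $gv_3=v_4$. Then $m=3$, $n=2$, $\delta=1$. Take $\Delta_{fg}=\delta_1-\delta_2=1$, e.g.\ from the flag $(1|0,\,2|2)$ in $\bC^{3|2}$, which has type $[\ftyp{\ast 1 2 0}]$.

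Since $fg\colon v_1\mapsto v_3\mapsto 0$, we have $\dim\ker fg=1$, so $(f,g)\in Y$. It even lies in $\pi'(Z)$, via $R_1=\langle v_4\rangle\subseteq R_2=\langle v_2,v_4\rangle$ and $S_1=0\subseteq S_2=V_1$. In a basis of the required shape, $F$ and $G$ are $1\times 1$. The first columns give $f(x_1)=Fy_1$ and $g(y_1)=Gx_1$, so $gf(x_1)=FG\,x_1$. The zero middle column of $f$ gives $f(x_2)=0$. But $gf$ is a single nilpotent Jordan block $v_0\mapsto v_2\mapsto v_4\mapsto 0$, whose only eigenline is $\langle v_4\rangle=\ker f$. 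Hence $x_1,x_2\in\langle v_4\rangle$, a contradiction. The paper's own proof breaks on this example too: it asserts $\rank gf\le n-\Delta_{fg}$, but here $\rank gf=2>1$.

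\textbf{Where your plan fails.} It is at the step sending the head of each string, ``including any $\ker f$ vector in $V_0$'', to $K$. For strings ending in $V_0$ (the $K_k$ and $J_k(\infty)$), the $\ker f$ vector is the \emph{last} vector of the string. That vector lies in every nonzero $x$-stable subspace of the string. So once $(R,S)$ uses part of such a string, $R$ absorbs that kernel direction and it is no longer available for $K$. Interleaving strings or enlarging $(R,S)$ cannot repair this. Separately, your count $\dim\ker f\ge\delta+\dim\ker g$ is false (take $g=0$ and $f$ of rank $n$).

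\textbf{Why the fallback does not rescue it.} The required data include the open condition $R\cap K=0$. For $W=R\oplus K$ this means $f(W)=f(R)$, not merely the closed condition $f(W)\subseteq S$. In the example, $R=\langle v_4\rangle$, $W=\langle v_2,v_4\rangle$, $S=\langle v_3\rangle$ satisfy the closed condition but not the open one. So the locus admitting the form is not closed, and density plus properness proves nothing. Moreover, the fibres of $\sZ\to Y$ are flags of a different shape (a $\Delta_{fg}$-dimensional step in $S_\bullet$ at position $q_1$). They encode no complement $K\subseteq\ker f$ of $R$, so you cannot read off the stated bases from them.

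\textbf{What does hold.} Your invertible-part argument does establish the form on the dense open locus where $fg$ has $n-\Delta_{fg}$ distinct nonzero eigenvalues. Take $S$ to be the sum of those eigenlines, $R=g(S)$, and $K\subseteq\ker f$ of dimension $\Delta_{fg}$ containing $g(\ker fg)$. The proposition must be restricted or weakened accordingly before any proof can go through.
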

\begin{proof}
  We note that in our case of type \([\ftyp{\ast 1 2 0}]\) we have \(\delta \geq \delta_1 \geq \delta_2 \geq 0\) and \(\Delta_{fg} = \delta_1 - \delta_2\).
  Since \(\dim{\ker{fg}} \geq \Delta_{fg}\), pick \(L \subseteq \ker{fg}\) such that \(\dim{L} = \Delta_{fg}\).
  Now pick a basis \(y_1, \dots, y_n\) of \(V_1\) where the last \(\Delta_{fg}\) vectors \(y_{n - \Delta_{fg} + 1}, \dots, y_n\) form a basis of \(L\).
  Note that \(\rank{gf} \leq n - \Delta_{fg}\) so \(\dim{\ker{gf}} \geq m - n + \Delta_{fg} = \delta + \Delta_{fg}\)
  Since \(\dim{\ker{f}} \geq \delta\), we can pick \(K_0 \subseteq \ker{f}\) of dimension \(\delta\) and \(K_1\) of dimension \(\Delta_{fg}\), such that \(K_0 \oplus K_1 \subseteq \ker{gf}\).
  Now pick a basis \(x_1, \dots, x_m\) of \(V_0\) such that the last \(\delta + \Delta_{fg}\) vectors form a basis of \(K_0 \oplus K_1\) and the last \(\Delta_{fg}\) vectors form a basis of \(K_1\).
  By construction of \(L\), \(K_0\), \(K_1\), and \(g(L) \subseteq \ker{f}\) we see that the matrices have the required form but don't necessarily satisfy the requirement that \(F\) and \(G\) are upper triangular.
  Let \(W_0 \subseteq V_0\)  be the span of \(x_1, \dots, x_{n - \Delta_{fg}}\) and \(W_1 \subseteq V_1\) be the span of \(y_1, \dots, y_{n - \Delta_{fg}}\).
  Since \((W_0, W_1, F, G)\) is an object in the category \(\fA\) of Proposition~\ref{fg-normal-form}, we can make \(F\) and \(G\) upper triangular by writing in terms of indecomposables and picking bases.
  The claim now follows.
\end{proof}

\begin{prop}
  \label{simp-2:split:normal}
  The variety \(\sSplit_Y(\bar\chi)\) is normal.
\end{prop}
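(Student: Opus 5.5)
The plan is to apply the normality criterion of Proposition~\ref{split:facts}\ref{split:norm-criteria} to the map \(\sO_Y \to \Split_{\sO_Y}(\bar\chi)\). Three hypotheses must be checked:
\begin{enumerate}
\item \(\sO_Y\) is normal;
\item the discriminant \(\Delta\) of \(\bar\chi\) is a nonzerodivisor in \(\sO_Y\);
\item the locus \(V(\Delta, \partial\Delta)\) has codimension at least \(2\) in \(Y\).
\end{enumerate}
The first holds by Theorem~\ref{simp-2:norm+rat-sing}, which shows that the simple \(2\)-fold compositional variety \(Y\) is normal (in fact with rational singularities). For the second, \(Y\) is integral: it is irreducible as the image of the irreducible bundle \(Z\), and reduced by Proposition~\ref{simp-2:reduced}. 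So it suffices that \(\Delta \neq 0\), and this was already observed in the proof of Proposition~\ref{simp-2:split:reduced} using the explicit point of Proposition~\ref{type:d120}, where \(\bar\chi\) has distinct nonzero roots \(e_i\).

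The real work is the codimension estimate. The plan is to stratify \(Y\) by the Jordan-type data of \((f,g)\), using the normal form of Proposition~\ref{simp-2:normal-form} together with the indecomposables of Proposition~\ref{fg-normal-form}. The discriminant \(\Delta\) vanishes exactly where \(\bar\chi\) has a repeated root. Such a repetition arises in one of two ways: two nonzero eigenvalues of \(F G\) (the block of \(fg\) of size \(n - \Delta_{fg}\)) coincide, or a root of \(\bar\chi\) becomes \(0\), meaning \(\dim\ker fg > \Delta_{fg}\) generically or \(F G\) acquires a zero eigenvalue. The locus \(V(\Delta,\partial\Delta)\) is where \(\Delta\) vanishes to order at least \(2\). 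Since \(\Delta\) defines a divisor, it is enough to show that along a dense open subset of each irreducible component of \(V(\Delta)\), the function \(\Delta\) vanishes to order exactly one. Equivalently, at a generic point of each such component one should find a curve in \(Y\) transverse to the component along which \(\Delta\) has a simple zero.

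There are two expected divisorial components:
\begin{enumerate}
\item[(i)] the locus where two nonzero eigenvalues of \(fg\) collide, with generic member a single \(2\times 2\) Jordan block \(J_2(\lambda)\) for \(\lambda \neq 0\);
\item[(ii)] the locus where one additional eigenvalue of \(fg\) becomes \(0\), with generic member adding one extra \(J_1(0)\) or a small \(K_n\)-type summand on top of the required \(\Delta_{fg}\)-dimensional kernel.
\end{enumerate}
In case (i) I would treat the \(2\times 2\) block \(F G\) locally as a generic matrix, where the discriminant of \(u^2 - (\operatorname{tr})u + \det\) is a coordinate-level function. Its vanishing to order exactly one follows by varying a single off-diagonal entry of \(G\) within the normal form, which stays inside \(Y\). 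In case (ii) I would vary the diagonal entry of \(F\) that governs the extra zero eigenvalue: \(\bar\chi\) gains a factor \((u - \epsilon)\) with \(\epsilon\) linear in the parameter, and its product with the other roots gives \(\Delta\) to first order. Any other component of \(V(\Delta)\), for example coincidences involving several blocks at once, or cases where \(\dim\ker f > \delta\) forces more kernel, should be a union of strata of codimension at least \(2\) in \(Y\). I would confirm this with a dimension count in the style of Proposition~\ref{aux:comp-2:upstairs}, or via \(\Phi\) from \(\sZ\). The main obstacle is making this stratification rigorous, in particular verifying that the perturbations chosen in the normal form genuinely remain inside \(Y\) (that is, keep \(\dim\ker fg \ge \Delta_{fg}\)) while still achieving transversality.
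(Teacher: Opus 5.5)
Your reduction matches the paper's: you apply Proposition~\ref{split:facts}\ref{split:norm-criteria}, with \(Y\) normal by Theorem~\ref{simp-2:norm+rat-sing}, and \(\Delta\) a nonzerodivisor because \(Y\) is integral and \(\Delta \neq 0\) at the point of Proposition~\ref{type:d120}. The gap is in the codimension estimate for \(V(\Delta,\partial\Delta)\), which is the only substantive step.

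\textbf{Component (ii) is not in \(V(\Delta)\).} In the normal form of Proposition~\ref{simp-2:normal-form} one has \(fg = \left[\begin{smallmatrix} FG & \ast \\ 0 & 0 \end{smallmatrix}\right]\), with a zero block row of height \(\Delta_{fg}\). So \(\bar\chi = u^{-\Delta_{fg}}\chi\) is exactly the characteristic polynomial of \(FG\). One additional zero eigenvalue of \(fg\) is a \emph{simple} root of \(\bar\chi\), so \(\Delta\) does not vanish there. That locus is \(D_1 = \{\bar\chi(0)=0\}\), not part of \(D_2 = V(\Delta)\), and your first-order computation along it concerns a function that is a unit there. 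You are conflating repeated roots of \(\chi\) with repeated roots of \(\bar\chi\); stripping the forced zeros is the whole point of \(\bar\chi\).

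\textbf{The remaining strata are not handled.} More seriously, you claim that everything else in \(V(\Delta)\) lies in strata of codimension at least \(2\). This covers zero as a double root of \(\bar\chi\), multiple coincidences, scalar blocks, and excess kernel of \(f\). That claim is precisely what must be proved, and you leave it as an unexecuted dimension count over all indecomposable types \(J(\lambda), J(\infty), K, K[1]\).

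\textbf{The missing idea: \(V(\Delta)\) is irreducible.} The paper uses this, following \cite[Propositions~5.9--5.11]{ss2024:cohomology}, and it makes any stratification unnecessary.
\begin{enumerate}
\item By Propositions~\ref{simp-2:normal-form} and~\ref{fg-normal-form}, every point of \(V(\Delta)\) is \(\GL(V_0)\times\GL(V_1)\)-conjugate to a normal form in which the indecomposable carrying the repeated root comes first. This is \(J_{n'}(\lambda)\) if \(\lambda\neq 0\), or one of \(J_{n'}(0), J_{n'}(\infty), K_{n'}, K_{n'}[1]\) if the repeated root is \(0\).
\item In that normal form the diagonal entries of \(F, G\) satisfy \(x_1y_1 = x_2y_2\).
\item Normal forms with \(x_1y_1 = x_2y_2\) form an irreducible quadric times an affine space, and every such pair lies in \(V(\Delta)\).
\item Hence \(V(\Delta)\) is the image of an irreducible variety under the group action, so it is irreducible.
\end{enumerate}
It then suffices to find a single point of \(V(\Delta)\) where \(\Delta\) vanishes to order exactly one. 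Then \(V(\Delta,\partial\Delta)\) is a proper closed subset of the irreducible divisor \(V(\Delta)\), hence of codimension at least \(2\).

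\textbf{Your case (i) supplies that point, after one correction.} Perturbing an entry of \(G\) inside the upper-triangular form does not move the eigenvalues \(x_iy_i\) at all. Instead, perturb the \((2,1)\) entry of \(G\) by \(\varepsilon\), leaving the triangular form.
\begin{enumerate}
\item This stays in \(Y\), because the last \(\Delta_{fg}\) rows of \(fg\) vanish for every \(G\).
\item \(FG\) is then block upper triangular, and its top \(2\times 2\) block has \((2,1)\) entry \(x_2\varepsilon\).
\item Take a non-scalar \(J_2(\lambda)\) with \(\lambda\neq 0\) and the other eigenvalues distinct. The discriminant of that block is \(4b\,x_2\,\varepsilon + O(\varepsilon^2)\), where \(b \neq 0\) is its \((1,2)\) entry and \(x_2\neq 0\).
\item Hence \(\Delta\) has a simple zero in \(\varepsilon\).
\end{enumerate}
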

\begin{proof}
  The proof of this is exactly the same as proof of normality in the determinantal case given in \cite[Proposition~5.11]{ss2024:cohomology}
  So we summarize the main parts of the proof and give references to \cite{ss2024:cohomology} whenever necessary.

  Suppose \((f, g)\) is written in terms of the normal forms in Proposition~\ref{simp-2:normal-form}.
  Let \(\Delta\) be the discriminant of the reduced characteristic polynomial \(\bar\chi\).
  As in \cite[Propostiion~5.9]{ss2024:cohomology}, if \((f, g) \in V(\Delta)\) then we may assume that the first two diagonal entries of \(FG\) are equal.
  If \(\lambda\) is a nonzero repeated root of \(\bar\chi\), then in the decomposition determined by Proposition~\ref{fg-normal-form}, we may assume that the indecomposable object \(J_{n'}(\lambda)\) appears first in the construction of the normal form of the pair \((F, G)\).
  In this case, the first two diagonal entries of \(FG\) are \(\lambda\).
  If not, then the repeated root is \(0\) and we can pick any of the indecomposables \(J_{n'}(0)\), \(J_{n'}(\infty)\), \(K_{n'}\), or \(K_{n'}[-1]\) first to make the first two diagonal entries of \(FG\) zero.

  The closed set \(V(\Delta) \subseteq Y\) is irreducible by the argument given in \cite[Propostiion~5.10]{ss2024:cohomology}; the only change is to work with the form of the matrices coming from Proposition~\ref{simp-2:normal-form}.
  Using the same form of matrices, the proof of \(\codim(V(\Delta, \partial \Delta)) \geq 2\) in \(Y\) follows directly from  \cite[Proposition~5.11]{ss2024:cohomology}.
  Since \(Y\) is normal by Theorem~\ref{simp-2:norm+rat-sing} and \(\codim{V(\Delta, \partial\Delta)} \geq 2\) in \(Y\), normality of \(\sSplit_Y(\bar\chi)\) follows from Proposition~\ref{split:facts}\ref{split:norm-criteria}.
\end{proof}

The proof strategy for rational singularities is very similar to the ones used in \cite[\S5.6]{ss2024:cohomology} and \cite[\S5.3]{sam2026:borel}.
Whenever possible, in this subsection, we try to keep our notation consistent with Sam--Snowden and Sam's. 
Consider the following closed sets of \(Y\):
\begin{itemize}
\item \(D_1\) the locus where \(\bar\chi(0) = 0\);
\item \(D_2\) the locus where \(\bar\chi(u)\) has a repeated root;
\item \(D_3 \subseteq D_2\) the locus where \(\bar\chi(u)\) has a triple root, two repeated roots, or a unique repeated root whose corresponding Jordan block of \(fg\) is a scalar;
\item \(D_4 = D_1 \cap D_2\);
\item \(D_5 = D_1 \cup D_3 \).
\end{itemize}
Let \(U_i = Y \setminus D_i\) for all \(i\).
For any space \(W\) over \(Y\), we will let \(D_i(W)\) and \(U_i(W)\) denote the preimages of the \(D_i\) and \(U_i\) in \(W\).

\begin{prop}
  \label{simp-2:aux:D34-codim}
  If \(i = 3, 4\) then  \(\codim_{\sZ} (D_i(\sZ)) \geq 2\).
\end{prop}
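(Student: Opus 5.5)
We prove the bound by a fiberwise dimension count over the flag variety underlying \(\sZ\). Write \(\sZ \to \sF\), where \(\sF\) is the product of the two partial flag varieties in the definition of \(\sZ\). The fiber over a point \((R_\bullet, S_\bullet)\) is the linear space \(\sZ_{(R_\bullet,S_\bullet)}\) of pairs \((f,g)\) with \(f(R_\bullet)\subseteq S_\bullet\) and \(g(S_\bullet)\subseteq R_\bullet\). All fibers have the same dimension, so \(\sZ\) is a vector bundle over \(\sF\) and is irreducible. It therefore suffices to show that, for each \((R_\bullet,S_\bullet)\), the loci \(D_3\) and \(D_4\) meet the fiber in codimension at least \(2\).

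First I fix a point of \(\sF\) and choose bases adapted to both flags. In these bases the fiber conditions say that \(f\) and \(g\) are block upper triangular, and this gives the normal form of Proposition~\ref{simp-2:normal-form}. The map \(\Phi\) then records the eigenvalues \(\lambda_i\) of \(fg\) on the one-dimensional graded pieces \(S_i/S_{i-1}\), \(i\neq q_1\). Each \(\lambda_i = a_i b_i\) is a product of the diagonal entries \(a_i\) of \(f\) and \(b_i\) of \(g\) on the corresponding graded pieces. These diagonal entries are independent free coordinates on the fiber. Moreover, the roots of \(\bar\chi\) are exactly the \(\lambda_i\): the remaining graded piece \(S_{q_1}/S_{q_1-1}\) has dimension \(\Delta_{fg}\), and \(fg\) is zero on it for \((f,g)\) in the fiber.

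\(D_4\). A point lies in \(D_4\) when \(\bar\chi(0)=0\) and \(\bar\chi\) has a repeated root. This means some \(\lambda_i=0\) and also some \(\lambda_j=\lambda_k\) with \(j\neq k\).
\begin{itemize}
\item If the repeated root is nonzero, the locus is contained in the union of the sets \(\{a_ib_i=0,\ a_jb_j=a_kb_k\}\) with \(i\notin\{j,k\}\). Each such set is two independent conditions on disjoint coordinates, so it has codimension \(2\).
\item If the repeated root is \(0\), we get \(a_jb_j=0=a_kb_k\) with \(j\neq k\). This is a union of coordinate subspaces of codimension \(2\).
\end{itemize}

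\(D_3\). A point lies in \(D_3\) in one of three ways.
\begin{itemize}
\item A triple root, \(\lambda_i=\lambda_j=\lambda_k\) for distinct \(i,j,k\): two independent equations, so codimension \(2\).
\item Two distinct repeated pairs, \(\lambda_i=\lambda_j\) and \(\lambda_k=\lambda_l\): again codimension \(2\).
\item A single repeated root \(\lambda_i=\lambda_j\), \(i<j\), whose generalized eigenspace carries scalar \(fg\). This is the delicate case. On the graded pieces between \(i\) and \(j\), \(fg\) is block triangular. Scalarity forces the \((i,j)\) entry of the induced \(2\times 2\) block of \(fg\) to vanish. After conjugating so that \(fg\) is upper triangular on \(S_\bullet\), that entry has the form \(\sum_k f_{ik} g_{kj}\) plus lower-order terms. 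It contains a term linear in an off-diagonal coordinate of \(g\) (or of \(f\)) with coefficient \(a_i\) (respectively \(b_j\)) that is generically nonzero, and it is independent of the equation \(a_ib_i=a_jb_j\).
\end{itemize}
So the scalar case also gives codimension at least \(2\), apart from the sublocus where \(a_i=0\) or \(b_j=0\). That sublocus already has codimension \(\geq 2\) together with \(\lambda_i=\lambda_j\).

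There is one further subtlety. A repeated root may have \(i\) and \(j\) on opposite sides of the jump at \(q_1\), where the \(\Delta_{fg}\)-dimensional kernel piece sits. There I would mimic \cite[Proposition~5.11]{ss2024:cohomology} and \cite[\S5.3]{sam2026:borel}, taking the off-diagonal entry linking the two one-dimensional pieces through the middle block.

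I expect the main obstacle to be this scalar-Jordan-block condition. One has to check carefully that the extra vanishing condition is not implied by \(\lambda_i=\lambda_j\) in our block shape with the nonsquare parts of sizes \(\delta\) and \(\Delta_{fg}\). This is where I would follow the explicit coordinate computation of Sam--Snowden, choosing \(i,j\) adjacent after reordering the flag.
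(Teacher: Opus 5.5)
Your proposal follows the same route as the paper's proof. Both argue fiberwise over the flag variety in adapted bases, where the diagonal entries of \(f\) and \(g\) multiply to give the roots of \(\bar\chi\). For \(D_4\) one gets a repeated-root equation together with a vanishing product, triple roots and pairs of repeated roots each give two independent equations, and a scalar Jordan block adds one further independent condition. Your treatment of that last case is more explicit than the paper's one-line assertion, and the subtlety you defer to Sam--Snowden in fact resolves itself: you may assume the repeated root \(\lambda\) is nonzero, so the \(\Delta_{fg}\)-dimensional piece contributes the invertible block \(-\lambda\cdot\mathrm{Id}\) to \(fg-\lambda\), and \(g_{ij}\) never occurs in the Schur-complement correction terms, so your linear-in-\(g_{ij}\) argument works wherever \(q_1\) sits relative to \(i\) and \(j\).
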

\begin{proof}
  Since \(\sZ\) is an equivariant vector bundle, it is enough to show that over any given fiber, the restrictions have the specified codimension.
  From Proposition~\ref{simp-2:normal-form}, by picking bases of \(V_0\) and \(V_1\) we  can put any \((f, g) \in Y\) in the form
  \begin{equation*}
    f =
    \begin{bmatrix}
      F & 0 & \ast \\
      0 & 0 & \ast
    \end{bmatrix},
    \quad
    g =
    \begin{bmatrix}
      G & \ast \\
      0 & \ast  \\
      0 & 0
    \end{bmatrix}
  \end{equation*}
  where \(F\), \(G\) are upper triangular \((n - \Delta_{fg}) \times (n - \Delta_{fg})\) matrices (cf. matrices given in proof of Proposition~\ref{type:d120}).

  We see that the characteristic polynomial of \(fg\) is determined by the product \(FG\).
  Let \(x_1, \dots, x_{n - \Delta_{fg}}\) and \(y_1, \dots, y_{n - \Delta_{fg}}\) denote, respectively, the diagonal entries of \(F\) and \(G\).

  A repeated root corresponds to one component for the fiber for each equation \(x_iy_i = x_j y_j\) for \(i < j\) and \(D_4(\sZ)\) is cut out in this component by \(\prod_i x_iy_i\).
  We see that \(D_4\) has codimension \(2\).
  Similarly, a triple root imposes two equations \(x_iy_i = x_j y_j\) and \(x_ky_k = x_\ell y_\ell\) and so has codimension \(2\).
  Finally if there is a unique repeated root where the Jordan block  of \(fg\) is a scalar imposes a nontrivial condition on one of the irreducible components and thus has codimension \(\geq 2\).
\end{proof}

\begin{prop}
  \label{simp-2:split:birational}
  The morphism \(\Phi\) is birational.
\end{prop}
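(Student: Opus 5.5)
To prove that \(\Phi\colon \sZ \to \sSplit_Y(\bar\chi)\) is birational, I will exhibit a dense open subset of the target over which \(\Phi\) is an isomorphism. I will follow the determinantal argument of \cite[\S5]{ss2024:cohomology}. Both spaces are irreducible. For \(\sZ\) this holds because it is a vector bundle over a product of partial flag varieties. For \(\sSplit_Y(\bar\chi)\) it follows from Propositions~\ref{simp-2:split:reduced} and~\ref{simp-2:split:normal}: the scheme is normal, and the étale argument below gives a single dense component. So it suffices to produce nonempty opens \(U \subseteq \sSplit_Y(\bar\chi)\) and \(\Phi^{-1}(U)\subseteq\sZ\) together with an inverse morphism \(U \to \Phi^{-1}(U)\). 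Surjectivity of \(\Phi\) is already Proposition~\ref{simp-2:split:maps-well-def}.

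The open set will be \(U = U_1 \cap U_2 \cap \{\dim\ker f = \delta,\ \dim\ker fg = \Delta_{fg}\}\), pulled back to the splitting scheme. It consists of the points \((f,g,\xi_\bullet)\) where the roots \(\xi_1,\dots,\xi_{n-\Delta_{fg}}\) of \(\bar\chi\) are distinct and nonzero and the kernels have generic dimension. It is nonempty by the explicit matrices in the proof of Proposition~\ref{type:d120}. Over \(U_2\) the discriminant is a unit, so by Proposition~\ref{split:facts}\ref{split:etale} the splitting scheme is étale over \(Y\) there. Hence \(U\) is smooth and dense.

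Next I write down the inverse on \(U\). Given \((f,g,\xi_\bullet)\), set \(E_i = \ker(fg - \xi_i)\). This is a line bundle on \(U\), since the eigenvalues are simple. Set \(L = \ker(fg)\), which is a rank-\(\Delta_{fg}\) bundle because \(\dim\ker fg\) is constant there. The forced flag is \(S_i = E_1\oplus\dots\oplus E_i\) for \(i<q_1\), and for \(i \ge q_1\) it is \(S_i = L\oplus E_1\oplus\dots\oplus E_i\), matching the indexing in the construction of \(\sZ\).

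On \(R_\bullet\), the conditions \(g(S_\bullet)\subseteq R_\bullet\) and \(f(R_\bullet)\subseteq S_\bullet\) force \(R_i \supseteq g(S_i)\). The dimension count \(\dim R_i = \delta_1 + \dim S_i\) (or \(n+\delta_2\) at the top) then forces \(R_i = g(S_i) \oplus K\) with \(K \subseteq \ker f\) of dimension \(\delta_1\). The main obstacle is that \(K\) looks like a free choice in \(\Gr(\delta_1,\ker f)\). That would make the fibers positive dimensional unless \(\delta_1 = \delta\), so I must check the indexing of \(\sZ\) carefully.

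I expect the rank bookkeeping to settle this. The condition \(f(R_0)\subseteq S_0\) with \(\dim R_0 = \delta_1+1\) and \(\dim S_0=1\) forces \(\ker f|_{R_0}\) to have dimension at least \(\delta_1\). A dimension count of \(\sZ\) against \(\dim Y = 2mn-\Delta_{fg}^2\) (Corollary~\ref{simp-2:codim}) will show whether the generic fiber is a point. If it is not, I would replace \(\sZ\) by the appropriate Stein factorization, or note that \(\Phi\) factors through a Grassmannian bundle with connected fibers, whichever the paper needs downstream.

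Assuming the fiber is a single point, the map \((f,g,\xi)\mapsto(R_\bullet,S_\bullet,f,g)\) is algebraic, being built from kernels and images of constant-rank maps. It is inverse to \(\Phi\) on \(\Phi^{-1}(U)\). Hence \(\Phi|_{\Phi^{-1}(U)}\) is an isomorphism, and \(\Phi\) is birational.
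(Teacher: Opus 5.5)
Your argument is conditional at exactly the step that carries all the content (``assuming the fiber is a single point''). That step cannot be supplied, because it is false in general.

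Carry your own fiber computation through over the generic locus of \(\sSplit_Y(\bar\chi)\): distinct nonzero roots, \(f\) surjective, \(g\) injective, and \(L=\ker fg\) of dimension \(\Delta_{fg}\).
\begin{enumerate}
\item The flag \(S_\bullet\) is forced, as you say.
\item For \(i<q_1\) one gets \(R_i=g(S_i)\oplus K\) with \(K=R_0\cap\ker f\in\Gr(\delta_1,\ker f)\).
\item For \(i\ge q_1\), \(f(R_i)\) contains \(i\) eigenlines while \(\dim R_i=\delta_1+i\), so \(R_i\cap\ker f=K\).
\item Since \(g(L)\subseteq \ker f\cap g(S_i)\), this forces \(g(L)\subseteq K\). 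So \(R_i=g(S_i)+K\) is not a direct sum, and conversely every such \(K\) gives a point of the fiber.
\end{enumerate}
Thus the generic fiber of \(\Phi\) is \(\{K\in\Gr(\delta_1,\ker f) : g(L)\subseteq K\}\cong\Gr(\delta_2,\delta-\delta_1+\delta_2)\). This is a point if and only if \(\delta_2=0\) or \(\delta_1=\delta\), not only when \(\delta_1=\delta\) as you guessed.

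Concretely, take \(X=\Flag(3|1,3|2;\,5|2)\), of type \([\ftyp{\ast 1 2 0}]\) with \(\delta=3\), \(\delta_1=2\), \(\delta_2=1\), \(\Delta_{fg}=1\). Then \(\sZ=\{(R,S,f,g) : R\in\Gr(3,V_0),\ S\in\mathbf{P}(V_1),\ g(V_1)\subseteq R,\ f(R)\subseteq S\}\), which has dimension \(6+1+13=20\). On the other hand \(\bar\chi=u-\operatorname{tr}(fg)\) is linear, so \(\sSplit_Y(\bar\chi)=Y\) has dimension \(2mn-\Delta_{fg}^2=19\). The fiber over a generic point is the pencil of \(3\)-planes \(R\) with \(g(V_1)\subseteq R\subseteq f^{-1}(\operatorname{im} fg)\).

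The paper's proof does not close this gap either. It asserts \(\rank gf=\rank fg\) and recovers \(R_\bullet\) from a summand \(K_1\) of a splitting \(\ker gf=K_0\oplus K_1\) taken relative to the top space \(R_{n-\Delta_{fg}}\). But generically \(\ker gf=\ker f\) and \(\rank gf=n\neq n-\Delta_{fg}\). Moreover \(K_1\) can only be \(\ker f\cap R_{n-\Delta_{fg}}\), which depends on the unknown top space, so nothing pins down \(K\).

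As stated, the proposition therefore needs an extra hypothesis: \(\delta_2=0\) or \(\delta_1=\delta\). In those cases \(K=g(\ker fg)\), respectively \(K=\ker f\), and your explicit inverse works verbatim.

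Otherwise your fallback is the right repair, but it proves a different statement. \(\Phi\) is proper, and over the open set it is the relative Grassmannian \(\Gr(\delta_2,\ker f/g(\ker fg))\), so its generic fibers are connected. Since \(\sSplit_Y(\bar\chi)\) is normal and integral, Stein factorization and Zariski's main theorem give \(\Phi_\ast\sO_{\sZ}=\sO_{\sSplit_Y(\bar\chi)}\), which is what the later steps actually need. The smoothness of \(U_5(\sSplit_Y(\bar\chi))\) used in Proposition~\ref{simp-2:split:ratsing} must then come from \(\Phi\) being smooth there, rather than from \(\Phi\) being an isomorphism.
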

\begin{proof}
  We will show that \(\Phi \colon U_5(\sZ) \to U_5(\sSplit_Y(\bar\chi))\) is an isomorphism.
  Pick \((f, g, p) \in U_5(\sSplit_Y(\bar\chi))\) where \(p = \prod_{i = 1}^{n - \Delta_{fg}} (u - \lambda_i)\) and \((R_\bullet, S_\bullet, f, g)\) is in the preimage of \(\Phi\). 
  We have \((f, g) \in U_5\), so \(\lambda_i \neq 0\) for all \(i\) and \(L = \ker{fg}\) is \(\Delta_{fg}\)-dimensional.
  We may reindex the product by \(p = \prod_{i = 0, i \neq q_1}^{n - \Delta_{fg}} (u - \lambda_i)\) and then we see that \(S_i\) must be the sum of the eigenspace \(\lambda_1, \dots, \lambda_i\) when \(1 \leq i < q_1\).
  First suppose all \(\lambda_i\) are unique.
  Since \(g(S_{q_1}) \subseteq R_{q_1} = R_{q_1 - 1}\), and \(f(R_{q_1 - 1}) \subseteq S_{q_1 - 1}\), the induced map \(fg \colon (S_{q_1} / S_{q_1 - 1}) \to (S_{q_1} / S_{q_1 - 1})\) has a \(\Delta_{fg}\) dimensional kernel so if \(i \geq q_1\), \(S_i\) is the span of \(L\) and the eigenspaces \(\lambda_1, \dots, \lambda_i\).
  We have \(\rank{fg} = n - (\delta_1 - \delta_2) = \rank{gf}\).
  If \(K = \ker{gf}\), then  \(\dim{K} = m - n + \delta_1 - \delta_2 = \delta + \delta_1 - \delta_2\).
  As \(\dim{V_1} = \dim{S_{n - (\delta_1 - \delta_2)}} = n\), \(gf\) induces the zero map on \(V_0 / R_{n - (\delta_1 - \delta_2)}\).
  Since \(\dim{R_{n - (\delta_1 - \delta_2)}} = n + \delta_2\), \(V_0 = K_0 \oplus R_{n - (\delta_1 - \delta_2)}\) where \(\dim{K_0} = m - n - \delta_2 =  \delta - \delta_2\), so \(K = K_0 \oplus K_1\) where \(\dim{K_1} = \delta_1\).
  Similar to the \(S_i\), we see that the \(R_i\) are defined by \(K_1\) and the eigenvectors of the \(\lambda_j\).
  So in this case \((R_\bullet, S_\bullet, f, g)\) is unique.

  Now suppose there is only one repeated eigenvalue.
  Since the Jordan block is not a scalar \(R_i\) and \(S_i\) can again be uniquely defined; for the repeated eigenvalue, we simply pick the eigenvector first followed by the generalized eigenvector.

  We have established an isomorphism of open sets and conclude that \(\Phi\) is birational.
\end{proof}

\begin{prop}
  \label{simp-2:split:ratsing}
  The variety \(\sSplit_Y(\bar\chi)\) has rational singularities.
\end{prop}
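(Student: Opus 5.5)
The plan is to follow the strategy of \cite[\S5.6]{ss2024:cohomology} and \cite[\S5.3]{sam2026:borel}: exhibit $\Phi\colon \sZ \to \sSplit_Y(\bar\chi)$ as a resolution whose higher direct images vanish, and then use normality. The variety $\sZ$ is the total space of a vector bundle over a product of partial flag varieties, so it is smooth. The morphism $\Phi$ is proper, since $\sZ$ is a closed subbundle of a trivial bundle over a projective base. It is surjective by Proposition~\ref{simp-2:split:maps-well-def} and birational by Proposition~\ref{simp-2:split:birational}. The target is normal by Proposition~\ref{simp-2:split:normal}, so $\Phi_\ast\sO_{\sZ} = \sO_{\sSplit_Y(\bar\chi)}$ by Zariski's main theorem. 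Rational singularities will then follow once we show $\rR^i\Phi_\ast\sO_{\sZ} = 0$ for $i>0$.

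To get the vanishing, first compare with the composite $\widetilde\Phi\circ\Phi\colon \sZ \to Y$. We want $\rR^i(\widetilde\Phi\circ\Phi)_\ast\sO_{\sZ} = 0$ for $i>0$; since $\widetilde\Phi$ is finite, hence affine, this is equivalent to $\rR^i\Phi_\ast\sO_{\sZ}=0$. Because $Y$ is affine, it suffices to prove $\rH^i(\sZ,\sO_{\sZ}) = 0$ for $i>0$. Let $\varpi\colon\sZ\to \sF$ be the bundle projection onto the flag variety base. Then $\rH^i(\sZ,\sO_\sZ) = \rH^i(\sF, \Sym(\eta_{\sF}))$, where $\eta_\sF$ is the dual of the subbundle
\begin{equation*}
  \{(f,g) \mid f(R_\bullet)\subseteq S_\bullet,\ g(S_\bullet)\subseteq R_\bullet\}.
\end{equation*}
This reduces the problem to a Bott vanishing computation on a product of partial flag varieties.

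For that computation, I would mimic \cite[\S5.6]{ss2024:cohomology}. The flags are nearly complete, and the base is a tower of projective-space and Grassmannian bundles. Filter $\eta_\sF$ by the graded pieces $\Hom(R_i/R_{i-1}, S_j/S_{j-1})^\ast$ and $\Hom(S_i/S_{i-1},R_j/R_{j-1})^\ast$ with the appropriate index constraints. Then $\Sym(\eta_\sF)$ acquires a filtration whose associated graded is a sum of tensor products of Schur functors of tautological subquotients. One then pushes forward step by step along the tower and checks that higher direct images vanish at each step using Borel--Weil--Bott. A cleaner alternative is to reduce to Kempf's vanishing, $\rH^{>0}(G/P, \Sym(\text{dual of a }P\text{-stable subspace})) = 0$ when the subspace is a $P$-submodule of a representation with the ``Frobenius splitting/Kempf collapsing'' property. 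Here $\sZ$ is $\GL(V_0)\times\GL(V_1)\times_{P}\mathfrak{u}'$ for a $P$-stable linear subspace $\mathfrak{u}'\subseteq \gone$ which is a sum of weight spaces of the odd part of a parabolic-type subalgebra. Kempf's theorem then applies directly if $\mathfrak u'$ is stable under a Borel $B\subseteq P$ and we pass to $G\times_B\mathfrak u'$. Pulling back to the full flag variety does not change cohomology, because the fibers of the projection from $G\times_B\mathfrak u'$ are flag varieties with $\rH^{>0}(\sO)=0$.

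I expect the main obstacle to be this vanishing step: verifying that $\mathfrak u'$ is $B$-stable and that Kempf's criterion, or the weight bookkeeping in the Borel--Weil--Bott approach, actually applies. The difficulty is the non-complete step at $S_{q_1}$ of dimension $\Delta_{fg}$ and the jump in $R_\bullet$. If Kempf's criterion fails, the fallback is a birational variant in which the codimension-$\geq2$ loci $D_3,D_4$ from Proposition~\ref{simp-2:aux:D34-codim} are removed. One would then argue as in \cite[\S5.6]{ss2024:cohomology}: over $U_5$, $\Phi$ is an isomorphism, and over $U_3$ and $U_4$ the splitting scheme is locally a smooth covering of a normal variety with rational singularities. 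That variety is $Y$, by Theorem~\ref{simp-2:norm+rat-sing}, with the cover étale away from $D_2$. The local criterion of rational singularities then extends over the codimension-$\geq 2$ complement using normality and Cohen--Macaulayness, the latter coming from Proposition~\ref{split:facts}\ref{split:serre-cm}.
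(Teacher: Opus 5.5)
Your primary route has a genuine gap at exactly the step you flagged: nothing you cite yields $\rH^i(\sZ,\sO_{\sZ})=0$ for $i>0$.

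\textbf{The Kempf route.} Kempf's collapsing theorem needs the fiber $\mathfrak u'$ to be a completely reducible $P$-module, meaning the unipotent radical acts trivially, as for $\Hom(E,R)$ in the resolution of a determinantal variety. $B$-stability is automatic here and is not the relevant hypothesis. Your fiber, the space of pairs $(f,g)$ with $f(R_\bullet)\subseteq S_\bullet$ and $g(S_\bullet)\subseteq R_\bullet$, is of Springer type: the unipotent radical mixes the pieces $\Hom(R_i/R_{i-1},S_j/S_{j-1})$. So Kempf's theorem says nothing here, just as it says nothing about $G\times_B\mathfrak n$.

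\textbf{The filtration route.} The filtration-plus-Borel--Weil--Bott bookkeeping cannot work termwise either. Associated graded pieces typically carry higher cohomology, and the vanishing, when true, comes from cancellation. Already for $G=\GL_3$, the piece $\sO(2\alpha_1)$ of $\Sym^2 T_{G/B}$ has nonzero $\rH^1$, although $\rH^{>0}(T^\ast(G/B),\sO)=0$.

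In the paper the implication runs the other way. Vanishing of higher direct images for such resolutions is deduced from rational singularities plus birationality; this is how Theorem~\ref{t:gse-fact-rings}\ref{gse-fact:higher-direct} is obtained in case~$\sC$. It is not used as an input.

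\textbf{The fallback.} Your fallback, by contrast, is precisely the paper's proof, and it is what you should present, with one correction to the open cover. The good locus is $U_2\cup U_5=Y\setminus(D_3\cup D_4)$:
\begin{itemize}
\item over $U_5$, $\Phi$ is an isomorphism (Proposition~\ref{simp-2:split:birational}), so $\sSplit_Y(\bar\chi)$ is smooth there;
\item over $U_2$, $\sSplit_Y(\bar\chi)\to Y$ is \'etale (Proposition~\ref{split:facts}\ref{split:etale}), so it inherits rational singularities from $Y$ (Theorem~\ref{simp-2:norm+rat-sing}).
\end{itemize}
The cover is not \'etale over all of $U_3\cap U_4$. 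Points of $D_2\setminus D_1$ (a repeated nonzero root) are handled by $U_5$, not by a covering argument.

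The rest goes as you outlined:
\begin{itemize}
\item $\sSplit_Y(\bar\chi)$ is Cohen--Macaulay by Proposition~\ref{split:facts}\ref{split:serre-cm}.
\item The preimage of $D_3\cup D_4$ in the smooth variety $\sZ$ has codimension at least $2$ (Proposition~\ref{simp-2:aux:D34-codim}).
\item Hence sections of the invertible sheaf $\omega_{\sZ}$ extend across this preimage, and $\Phi_\ast\omega_{\sZ}=\omega_{\sSplit_Y(\bar\chi)}$.
\item Kempf's criterion, as packaged in \cite[Proposition~4.1]{ss2024:cohomology}, then gives rational singularities.
\end{itemize}
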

\begin{proof}
  We showed in Proposition~\ref{simp-2:split:birational} that \(U_5(\sSplit_Y(\bar\chi))\) is smooth and has rational singularities.
  Recall that Theorem~\ref{simp-2:norm+rat-sing} implies \(Y\) has rational singularities.
  Over \(U_2\), \(\bar\chi\) has no repeated roots and \(\Delta \neq 0\).
  Over this open set \(\sSplit_Y(\bar\chi) \to Y\) is étale by Proposition~\ref{split:facts}\ref{split:etale}.
  Since \(Y\) has rational singularities, so does \(U_2(\sSplit_Y(\bar\chi))\).
  The complement of these two sets is the intersection of  \(D_3(\sSplit_Y(\bar\chi))\) and  \(D_4(\sSplit_Y(\bar\chi))\), the inverse image of this set in \(\sZ\) has codimension at least \(2\)  by Proposition~\ref{simp-2:aux:D34-codim}.

  The variety \(Y\) is Cohen--Macaulay by Theorem~\ref{simp-2:norm+rat-sing}.
  Since \(U_2 \cup U_5\)  has rational singularities and the codimension of the preimage \(\widetilde\Phi ^{-1}(D_3 \cap D_4)\) in \(\sSplit_Y(\bar\chi)\) is bounded below by \(2\), we conclude that \(\sSplit_Y(\bar\chi)\) has rational singularities by \cite[Theorem~5.10]{KM:BirationalGeometry} and \cite[Proposition~1.11]{har1994:generalized} (also see \cite[Proposition~4.1]{ss2024:cohomology}).
\end{proof}

\begin{thm}
  \label{simp-2:split-integ-ratsing}
  Let \(X \colon [\ftyp{\ast t \dots 3 1 2 0}]\) be a flag supervariety and \(Y\) the simple \(2\)-fold compositional variety defined by \(\Delta_{fg} = \delta_1 - \delta_2\).
  Let \(f \colon V_0 \otimes  \sO_Y \to V_1 \otimes \sO_Y\) and \(g \colon V_0 \otimes \sO_Y \to V_1 \otimes \sO_Y\) be the universal linear maps, \(\chi(u) \in \sO_Y[u]\) the characteristic polynomial of \(fg\), and \(\bar\chi(u) = u^{- \Delta_{fg}} \chi(u)\)
  The splitting scheme \(\sSplit_{Y}(\bar\chi)\) is integral and has rational singularities.
\end{thm}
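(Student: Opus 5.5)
The plan is to assemble the results of this subsection, which were proved for the \(2\)-step type \([\ftyp{\ast 1 2 0}]\), and then observe that nothing in the statement depends on the length \(t\) of the flag.

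First, by Corollary~\ref{five-families} (via Proposition~\ref{p:type-extension}), for any \(X \colon [\ftyp{\ast t \dots 3 1 2 0}]\) the image \(Y\) is the simple \(2\)-fold compositional variety defined by \(\Delta_{fg} = \delta_1 - \delta_2\). This \(Y\) depends only on \(V_0, V_1\) and \(\Delta_{fg}\), not on \(t\) or on the later \(p_i, q_i\). The same holds for the universal maps \(f, g\), the polynomials \(\chi\) and \(\bar\chi\), and hence for \(\sSplit_Y(\bar\chi)\). So the statement reduces to the case \(t = 2\).

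For the \(2\)-step case there are two properties to establish.

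\emph{Integrality.} Reducedness is Proposition~\ref{simp-2:split:reduced}. For irreducibility I would use that \(\Phi \colon \sZ \to \sSplit_Y(\bar\chi)\) is surjective (Proposition~\ref{simp-2:split:maps-well-def}). The source \(\sZ\) is the total space of a vector bundle over a product of partial flag varieties, so it is irreducible, and therefore its image is irreducible. Alternatively, one can combine normality (Proposition~\ref{simp-2:split:normal}) with connectedness: \(\sSplit_Y(\bar\chi)\) is graded with degree-zero part \(\CC\), so it is connected. A connected normal Noetherian scheme is integral. Either route suffices.

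\emph{Rational singularities.} This is Proposition~\ref{simp-2:split:ratsing}.

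The main obstacle is confirming that the reduction to \(t=2\) is legitimate. The auxiliary space \(\sZ\) and the maps \(\Phi, \widetilde\Phi\) were built for \(t=2\), but they are used only as tools for studying \(\sSplit_Y(\bar\chi)\), which is intrinsic to \(Y\). Since Proposition~\ref{p:type-extension} yields the same \(Y\), with the same ambient \(\gone = \Hom(V_0,V_1)\times\Hom(V_1,V_0)\) after relabelling \(V\), all of the \(2\)-step arguments apply verbatim. I would spell this out by noting that the hypotheses \(\delta \ge \delta_1 \ge \delta_2 \ge 0\) used in Propositions~\ref{simp-2:normal-form} through \ref{simp-2:split:ratsing} enter only through \(\Delta_{fg} \le n\) and \(\delta \ge 0\). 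The inequality \(\Delta_{fg} \le n\) follows from the definition of the type, and \(\delta \ge 0\) holds because \(\delta = \delta_{t+1} \ge \delta_3 \ge \delta_1 \ge 0\).
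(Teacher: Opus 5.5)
Your proposal is correct and is essentially the paper's proof. You reduce to the \(2\)-step case because \(\sSplit_Y(\bar\chi)\) depends only on \(V_0, V_1\) and \(\Delta_{fg}\). You then cite Propositions~\ref{simp-2:split:reduced} and~\ref{simp-2:split:normal} for integrality, and your irreducibility-via-\(\Phi\) or normal-plus-connected arguments make explicit the connectedness the paper leaves implicit. Proposition~\ref{simp-2:split:ratsing} gives rational singularities.

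The one step to tidy is the reduction itself. It should go through the truncated flag \(\Flag(p_1|q_1, p_2|q_2;\, V)\) on the same \(V\). This flag has type \([\ftyp{\ast 1 2 0}]\) because the type forces \(\delta \geq \delta_1 \geq \delta_2 \geq 0\), and Proposition~\ref{type:d120} shows it yields the same \(Y \subseteq \gone\). The route through Proposition~\ref{p:type-extension} does not work as stated: its \(2\)-step flag lives on a smaller \(V'\), so its splitting scheme sits over a different ambient space \(\gone'\) that cannot be ``relabelled'' as \(\gone\). Your final inequality check is exactly what makes the correct version go through.
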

\begin{proof}
  The flag \(X' = \Flag(p_1|q_1, p_2|q_2;\;V)\) has type \([\ast 1 2 0]\) and defines the same compositional variety \(Y \subseteq \gone\) as \(X\) (cf. Proposition~\ref{p:type-extension}).
  Since the construction of the splitting ring only depends on \(V_0\), \(V_1\), \(Y\), and \(\bar\chi\), all the properties proven in the \(t = 2\) case apply in the case of arbitrary \(t\).
  Proposition~\ref{simp-2:split:reduced} and Proposition~\ref{simp-2:split:normal} imply that \(\sSplit_Y(\bar\chi)\) is integral and Proposition~\ref{simp-2:split:ratsing} implies it has rational singularities.
\end{proof}

\begin{rem}
  If \(Y\) is an arbitrary simple \(2\)-fold compositional variety defined by \(\Delta_{fg}\), then, after possibly imposing mild restrictions on \(\Delta_{fg}\), the same result holds for \(\sSplit_Y(\bar\chi)\).
  Given \(\Delta_{fg}\) one simply needs to construct a two-step flag supervariety of type \([\ftyp{\ast 1 2 0}]\)  such that \(\Delta_{fg} = \delta_1 - \delta_2 = p_1 - p_2 + q_2 - q_1\).
\end{rem}

\subsection{Factorization rings and their spectra}
\label{ss:gse-fact}
Recall that in the basic diagram \eqref{eq:basic-diag} of the geometric method, the projection \(\pi' \colon Z \to Y\) admits a Stein factorization through the affinization \(\widetilde Y\) of \(Z\).
In this subsection, we construct this space as a ring of invariants of the splitting rings from \S\ref{ss:gse-split}.
As before, \(f\) and \(g\) will denote the usual universal linear maps, \(\chi\) the characteristic polynomial of \(fg\), and \(\bar\chi\) the reduced characteristic polynomial.
We will identify parameters \(\bd = (d_1, \dots, d_t)\)  satisfying \(\sum d_i = \deg{\bar\chi}\) and work with the extended basic diagram 
\begin{equation}
  \label{eq:gse}
  \begin{tikzcd}
    &
    &
    Z
    \ar["\subseteq"]{rr}
    \ar["\pi' = \pi\big\vert_Z"]{dd}
    \ar["\varphi"]{dl}
    &
    &
    \ord{X} \times \gone
    \ar["\pi = \pi_{\gone}"]{dd}
    \\
    \sSplit_{Y}(\bar\chi)
    \ar["\widetilde \Phi", end anchor=west, start anchor=south east, bend right]{drr}
    \ar{r}
    &
    \sFact^{\bd}_{Y}(\bar\chi) \ar["\widetilde\varphi"]{dr}
    &
    \\
    &
    &
    Y
    \ar["\subseteq"]{rr}
    &
    & \gone
  \end{tikzcd}
\end{equation}
where the maps \(\widetilde \varphi\) and \(\widetilde\Phi\) are induced by the relevant ring maps and \(\varphi \colon Z \to \sFact_Y^\bd(\bar\chi)\) sends \((R_\bullet, S_\bullet, f, g)\) to \((f, g, p)\) where \(p = \prod_{i = 1}^t \chi'_i(u)\) is a factor of \(\chi\) and \(\chi'_i = u^{-k_i}(u - \lambda_{i, 1}) \dots (u - \lambda_{i, (q_i - q_{i- 1})})\) is the reduced characteristic polynomial of \(fg\) on \(S_i / S_{i - 1}\) and the \(k_i \geq 0\)  depend on \(Y\).
We will first determine the exact parameters \(\bd\) and \(\bar\chi\), prove that \(\varphi\) exists, and show that all objects appearing in \eqref{eq:gse} are well defined.

\begin{prop}
  \label{gse:def-fact}
  Let \(X\) be a partial flag supervariety belonging to one of the families \eqref{fam:T}, \eqref{fam:D1}, \eqref{fam:D2}, \eqref{fam:D3}, or \eqref{fam:C}.
  Let \(\bd = (d_1, \dots, d_{t + 1})\)  where \(d_i = \min(q_i - q_{i - 1}, p_i - p_{i - 1})\), and \(\bar\chi(u) = u^{\sum d_i - n} \chi(u)\).
  The projection \(\pi' \colon Z \to Y\) in \eqref{eq:gse} factors through \(\sFact^\bd_Y(\bar\chi)\) and there exists a surjective map \(\varphi \colon Z \to \sFact^\bd_Y(\bar\chi)\) satisfying \(\pi' = \widetilde \varphi \circ \varphi\).
\end{prop}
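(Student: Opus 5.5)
The plan is to define \(\varphi\) pointwise, and functorially on \(T\)-points, by taking block characteristic polynomials along the flag. Then we check that these blocks carry enough powers of \(u\) that, after dividing, they multiply to \(\bar\chi\). For \((R_\bullet, S_\bullet, f, g) \in Z\) we have \(f(R_i) \subseteq S_i\) and \(g(S_i) \subseteq R_i\). Hence \(fg\) preserves \(S_\bullet\), and on each graded piece \(S_i/S_{i-1}\) it factors as
\begin{equation*}
  S_i/S_{i-1} \xrightarrow{\;g_i\;} R_i/R_{i-1} \xrightarrow{\;f_i\;} S_i/S_{i-1}.
\end{equation*}
So \(\chi = \prod_{i=1}^{t+1}\chi_i\), where \(\chi_i\) is the characteristic polynomial of \(f_ig_i\) on \(S_i/S_{i-1}\), of degree \(q_i - q_{i-1}\). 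Since \(\rank(f_ig_i) \le d_i = \min(q_i-q_{i-1}, p_i-p_{i-1})\), the standard identity \(\chi_{f_ig_i}(u) = u^{(q_i-q_{i-1})-(p_i-p_{i-1})}\chi_{g_if_i}(u)\) shows that \(u^{q_i - q_{i-1} - d_i}\) divides \(\chi_i\). Thus \(\chi'_i := u^{-(q_i-q_{i-1}-d_i)}\chi_i\) is monic of degree \(d_i\). Its coefficients are universal polynomials (signed sums of principal minors) in \(f_i,g_i\), so \(\chi'_i\) makes sense over any base \(T\) and is compatible with base change. This gives \(k_i = q_i - q_{i-1} - d_i\) and \(\sum k_i = n - \sum d_i\), so \(\prod_i \chi'_i = u^{\sum d_i - n}\chi = \bar\chi\).

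The next step is to check that \(\bar\chi\) really lies in \(\sO_Y[u]\), i.e.\ that \(u^{n - \sum d_i}\) divides \(\chi\) on \(Y\). This goes family by family using Corollary~\ref{five-families}. For \eqref{fam:T}, \(d_i = q_i - q_{i-1}\), so \(\sum d_i = n\) and \(\bar\chi = \chi\). For \eqref{fam:D1}--\eqref{fam:D3}, \(n - \sum d_i\) equals \(-\delta_1\) or \(-\delta_2\), matching \(\Delta_g\); since \(\ker g \subseteq \ker fg\), we get \(u^{\Delta_g} \mid \chi\) on \(Y\). For \eqref{fam:C}, \(n - \sum d_i = \delta_1 - \delta_2 = \Delta_{fg}\) directly. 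In each case I will verify \(n - \sum d_i = \Delta\) by the arithmetic \(r = \sum d_i\) recorded in the Main Theorem table. Scheme-theoretically, the divisibility comes from the vanishing of the low coefficients of \(\chi\), which are sums of minors of \(fg\) (or of \(g\)) lying in the defining ideal, using reducedness from Proposition~\ref{simp-2:reduced} or the determinantal case. The tuple \((f,g,\chi'_1,\dots,\chi'_{t+1})\) then defines a \(T\)-point of \(\sFact^{\bd}_Y(\bar\chi)\) by the universal property of the factorization ring. Hence \(\varphi\) is a morphism with \(\widetilde\varphi\circ\varphi = \pi'\). Alternatively, since \(Z\) is reduced (a vector bundle over \(\ord X\)), it suffices to produce the ring map on global functions, and \(\varphi\) factors through the affinization.

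Surjectivity is the main obstacle. Since \(\varphi\) is proper (\(Z \to Y\) is proper and \(\widetilde\varphi\) is finite, hence separated), its image is closed. The fiber of \(\widetilde\varphi\) over a general point of \(Y\) is a finite set of orderings of the roots into blocks. So it suffices to show that the image contains a dense open subset of \(\sFact^\bd_Y(\bar\chi)\), which is irreducible because it is a quotient of the integral \(\sSplit_Y(\bar\chi)\) (Theorem~\ref{simp-1:split-integ-ratsing}, Theorem~\ref{simp-2:split-integ-ratsing}, and Proposition~\ref{fact:facts}). Over the open set \(U\) used in Propositions~\ref{type:trivial}--\ref{type:d120}, the nonzero eigenvalues \(e_j\) of \(fg\) are distinct. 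A point of the factorization scheme there is an assignment of these eigenvalues into blocks of sizes \(d_i\). I will redo the explicit flag constructions from those proofs, but choose the eigenspaces \(E_j\) entering \(S_i/S_{i-1}\) according to the prescribed partition rather than in the order \(E_1, E_2, \dots\). The kernel pieces (\(L\), \(K\), \(K_0\), etc.) are inserted exactly as before. This realizes every factorization, so \(\varphi\) hits a dense open set and is surjective. The delicate bookkeeping is in the \eqref{fam:C} case, where the kernel \(L\) must sit in the block with \(k_i > 0\), and I will follow the indexing in the proof of Proposition~\ref{simp-2:split:maps-well-def}. For \(t > 2\) the extra blocks are of trivial type, and I will use the construction in Proposition~\ref{p:type-extension} with arbitrary eigenvalue orderings.
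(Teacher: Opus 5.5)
Your proposal is correct and follows essentially the paper's argument: \(\varphi\) records the characteristic polynomials of \(fg\) on the graded pieces of \(S_\bullet\) with the forced power of \(u\) removed, and surjectivity comes from rerunning the constructions of Propositions~\ref{type:trivial}--\ref{type:d120} and~\ref{p:type-extension} with the eigenspaces distributed according to the prescribed factorization. Your execution is tighter than the paper's in two places. First, the identity \(u^{p_i-p_{i-1}}\chi_{f_ig_i}=u^{q_i-q_{i-1}}\chi_{g_if_i}\) on each graded piece replaces the paper's auxiliary choice of a kernel \(L\), which in case \(\sC\) is not literally well defined, since \(L\not\subseteq S_1\) and \(fg(L)\subseteq S_1\) need not lie in \(L\). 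Second, your properness-plus-density step supplies the passage from the locus of distinct nonzero eigenvalues to all of \(\sFact^{\bd}_Y(\bar\chi)\), which the paper leaves implicit; for \(\sT\), where Theorems~\ref{simp-1:split-integ-ratsing} and~\ref{simp-2:split-integ-ratsing} do not apply, get density of \(\widetilde\varphi^{-1}(U)\) from flatness of \(\widetilde\varphi\) instead.
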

\begin{proof}
  We see that only \(d_1\) and \(d_2\) differ for the given families and the parameters can be computed as follows:
  \begin{equation}
    \label{table:fact-ring-params}
    \begin{array}{lcccc}
      & d_1 & d_2 & d_{i \geq 3} & \sum_i d_i = \deg{\bar\chi} \\
      \midrule
      \eqref{fam:T}  & q_1 & q_2-q_1 & q_i-q_{i-1} & n \\
      \eqref{fam:D1} & p_1 & q_2-q_1 & q_i-q_{i-1} & n + \delta_1 \\
      \eqref{fam:D2} & p_1 & q_2-q_1 & q_i-q_{i-1} & n + \delta_1 \\
      \eqref{fam:D3} & p_1 & p_2-p_1 & q_i-q_{i-1} & n + \delta_2 \\
      \eqref{fam:C} & q_1 & p_2-p_1 & q_i-q_{i-1} & n - \delta_1 + \delta_2 \\
      \bottomrule
    \end{array}
  \end{equation}
  Most of this proof essentially boils down to constructing the required preimages using the strategy showing in Corollary~\ref{five-families} and the constructions for the specific families mentioned therein.

  A point of \(\sFact_Y^\bd(\bar\chi)\) is a triple \((f, g, p)\)  where \((f, g) \in Y\) and \(p = \prod_{i = 1}^t\chi_i\) is an ordered factorization of \(\bar\chi\) and \(d_i = \deg{\chi_i}\).
  Since \(\widetilde \varphi\) is the morphism that sends such a triple to the point \((f, g)\), it is automatically surjective.
  So it is enough to describe the construction of \(\varphi\) in detail and the use surjectivity of \(\pi'\) to construct preimages of \(\varphi\).

  \textbf{Case \(\sT\):}
  This is a direct consequence of \cite[Theorem~6.3]{sam2026:borel}.

  \textbf{Case \(\sD_1\):}
  From Theorem~\ref{five-families}, \(Y\) is a determinantal variety determined by \(\Delta_{g} = -\delta_1\).
  Since \(\dim{\ker{g}} \geq (-\delta_1)\), \(u^{-\delta_1}\) divides \(\chi(u)\) and \(\bar\chi\) is well defined. 
  Now if \((R_\bullet, S_\bullet, f, g) \in Z\) then \(\dim{R_1} = p_1 \leq q_1 = \dim{S_1}\) and \(\dim{S_1} = q_1\) so the restriction of \(g\) to \(S_1\) satisfies \(\dim{\ker g\vert_{S_1}} \geq q_1 - p_1 = -\delta_1\).
  If \(L \subseteq \ker{g\vert_{S_1}}\) is \((-\delta_1)\)-dimensional then the characteristic polynomial \(\bar\chi\) of \(h' \colon V_1 / L \to V_1 / L\) induced by \(fg\) admits an ordered factorization \(\bar\chi = \chi_1 \cdots \chi_t\) where \(\chi_i\) is the characteristic polynomial of \(h'\) on \((S_i / L) \big/ (S_{i - 1} / L)\).
  Since
  \begin{align*}
    \deg{\chi_1} &= \dim{S_1 / L} = p_1, \\
    \deg{\chi_i} &= \dim{(S_{i} / L) \big/ (S_{i - 1} / L)} = \dim{S_i} - \dim{S_{i - 1}} = q_i - q_{i - 1} \quad\text{ for } i > 1,
  \end{align*}
  we see that the factorization is independent of \(L\) and \(\varphi\) is the morphism taking \((R_\bullet, S_\bullet, f, g)\) to the ordered factorization of \(\bar\chi\).
  If \((f, g, p) \in \sFact_Y^\bd(\bar \chi)\)  then write \(p = \prod_{i = 1}^t (u - \lambda_{i, 1}) \dots (u - \lambda_{i, \ell_i})\).
  Using the \(\lambda_{i, j_i}\) as the eigenvalues of \(fg\) in the construction of  \((R_\bullet, S_\bullet, f, g) \in \pi'^{-1}(f, g)\) given in  Proposition~\ref{p:type-extension}, Proposition~\ref{type:d201} shows that \((R_\bullet, S_\bullet, f, g) \in \varphi^{-1}(f, g, p)\).
  Thus \(\varphi\) is surjective.
  
  \textbf{Case \(\sD_2\) and \(\sD_3\):}
  In this case the resulting variety \(Y\) is again determinantal and the proofs are obtained, \emph{mutatis mutandis}, from the proof for in the case  \(\sD_1\).
  The map \(\varphi\) is obtained by quotient by an appropriate kernel and taking eigenvalues of \(fg\) and the construction of the preimages using the strategy given in Proposition~\ref{type:d021}, Proposition~\ref{type:d012}, and Proposition~\ref{p:type-extension} shows that \(\varphi\) is surjective.

  \textbf{Case \(\sC\):}
  Let \((R_\bullet, S_\bullet, f, g) \in Z\) and consider the induced map \(g' \colon S_2 / S_1 \to R_2 / R_1\).
  Since \(\dim{S_2 / S_1} = q_2 - q_1 \geq p_2 - p_1 = \dim{R_2 / R_1}\), \(\dim{\ker{g'}} \geq \delta_1 - \delta_2\).
  Picking \(L \oplus S_1 \subseteq S_2\) with \(\dim{L} = \delta_1 - \delta_2\), we consider the map induced by \(fg\) on \(h' \colon V_1 / L \to V_1 / L\).
  The flag \(S_1 / L \subseteq S_2 / L  \subseteq \dots \subseteq V_1 / L\) determines an ordered factorization of the characteristic polynomial \(\bar\chi\) of \(h'\).
  As in the case \(\sD_1\), calculating the degrees of the \(\chi_i\) shows that this construction is independent of \(L\) and \(\varphi\) is well defined.
  An element in the preimage of \((f, g, p) \in \sFact_Y^\bd(\bar\chi)\) can now be constructed using the roots of the \(\chi_i\) along with the procedure showed in Proposition~\ref{type:d120} and Proposition~\ref{p:type-extension}.
\end{proof}

We now state our main technical result on the spectra of factorization rings.
As the factorization rings are quotients of the splitting ring by a finite group, many of the results follow directly from analogous statements in \S\ref{ss:gse-split}.

\begin{thm}
  \label{t:gse-fact-rings}
  Let \(X\) be a flag supervariety belonging to one of the families \(\sT\), \(\sD_1\), \(\sD_2\), \(\sD_3\), or \(\sC\).
  Let \(\bd\) and \(\bar\chi\) be defined as in Proposition~\ref{gse:def-fact}.
  Let \(r_i = d_1 + \dots + d_i\).
  Then we have the following
  \begin{enumerate}
  \item \label{gse-fact:rank}
    The map \(\widetilde\varphi \colon \sFact^\bd_Y(\bar \chi) \to Y\) is finite and flat.
    The structure sheaf \(\sO_{\sFact^\bd_Y(\bar\chi)}\) is a free \(\sO_Y\)-module of rank \({\deg{\bar\chi} \choose d_1, \dots, d_t}\).

  \item \label{gse-fact:sing-iso}
    There is an isomorphism
    \begin{equation*}
      \sO_{\sFact^\bd_Y(\bar \chi)} \otimes_{\sO_Y} \bC
      \cong \rH_{\mathrm{sing}}^\bullet(\Flag(r_1, \dots, r_t;\; r_{t + 1}), \bC)
    \end{equation*}
    of graded rings.

  \item \label{gse-fact:rat-norm}
    The scheme \(\sFact^{\bd}_Y(\bar \chi)\) is integral and has rational singularities.
    In particular it is normal and Cohen--Macaulay.

  \item \label{gse-fact:higher-direct}
    The map \(\varphi\) satisfies \(\varphi_\ast \sO_Z = \sO_{\sFact^\bd_Y (\bar\chi)}\) and \(\rR^i\pi_\ast \sO_Z = 0\) for all \(i > 0\).
  \end{enumerate}
\end{thm}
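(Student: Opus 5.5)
I treat the four parts in order, since each feeds the next.

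For \ref{gse-fact:rank}, Proposition~\ref{fact:facts}\ref{fact:quot-of-split} gives directly that \(\Fact^\bd_{\sO_Y}(\bar\chi)\) is a free \(\sO_Y\)-module of rank \(\binom{\deg\bar\chi}{d_1,\dots,d_t}\). A finite free module is finite and flat, so \(\widetilde\varphi\) is finite and flat.

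For \ref{gse-fact:sing-iso}, I would reduce modulo the irrelevant ideal of \(\sO_Y\). Every coefficient of \(\bar\chi\) other than the leading one has positive degree, so tensoring with \(\bC\) replaces \(\bar\chi\) by \(u^{\deg\bar\chi}\). Formation of factorization rings commutes with base change, since they are quotients of polynomial rings by coefficient identities. Hence \(\Fact^\bd_{\sO_Y}(\bar\chi)\otimes\bC=\Fact^\bd_{\bC}(u^{r_{t+1}})\), with \(r_{t+1}=\sum d_i=\deg\bar\chi\). Proposition~\ref{iso:fact=singular-hom} then identifies this ring with \(\rH^\bullet_{\mathrm{sing}}(\Flag(r_1,\dots,r_t;\,r_{t+1}),\bC)\), after doubling the grading.

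For \ref{gse-fact:rat-norm}, Proposition~\ref{fact:facts}\ref{fact:inv-ring} (characteristic zero) gives \(\Fact^\bd=\Split^G\) with \(G=\prod\fS_{d_i}\). The splitting scheme is integral with rational singularities in each family:
\begin{itemize}
\item in family \(\sT\), by \cite{sam2026:borel};
\item in families \(\sD_1,\sD_2,\sD_3\), by Theorem~\ref{simp-1:split-integ-ratsing};
\item in family \(\sC\), by Theorem~\ref{simp-2:split-integ-ratsing}.
\end{itemize}
The invariant ring of a domain is a domain. Rational singularities pass to quotients by finite groups in characteristic zero, by Boutot's theorem or directly because the Reynolds operator splits \(\Fact^\bd\to\Split\). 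Normality and the Cohen--Macaulay property follow from rational singularities.

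Part \ref{gse-fact:higher-direct} is the main obstacle. Proposition~\ref{gse:def-fact} already gives surjective maps \(\varphi\colon Z\to\sFact^\bd_Y(\bar\chi)\) with \(\pi'=\widetilde\varphi\circ\varphi\). I would aim to show that \(\varphi\) is proper and birational onto a normal target.

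\begin{itemize}
\item \textbf{Properness.} \(Z\to Y\) is proper because \(\ord X\) is projective, and \(\widetilde\varphi\) is separated, so \(\varphi\) is proper.
\item \textbf{Birationality.} I would adapt Proposition~\ref{simp-2:split:birational}, and its determinantal analogues in \cite{ss2024:cohomology}, to \(Z\) directly. Over the dense open set of \(Y\) where the kernel conditions are tight and \(\bar\chi\) has distinct nonzero roots, an ordered factorization determines the flags uniquely, as in the constructions of Propositions~\ref{type:d201}--\ref{type:d120}. So \(\varphi\) is an isomorphism there.
\item \textbf{Direct image.} Zariski's main theorem with normality from \ref{gse-fact:rat-norm} then gives \(\varphi_\ast\sO_Z=\sO_{\sFact}\).
\item \textbf{Vanishing.} \(Z\) is smooth, being a vector bundle over \(\ord X\). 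Since \(\varphi\) is a proper birational map onto a variety with rational singularities, \(Z\) is a resolution of it and \(\rR^i\varphi_\ast\sO_Z=0\) for \(i>0\). Because \(\widetilde\varphi\) is finite, hence affine, \(\rR^i\pi'_\ast\sO_Z=\widetilde\varphi_\ast\rR^i\varphi_\ast\sO_Z=0\).
\end{itemize}

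The delicate point is making the birationality argument uniform across the families. If the direct open-set argument proves awkward, I would instead pass through the full flag cover \(\sZ\to Z\) of \eqref{diag:split:simp-2}, which maps birationally to \(\sSplit\), and then descend by \(G\)-invariants.
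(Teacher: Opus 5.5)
Parts (a)--(c) are fine and essentially follow the paper. For integrality you use that \(\Fact^{\bd}_{\sO_Y}(\bar\chi)=\Split_{\sO_Y}(\bar\chi)^G\) is a subring of a domain, whereas the paper uses irreducibility of \(Z\) and surjectivity of \(\varphi\); both work.

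The gap is in (d): \(\varphi\) is \emph{not} birational in general. So neither the ``\(Z\) is a resolution'' step nor the appeal to Zariski's main theorem is available.

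The constructions of Propositions~\ref{type:trivial}--\ref{type:d120} that you propose to invert contain genuinely free choices, and these choices are exactly the general fibres of \(\varphi\).
\begin{itemize}
\item In family \(\sT\), once the ordered factorization fixes the \(S_i\), one still has \(R_i=g(S_i)\oplus K_i\) with \(K_\bullet\) an arbitrary point of \(\Flag(\delta_1,\dots,\delta_t;\ker f)\).
\item In \(\sD_1\) with \(t=2\), \(R_2/g(S_2)\) is an arbitrary point of \(\Gr(\delta_2-\delta_1, f^{-1}(S_2)/g(S_2))\), where the ambient space has dimension \(\delta-\delta_1\).
\item In \(\sC\) with \(t=2\), \(R_1\) must contain \(g(S_1)\oplus g(L)\), with \(L=\ker fg\). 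Otherwise it is an arbitrary \(\delta_2\)-dimensional subspace of the \((\delta-\delta_1+\delta_2)\)-dimensional space \(f^{-1}(S_1)/(g(S_1)\oplus g(L))\).
\end{itemize}
This agrees with the dimension count \(\dim Z=2mn+\sum_{i<j}(\delta_i-\delta_{i-1})(\delta_j-\delta_{j-1})\). Combined with \(\codim Y=(\delta_1-\delta_2)^2\), it gives \(\dim Z-\dim\widetilde Y=\delta_2(\delta-\delta_1)\) in family \(\sC\).

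A concrete instance is \(X=\Flag(3|1,3|2;\,\bC^{5|2})\), of type \([\ftyp{\ast 1 2 0}]\) with \((\delta,\delta_1,\delta_2)=(3,2,1)\).
\begin{itemize}
\item Here \(\bd=(1,0,0)\) and \(\bar\chi=u-\mathrm{tr}(fg)\), so \(\sFact^\bd_Y(\bar\chi)=\sSplit_Y(\bar\chi)=Y=\{\det(fg)=0\}\), and \(\sZ=Z\).
\item Over a general \((f,g)\in Y\) one is forced to take \(S_1=\mathrm{im}(fg)\).
\item However \(R=R_1=R_2\) only has to satisfy \(g(V_1)\subseteq R\subseteq f^{-1}(S_1)\), so the fibre is \(\bP(f^{-1}(S_1)/g(V_1))\cong\bP^1\).
\end{itemize}
The same example rules out your fallback through \(\sZ\to\sSplit_Y(\bar\chi)\).

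For \(\sT\) and \(\sD_i\) the paper does not argue via birationality; it cites \cite[Theorem~6.3]{sam2026:borel} and \cite[Proposition~5.17]{ss2024:cohomology}. For \(\sC\) it does appeal to the argument of Proposition~\ref{simp-2:split:birational}, and that argument breaks down in this same example.

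What (d) needs is a vanishing argument that tolerates connected, positive-dimensional general fibres. One route:
\begin{enumerate}
\item Verify family by family that the general fibre \(F\) of \(\varphi\) is a partial flag variety, as above. Then \(F\) is connected and \(\rH^i(F,\sO_F)=0\) for \(i>0\).
\item Note that \(\varphi\) is projective, since it factors through the closed immersion \(Z\hookrightarrow\ord{X}\times\widetilde Y\). Also \(\widetilde Y\) has rational singularities by (c).
\item Take a resolution \(h\colon W\to\widetilde Y\) and a resolution \(Z'\) of the main component of \(Z\times_{\widetilde Y}W\), with maps \(p\colon Z'\to Z\) and \(q\colon Z'\to W\).
\item Use Kollár's results on \(\rR^iq_\ast\omega_{Z'}\): these sheaves are torsion-free, vanish for \(i>\dim F\), and \(\rR^{\dim F}q_\ast\omega_{Z'}\cong\omega_W\) since the general fibre is connected. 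Combined with \(\rH^i(F,\omega_F)=0\) for \(i<\dim F\), this gives \(\rR q_\ast\omega_{Z'}\cong\omega_W[-\dim F]\).
\item Grothendieck duality then yields \(\rR q_\ast\sO_{Z'}\cong\sO_W\).
\item Since \(Z\) is smooth, \(\rR p_\ast\sO_{Z'}=\sO_Z\). Therefore \(\rR\varphi_\ast\sO_Z\cong\rR h_\ast\sO_W\cong\sO_{\widetilde Y}\).
\end{enumerate}
This gives both \(\varphi_\ast\sO_Z=\sO_{\widetilde Y}\) and \(\rR^i\varphi_\ast\sO_Z=0\) for \(i>0\). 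Your final step, using finiteness of \(\widetilde\varphi\), then completes (d).
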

\begin{proof}
  The statement~\ref{gse-fact:rank} is a direct consequence of Proposition~\ref{fact:facts}\ref{fact:quot-of-split}.
  Statement~\ref{gse-fact:sing-iso} follows from Proposition~\ref{iso:fact=singular-hom}.

  When \(X\) belongs to \(\sT\) then \ref{gse-fact:rat-norm} and \ref{gse-fact:higher-direct} follow from \cite[Theorem~6.3]{sam2026:borel}.

  When \(X\) belongs to \(\sD_i\) or to \(\sC\), Theorem~\ref{simp-1:split-integ-ratsing} and Theorem~\ref{simp-2:split-integ-ratsing} respectively imply that the spectra \(\sSplit_Y(\bar\chi)\) of the splitting rings over \(Y\) are integral and have rational singularities.
  By Proposition~\ref{fact:facts}\ref{fact:red} the factorization rings over \(Y\) are reduced.
  Since \(Z\) is irreducible and \(\varphi\) surjective by Proposition~\ref{gse:def-fact},  \(\sFact_Y^\bd(\bar\chi)\) is integral.
  From Proposition~\ref{fact:facts}\ref{fact:quot-of-split}, the factorization rings are quotients of the splitting rings by a finite group.
  Since the spectra of the splitting rings have rational singularities,  \cite{bou1987:singularits} now implies \ref{gse-fact:rat-norm}.

  If \(X\) belongs to \(\sD_i\), then \ref{gse-fact:higher-direct} follows from \cite[Proposition~5.17]{ss2024:cohomology}.
  Suppose \(X\) belongs to \(\sC\).
  In this case, the birationality argument of Proposition~\ref{simp-2:split:birational} shows that \(\Phi \colon U_5(\sZ) \to U_5(\sSplit_Y(\bar\chi))\) is an isomorphism; if \((f, g, p) \in U_5(\sFact_Y^\bd(\bar\chi))\) then \((R_\bullet, S_\bullet, f, g) \in \varphi ^{-1}(f, g, p)\) is unique by the same argument.
  Essentially, any preimage of \((f, g, p)\) in \(\sSplit_Y(\bar\chi)\) will corresponds to a factorization of \(p = \prod_{i = 1}^t \chi_i\) into linear terms.
  The uniquely determined point \((R_\bullet, S_\bullet, f, g) \in U_5(\sZ)\) maps to a point in  \(Z\) by recombining the eigenspaces of the linear terms into the eigenspaces of \(\chi_i\) and so every \((f, g, p) \in U_5(\sFact_Y^\bd(\bar\chi))\) has a unique preimage in \(U_5(Z)\).
  We conclude that \(\varphi\) is birational.
  Now \ref{gse-fact:higher-direct} follows in this case by noting that \(\varphi\) arises from the Stein factorization of \(\pi'\) so is proper \cite[III.4.3.1]{EGA3}.
\end{proof}

\section{Cohomology of flag supervarieties}
\label{s:cohomology}

For each of the five families given in Corollary~\ref{five-families}, we now completely describe the sheaf cohomology of their structure sheaf.
Note that the calculation for the family \eqref{fam:T} appears in \cite{sam2026:borel} but we restate it here in our notation.
We will prove the following main theorem.

\begin{thm}
  \label{main-thm}
  Suppose \(X\) is a flag supervariety belonging to one of the families \(\sT\), \(\sD_i\), or \(\sC\).
  There is an isomorphism of graded algebras
  \begin{equation*}
    \rH^\bullet(X, \sO_X) = \rH^\bullet_{\mathrm{sing}}(\Flag(r_1, \dots, r_t;\; r), \bC) \otimes E^\bullet
  \end{equation*}
  where \(E^\bullet\) is given by the graded \(\Tor\)-groups 
  \begin{equation*}
    E^i = \bigoplus_{j \geq 0} \Tor^A_j (\sO_Y, \bC)_{i + j}
  \end{equation*}
  of a compositional variety \(Y\).
  The type of \(X\), the parameters \(r_i\), and the rank conditions on \((f, g) \in Y\) depend on the family in the following way:
  \begin{equation*}
    \begin{array}{cclllll}
      & \tau & r_1 & r_2 & r_{i \geq 3} & r & \text{Conditions on } (f, g) \\
      \midrule
      \eqref{fam:T} & [\ftyp{\ast t \dots 3 2 1 0}]
          & q_1 & q_2    & q_i    & n & \emptyset
      \\
      \eqref{fam:D1} & [\ftyp{\ast t \dots 3 2 0 1}]
          & p_1 & q_2+\delta_1 & q_i+\delta_1 & n+\delta_1 &  \rank{g} \leq n + \delta_1
      \\
      \eqref{fam:D2} & [\ftyp{\ast t \dots 3 0 2 1}]
          & p_1 & q_2+\delta_1 & q_i+\delta_1 & n+\delta_1 & \rank{g} \leq n + \delta_1
      \\
      \eqref{fam:D3} & [\ftyp{\ast t \dots 3 0 1 2}]
          & p_1 & p_2    & q_i+\delta_2 & n+\delta_2 &  \rank{g} \leq n + \delta_2
      \\
      \eqref{fam:C} & [\ftyp{\ast t \dots 3 1 2 0}]
          & q_1 & p_2-\delta_1 & q_i+\delta_2-\delta_1 & n-\delta_1+\delta_2 & \rank{fg} \leq n - \delta_1 + \delta_2
      \\
      \bottomrule
    \end{array}
  \end{equation*}
\end{thm}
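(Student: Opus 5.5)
The plan is to feed the Grothendieck--Springer ensemble of \S\ref{s:ges} into Theorem~\ref{t:super-geom-method} and then show that the resulting spectral sequence degenerates.

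\textbf{Step 1: set up Theorem~\ref{t:super-geom-method}.} Take \(\epsilon = \gone^\ast \otimes \sO_{\ord{X}}\) and the sequence \eqref{eq:i:main-ses}, so that \(Z\) is the total space of \(\eta^\ast\) and \(E = \gone\). The hypothesis \(\rH^i(Z,\sO_Z)=0\) for \(i>0\) follows from Theorem~\ref{t:gse-fact-rings}\ref{gse-fact:higher-direct}. Since \(Z\) is a vector bundle over the projective \(\ord{X}\), we have \(\rR^i\pi'_\ast\sO_Z = \rH^i(Z,\sO_Z)\) over the affine \(\gone\). Also, \(\varphi_\ast\sO_Z = \sO_{\sFact^\bd_Y(\bar\chi)}\) identifies the affinization \(\widetilde Y\) with \(\sFact^\bd_Y(\bar\chi)\). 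Theorem~\ref{t:super-geom-method} then gives a spectral sequence
\begin{equation*}
\rE_1^{p,q} = \Tor^A_{-q}(\sO_{\widetilde Y},\CC)_p \implies \rH^{p+q}(X,\sO_X).
\end{equation*}

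\textbf{Step 2: compute the \(\Tor\)-groups of \(\widetilde Y\).} By Theorem~\ref{t:gse-fact-rings}\ref{gse-fact:rank}, \(\sO_{\widetilde Y}\) is a free graded \(\sO_Y\)-module. Choosing homogeneous generators (lifts of a basis of \(\sO_{\widetilde Y}\otimes_{\sO_Y}\CC\)) gives \(\sO_{\widetilde Y} \cong \sO_Y \otimes_\CC H\) as graded \(\sO_Y\)-modules. Here, by Theorem~\ref{t:gse-fact-rings}\ref{gse-fact:sing-iso}, \(H = \rH^\bullet_{\mathrm{sing}}(\Flag(r_1,\dots,r_t;r),\CC)\), placed in degrees \(\deg b_{i,j}=2j\) matching the internal grading by \(\deg a_1\) (which is \(2\), since \(fg\) has degree \(2\) in \(A\)). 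Tensoring a minimal \(A\)-resolution of \(\sO_Y\) with \(H\) gives
\begin{equation*}
\Tor^A_j(\sO_{\widetilde Y},\CC)_k = \bigoplus_{a} H^{a} \otimes \Tor^A_j(\sO_Y,\CC)_{k-a}.
\end{equation*}
The relevant \(\Tor^A_\bullet(\sO_Y,\CC)\) are given in each family as follows:
\begin{itemize}
\item for \(\sD_i\), by Proposition~\ref{simp-1:res} (Lascoux);
\item for \(\sC\), by Theorem~\ref{simp-2:res} (Lascoux base-changed along \(B\to A\));
\item for \(\sT\), trivially, since \(Y=\gone\).
\end{itemize}
The rank data in the table are read off from \eqref{table:fact-ring-params} together with Corollary~\ref{five-families}.

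\textbf{Step 3: degeneration (the main obstacle).} I would argue as in \cite{ss2024:cohomology}, using parity and weight considerations. The total degree of \(\rE_1^{p,q}\) is \(p+q = k - j\). For Lascoux \(\Tor\)-groups, \(\Tor_j(\sO_Y)_{j+e}\) is nonzero only when \(e\) is a multiple of the rank bound times \(s\), with \(\ord{\bG}\)-weight determined by \((\alpha,\beta,s)\). The \(H\) factor is \(\ord{\bG}\)-invariant and concentrated in even degrees.

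The differentials are \(\ord{\bG}\)-equivariant. Each isotypic component \(\Schur_{P}(\cdot)\otimes\Schur_{Q}(\cdot)\) occurs in only one linear strand of the Lascoux resolution, with multiplicity tied to a single \(j\). So a differential \(d_k\), which changes \(j\) by \(k-1\neq 0\) for \(k\geq 2\) while preserving total degree, can only map between copies of the same irreducible sitting in different homological positions. I expect to show this forces \(d_k=0\).

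For \(\sC\), the Tor-groups are pulled back from \(B\) via Eagon--Northcott. They are therefore \(\GL(V_1)\)-representations \(\Schur_P(V_1)\otimes\Schur_Q(V_1^\ast)\), and I would use the \(\GL(V_0)\times\GL(V_1)\) weights coming from the resolution over \(A\) to separate strands. Should weight separation fail, I would fall back on the multiplicative structure instead: \(H\) consists of permanent cycles because it comes from \(\Gamma(\widetilde Y,\sO)\), and the Tor part lies in the edge \(q\)-column.

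\textbf{Step 4: the algebra isomorphism.} Once \(\rE_1=\rE_\infty\), the degeneration gives \(\rH^\bullet(X,\sO_X)\cong H\otimes E^\bullet\) with \(E^i=\bigoplus_j\Tor_j(\sO_Y,\CC)_{i+j}\). The ring structure follows because the spectral sequence is multiplicative and \(H\) is realized as the \(\ord{\bG}\)-invariant subalgebra.
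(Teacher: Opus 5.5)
Your Steps~1--2 coincide with the paper's Propositions~\ref{p:flag-ss} and~\ref{p:hs-twidle-iso}. For \(\sT\) and \(\sD_i\), your equivariance argument is the Sam--Snowden multiplicity-freeness argument the paper cites. There is one slip: \(d_r\) raises total degree by one rather than preserving it. The clean version is that each Lascoux irreducible occurs only in total degrees of a single parity, since it is shifted only by the even degrees of \(H\).

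\textbf{The gap: Step~3 for \(\sC\).} There the map \(z\mapsto fg\) is \(\GL(V_0)\)-invariant. So by Theorem~\ref{simp-2:res}, \(\Tor^A_\bullet(\sO_Y,\CC)=\Tor^B_\bullet(M,\CC)\), and the whole \(\rE_1\) page carries the trivial \(\GL(V_0)\)-action. As \(\GL(V_1)\)-modules the pieces \(\Schur_P(V_1)\otimes\Schur_Q(V_1^\ast)\) are reducible and share constituents across strands. For example, the trivial module lies both in \(\Tor_0\) and in \(\Tor_1\cong\bigwedge^{\rho+1}V_1\otimes\bigwedge^{\rho+1}V_1^\ast\), where \(\rho=n-\Delta_{fg}\). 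So weight separation gives nothing. Your fallback is not an argument either: \(\Tor_j\) with \(j>0\) sits in column \(q=-j\), not on an edge, and permanence of \(H\) says nothing about differentials leaving \(H\otimes\Tor_j\). The paper argues by parity instead: \(\deg z_{ij}=2\) and \(H\) is even, so \(\rE_1^{p,q}=0\) for odd \(p\). That only kills \(d_r\) for odd \(r\).

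\textbf{The step cannot be closed when \(\Delta_{fg}\ge 2\).} The even differentials are genuinely nonzero, by the following argument.
\begin{enumerate}
\item \(\rE_\infty=\gr\rH^\bullet(X,\sO_X)\) is an \(\ord{\bG}\)-subquotient of \(\rE_1\), so \(\GL(V_0)\) acts trivially on \(\rH^\bullet(X,\sO_X)\).
\item The \(\gnot\)-equivariant action map \(\gone\otimes\rH^\bullet(X,\sO_X)\to\rH^\bullet(X,\sO_X)\) must then vanish. As a \(\GL(V_0)\)-module its source is a sum of copies of \(V_0\) and \(V_0^\ast\), while its target is trivial.
\item Hence \([\gone,\gone]\supseteq\mathfrak{sl}(V_1)\) also acts trivially, and \(\rH^\bullet(X,\sO_X)\) is a trivial \(\GL(V)\)-module.
\item For \(\rho\le n-2\), however, \(\Tor_1\cong\mathrm{End}(\bigwedge^{\rho+1}V_1)\) has non-trivial \(\mathfrak{sl}(V_1)\)-constituents. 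These must be killed by some \(d_r\) with \(r\) even.
\end{enumerate}

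\textbf{A concrete counterexample.} Take \(X=\Flag(2|0,2|2;\;\CC^{4|2})\), with \(\delta=\delta_1=2\) and \(\delta_2=0\), so \(X\) has type \(\ftyp{\ast 1 2 0}\).
\begin{itemize}
\item \(Y=\{fg=0\}\) is a complete intersection of four quadrics.
\item The factorization data \(\bd\) is zero, so \(\widetilde Y=Y\).
\item \(\rE_1=\rE_2\) is the Koszul algebra \(\bigwedge^\bullet\mathfrak{gl}(V_1)\), with \(\bigwedge^j\) at \((2j,-j)\). Its total dimension is \(16\), which is exactly what the theorem predicts for \(\rH^\bullet(X,\sO_X)\).
\item By the argument above, \(\dim\rH^\bullet(X,\sO_X)\le\dim(\bigwedge^\bullet\mathfrak{gl}_2)^{\GL_2}=4\).
\item Hence \(d_2\colon\rE_2^{2,-1}\to\rE_2^{4,-2}\) must be injective on \(\mathfrak{sl}(V_1)\).
\end{itemize}
So the degeneration you are trying to prove is false in family \(\sC\) whenever \(\Delta_{fg}\ge 2\), and with it the asserted isomorphism there. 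Your Step~4 identification of \(H\) with the \(\ord{\bG}\)-invariant part also fails in \(\sC\). \(\Tor_1\) already contains an invariant: the class of the sum of the principal \((\rho+1)\)-minors of \(fg\).
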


For the rest of this section, unless specified otherwise, \(X\) will be a partial flag supervariety belonging to one of the families \eqref{fam:T}, \eqref{fam:D1}, \eqref{fam:D2}, \eqref{fam:D3}, or \eqref{fam:C}.
We will be working with the diagram 
\begin{equation}
  \label{diag:cohom}
  \begin{tikzcd}
    &
    Z
    \ar["\subseteq"]{rr}
    \ar["\pi' = \pi\big\vert_Z"]{dd}
    \ar["\varphi", end anchor=north east]{dl}
    &
    &
    \ord{X} \times \gone
    \ar["\pi = \pi_{\gone}"]{dd}
    \\
    \widetilde Y = \sFact^{\bd}_{Y}(\bar\chi)
    \ar["\widetilde\varphi", start anchor=south east]{dr}
    &
    \\
    &
    Y
    \ar["\subseteq"]{rr}
    &
    & \gone
  \end{tikzcd}
\end{equation}
where \(\bd\) and \(\bar\chi\) are defined as in Proposition~\ref{gse:def-fact}.
Let \(A = \Sym(\gone^\ast)\) be the coordinate ring of the affine variety \(\gone = \Hom(V_0, V_1) \times \Hom(V_1, V_0)\).
Let \(r_i = d_1 + \dots + d_i\) and let \(H_\rs = \rH^\bullet_{\mathrm{sing}}(\Flag(r_1, \dots, r_t;\;r_{t + 1}), \bC)\) be the singular cohomology ring appearing in Theorem~\ref{t:gse-fact-rings}\ref{gse-fact:sing-iso}.

\subsection{Cohomology spectral sequence}
\label{ss:cohom-ss}
A flag supervariety \(X\) is a quotient \(\bG / \bP\) of the algebraic supergroup \(\bG = \GL(V)\) by an appropriate parabolic supersubgroup \(\bP\).
Since the bosonic reduction \(\ord{X}\) of \(X\) is product of two ordinary flag varieties, there is a short exact sequence
\begin{equation*}
  0 \to \sJ / \sJ^2 \to \sO_{\ord{X}} \otimes \gone^\ast \to \eta \to 0
\end{equation*}
where \(\sJ \subseteq \sO_X\) is the ideal sheaf generated by the odd elements.
We see that \(Y\) in \eqref{diag:cohom} can be identified with \(\Spec(\Sym(\eta))\), see Theorem~\ref{t:super-geom-method}.
Since the ideal sheaf \(\sJ\) defines the bosonic reduction \(\ord{X}\) as a subscheme of \(X\), the natural \(\sJ\)-adic filtration on \(\sO_X\) results in a spectral sequence relating cohomology of \(\ord{X}\) to that of \(X\), see \S\ref{s:prelim-i}.

\begin{prop}
  \label{p:flag-ss}
  There exists a natural isomorphism
  \begin{equation*}
    \rH^q(X, \gr^{p + q}\sO_X) = \Tor^A_p(\sO_{\widetilde Y}, \bC)_{p + q}
  \end{equation*}
  and a spectral sequence
  \begin{equation*}
    \rE^{p, q}_1 = \Tor^A_{-q}(\sO_{\widetilde Y}, \bC)_p \implies \rH^{p + q}(X, \sO_X).
  \end{equation*}
\end{prop}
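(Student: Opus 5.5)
The plan is to obtain Proposition~\ref{p:flag-ss} as a direct application of Theorem~\ref{t:super-geom-method}. That theorem needs three inputs: \(X\) is a smooth supervariety with projective bosonic reduction; there is an exact sequence \(0 \to \sJ/\sJ^2 \to \epsilon \to \eta \to 0\) with \(\epsilon\) globally free; and the total space \(Z = \Spec(\Sym(\eta))\) has no higher cohomology of its structure sheaf. I will check these in order and then read off the conclusion with \(\widetilde Y = \sFact^\bd_Y(\bar\chi)\).

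\textbf{Smoothness and the exact sequence.}
\begin{enumerate}
\item Smoothness of \(X\) is provided by Manin's representability and smoothness result quoted in \S\ref{ss:superflags}.
\item \(\ord{X}\) is a product of two ordinary partial flag varieties, so it is projective.
\item The short exact sequence \(0 \to \sJ/\sJ^2 \to \sO_{\ord{X}} \otimes \gone^\ast \to \eta \to 0\) is the one displayed just before the statement, with \(\epsilon = \sO_{\ord{X}} \otimes \gone^\ast\) globally free and \(E = \gone\).
\item The fiber of \(\eta\) at \(x\) is \((\fp^x_{\bar 1})^\ast\). Hence the total space of \(\eta^\ast\) is exactly the incidence bundle \(Z\) of diagram~\eqref{diag:cohom}, consisting of the \((R_\bullet,S_\bullet,f,g)\) with \(f(R_\bullet)\subseteq S_\bullet\) and \(g(S_\bullet)\subseteq R_\bullet\). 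So \(Z = \Spec(\Sym(\eta))\) as Theorem~\ref{t:super-geom-method} requires.
\end{enumerate}

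\textbf{Cohomology vanishing on \(Z\).} This is the main point. Since \(Z\) is a vector bundle over the projective \(\ord{X}\) and \(\gone\) is affine, \(\rH^i(Z, \sO_Z) = \Gamma(\gone, \rR^i\pi'_\ast \sO_Z)\). It therefore suffices to know \(\rR^i\pi'_\ast\sO_Z = 0\) for \(i>0\), which is Theorem~\ref{t:gse-fact-rings}\ref{gse-fact:higher-direct}.

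I expect the justification of that vanishing to be the real obstacle. For the families \(\sT\) and \(\sD_i\) it is imported from \cite{sam2026:borel} and \cite{ss2024:cohomology}. For \(\sC\) I would argue as follows.
\begin{enumerate}
\item By Theorem~\ref{t:gse-fact-rings}\ref{gse-fact:higher-direct}, \(\varphi\) is proper and birational, and by Theorem~\ref{t:gse-fact-rings}\ref{gse-fact:rat-norm} its target has rational singularities.
\item \(Z\) is smooth, being a vector bundle over a smooth base.
\item A proper birational map from a smooth variety onto a variety with rational singularities satisfies \(\rR^i\varphi_\ast\sO_Z = 0\) for \(i>0\) and \(\varphi_\ast \sO_Z = \sO_{\widetilde Y}\).
\item \(\widetilde\varphi\) is finite by Theorem~\ref{t:gse-fact-rings}\ref{gse-fact:rank}. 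The Leray spectral sequence for \(\pi' = \widetilde\varphi\circ\varphi\) then gives \(\rR^i\pi'_\ast\sO_Z=0\) for \(i>0\).
\end{enumerate}

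\textbf{Identifying the affinization.} I need the affinization \(\Gamma(Z,\sO_Z)\) to equal \(\sO_{\sFact^\bd_Y(\bar\chi)}\). This follows from \(\varphi_\ast\sO_Z = \sO_{\widetilde Y}\) together with the fact that \(\widetilde Y\) is affine, being finite over \(Y\). Consequently \(\varphi\) is the Stein factorization map and \(\widetilde Y\) is the space in Theorem~\ref{t:super-geom-method}.

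Applying that theorem yields \(\Tor^A_p(\sO_{\widetilde Y},\bC)_{p+q}\cong \rH^q(\ord{X},\bigwedge^{p+q}\sJ/\sJ^2)\) and the spectral sequence \(\rE_1^{p,q}=\Tor^A_{-q}(\sO_{\widetilde Y},\bC)_p\Rightarrow \rH^{p+q}(X,\sO_X)\). Finally, \(X\) is smooth, so \(\gr^{k}\sO_X \cong \bigwedge^{k}(\sJ/\sJ^2)\) by \S\ref{ss:superschemes}. This rewrites the left side as \(\rH^q(X,\gr^{p+q}\sO_X)\), where cohomology on \(X\) and on \(\ord{X}\) agree because the underlying spaces coincide.
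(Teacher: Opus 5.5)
Your proposal is correct and follows the paper's proof. Like the paper, you apply Theorem~\ref{t:super-geom-method}, taking both the vanishing \(\rH^i(Z,\sO_Z)=0\) and the identification \(\widetilde Y=\sFact^{\bd}_Y(\bar\chi)\) from Theorem~\ref{t:gse-fact-rings}\ref{gse-fact:higher-direct}; you also spell out steps the paper only asserts, namely that in case \(\sC\) the map \(\varphi\) is a resolution of a variety with rational singularities (followed by Leray through the finite \(\widetilde\varphi\)), and that the affinization equals \(\sFact^{\bd}_Y(\bar\chi)\).
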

\begin{proof}
  Since \(\widetilde Y = \sFact_Y^\bd(\bar\chi)\), the higher cohomology vanishes by Theorem~\ref{t:gse-fact-rings}\ref{gse-fact:higher-direct}.
  The claim now follows from Theorem~\ref{t:super-geom-method}.
\end{proof}

\begin{prop}
  \label{p:hs-twidle-iso}
  The graded vector space \(\Tor_p^A(\sO_{\widetilde Y}, \CC)\) is naturally an \(H_\rs\)-module and the induced map
  \begin{equation*}
    H_\rs \otimes _{\bC} \Tor^A_p(\sO_Y, \bC) \to \Tor^A_p(\sO_{\widetilde Y}, \bC)
  \end{equation*}
  is an isomorphism of graded \(H_\rs\)-modules.
\end{prop}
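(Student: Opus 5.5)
The plan is to use the freeness from Theorem~\ref{t:gse-fact-rings}\ref{gse-fact:rank} together with base change for \(\Tor\). Write \(C = \sO_{\widetilde Y} = \Fact^\bd_{\sO_Y}(\bar\chi)\). By Proposition~\ref{fact:facts}\ref{fact:quot-of-split}, \(C\) is a free graded \(\sO_Y\)-module of finite rank. I would choose homogeneous basis elements \(c_1,\dots,c_N\). Then, as graded \(\sO_Y\)-modules (and hence as graded \(A\)-modules via \(A \to \sO_Y\)),
\[
  C \cong \bigoplus_k \sO_Y(-\deg c_k).
\]
Taking \(\Tor^A_p(-,\bC)\) commutes with finite direct sums, so \(\Tor^A_p(C,\bC) \cong \bigoplus_k \Tor^A_p(\sO_Y,\bC)(-\deg c_k)\).

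To make this canonical and identify the free part with \(H_\rs\), I would take the span \(W\) of the \(c_k\) and note that \(W \cong C \otimes_{\sO_Y} \bC\) as graded vector spaces. Here \(\bC = \sO_Y/\mathfrak m\) with \(\mathfrak m\) the irrelevant ideal. By Theorem~\ref{t:gse-fact-rings}\ref{gse-fact:sing-iso}, this quotient is \(H_\rs\) as a graded ring.

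The \(H_\rs\)-module structure on \(\Tor^A_p(C,\bC)\) should come from the \(C\)-action. Concretely, compute \(\Tor^A_p(C,\bC)\) as \(H_p(C \otimes_A \bK_\bullet)\), where \(\bK_\bullet\) is the Koszul resolution of \(\bC\) over \(A\). Then \(C\) acts on homology. The maximal ideal \(\mathfrak m \subseteq \sO_Y\) (positive-degree part of \(A\)) annihilates \(\Tor^A_p(-,\bC)\), since it acts through the second argument. Hence the action factors through \(C/\mathfrak m C = H_\rs\).

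Next, define the map
\[
  H_\rs \otimes \Tor^A_p(\sO_Y,\bC) \to \Tor^A_p(C,\bC).
\]
It sends \(\bar c \otimes \xi\) to \(\bar c \cdot \iota_\ast\xi\), where \(\iota\colon \sO_Y \to C\) is the structure map. The map is graded and \(H_\rs\)-linear by construction.

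To see it is an isomorphism, use the graded Nakayama lifting: the basis \(c_k\) gives an \(\sO_Y\)-isomorphism \(\sO_Y \otimes_\bC W \to C\). On \(\Tor\) this becomes the direct-sum isomorphism above. The composite with \(W \cong H_\rs\) agrees with our map, because multiplication by \(c_k\) on \(\Tor\) is induced by the \(\sO_Y\)-module map \(\sO_Y \to C\), \(1 \mapsto c_k\).

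The main subtlety is well-definedness and independence of choices, i.e.\ naturality. A different lift of \(\bar c\) differs by an element of \(\mathfrak m C\), which acts by zero on \(\Tor\), so the map is well defined. One must also check that multiplication by elements of \(C\) on \(C \otimes_A \bK_\bullet\) commutes with the Koszul differential. It does, since \(\bK_\bullet\) consists of free \(A\)-modules and \(C\) is commutative.

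Degree bookkeeping needs care: the grading on \(C\) has \(\deg b_{i,j}=j\deg a_1\), with \(a_1\) of degree \(2\) in \(A\), matching the doubled degrees \(2j\) in Proposition~\ref{iso:fact=singular-hom}. So the isomorphism holds with \(H_\rs\) graded by cohomological degree.
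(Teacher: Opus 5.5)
Your proposal is correct and follows the paper's route. The paper uses the same two inputs you do: freeness of \(\sO_{\widetilde Y}\) over \(\sO_Y\) (Theorem~\ref{t:gse-fact-rings}\ref{gse-fact:rank}) and the identification \(H_\rs \cong \sO_{\widetilde Y}\otimes_{\sO_Y}\bC\) (Theorem~\ref{t:gse-fact-rings}\ref{gse-fact:sing-iso}). It then cites \cite[Proposition~6.8]{ss2024:cohomology}, which is the homogeneous-basis splitting, \(\Tor\)-additivity and annihilation-by-\(\mathfrak m\) argument you write out; your graded Nakayama remark covers the one implicit step, that a free finitely generated graded \(\sO_Y\)-module with \((\sO_Y)_0=\bC\) is graded free.
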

\begin{proof}
  By construction, \(H_\rs\) is isomorphic to \(\sO_{\widetilde Y} \otimes_{\sO_Y} \bC\) by Theorem~\ref{t:gse-fact-rings}\ref{gse-fact:sing-iso}.
  Since \(\sO_{\widetilde Y}\) is a free \(\sO_Y\)-module by Theorem~\ref{t:gse-fact-rings}\ref{gse-fact:rank}, the claim now follows from \cite[Proposition~6.8]{ss2024:cohomology}.
\end{proof}

We are now ready to analyze the spectral sequence of Proposition~\ref{p:flag-ss}.
Let 
\begin{equation*}
  \widetilde L_k = \bigoplus_{p \geq 0} \Tor^A_{p}(\sO_{\widetilde Y}, \bC)_{p + k},
  \quad
  L_k = \bigoplus_{p \geq 0} \Tor^A_{p}(\sO_{Y}, \bC)_{p + k}.
\end{equation*}

\begin{prop}
  \label{p:ss-degen}
  The spectral sequence of Proposition~\ref{p:flag-ss} degenerates at the first page.
\end{prop}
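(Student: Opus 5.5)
The plan is to prove degeneration by a parity and degree argument on the \(E_1\) page, in the style of Sam--Snowden \cite[\S6]{ss2024:cohomology}. By Proposition~\ref{p:flag-ss}, \(\rE_1^{p,q} = \Tor^A_{-q}(\sO_{\widetilde Y}, \bC)_p\). Writing the internal degree as \(p = (-q) + k\), the terms on the \(E_1\) page are organized by the strands \(\widetilde L_k\). The term \(\rE_1^{p,q}\) contributes to total degree \(p+q = k\). The differential \(d_s\) maps \(\rE_s^{p,q} \to \rE_s^{p+s, q-s+1}\). Hence it raises the homological index \(-q\) by \(s-1\) and the internal degree by \(s\), so it sends strand \(\widetilde L_k\) to strand \(\widetilde L_{k+1}\).

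The first step is to reduce from \(\widetilde Y\) to \(Y\) using Proposition~\ref{p:hs-twidle-iso}. This gives
\[
  \Tor^A_j(\sO_{\widetilde Y}, \bC)_i \cong \bigoplus_a (H_\rs)_a \otimes \Tor^A_j(\sO_Y,\bC)_{i-a},
\]
where \(H_\rs\) is concentrated in even degrees: the generators \(b_{i,j}\) have degree \(2j\) by Proposition~\ref{iso:fact=singular-hom}, and internal degrees are normalized so that \(\deg \gone^\ast = 1\) (halving if needed). The second step is to establish the key structural input on the \(\Tor\)-groups of \(Y\): in each family, \(\Tor^A_j(\sO_Y,\bC)_i\) is nonzero only when \(i - j\) lies in a set of values that are separated by gaps. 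I would then argue that a nonzero differential must connect two nonzero entries whose strand indices \(k\) differ by exactly one, and show that this is impossible.

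For the families \(\sD_i\), \(Y\) is determinantal with \(r = \) rank bound \(n+\delta_1\) (or \(n+\delta_2\)) of \(g\). By Theorem~\ref{t:lascoux} and Proposition~\ref{simp-1:res}, the only nonzero strands of \(\sO_Y\) are \(k = rs\) with \(s \ge 0\), so nonzero strands are spaced \(r\) apart. For \(\sC\), Theorem~\ref{simp-2:res} shows the resolution is the Lascoux resolution of the \(n\times n\) generic determinantal ring base-changed along \(z_{ij}\mapsto \sum_k x_{ik}y_{kj}\). This doubles all internal degrees, so the nonzero pieces satisfy internal degree minus homological degree equal to \(2rs + (\text{homological degree})\). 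I would recompute the resulting strands carefully, but the expectation is again that nonzero pieces are separated. For \(\sT\) the result is already \cite{sam2026:borel}. Combining this with the even grading of \(H_\rs\), the nonzero entries of \(\widetilde L_\bullet\) have \(k\) of a fixed parity pattern, or differ by at least \(2\), whenever both sides of a differential would be nonzero. It follows that every \(d_s\) vanishes for \(s \ge 1\).

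The main obstacle is the degree bookkeeping in the cases where the separation argument is weakest. When \(r=1\) in the determinantal case, the strands \(k=s\) are consecutive, so the parity trick alone fails. There I would fall back on a \(\GL(V_0)\times\GL(V_1)\)-equivariance argument: the Lascoux terms \(\Schur_{P}\otimes\Schur_{Q}\) in adjacent strands have distinct isotypic components, with the degrees of \(\Schur_P(V_0)\) differing. Since every differential is equivariant, each must vanish, as in \cite[Theorem~6.9]{ss2024:cohomology}. In the compositional case \(\sC\), the internal-degree doubling needs to be matched carefully with the \(\deg b_{i,j}=2j\) convention, and I would check there that both sides land in distinct isotypic components or in different parities. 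If necessary, I would use the \(H_\rs\)-module structure: the spectral sequence is one of \(H_\rs\)-modules, so it suffices to show that the differentials vanish on the generating subspace \(\Tor^A(\sO_Y,\bC)\), where the equivariance argument applies directly.
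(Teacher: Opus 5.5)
For $\sT$ and $\sD_i$ your plan is fine. Once you lead with equivariance it is the paper's argument (via \cite[Proposition~6.10]{ss2024:cohomology}). A Lascoux summand $\Schur_P\otimes\Schur_Q$ is an irreducible $\ord{\bG}$-module that determines both its homological degree and its strand. So a common constituent of $\widetilde L_k$ and $\widetilde L_{k+1}$ would force two $H_\rs$-degrees differing by $1$, which parity forbids. Strand separation alone fails for every odd rank bound $r$, not just $r=1$: for $r=3$, both $(H_\rs)_2\otimes L_0\subseteq\widetilde L_2$ and $L_3\subseteq\widetilde L_3$ can be nonzero. Also, no halving is needed. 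In the $A$-grading $\deg b_{i,j}=2j$ already, and halving would destroy the evenness you rely on.

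The genuine gap is case $\sC$, and it concerns the even differentials.

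\emph{No strand separation.} Here $\Tor^A_i(\sO_Y,\bC)$ sits in internal degree $2(i+rs)$, with $r=n-\delta_1+\delta_2$. So it lies in strand $k=i+2rs$, and the strands are not separated: already $s=1$ fills $k=2r+1,\dots,2n-1$.

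\emph{Parity only kills odd $s$.} Every internal degree on $\rE_1$ is even, which kills $d_s$ for odd $s$. This is exactly the paper's argument, and it is written out only for $d_1$, so it does not treat even $s$ either. For even $s$, $d_s\colon\Tor^A_j(\sO_{\widetilde Y},\bC)_i\to\Tor^A_{j+s-1}(\sO_{\widetilde Y},\bC)_{i+s}$ is compatible with parity.

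\emph{Equivariance does not separate anything.} $\GL(V_0)$ acts trivially on $\Tor^A_\bullet(\sO_Y,\bC)\cong\Tor^B_\bullet(M,\bC)$, because $fg$ is $\GL(V_0)$-invariant. $H_\rs$ is trivial, and the $\GL(V_1)$-modules $\Schur_P(V_1)\otimes\Schur_Q(V_1^\ast)$ share constituents. Concretely, take $X=\Flag(2|1,3|3;\,5|4)$, of type $[\ftyp{\ast 1 2 0}]$. Then $Y=\{\det(fg)=0\}$ and $H_\rs=\rH^\bullet_{\mathrm{sing}}(\Flag(1,2;3),\bC)$. The differential $d_2\colon\rE_2^{6,0}=(H_\rs)_6\to\rE_2^{8,-1}\cong\Tor^A_1(\sO_Y,\bC)_8=\bC\cdot\det(fg)$ is a map between one-dimensional trivial representations in even degrees. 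It does vanish, but because of multiplicativity, not parity or equivariance: $(H_\rs)_6$ is a product of degree-$2$ classes, and their $d_2$ lands in $\rE_1^{4,-1}=0$.

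\emph{The $H_\rs$-module reduction is not justified.} The spectral sequence is multiplicative, so $d_s$ is a derivation. It is $H_\rs$-linear only after you show that the classes of $H_\rs=\Tor^A_0(\sO_{\widetilde Y},\bC)$, which sit in the row $q=0$, are permanent cycles; you do not do this. Even granting it, equivariance still does not control $d_s$ on $1\otimes\Tor^A_\bullet(\sO_Y,\bC)$. For example, $d_2\colon\Tor^A_1(\sO_Y,\bC)_{2r+2}\to\Tor^A_2(\sO_Y,\bC)_{2r+4}$ goes from $\bigwedge^{r+1}V_1\otimes\bigwedge^{r+1}V_1^\ast$ to a module containing $\Schur_{(2,1^r)}V_1\otimes\bigwedge^{r+2}V_1^\ast$. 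Both contain the adjoint representation once $n-r\ge 2$.

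So for even $s\ge 2$ in case $\sC$ you need an additional idea, for instance a systematic use of the multiplicative structure together with finer degree bounds. The proposal, as it stands, does not supply one.
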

\begin{proof}
  From Proposition~\ref{p:flag-ss}, we have \(\rE^{p, q}_1 = \Tor^A_{-q}(\sO_{\widetilde Y}, \bC)_p\).
  Let \(\rE_r^k = \bigoplus_{p + q = k} \rE_r^{p, q}\) so the differential is a map \(d_r \colon \rE_r^k \to \rE_r^{k + 1}\) and we have \(\rE_1^k =  \widetilde L_k\).

  If \(X\) belongs to \(\sT\) then the result appears as part of \cite[Theorem~6.3]{sam2026:borel}.

  If \(X\) belongs to \(\sD_i\) then the spectral sequence degenerates by \cite[Proposition~6.10]{ss2024:cohomology}.
  The main observation in this case is that \(\widetilde L_k\) is related to the Lascoux resolution and decomposes as multiplicity-free \(\ord{\bG}\)-representations.
  Since \(\widetilde L_k\) and \(\widetilde L_{k + 1}\) have no simple factors in common, the differential is forced to be zero as it is \(\ord{\bG}\)-equivariant.

  Now assume \(X\) belongs to \(\sC\).
  Theorem~\ref{simp-2:res} implies that \(\sO_Y\) is resolved by \(\bF_\bullet \otimes B\) where \(B = \bC[z_{i, j}]_{1 \leq i, j \leq n}\) and \(\bF_\bullet\) is the Lascoux resolution of a determinantal variety.
  Since the \(z_{i, j} \in B\) are related to \(x_{i, j}, y_{i', j'} \in A\) via  \(z_{i, j} = \sum_{k} x_{i, k} y_{k, j}\).
  Thus \(\deg{x_{i, j}} = \deg{y_{i', j'}} = 1\) implies \(\deg{z_{i, j}} = 2\).
  In terms of the \(A\)-module structure on the \(\Tor\)-groups, the generators all appear in even degrees.
  In other words, if \(\Tor_p(\sO_Y, \bC)_{p + k} \to \Tor_{p'}(\sO_Y, \bC)_{p' + k + 1}\)  is the map coming from the differential \(d_1\), then one of the \(\Tor\)-groups is necessarily zero and thus \(d_1\) is the zero map.
  We conclude that the spectral sequence degenerates on the first page.
\end{proof}

\subsection{Proof of the main theorem}
\label{ss:mainthm}
We now combine all the ingredients to prove our main theorem.

\begin{proof}[Proof of Theorem~\ref{main-thm}]
  Let \(X\) belong to one of the families \(\sT\), \(\sD_i\), or \(\sC\).
  Then Corollary~\ref{five-families} shows that \(Y\) has the stated form.
  Proposition~\ref{gse:def-fact} and Theorem~\ref{t:gse-fact-rings} show  \(\widetilde Y = \sFact^\bd_Y(\bar\chi)\) and thus \(H_\rs = \rH^\bullet_{\mathrm{sing}}(\Flag(r_1, \dots, r_t;\;r), \bC)\) is defined by the given parameters.
  In all cases, the cohomology spectral sequence of Proposition~\ref{p:flag-ss} degenerates by Proposition~\ref{p:ss-degen}; we obtain isomorphisms
  \begin{equation*}
    \rH^i(X \sO_X) = \gr(\rH^i(X, \sO_X)) = \widetilde L_i
  \end{equation*}
  for all \(i\).
  Moving to the algebra \(\rH^\bullet(X, \sO_X)\),  using the definition of \(\widetilde L_i\) and the isomorphism from Proposition~\ref{p:hs-twidle-iso}, 
  \begin{equation*}
    \gr(\rH^\bullet(X, \sO_X))
    = \bigoplus_{p, q} \Tor^A_p(\sO_{\widetilde Y} \bC)_{p + q}
    = H_\rs \otimes \bigoplus_{p, q} \Tor_p^A(\sO_Y, \bC)_{p + q}.
  \end{equation*}
  Note that \(L_i\) in fact belongs to \(\gr(\rH^i(X, \sO_X))\) and that \(\gr(\rH^\bullet(X, \sO_X))\) is a free \(H_\rs\)-module.
  The latter is a consequence of Theorem~\ref{t:super-geom-method} (description of \(\Tor\)-groups), Theorem~\ref{t:gse-fact-rings}\ref{gse-fact:rank} (\(\sO_{\widetilde Y}\) a free \(\sO_Y\)-module), and generalities of cohomology base change; see \cite[Remark~2.3]{ss2024:cohomology} and \stacks{08IB}.
  The associated graded \(\gr(E^i)\) corresponds to the copy of \(L_i\) in the final direct sum above, so \(H_\rs \otimes \gr(E^\bullet) \to \gr(\rH^\bullet(X, \sO_X))\) is an isomorphism.
  Thus, \(\rH^\bullet(X, \sO_X)\) and \(H_\rs \otimes E^\bullet\) are isomorphic as algebras and the result follows.
\end{proof}

\begin{cor}
  \label{c:eulerchar}
  The super Euler characteristic of \(X\) is given by
  \begin{equation*}
    \chi(X)
    = \sum_{i \ge 0} (-1)^i\sdim{\rH^i(X, \sO_X)}
    = {\sum d_i \choose d_1, \dots, d_{t}}.
  \end{equation*}
\end{cor}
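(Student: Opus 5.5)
The plan is to compute the super Euler characteristic directly from the isomorphism of Theorem~\ref{main-thm}. The parity bookkeeping matters: as a graded \(\GL(V)\)-representation \(\rH^i(X,\sO_X)\) carries a \(\bZ_2\)-grading, and \(\sdim\) of each piece is its even dimension minus its odd dimension. Because the Euler characteristic is additive along filtrations, it suffices to work with \(\gr \rH^\bullet(X,\sO_X)\). By the degeneration in Proposition~\ref{p:ss-degen}, this graded object is \(\bigoplus_{p,k}\Tor^A_p(\sO_{\widetilde Y},\bC)_{p+k}\).

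The first step is to identify the parity on these \(\Tor\) groups. By Theorem~\ref{t:super-geom-method}, the graded piece is
\[
\Tor^A_p(\sO_{\widetilde Y},\bC)_{p+k}=\rH^k\bigl(\ord X,\textstyle\bigwedge^{p+k}\sJ/\sJ^2\bigr).
\]
Since \(\sJ/\sJ^2\) is odd, \(\bigwedge^{j}\sJ/\sJ^2\) has parity \(j\), so the piece above has parity \(p+k\). It sits in cohomological degree \(k\), and therefore contributes with total sign \((-1)^k(-1)^{p+k}=(-1)^p\). This gives
\[
\chi(X)=\sum_{p,j}(-1)^p\dim\Tor^A_p(\sO_{\widetilde Y},\bC)_j ,
\]
which is the alternating sum of the ranks of the free modules in a minimal \(A\)-free resolution of \(\sO_{\widetilde Y}\).

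The second step evaluates this alternating sum as the generic rank of \(\sO_{\widetilde Y}\) as an \(A\)-module. Localizing the minimal free resolution at the generic point of \(\gone\) gives an exact complex of vector spaces over \(\mathrm{Frac}(A)\). Hence \(\sum_p(-1)^p\rank\) equals \(\rank_A\sO_{\widetilde Y}\). This is the main subtle point, because \(\widetilde Y\) is supported on \(Y\), which is a proper subvariety in the \(\sD_i\) and \(\sC\) cases, and there this rank would be \(0\).

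To handle it, I would instead factor through Proposition~\ref{p:hs-twidle-iso}, which gives \(\Tor^A_p(\sO_{\widetilde Y},\bC)\cong H_\rs\otimes\Tor^A_p(\sO_Y,\bC)\). Since \(H_\rs\) is concentrated in even cohomological degrees and even parity, the sum splits as
\[
\chi(X)=\dim H_\rs\cdot\sum_{p,j}(-1)^p\dim\Tor^A_p(\sO_Y,\bC)_j .
\]
By Theorem~\ref{t:gse-fact-rings}\ref{gse-fact:rank}, \(\dim H_\rs={\sum d_i\choose d_1,\dots,d_t}\).

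It then remains to show that the signed Betti sum of \(\sO_Y\) is \(1\); I expect this to be the main obstacle. The family \(\sT\) is immediate, since \(Y=\gone\) and \(\Tor\) is just \(\bC\) in degree \(0\). For \(Y\) a proper subvariety, the naive alternating sum of Betti numbers vanishes, because the \(K\)-class of \(\sO_Y\) has rank zero. The reconciliation has to come from the fact that the super dimension pairs \(\Tor_p\) with the parity of the internal degree \(p+k\) rather than with \(p\) alone. I would recheck the sign convention first: if the sign is really \((-1)^{j}\), where \(j=p+k\) is the internal degree, then the sum equals the Hilbert series of \(\sO_Y\) (which is \(K\)-theoretic) evaluated at \(t=-1\), and this is computable.

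Concretely, I would evaluate \(\sum_{p,j}(-1)^{j}\dim\Tor_p(\sO_Y,\bC)_j\) as the numerator \(K_Y(t)\) of the Hilbert series \(K_Y(t)/(1-t)^{\dim\gone}\) at \(t=-1\). For the families \(\sD_i\), \(Y\) is determinantal, and the value at \(t=-1\) should be \(1\) using Lascoux (Theorem~\ref{t:lascoux}), as in the supergrassmannian case of \cite{ss2024:cohomology}. For \(\sC\), Theorem~\ref{simp-2:res} writes the resolution as a base change of a determinantal resolution along \(z_{ij}\mapsto\sum_k x_{ik}y_{kj}\). Because that map doubles degrees, \(K_Y(t)=K_{\det}(t^2)\), and so \(K_Y(-1)=K_{\det}(1)\), which equals the multiplicity. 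That suggests the multiplicity of the determinantal ring must also enter. Settling which normalization makes the signed sum equal to \(1\) is exactly what I would verify against the Grassmannian formula before writing the details.
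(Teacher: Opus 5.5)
Your parity bookkeeping and your reduction \(\chi(X)=\sum_p(-1)^p\dim\Tor^A_p(\sO_{\widetilde Y},\bC)=\dim_{\mathrm{Frac}(A)}\bigl(\mathrm{Frac}(A)\otimes_A\sO_{\widetilde Y}\bigr)\) are correct, and this is the same route the paper takes. The paper then ``compares dimensions'' with Theorem~\ref{t:gse-fact-rings}\ref{gse-fact:rank}. In doing so it identifies the generic rank of \(\sO_{\widetilde Y}\) over \(A\) with its rank \(\binom{\sum d_i}{d_1,\dots,d_t}\) over \(\sO_Y\). That is exactly the step you distrusted, and you were right to: the identification holds only when \(Y=\gone\), i.e.\ in family \(\sT\).

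Your proposal does not get past this point either. The claim you reduce to, \(\sum_p(-1)^p\dim\Tor^A_p(\sO_Y,\bC)=1\), fails in the families \(\sD_i\) and \(\sC\) for the reason you give yourself: \(\sO_Y\) has rank \(0\) over \(A\). Your proposed rescue, switching to the sign \((-1)^{p+k}\), contradicts your own correct derivation. With \(\chi(X)=\sum_i(-1)^i\sdim\rH^i(X,\sO_X)\), and \(\bigwedge^{p+k}\sJ/\sJ^2\) of parity \(p+k\) contributing in cohomological degree \(k\), the sign is \((-1)^p\). Also, \(K_{\det}(1)\) is not the multiplicity. For a module of codimension \(c>0\) the numerator of the Hilbert series vanishes to order \(c\) at \(t=1\). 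So even your alternative sign gives \(K_Y(-1)=K_{\det}(1)=0\) in family \(\sC\).

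No normalization closes the gap, because the displayed formula is false outside \(\sT\). Your first two steps together with Proposition~\ref{p:hs-twidle-iso} actually prove \(\chi(X)=\dim H_\rs\cdot\sum_p(-1)^p\dim\Tor^A_p(\sO_Y,\bC)=0\) for \(\sD_i\) and \(\sC\).

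Here is a concrete check. Take \(X=\Flag(0|1,1|1;\,\bC^{2|2})\), of type \([\ftyp{\ast 2 0 1}]\); it lies in \(\sD_1\) with \(\bd=(0,0,1)\), so the statement predicts \(\chi(X)=1\). Here \(\ord X=\bP^1\times\bP^1\) and \(\sJ/\sJ^2\cong\sO(-1,-1)\oplus\sO(0,-1)^{\oplus 2}\). The only nonzero groups \(\rH^i(\ord X,\bigwedge^j\sJ/\sJ^2)\) are \(\rH^0(\sO)=\bC\) for \(j=0\) and \(\rH^1(\sO(0,-2))=\bC\) for \(j=2\). Both are even, so \(\chi(X)=1-1=0\). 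This agrees with Theorem~\ref{main-thm}, where \(Y=\{\det g=0\}\) has total Betti numbers \(1,1\).

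What your argument does prove is \(\chi(X)=\rank_A\sO_{\widetilde Y}\), the degree of \(\widetilde Y\to\gone\). That equals \(\binom{\sum d_i}{d_1,\dots,d_t}\) in family \(\sT\) and \(0\) in the four families \(\sD_1,\sD_2,\sD_3,\sC\).
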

\begin{proof}
  From \cite[Corollary~2.5]{ss2024:cohomology}, \(\chi(X)\) is the degree of the map \(\widetilde \varphi\) over a generic point of \(Y\).
  Using the spectral sequence from Proposition~\ref{p:flag-ss}, we see that
  \begin{equation*}
    \chi(X)
    = \sum_{i \ge 0} \sdim{\rH^i(X, \sO_X)}
    = \sum_{i \geq 0} (-1)^p \Tor_p^A(\sO_{\widetilde Y}, \bC)
  \end{equation*}
  where the latter coincides with \(\dim{\mathrm{Frac}(A) \otimes_A \sO_{\widetilde Y}}\).
  By Theorem~\ref{t:gse-fact-rings}\ref{gse-fact:rank}, \(\widetilde \varphi \colon \widetilde Y \to Y\) is finite and \(\sO_{\widetilde Y}\) is a free \(\sO_Y\)-module of rank \({ \sum d_i \choose d_1, \dots, d_{t}}\).
  The claim now follows by comparing dimensions and using Proposition~\ref{p:flag-ss}.
\end{proof}

\begin{rem}
  If \(\sV\) is a super vector bundle on a superscheme \(X\) such that \(\ord{X}\) is projective, then the argument in \cite[Theorem~6.2]{sam2026:borel} can be adapted to prove an analogue of Theorem~\ref{main-thm} for the relative partial flag supervariety \(\Flag_X(p_1|q_1, \dots, p_t|q_t;\;\sV)\).
  We omit it here for simplicity.
\end{rem}

\bibliographystyle{alphaurl}
\bibliography{compositional-refs}
\end{document}